\definecolor{my-link}{rgb}{0.5,0.0,0.0}
\definecolor{my-blue}{rgb}{0.0,0.0,0.6}
\definecolor{my-red}{rgb}{0.5,0.0,0.0}
\definecolor{my-green}{rgb}{0.0,0.5,0.0}
\definecolor{nicos-red}{rgb}{0.75,0.0,0.0}
\definecolor{light-gray}{gray}{0.6}
\definecolor{really-light-gray}{gray}{0.8}
\newtheorem{theorem}{\sc Theorem}[section]
\newtheorem{lemma}[theorem]{\sc Lemma}
\newtheorem{proposition}[theorem]{\sc Proposition}
\newtheorem{corollary}[theorem]{\sc Corollary}
\newtheorem{definition}[theorem]{\it Definition}
\numberwithin{equation}{section}
\theoremstyle{remark}
\newtheorem{remark}[theorem]{Remark}
\newcommand{\be}{\begin{equation}}
\newcommand{\ee}{\end{equation}}
\providecommand{\abs}[1]{\vert#1\vert}
\providecommand{\pp}[1]{\langle#1\rangle}
\newcommand{\fl}[1]{\lfloor{#1}\rfloor} 
\newcommand{\ce}[1]{\lceil{#1}\rceil}
\def\Coc{\mathscr K}
\def\Cor{\Coc_0}
   \def\cE{\mathcal{E}}
\def\cG{\mathcal{G}}   \def\cW{\mathcal{W}}
\def\cH{\mathcal{H}}
\def\cR{\mathcal{R}}
\def\cS{\mathcal{S}}  \def\cT{\mathcal{T}}
\def\esssup{\mathop{\mathrm{ess\,sup}}}
\def\Phat{\widehat\P}
\def\Omhat{\widehat\Omega}
\def\what{\hat\w}
\def\xbar{\bar x}
\def\t{\mathfrak t}
\def\tmin{{\underline\t}}
\def\tmax{{\bar\t}}
\newcommand{\one}{\mbox{\mymathbb{1}}}
\def\kS{\mathfrak{S}}
\def\bE{\mathbb{E}}
\def\bN{\mathbb{N}}
\def\bP{\mathbb{P}}
\def\bR{\mathbb{R}}
\def\bZ{\mathbb{Z}}
 \def\Z{\bZ}  \def\R{\bR}\def\N{\bN}
\def\w{\omega}
\def\om{\omega}
\def\e{\varepsilon}
\def\ddd{\displaystyle} 
\def\ri{{\mathrm{ri\,}}}
\def\m1{\mathbf{1}}
\def\kS{\mathfrak S}
 \def\Vvv{{\rm\mathbb{V}ar}}
 \def\wt{\widetilde}  \def\wh{\widehat} 
\def\E{\bE}
\def\P{\bP} 
\def\funct lp{L} 
\def\funct lpbar{\bar L} 
\font \mymathbb = bbold10 at 11pt
\def\range{\mathcal R}
\def\Uset{\mathcal U}
\def\Diff{\mathcal D}
\def\EP{\mathcal E}
\def\EPlim{{\widehat{\mathcal E}}}
\def\cV{\mathcal V}
\def\Y{Y}
\def\gpp{{g_{\text{pp}}}}
\def\gpl{{g_{\text{pl}}}}
\def\gppa{\gamma}
\def\Gsw{G}
\def\f{f}
\def\Y{Y}
\def\wg{\eta}
\def\wgtil{\tilde\eta}
\def\B{B}
\def\cA{{\mathcal A}}
\def\cB{{\mathcal B}}
\def\Gpl{G}
\def\Gpp{G}
\def\gpp{g_{\text{\rm pp}}}
\def\gpl{g_{\text{\rm pl}}}
\def\ri{{\mathrm{ri\,}}}
\def\f{{f}} 
\def\gppa{{\gamma}}
\def\amin{{\underline\alpha}}
\def\amax{{\overline\alpha}}
\def\smin{{\underline s}}
\def\smax{{\overline s}}
\def\ximin{{\underline\xi}}
\def\ximax{{\overline\xi}}
\def\zetamin{{\underline\zeta}}
\def\zetamax{{\overline\zeta}}
\def\B{{B}}
\def\cE{{\mathcal E}}
\def\W{{W}} 
\def\S{{S}}
\def\A{{A}}
\def\D{{D}}
\def\Wp{{\wt W}}
\def\Sp{{\wt S}}
\def\Ap{{\wt A}}
\def\M{{\mathcal M}}
\def\U{{U}}
\def\Ombig{{\widehat\Omega}}
\def\Pbig{{\widehat\P}}
\def\Ebig{{\widehat\E}}
\def\kSbig{{\widehat\kS}}
\def\OBP{(\Omega, \kS, \P)} 
\def\OBPbig{(\Ombig, \kSbig, \Pbig)}
\def\Gsw{G^{\rm SW}}
\def\Gne{G^{\rm NE}}
\def\Gn{G^{\rm N}}
\def\cid{\xi_*}
\def\Bci{B^*}
\def\cidmin{\underline\xi_*}
\def\cidmax{\overline\xi_*}
\def\pp{p}  
\def\Ew{m_0}   
\def\opc{\vec p_c} 
\def\Saset{V}  
\def\cla{\cO} 
\def\clb{\cO} 
\def\aaa{\ell}   
\def\etamin{{\underline\eta}}
\def\etamax{{\overline\eta}}
\def\pc{\vec p_c}
\def\cO{\mathcal O}
\def\cidl{\cid^{\text\rm{(l)}}}
\def\cidr{\cid^{\text\rm{(r)}}}
\def\cT{\mathcal T}
\definecolor{darkgreen}{rgb}{0.0,0.5,0.0}
\definecolor{darkblue}{rgb}{0.0,0.0,0.3}
\definecolor{nicosred}{rgb}{0.65,0.1,0.1}
\definecolor{light-gray}{gray}{0.7}
\begin{document}
\title[Corner growth model]{Stationary cocycles for the corner growth model}


\author[N.~Georgiou]{Nicos Georgiou}
\address{Nicos Georgiou\\ University of Sussex\\School of Mathematical and Physical Sciences\\ Department of Mathematics\\ Falmer Campus\\ Brighton BN1 9QH\\ UK.}
\email{n.georgiou@sussex.ac.uk}
\urladdr{http://www.maths.sussex.ac.uk/~ng221}
\urladdr{http://www.sussex.ac.uk/profiles/329373} 
\thanks{N.\ Georgiou was partially supported by a Wylie postdoctoral fellowship at the University of Utah.}

\author[F.~Rassoul-Agha]{Firas Rassoul-Agha}
\address{Firas Rassoul-Agha\\ University of Utah\\  Mathematics Department\\ 155S 1400E, Room 233.\\   Salt Lake City, UT 84112\\ USA.}
\email{firas@math.utah.edu}
\urladdr{http://www.math.utah.edu/~firas}
\thanks{F.\ Rassoul-Agha and N.\ Georgiou were partially supported by National Science Foundation grant DMS-0747758.}
\thanks{F.\ Rassoul-Agha was partially supported by National Science Foundation grant DMS-1407574 and the Simons Foundation grant 306576.}

\author[T.~Sepp\"al\"ainen]{Timo Sepp\"al\"ainen}
\address{Timo Sepp\"al\"ainen\\ University of Wisconsin-Madison\\  Mathematics Department\\ Van Vleck Hall\\ 480 Lincoln Dr.\\   Madison WI 53706-1388\\ USA.}
\email{seppalai@math.wisc.edu}
\urladdr{http://www.math.wisc.edu/~seppalai}
\thanks{T.\ Sepp\"al\"ainen was partially supported by  National Science Foundation grant   DMS-1306777 and by the Wisconsin Alumni Research Foundation.} 

\keywords{Busemann function, cocycle, competition interface, corrector,  directed percolation, geodesic, last-passage percolation, queueing fixed point,  variational formula.}
\subjclass[2000]{60K35, 65K37} 
\date{October 6, 2015}
\begin{abstract}
 We study  the directed last-passage percolation model on the planar integer lattice  with nearest-neighbor steps and general i.i.d.\ weights on the vertices,  outside the class of exactly solvable models.   Stationary cocycles are constructed for this percolation model from queueing fixed points.  These cocycles serve as boundary conditions for stationary last-passage percolation, define solutions to variational formulas that characterize limit shapes,  and   yield new results for Busemann functions, geodesics and the competition interface.  
\end{abstract}
\maketitle

\maketitle
\tableofcontents

\section{Introduction}

  We study nearest-neighbor  directed  last-passage percolation (LPP)  on the lattice $\Z^2$, also called the {\sl corner growth model}.  Random i.i.d.\ weights $\{\w_x\}_{x\in\Z^2}$ are  used to define {\sl last-passage times}  $\Gpp_{x,y}$ between  lattice points $x\le y$ in $\Z^2$ by 
 \be \Gpp_{x,y}=\max_{x_\centerdot}\sum_{k=0}^{n-1}\w_{x_k} 
	\label{Gxy1}\ee
where the maximum is over paths $x_\centerdot=\{x=x_0, x_1, \dotsc, x_n=y\}$  that satisfy  $x_{k+1}-x_k\in\{e_1,e_2\}$ (up-right paths).  

 When $\w_x\ge 0$   this defines  a growth model in the first quadrant $\Z_+^2$.  Initially  the growing cluster    is   empty.   The origin joins the cluster at time $\w_0$.  After both  $x-e_1$ and $x-e_2$   have joined the cluster,  point $x$ waits time $\w_x$ to  join.  (However,  if  $x$ is on the boundary of $\Z_+^2$, only  one of $x-e_1$ and $x-e_2$ is required to  have joined.)  The cluster    at  time $t$  is   $\cA_t=\{x\in\Z_+^2:  \Gpp_{0,x}+ \w_x \le t\}$.       Our convention to exclude the last weight $\w_{x_n}$ in \eqref{Gxy1} forces the clumsy addition of $\w_x$ in the definition of $\cA_t$, but is convenient for other purposes.  

The interest is in the large-scale behavior of the model.  This begins with  the deterministic limit  $\gpp(\xi)=\lim_{n\to\infty}  n^{-1}\Gpp_{0, \fl{n\xi}}$  for  $\xi\in\R_+^2$, the   fluctuations of $\Gpp_{0, \fl{n\xi}}$,   and the behavior of the maximizing paths in \eqref{Gxy1} called  {\sl geodesics}. 
Closely related are the  {\sl  Busemann functions} that  are limits  of gradients $\Gpp_{x, v_n}-\Gpp_{y,v_n}$ as   $v_n\to\infty$ in a particular direction and the {\sl competition interface} between subtrees of the geodesic tree.   To see how Busemann functions   connect with geodesics, note that by 
  \eqref{Gxy1}   the following identity holds  along any geodesic $x_\centerdot$ from $u$ to $v_n$: 
	\be\label{w-geod}\w_{x_i}=\min\bigl(\Gpp_{x_i,v_n}-\Gpp_{x_i+e_1,v_n}\,,\,\Gpp_{x_i,v_n}-\Gpp_{x_i+e_2,v_n}\bigr).\ee

Busemann functions arise also  in a limiting   description of  the $\Gpp_{x,y}$ process locally around a   point $v_n\to\infty$.    Take a finite subset $\cV$ of $\Z^2$.   A natural expectation is  that  the vector  $\{\Gpp_{0, v_n-u} -  \Gpp_{0, v_n} : u\in\cV\}$ converges in distribution as  $v_n\to\infty$ in a particular direction.   A shift by $-v_n$ and reflection    $\w_x\mapsto \w_{-x}$  turn this vector  into 
$\{  \Gpp_{u, v_n } - \Gpp_{0, v_n}  : u\in\cV \}$.  For this last collection of random gradients we can  expect almost sure convergence, in particular if the geodesics from $0$ and $u$ to $v_n$ coalesce eventually.  These types of results will be developed in the paper.



Here are some particulars of what follows, in relation to past work.

In \cite{Geo-Ras-Sep-13a-} we derived variational formulas for the point-to-point limit $\gpp(\xi)$ and its point-to-line counterpart (introduced in Section \ref{sec:results}) and developed a solution ansatz for these variational formulas  in terms of stationary cocycles.  In the present paper we construct these cocycles for the planar corner growth model with general i.i.d.\ weights bounded from below, subject to a moment bound.  This construction comes from the fixed points of  the associated queueing operator. The existence of these fixed points was proved by Mairesse and Prabhakar \cite{Mai-Pra-03}.  These cocycles are constructed on an extended space of weights.  The Markov process analogy of this construction  is a simultaneous construction  of processes for  all invariant distributions, coupled by  common Poisson clocks that drive the evolution.   The i.i.d.\ weights $\w$ are the analogue of the clocks and the cocycles the analogues of the initial state variables.    With the help of the cocycles we establish new results for Busemann functions and directional geodesics for the corner growth model.

 A related recent result is 
Krishnan's  \cite[Theorem 1.5]{Kri-13-} variational formula for the time constant of  first passage bond percolation. His formula is analogous to our \eqref{eq:g:K-var}.

  Under some moment assumptions on the weights, the corner growth  model   is expected to lie in the Kardar-Parisi-Zhang (KPZ)  universality class. (For a review of KPZ universality see \cite{Cor-12}.)   The fluctuations of $\Gpp_{0, \fl{n\xi}}$ are expected to have order of magnitude $n^{1/3}$ and  limit distributions from random matrix theory.   When the weights have exponential or geometric distribution the model is exactly solvable, and it is possible to derive 
   exact fluctuation exponents and limit distributions \cite{Bal-Cat-Sep-06,Joh-00,Joh-06}. 
  In these cases the  cocycles mentioned above have explicit product form distributions. 
  The present paper can be seen as an attempt to begin development of techniques for studying the corner growth model beyond the exactly solvable cases.




On   Busemann functions and geodesics, past milestones are the first-passage percolation results of Newman et al.\ summarized in \cite{New-95},  the applications of his techniques to the exactly solvable exponential corner growth model by Ferrari and Pimentel \cite{Fer-Pim-05}, and the recent improvements to \cite{New-95} by Damron and Hanson \cite{Dam-Han-14}.  Coupier \cite{Cou-11} further sharpened the results for the exponential corner growth model.     Stationary cocycles have not been developed  for first-passage percolation.    \cite{New-95} utilized a global curvature assumption to derive properties of geodesics, and then the existence of Busemann functions.   
\cite{Dam-Han-14} began with a weak limit of Busemann functions from which properties of geodesics follow.  

 In our setting everything flows from the cocycles, both almost sure existence of Busemann functions and properties of geodesics.  With a cocycle appropriately coupled to  the weights $\w$, geodesics can be defined locally in a constructive manner, simply by following   minimal gradients of the cocycle.    
 
 The role of the regularity of the function $\gpp$ in our paper needs to be explained.    Presently it is expected but not yet proved that under our assumptions (i.i.d.\ weights with some moment hypothesis)  $\gpp$ is differentiable and, if $\w_0$ has a continuous distribution,  strictly concave.     Our development of the cocycles and their consequences for Busemann functions, geodesics and the competition interface by and large do not rely on any regularity assumptions.  Instead the results are developed in a  general manner so that  points of nondifferentiability are allowed, as well as flat segments even if $\w_0$ has a continuous distribution.  After   these fundamental but at times technical results are in place,  we can invoke regularity assumptions to  state cleaner corollaries  where the underlying cocycles and their extended space do not appear.   We put these tidy results at the front of the paper in Section \ref{sec:results}.  The real work begins after that.  The point we wish to  emphasize is that  no unrealistic assumptions are made and we expect future work to verify the regularity assumptions that appear in this paper.

 \smallskip

{\bf Organization of the paper.}  
Section \ref{sec:results} describes the corner growth model and the main results of the paper.   These results are the cleanest ones stated under assumptions on the regularity of the limit function  $\gpp(\xi)$.   The properties we use as hypotheses are expected to be true   but  they are not presently known.   Later sections   contain more general results, but at a   price: (a) the statements are not as clean because they need to take corners and flat segments of $\gpp$ into consideration and (b) the results are valid on an extended space that supports additional edge  weights (cocycles)  in addition to vertex weights $\w$ in \eqref{Gxy1}.  

Section \ref{sec:duality} develops a convex duality between directions or velocities $\xi$ and tilts or external fields $h$ that comes from  the relationship of the point-to-point and point-to-line percolation models.  

Section \ref{sec:cocycles}  states the existence and properties of the cocycles on which all the results of the paper are based.   The cocycles define a stationary last-passage model.  The variational formulas for the percolation limits are first solved on the extended space of the cocycles.  

Section \ref{sec:busemann} develops the existence of  Busemann functions.  

Section \ref{sec:geod} studies cocycle geodesics and with their help proves our results for geodesics.

Section \ref{sec:ci-pf}  proves results for the competition interface.  
 
Section \ref{sec:solv} discusses examples with geometric and exponential   weights $\{\w_x\}$.  
These are of course exactly solvable models, but it is useful to see the   theory illustrated in its ideal form.  

Several appendixes come at the end.   Appendix \ref{app:q} proves the main theorem of Section \ref{sec:cocycles} by relying on queuing results from \cite{Mai-Pra-03, Pra-03}.   Appendix \ref{app:geod} proves the coalescence of cocycle geodesics by adapting the first-passage percolation proof of \cite{Lic-New-96}.  A short 
Appendix \ref{app:aux} states an ergodic theorem for cocycles proved in \cite{Geo-etal-15-}.   Appendix \ref{sec:cone-pf}   proves properties of the  limit $\gpp$ in the case of a percolation cone, in particular differentiability at the edge.  The proofs are adapted from the first-passage percolation work of \cite{Auf-Dam-13, Mar-02}.  
  Appendix \ref{app:shape}  states the  almost sure shape theorem for the corner growth model from \cite{Mar-04} and proves an $L^1$ version. 

\smallskip

{\bf Notation and conventions.}   $\R_+=[0,\infty)$,  $\Z_+=\{0,1,2,3, \dotsc\}$, and $\N=\{1,2,3,\dotsc\}$. The standard basis vectors of $\R^2$ are  $e_1=(1,0)$ and $e_2=(0,1)$ and  the $\ell^1$-norm of $x\in\R^2$  is   $\abs{x}_1=\abs{x\cdot e_1} + \abs{x\cdot e_2}$.   For $u,v\in\R^2$ a  closed line segment on $\R^2$ is denoted by   $[u,v]=\{tu+(1-t)v:  t\in[0,1]\}$, and an open line segment by  $]u,v[=\{tu+(1-t)v:  t\in(0,1)\}$.    Coordinatewise ordering $x\le y$ means that $x\cdot e_i\le y\cdot e_i$ for both $i=1$ and $2$.  Its negation $x\not\le y$ means that   $x\cdot e_1> y\cdot e_1$ or   $x\cdot e_2> y\cdot e_2$.    An admissible or up-right path $x_{0,n}=(x_k)_{k=0}^n$  on $\Z^2$ satisfies $x_k-x_{k-1}\in\{e_1,e_2\}$.  

The basic environment space is $\Omega=\R^{\Z^2}$ whose elements are denoted by $\w$.  There is also a larger product space $\Ombig=\Omega\times\Omega'$ whose elements are denoted by $\what=(\w, \w')$ and $\tilde\w$.   

Parameter  $\pp>2$ appears in a  moment hypothesis  $\E[|\w_0|^{\pp}]<\infty$,  while $p_1$ is the  probability of an open site in an oriented site percolation process.  
 
A statement that  contains $\pm$ or $\mp$ is a combination of two statements: one for the top choice of the sign and another one for the bottom choice.  


\section{Main results}  
\label{sec:results}

\subsection{Assumptions} 
The two-dimensional  corner growth model is the last-passage percolation model on the planar square lattice $\Z^2$ with admissible steps   $\range=\{e_1,e_2\}$.  
    $\Omega=\R^{\Z^2}$ is the   space of   environments  or weight configurations $\w=(\w_x)_{x\in\Z^2}$.   The group of spatial translations   $\{T_x\}_{x\in\Z^2}$  acts on $\Omega$ by $(T_x\w)_y=\w_{x+y}$ for $x,y\in\Z^2$.   Let $\kS$ denote the Borel $\sigma$-algebra of $\Omega$.  $\P$ is  a Borel probability measure    on $\Omega$  under which the weights  $\{\w_x\}$  are  independent, identically distributed (i.i.d.) nondegenerate random variables with a $2+\e$ moment.      Expectation under $\P$ is denoted by $\E$.   For a technical reason we also assume $\P(\w_0\ge c)=1$ for some finite constant $c$.
    
For future reference we summarize our standing assumptions in this statement: 
	\be\begin{aligned} \label{2d-ass}
		&\text{$\P$ is i.i.d., \, $\E[|\w_0|^{\pp}]<\infty$ for some $\pp>2$,\,     $\sigma^2=\Vvv(\w_0)>0$, and }\\
		 &\text{$\P(\w_0\ge c)=1$ for some   $c>-\infty$.} 
	\end{aligned} 
	\ee  
Assumption \eqref{2d-ass} is valid throughout the paper and will not be repeated in  every statement.  	The constant 
\[\Ew=\E(\w_0)\]  will appear  frequently.  	The symbol   $\w$ is reserved for   these $\P$-distributed  i.i.d.\ weights, also later when they are embedded in a larger configuration $\what=(\w, \w')$.  

  Assumption $\P(\w_0\ge c)=1$ is required in only one part of our proofs, namely in  Appendix \ref{app:q} where we rely on  results from queueing theory.   In that context $\w_x$ is a service time, and the results we use have been proved only for $\w_x\ge 0$.  (The extension to $\w_x\ge c$  is immediate.)   The point we wish to make  is that once the queueing results have been extended to general real-valued  i.i.d.\ weights $\w_x$ subject to the moment assumption in \eqref{2d-ass},  everything in this paper is true  for these general real-valued weights.  
  
  
  \subsection{Last-passage percolation} 
Given an environment $\w$ and two points $x,y\in\Z^2$ with $x\le y$ coordinatewise,
define the {\sl point-to-point last-passage time} by
	\[ \Gpp_{x,y}=\max_{x_{0,n}}\sum_{k=0}^{n-1}\w_{x_k}.\]
The maximum is over paths $x_{0,n}=(x_k)_{k=0}^n$  that start at  $x_0=x$,  end at $x_n=y$ with $n=\abs{y-x}_1$,  and have increments $x_{k+1}-x_k\in\{e_1,e_2\}$. We call such paths {\sl admissible} or {\sl up-right}.

Given a vector $h\in\R^2$, an environment $\w$, and an integer $n\ge0$, define the  $n$-step  {\sl point-to-line last passage time} with {\sl tilt} (or {\sl external field}) $h$ by
 	\[ \Gpl_n(h)=\max_{x_{0,n}}\Bigl\{\,\sum_{k=0}^{n-1}\w_{x_k}+h\cdot x_n\Bigr\}. \]
The maximum is over all admissible $n$-step   paths that start at $x_0=0$.

 It is standard (see for example \cite{Mar-04} or \cite{Ras-Sep-14})   that under assumption \eqref{2d-ass},    for $\P$-almost every $\w$, simultaneously  for every $\xi\in\R_+^2$ and every $h\in\R^2$, 
the following  limits exist:
		\begin{align}
		 &\gpp(\xi)=\lim_{n\to\infty}n^{-1}\Gpp_{0,\lfloor{n\xi}\rfloor},\label{eq:g:p2p}\\
		 &\gpl(h)=\lim_{n\to\infty}n^{-1}\Gpl_n(h).\label{eq:g:p2l}
		\end{align}
In the definition above  integer parts are taken coordinatewise: $\fl v=(\fl{a},\fl{b}) \in\Z^2$ for $v=(a,b) \in\R^2$. 

 Under assumption \eqref{2d-ass}  the limits  above   are finite nonrandom continuous functions.  In particular, $\gpp$  is continuous up to the boundary of $\R_+^2$.   Furthermore,   $\gpp$ is a symmetric, concave,  $1$-homogeneous function on $\R_+^2$  and $\gpl$ is a convex Lipschitz function on $\R^2$. Homogeneity means that    $\gpp(c\xi)=c\gpp(\xi)$ for $\xi\in\R_+^2$ and $c\in\R_+$.    By homogeneity, for most purposes it suffices to consider $\gpp$ as a function  on  the convex hull    $\Uset=\{te_1+(1-t)e_2:t\in[0,1]\}$ of $\range$.    The relative interior $\ri\Uset$ is the open line segment $\{te_1+(1-t)e_2:t\in(0,1)\}$.  


Decomposing according to the endpoint of the path and some estimation (Theorem 2.2 in \cite{Ras-Sep-14}) give
 	\be\label{h-xi} \gpl(h)=\sup_{\xi\in\Uset}\{\gpp(\xi)+h\cdot\xi\}.\ee
By convex duality   for $\xi\in\ri\Uset$
	\[\gpp(\xi)=\inf_{h\in\R^2}\{\gpl(h)-h\cdot\xi\}.\] 
Let us  say $\xi\in\ri\Uset$ and $h\in\R^2$ are {\sl dual} if  
	\begin{align}\label{eq:duality} \gpp(\xi)=\gpl(h)-h\cdot\xi.\end{align}

Very little  is known in general about $\gpp$ beyond the soft properties mentioned above.  In the    exactly solvable case, with $\w_x$ either exponential or geometric,    
$\gpp(s,t)=(s+t)\Ew+2\sigma\sqrt{st}$. 
The Durrett-Liggett flat edge result  (\cite{Dur-Lig-81}, Theorem \ref{th:flat-edge} below) tells us that  this formula is   not true for all i.i.d.\ weights.   
It does  hold for general weights asymptotically at the boundary \cite{Mar-04}:  $\gpp(1,t)=\Ew+2\sigma\sqrt{t} + o(\sqrt t\,)$
as $t\searrow 0$.  

\subsection{Gradients and convexity} 
Regularity properties of $\gpp$ play a role in  our results,  so we   introduce notation for that purpose.   
Let 
\[  \Diff=\{\xi\in\ri\Uset:\text{ }\gpp\text{ is differentiable at $\xi$}\}.\]
To be   clear,  $\xi\in\Diff$ means that the gradient $\nabla\gpp(\xi)$
 exists in the usual sense of differentiability of functions of several variables.  
 At $\xi\in\ri\Uset$ this is equivalent to the differentiability of the single variable 
 function $s\mapsto \gpp(s,1-s)$ at $s={\xi\cdot e_1}/{\abs{\xi}_1}$.  
 By concavity the set $(\ri\Uset)\smallsetminus\Diff$ is at most countable.
 
 A point $\xi\in\ri\Uset$ is an {\sl exposed point} if 
 	\be \label{eq:epod}	\begin{aligned}
	   \forall \zeta\in\ri\Uset\smallsetminus \{\xi\}: \; \gpp(\zeta)\; <\; \gpp(\xi) + \nabla\gpp(\xi)\cdot(\zeta - \xi) .   \end{aligned}\ee
The set  of {\sl exposed points of differentiability} of $\gpp$ is  
$\EP=  \{ \xi\in\Diff:  \text{\eqref{eq:epod} holds}\}$.  
The condition for an exposed point is formulated entirely in terms of $\Uset$ because  $\gpp$  is 
a homogeneous function and therefore cannot have exposed points as a function on $\R_+^2$. 	
		
It is expected that $\gpp$ is differentiable on all of $\ri\Uset$.  
But since this is not known, our development must handle 
possible points of nondifferentiability.   For this purpose we take left and right
limits on $\Uset$. Our convention is that a {\sl left limit} $\xi\to\zeta$ on $\Uset$ means that 
$\xi\cdot e_1$ increases to $\zeta\cdot e_1$, while in a {\sl right limit}   
$\xi\cdot e_1$ decreases to $\zeta\cdot e_1$.  
  
For $\zeta\in\ri\Uset$ define one-sided gradient vectors $\nabla\gpp(\zeta\pm)$ by
\begin{align*}
\nabla\gpp(\zeta\pm)\cdot e_1&=\lim_{\e\searrow0}\frac{\gpp(\zeta\pm\e e_1)-\gpp(\zeta)}{\pm\e} \\  \text{and}\quad 
\nabla\gpp(\zeta\pm)\cdot e_2&=\lim_{\e\searrow0}\frac{\gpp(\zeta\mp\e e_2)-\gpp(\zeta)}{\mp\e}.
\end{align*}
Concavity of $\gpp$ ensures that the  limits   exist.   $\nabla\gpp(\xi\pm)$ coincide (and equal $\nabla\gpp(\xi)$) 
 if and only if $\xi\in\Diff$.  
Furthermore,  on $\ri\Uset$, 
\begin{align}\label{nabla-g-lim}  
\nabla\gpp(\zeta-)=\lim_{\xi\cdot e_1\nearrow\zeta\cdot e_1} \nabla\gpp(\xi\pm)
\quad\text{and}\quad 
\nabla\gpp(\zeta+)=\lim_{\xi\cdot e_1\searrow\zeta\cdot e_1} \nabla\gpp(\xi\pm). 
\end{align}
  
For $\xi\in\ri\Uset$ define maximal  line segments on which $\gpp$ is linear,  $\Uset_{\xi-}$ for the left gradient at $\xi$ 
and $\Uset_{\xi+}$ for the right gradient at $\xi$,  
by 
	\begin{align}\label{eq:sector1}
\Uset_{\xi\pm}=\{\zeta\in\ri\Uset: 	\gpp(\zeta)-\gpp(\xi)=\nabla g(\xi\pm)\cdot(\zeta-\xi)\}.  
	\end{align}
Either or both segments can degenerate to a point. 
Let \be\label{eq:sector2} \Uset_\xi=\Uset_{\xi-}\cup\,\Uset_{\xi+}=[\ximin, \ximax]
\qquad\text{with $\ximin\cdot e_1\le \ximax\cdot e_1$.}
\ee
  If $\xi\in\Diff$ then $\Uset_{\xi+}=\Uset_{\xi-}=\Uset_\xi$, while if $\xi\notin\Diff$ then $\Uset_{\xi+}\cap\Uset_{\xi-}=\{\xi\}$.  If $\xi\in\EP$ then $\Uset_{\xi}= \{\xi\}$.     Figure \ref{fig:defs} illustrates.
  
For $\zeta\cdot e_1<\eta\cdot e_1$ in $\ri\Uset$,  $[\zeta, \eta]$ is a     
  {\sl maximal linear segment} for  $\gpp$   if $\nabla\gpp$ exists and  is constant in $]\zeta, \eta[$  but not on any strictly larger open line segment in $\ri\Uset$.  
Then    $[\zeta, \eta]=\Uset_{\zeta+}=\Uset_{\eta-}=\Uset_\xi$ for any $\xi\in\;]\zeta, \eta[$.   If $\zeta, \eta\in\Diff$ we say that $\gpp$ is differentiable at the endpoints of this maximal linear segment.  This  hypothesis will be invoked several times.

  \begin{figure}[h]	
	\begin{center}
		\begin{tikzpicture}[>=latex,scale=0.74]

	\draw[-](0,0)--(11,0)node[below]{$\Uset$};
		
	\draw[ line width = 1.3 pt, color=nicosred] (4, 7)node[right,color=black]{$ $}--(6,5.5)--(8,4)--(10,2.5);
		
\draw[ line width = 1.2 pt, color=nicosred] (3, 5.5)--(5,5.5)--(7,5.5) node[color=black, below right]{$ $}--(10,5.5);

		\draw[line width = 1.3pt] (0,.9)--(0,1) .. controls (0.2,1.5)..(2,3)..controls (5,5.5)..(6,5.5) -- (8,4)..controls (9.5,2)..(10,.9); 
		
		\draw[dashed, line width = 1.1pt] (6,5.5)--(6,0)node[below]{$\xi = \ximin$};
		\draw[dashed, line width = 1.1pt] (8,4)--(8,0)node[below]{$\ximax$};
		
		\draw[line width=3pt, color= nicosred] (6,0)--(8,0);
		
		\shade[ball color=red](8,4)circle(1.5mm);
		\shade[ball color=red](6,5.5)circle(1.5mm); 
		
		\draw [decorate,decoration={brace, amplitude = 0.3cm},xshift=0pt,yshift=5, line width=1.2pt]
						(6.1,0) --(7,0)-- (7.9,0);
		\draw(7,.44)node[above]{$\Uset_{\xi+}$};
		
				
		\draw[dashed, line width = 1.1pt] (.5,1.7)--(.5,0)node[below]{$ \zetamin$};
		\draw[dashed, line width = 1.1pt] (4.6,5.1)--(4.6,0)node[below]{$\zetamax$};
		\draw[dashed, line width = 1.1pt] (2.4,3.3)--(2.4,1);
		\draw[dashed, line width = 1.1pt] (2.4,.3)--(2.4,0)node[below]{$\zeta$};
		
		\shade[ball color=red](.5,1.7)circle(1.5mm);
		\shade[ball color=red](4.6,5.1 )circle(1.5mm); 
		\shade[ball color=red](2.4, 3.3 )circle(1.5mm); 
		
		\draw [decorate,decoration={brace, amplitude = 0.3cm},xshift=0pt,yshift=5, line width=1.2pt]
						(.5,0) -- (4.6,0);
		\draw(2.5,.44)node[above]{$\Uset_{\zeta+} = \Uset_{\zeta} = \Uset_{\zeta-}  $};
		
		\draw[line width=3pt, color= nicosred] (.5,0)--(4.6,0);	
		
		\end{tikzpicture}
	\end{center}
	\caption{\small    A graph of a concave function over $\Uset$ to illustrate the definitions.  
	   $\zetamin$, $\zeta$ and $\zetamax$ are points of differentiability while $\ximin = \xi$ and  $\ximax$ are not.   $\Uset_{\zetamin}=\Uset_\zeta=\Uset_\zetamax=[\zetamin,\zetamax]$.
	    The red lines represent supporting hyperplanes at $\xi$.   The slope from the left at $\xi$ is zero, and the  horizontal red line touches the graph only at $\xi$.  Hence  $ \Uset_{\xi-} = \{\xi\}$.  	Points on  the line segments $[\zetamin, \zetamax]$ and $]\xi, \ximax[$ are   not exposed. 
  $\cE=\ri\Uset \smallsetminus \bigl([\zetamin, \zetamax] \cup[\xi, \ximax]\bigr).$}
  \label{fig:defs}
\end{figure}



\subsection{Cocycles} 
The next definition is central to the paper.  

\begin{definition}[Cocycle]
\label{def:cK}
	A measurable function $B:\Omega\times\Z^2\times\Z^2\to\R$ is   a {\rm stationary $L^1(\P)$  cocycle}
	if it satisfies the following three conditions. 
		\begin{enumerate}[\ \ \rm(a)]
			\item\label{def:cK:int} Integrability: for each $z\in\{e_1,e_2\}$, $\E\abs{B(0,z)} <\infty$.
			\item\label{def:cK:stat} Stationarity:  for $\P$-a.e.\ $\w$ and  all $x,y,z\in\Z^2$, 
					$B(\w, z+x,z+y)=B(T_z\w, x,y)$.
			\item\label{def:cK:coc} Cocycle property:   $\P$-a.s.\ and for all $x,y,z\in\Z^2$, $B(x,y)+B(y,z)=B(x,z)$.
		\end{enumerate}
	The space  of stationary $L^1(\P)$  cocycles on $\OBP$  is denoted by $\Coc(\Omega)$.
	 
	A cocycle  $F(\w, x,y)$  is {\rm centered} if 
  $\E[F(x,y)]=0$ for all $x,y\in\Z^2$. 
	The space  of centered stationary $L^1(\P)$  cocycles  on $\OBP$  is denoted by $\Cor(\Omega)$.
\end{definition}
The cocycle property \eqref{def:cK:coc} implies that $B(x,x) = 0$ for all $x \in \Z^2$ and the antisymmetry property $B(x,y) = -B(y,x)$. 
$\Cor(\Omega)$ is the $L^1(\P)$ closure of gradients 
$F(\w,x,y)=\varphi(T_y\w)-\varphi(T_x\w)$, $\varphi\in L^1(\P)$ (see 
\cite[Lemma~C.3]{Ras-Sep-Yil-13}).     Our convention for  centering a 
  stationary $L^1$ cocycle $B$  is to let   
 $h(\B)\in\R^2$ denote the vector  that   satisfies 
 	\be 	
		\E[\B(0,e_i)]=-h(\B)\cdot e_i \qquad\text{for  $i\in\{1,2\}$ }  \label{eq:EB}
	\ee
and then define $F\in\Cor(\Omega)$ by 
	\be 
		F(x,y)=  h(\B)\cdot (x-y)-\B(x,y).  \label{FF}
	\ee



\subsection{Busemann functions}
We can now state the theorem on the existence of Busemann functions.   This theorem is proved in Section \ref{sec:busemann}.  

\begin{theorem}\label{thm:buse}  
Let  $\xi\in\ri\Uset$ with  $\Uset_\xi =[\ximin, \ximax]$   defined in \eqref{eq:sector2}.
 Assume that $\ximin, \xi, \ximax$  are points of differentiability of $\gpp$.   {\rm(}The degenerate case $\ximin=\xi=\ximax$ is also acceptable.{\rm)}
 %
There exists a stationary $L^1(\P)$ cocycle $\{B(x,y):x,y\in\Z^2\}$ and an event  $\Omega_0$ with $\P(\Omega_0)=1$ such that  the following holds for each 
$\w\in\Omega_0$:  for each sequence $v_n\in\Z_+^2$ such that 
\begin{align}\label{eq:vn}
\abs{v_n}_1\to\infty\quad\text{and}\quad\ximin\cdot e_1\le\varliminf_{n\to\infty}\frac{v_n\cdot e_1}{\abs{v_n}_1}\le\varlimsup_{n\to\infty}\frac{v_n\cdot e_1}{\abs{v_n}_1}\le\ximax\cdot e_1,
\end{align}
we have the limit
\be \label{eq:grad:coc1} 
							B(\w, x,y) = \lim_{n\to \infty} \big( \Gpp_{x, v_n}(\w) - \Gpp_{y, v_n}(\w) \big)  
						\ee 
						 for all  $x,y\in\Z^2$. 
%
Furthermore,  
\be\label{EB=Dg}     \nabla \gpp(\zeta) = \bigl( \, \E[B(x,x+e_1)]\,,\,   \E[B(x,x+e_2)] \,\bigr)\quad\text{for all $\zeta\in\Uset_\xi$}.
\ee 
\end{theorem}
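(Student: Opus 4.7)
The plan is to deduce Theorem \ref{thm:buse} from the cocycle construction carried out in Section \ref{sec:cocycles}. That section will produce, for each $\zeta \in \ri\Uset$, a pair of stationary $L^1$ cocycles $B^{\zeta\pm}$ on $\Omega$ (projected down from the extended space $\Ombig$), with three features I need: monotonicity in $\zeta$ componentwise; one-sided continuity in $\zeta$; and the mean identity $\E[B^{\zeta\pm}(0,e_i)] = -\nabla\gpp(\zeta\pm)\cdot e_i$. Under the hypothesis that $\ximin, \xi, \ximax$ are points of differentiability, the one-sided cocycles collapse: $B^{\ximin-}=B^{\ximin+}$ and $B^{\ximax-}=B^{\ximax+}$. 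Because $\gpp$ is linear on $\Uset_\xi=[\ximin,\ximax]$ with a single gradient, the means $\E[B^{\ximin}(0,e_i)]$ and $\E[B^{\ximax}(0,e_i)]$ agree. Componentwise monotonicity combined with equality of means forces the two cocycles to coincide $\P$-almost surely (standard argument: the difference is a nonnegative centered cocycle, hence $0$ by the ergodic theorem). Call this common cocycle $B$.

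The Busemann limit is then extracted by sandwiching the free LPP against the stationary LPP driven by $B^{\ximin\pm}$ and $B^{\ximax\pm}$. Attaching $B^{\zeta\pm}$-increments as boundary weights along the south and west edges of a quadrant rooted at $v_n$ yields a stationary process $G^{\zeta\pm}_{\,\cdot\,,v_n}$ satisfying the exact telescoping identity
\[
G^{\zeta\pm}_{x,v_n} - G^{\zeta\pm}_{y,v_n} = B^{\zeta\pm}(x,y).
\]
A monotone coupling with the free model sandwiches $G_{x,v_n}-G_{y,v_n}$ between $B^{\ximax+}(x,y)$ and $B^{\ximin-}(x,y)$, up to an error controlled by the macroscopic exit point of the free geodesic from the boundary cone. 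Invoking the shape theorem of Appendix \ref{app:shape}, the hypothesis that all subsequential limits of $v_n/|v_n|_1$ lie in $[\ximin,\ximax]$, and the cocycle-geodesic analysis of Section \ref{sec:geod}, this exit-point error is $o(1)$. Since the two bounding cocycles have already been identified as $B$, this proves \eqref{eq:grad:coc1}.

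The event $\Omega_0$ of full measure is assembled by intersecting the almost sure events supporting the cocycle identities, the shape theorem applied simultaneously to a countable dense family of directions, and the limit along a countable dense set of pairs $(x,y)$; the cocycle property in $(x,y)$ then extends the limit to every $x,y\in\Z^2$. Finally, \eqref{EB=Dg} is read off from the mean identity $\E[B^{\zeta\pm}(0,e_i)] = -\nabla\gpp(\zeta\pm)\cdot e_i$ together with the observation that $\nabla\gpp$ is constant across $\Uset_\xi$ under the differentiability hypothesis at the endpoints.

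The main obstacle is the sandwich step: showing that the exit-point error in the stationary-LPP comparison is negligible for \emph{every} sequence $v_n$ whose asymptotic directions are trapped in $[\ximin,\ximax]$. When $\gpp$ is strictly concave at $\xi$ the required decay follows from convex duality in the standard way, but in the present setting, where $\Uset_\xi$ may be a genuine flat segment, one must use differentiability precisely at the endpoints $\ximin$ and $\ximax$ to exclude macroscopic excursions of the exit point outside the correct cone. This is the technical heart of the argument and is the place where the differentiability hypothesis in the theorem is actually consumed.
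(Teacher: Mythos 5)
Your proposal follows the paper's own route closely — cocycles from queueing fixed points, stationary LPP comparison, exit-point control, two-sided sandwich, and collapse of the sandwich under the differentiability hypothesis. The collapse step you give (componentwise monotonicity plus equality of means forces $B^{\ximin-}=B^{\ximax+}$) is a valid alternative to the paper's: in the paper the identification is built into the construction (Theorem~\ref{th:cocycles}\eqref{th:cocycles:flat}, which says cocycles sharing a tilt vector are literally the same coordinate projections), whereas your version deduces it post hoc from nonnegativity of the difference and its vanishing mean. Both are fine.

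The one substantive error is in your final paragraph, where you claim the differentiability hypothesis at $\ximin,\ximax$ is ``consumed'' in the exit-point step. It is not. The sandwich (Theorem~\ref{th:construction}) is proved for \emph{arbitrary} $\xi\in\ri\Uset$ with no regularity assumption. The strict gradient inequality that Lemma~\ref{lm:exit:con} needs, namely $\gppa'(r+)>\gppa'(s-)$ for $r$ strictly to the left of the slope coordinate $\underline s$ of $\ximin$, is automatic: by definition $\ximin$ is the left endpoint of the \emph{maximal} linear segment $\Uset_{\xi-}$ on which $\gpp$ has gradient $\nabla\gpp(\xi-)$, so $\gppa$ cannot be affine with slope $\gppa'(s-)$ on any interval $[r,\underline s]$ with $r<\underline s$. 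If $\ximin\notin\Diff$ strictness is even easier ($\gppa'(\underline s-)>\gppa'(\underline s+)$). The differentiability hypothesis of the theorem is used \emph{only} to collapse the sandwich, i.e.\ to get $\nabla\gpp(\ximin-)=\nabla\gpp(\xi)=\nabla\gpp(\ximax+)$ and hence $B^{\ximin-}=B^{\ximax+}$ — which you already identify correctly in the earlier part of the proposal. So the final paragraph mislocates the hypothesis and also gives a wrong reason (``excluding macroscopic excursions of the exit point''): the excursion control works identically whether or not $\gpp$ is regular at the endpoints.

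A smaller point: you speak of cocycles ``on $\Omega$ (projected down from $\Ombig$)''. The cocycles $B^{\zeta\pm}$ are constructed on the extended space $\Ombig$, and $\kS$-measurability of the limit cocycle $B^\xi$ is a \emph{consequence} of the Busemann limit \eqref{eq:grad:coc1} (the right side depends on $\w$ alone), not an input to it; see Remark~\ref{bu:kS}. If one tried to take a genuine ``projection'' (conditional expectation onto $\kS$) before proving the limit, the cocycle property would not obviously survive. The wording should be made precise to avoid a circularity.
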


To paraphrase the theorem,  Busemann functions $B^\xi$ exist in directions $\xi\in\EP$,  and furthermore,  if $\gpp$ is differentiable at the endpoints of a maximal linear segment,    then Busemann functions exist and agree in all directions on this line segment.    (Note that if $\ximin\ne\ximax$, the statement of the theorem is the same for any $\xi\in\;]\ximin,\ximax[$\,.)  
In particular, if $\gpp$ is differentiable everywhere on $\ri\Uset$,  then (i)  for each direction $\xi\in\ri\Uset$ there is a Busemann function $B^\xi$  such that, almost surely, $B^\xi(\w,x,y)$ equals  the limit  in \eqref{eq:grad:coc1}   for any sequence $v_n/\abs{v_n}_1\to\xi$ and (ii) the $B^\xi$'s match on linear segments of $\gpp$.



   We shall not derive the  cocycle property of $B^\xi$  from the limit \eqref{eq:grad:coc1}.  Instead in Section \ref{sec:cocycles} and Appendix \ref{app:q}  we construct a family of cocycles on an extended   space $\Ombig=\Omega\times\Omega'$ and show that  one  of these   cocycles equals the limit on the right of \eqref{eq:grad:coc1}.

The Busemann limits \eqref{eq:grad:coc1}  can also be interpreted as convergence of the last-passage process  to a  stationary last-passage process, described in Section \ref{sec:stat-lpp}.

	Equation \eqref{EB=Dg}  
  was anticipated in \cite{How-New-01} (see paragraph after the proof of Theorem 1.13)   for  Euclidean first passage percolation (FPP) where $\gpp(x,y)=c\sqrt{x^2+y^2}$. 
    	A version of this formula appears also in Theorem 3.5 of \cite{Dam-Han-14} for lattice  FPP.






\subsection{Variational formulas} 

Cocycles arise in variational formulas that describe the limits of last-passage percolation models.    
 In Theorems 3.2 and 4.3 in \cite{Geo-Ras-Sep-13a-} we proved these variational formulas:  for $h\in\R^2$ 
	\begin{align}
		\gpl(h)&=\inf_{F\in\Cor(\Omega)}\; \P\text{-}\esssup_\w\;  \max_{i\in\{1,2\}} \{\w_0+h\cdot e_i+F(\w, 0,e_i)\}\label{eq:g:K-var}\\
		\intertext{and for $\xi\in\ri\Uset$}  
		\gpp(\xi)&=\inf_{B\in\Coc(\Omega)} \;\P\text{-}\esssup_\w\;  \max_{i\in\{1,2\}} \{\w_0 -B(\w, 0,e_i)-h(B)\cdot\xi\}.\label{eq:gpp:K-var}
	\end{align}
The next theorem states that the Busemann functions found in Theorem \ref{thm:buse} give minimizing cocycles.  	
	
	\begin{theorem}\label{thm:var-buse}  Let  $\xi\in\ri\Uset$ with  $\Uset_\xi =[\ximin, \ximax]$   defined in \eqref{eq:sector2}.
 Assume that $\ximin, \xi, \ximax\in\Diff$.  Let $B^\xi\in\Coc(\Omega)$ be given  by \eqref{eq:grad:coc1}.  We have  $h(B^\xi)=-\nabla\gpp(\xi)$ by \eqref{EB=Dg} and \eqref{eq:EB}.  Define   $F(x,y)=h(B^\xi)\cdot(x-y)-B^\xi(x,y)$ as in  \eqref{FF}.  
 
 \begin{enumerate}[\ \ \rm(i)]
 \item\label{thm:var-buse:i}   Let $h=h(B^\xi)+(t,t)$ for some $t\in\R$.  Then for $\P$-a.e.~$\w$ 
\be\label{var15}  \gpl(h)=  \max_{i\in\{1,2\}} \{\w_0+h\cdot e_i+F(\w, 0,e_i)\} = t. \ee
In other words,  $F$ is a minimizer in \eqref{eq:g:K-var} and the essential supremum vanishes.  
  
 \item\label{thm:var-buse:ii}   For $\P$-a.e.~$\w$ 
\be\label{var151}\gpp(\xi)=  \max_{i\in\{1,2\}} \{\w_0 -B(\w, 0,e_i)-h(B)\cdot\xi\}. 
\ee
In other words,  $B^\xi$ is a minimizer in \eqref{eq:gpp:K-var} and the essential supremum vanishes.  
\end{enumerate}
\end{theorem}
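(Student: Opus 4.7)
The strategy is to pass the defining last-passage recursion to the limit along the sequence $v_n$ of Theorem \ref{thm:buse}, obtain a pointwise ``min'' identity for $B^\xi$, and then read off both \eqref{var15} and \eqref{var151} by straightforward algebra. All of this is done on the full-measure event $\Omega_0$ from Theorem \ref{thm:buse}.

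The starting point is the one-step last-passage recursion
\[
\Gpp_{0,v_n}(\w)=\w_0+\max\bigl(\Gpp_{e_1,v_n}(\w),\Gpp_{e_2,v_n}(\w)\bigr),
\]
which holds for all $n$ large enough that $v_n\ge e_1+e_2$ coordinatewise. Rewriting,
\[
\min_{i\in\{1,2\}}\bigl(\Gpp_{0,v_n}(\w)-\Gpp_{e_i,v_n}(\w)\bigr)=\w_0.
\]
Letting $n\to\infty$ along $v_n$ and invoking the Busemann limit \eqref{eq:grad:coc1} in both differences, I obtain the key identity
\begin{equation}\label{proof:min-id}
\min_{i\in\{1,2\}}B^\xi(\w,0,e_i)=\w_0\qquad\text{for $\w\in\Omega_0$.}
\end{equation}

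For part \eqref{thm:var-buse:ii}, the $1$-homogeneity of $\gpp$ and $\xi\in\Diff$ give Euler's relation $\nabla\gpp(\xi)\cdot\xi=\gpp(\xi)$; combined with $h(B^\xi)=-\nabla\gpp(\xi)$ this yields $-h(B^\xi)\cdot\xi=\gpp(\xi)$. Plugging this and \eqref{proof:min-id} into the expression on the right of \eqref{var151}:
\[
\max_{i\in\{1,2\}}\bigl\{\w_0-B^\xi(\w,0,e_i)-h(B^\xi)\cdot\xi\bigr\}
=\gpp(\xi)+\Bigl(\w_0-\min_{i\in\{1,2\}}B^\xi(\w,0,e_i)\Bigr)=\gpp(\xi),
\]
giving \eqref{var151}. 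In particular, since $B^\xi\in\Coc(\Omega)$, this identifies $B^\xi$ as a minimizer in \eqref{eq:gpp:K-var} whose essential supremum is achieved as a constant.

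For part \eqref{thm:var-buse:i}, write $F(\w,0,e_i)=-h(B^\xi)\cdot e_i-B^\xi(\w,0,e_i)$ from \eqref{FF} and compute, using $h=h(B^\xi)+(t,t)$,
\[
\w_0+h\cdot e_i+F(\w,0,e_i)=\w_0+t-B^\xi(\w,0,e_i).
\]
Taking the maximum over $i\in\{1,2\}$ and applying \eqref{proof:min-id} gives the value $t$, proving the pointwise equation in \eqref{var15}. To identify $\gpl(h)=t$ I combine two bounds: the variational formula \eqref{eq:g:K-var} together with $F\in\Cor(\Omega)$ (which holds by construction) yields $\gpl(h)\le t$, while convex duality \eqref{h-xi} gives $\gpl(h)\ge\gpp(\xi)+h\cdot\xi=\gpp(\xi)+h(B^\xi)\cdot\xi+t\,|\xi|_1=t$, since $|\xi|_1=1$ and $h(B^\xi)\cdot\xi=-\gpp(\xi)$. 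Equality of these bounds completes \eqref{var15}.

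The only subtle point is that \eqref{eq:grad:coc1} must be applied \emph{simultaneously} to the two differences $\Gpp_{0,v_n}-\Gpp_{e_1,v_n}$ and $\Gpp_{0,v_n}-\Gpp_{e_2,v_n}$ in order to pass the min inside the limit. This is free, however: Theorem \ref{thm:buse} supplies a single almost-sure event $\Omega_0$ on which the convergence holds for \emph{all} pairs $x,y\in\Z^2$ at once. I expect no further obstacles; the result is essentially a clean corollary of Theorem \ref{thm:buse}, the one-step LPP recursion, and Euler's identity for the $1$-homogeneous function $\gpp$.
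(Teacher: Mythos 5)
Your proof is correct, and it takes a genuinely different, more self-contained route than the paper's. The paper's proof of this theorem is a one-liner: it identifies the Busemann function $B^\xi$ with the cocycle $B^\xi$ constructed on the extended space $\Ombig$ in Theorem \ref{th:cocycles}, observes (Remark \ref{bu:kS}) that this cocycle is actually $\kS$-measurable, and then invokes Theorems \ref{th:var-sol} and \ref{thm:var-p2p} -- which in turn rest on Theorem 3.6 of \cite{Geo-Ras-Sep-13a-}, a general statement that a cocycle recovering the potential produces a minimizer. Your argument bypasses that chain entirely: you re-derive the potential-recovery identity $\w_0=\min_i B^\xi(\w,0,e_i)$ directly from the one-step last-passage recursion plus the Busemann limit \eqref{eq:grad:coc1} on the single full-measure event supplied by Theorem \ref{thm:buse}, then finish with Euler's relation for the $1$-homogeneous differentiable $\gpp$ (giving $-h(B^\xi)\cdot\xi=\gpp(\xi)$) and elementary algebra. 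To pin down $\gpl(h)=t$ you combine the upper bound from \eqref{eq:g:K-var} (plugging in your explicit $F$) with the lower bound from duality \eqref{h-xi} and $\abs{\xi}_1=1$; this sandwich replaces the paper's appeal to \eqref{eq:Kvar:min}. What your route buys is transparency and independence from the extended-space cocycle construction and the general corrector machinery; what the paper's route buys is brevity, at the cost of referring the reader back through Section \ref{sec:cocycles} and an external citation. Both are valid.
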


The condition $h=h(B^\xi)+(t,t)$ for some $t\in\R$ is equivalent to $h$ dual to $\xi$. Every $h$ has a dual $\xi\in\ri\Uset$ as we show in Section \ref{sec:duality}.  Consequently, if $\gpp$ is differentiable everywhere on $\ri\Uset$,  each $h$ has a minimizing Busemann cocycle $F$ that satisfies \eqref{var15}.    Theorem \ref{thm:var-buse}  is proved in Section \ref{sec:busemann}.  

The choice of $i\in\{1,2\}$ in \eqref{var15} and \eqref{var151} must depend on $\w$. This choice  is determined if   $\xi$  is not the asymptotic direction 
 of the competition interface (see Remark \ref{rem:cif-var} below).  

Borrowing from the homogenization literature (see e.g.\ page 468 of \cite{Arm-Sou-12}), 
a minimizer of \eqref{eq:g:K-var} or \eqref{eq:gpp:K-var} 
that also removes the essential supremum, that is,  a cocycle that satisfies \eqref{var15} or 
\eqref{var151}, is called a {\sl corrector}.

 	
\smallskip	
	
\subsection{Geodesics}\label{subsec:geo} 	


For $u\le v$ in $\Z^2$ an admissible path $x_{0,n}$ from  $x_0 = u$ to  $x_n=v$  (with $n=\abs{v-u}_1$)  
is   a (finite) {\sl geodesic} from $u$ to $v$ 
if 
	\[
		\Gpp_{u,v} = \sum_{k=0}^{n-1} \w_{x_k}.
	\]
An up-right path $x_{0,\infty}$ is an {\sl infinite  geodesic} emanating from $u\in\Z^2$ if $x_0=u$ and for any $j>i\ge0$, $x_{i,j}$ is a geodesic between $x_i$ and $x_j$.	Two infinite geodesics $x_{0, \infty}$ and $y_{0, \infty}$ {\sl coalesce} if there exist $m,n\ge0$ such that $x_{n,\infty}=y_{m,\infty}$.


A geodesic $x_{0,\infty}$ is $\xi$-directed  or  a $\xi$-geodesic if $x_n/\abs{x_n}_1\to\xi$ for $\xi\in\Uset$, and simply     directed if it is $\xi$-directed for some $\xi$.   Flat segments of $\gpp$  on $\Uset$ prevent us from asserting that all geodesics are directed.  Hence we say more generally for  a subset $\cV\subset\Uset$ that  a geodesic $x_{0,\infty}$ is $\cV$-directed 
if all the limit points of 
$x_n/\abs{x_n}_1$ lie in $\mathcal V$.

Recall that $\gpp$ is {\sl strictly concave}  if   there is no nondegenerate line segment on $\ri\Uset$ on which $\gpp$ is   linear.    Recall also the definition of $\Uset_{\xi\pm}$ from \eqref{eq:sector1}  and  $\Uset_{\xi}=\Uset_{\xi+}\cup\Uset_{\xi-}$.

\begin{theorem}
\begin{enumerate}[\ \ \rm(i)]	
	\item\label{thm-1:geod:exist}   The following statements hold for $\P$-almost every $\w$.  For every $u\in\Z^2$ and  $\xi\in\Uset$ there exist  infinite $\Uset_{\xi+}$-  and $\Uset_{\xi-}$-directed geodesics starting from $u$.  Every geodesic   is $\Uset_\xi$-directed for some $\xi\in\Uset$.
\item\label{thm-1:geod:direct}	If $\gpp$ is strictly concave   then,  	
		with $\P$-probability one, every geodesic is directed.
\item\label{thm-1:geod:cont}Suppose  $\P\{\w_0\le r\}$ is a continuous function of $r\in\R$.
Fix $\xi\in\Uset$ and assume $\ximin, \xi,\ximax\in\Diff$.  Then   
		  $\P$-almost surely  there is a unique $\Uset_\xi$-geodesic out of every $u\in\Z^2$  and all these geodesics coalesce.  
\end{enumerate}
\label{thm-1:geod}\end{theorem}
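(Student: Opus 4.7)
The plan is to route everything through \emph{cocycle geodesics} built from the family of stationary cocycles $B^{\xi\pm}$ constructed in Section \ref{sec:cocycles} (which agree with the Busemann limits of Theorem \ref{thm:buse} wherever the latter apply). Passing to the limit in the one-step decomposition $\Gpp_{x,v_n}=\w_x+\max_{i}\Gpp_{x+e_i,v_n}$ yields the pointwise identity
\[
\w_x=\min_{i\in\{1,2\}}B^{\xi\pm}(x,x+e_i)\qquad\text{for every }x\in\Z^2.
\]
Call an up-right path $(x_k)_{k\ge 0}$ a $B$-\emph{geodesic} if $\w_{x_k}=B(x_k,x_{k+1})$ at every step. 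Telescoping the cocycle and comparing with any competing admissible path shows at once that every finite segment of a $B$-geodesic is a last-passage geodesic. The ergodic theorem for cocycles (Appendix \ref{app:aux}) applied along the path then gives the empirical $e_1$-frequency, which by \eqref{EB=Dg} and the convex duality of Section \ref{sec:duality} pins down the asymptotic direction of a $B^{\xi\pm}$-geodesic as the endpoint of $\Uset_\xi$ corresponding to $\nabla\gpp(\xi\pm)$.

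For part \eqref{thm-1:geod:exist}, a $\Uset_{\xi\pm}$-directed geodesic out of any $u$ is obtained by selecting, at each vertex, any $e_i$ realizing $\w_x=B^{\xi\pm}(x,x+e_i)$; the resulting path is a $B^{\xi\pm}$-geodesic with the directional control just described. For the statement that every geodesic is $\Uset_\xi$-directed for some $\xi\in\Uset$, I would argue by a planar sandwich: if a geodesic $y_{0,\infty}$ had limit directions $\eta_1,\eta_2$ with $\eta_1\cdot e_1<\eta_2\cdot e_1$ not contained in a common $\Uset_\xi$, then there is a rational $\xi$ with $\eta_1\cdot e_1<\xi\cdot e_1<\eta_2\cdot e_1$ and $\ximin,\ximax\in\Diff$; $y_{0,\infty}$ would then be forced to cross the $\Uset_{\xi\pm}$-cocycle geodesics out of shifted base points infinitely often on both sides, contradicting the fact that two distinct up-right geodesics cannot recross after meeting.

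Part \eqref{thm-1:geod:direct} is then immediate: strict concavity forces $\Uset_\xi=\{\xi\}$ for every $\xi\in\ri\Uset$, so $\Uset_\xi$-directed means directed. For part \eqref{thm-1:geod:cont}, the differentiability assumptions at $\ximin,\xi,\ximax$ collapse $B^{\xi-}$ and $B^{\xi+}$ into a single cocycle $B^\xi$ controlling all of $\Uset_\xi$, and the variational characterization in Theorem \ref{thm:var-buse} provides a version of $B^\xi(x,x+e_i)$ measurable with respect to $\{\w_y:y\ne x\}$. Continuity of the law of $\w_0$ then forces
\[
\P\bigl(\w_x-B^\xi(x,x+e_1)=\w_x-B^\xi(x,x+e_2)\bigr)=0,
\]
so almost surely the $B^\xi$-geodesic out of each $u$ is unique; by the sandwich argument above it coincides with every $\Uset_\xi$-geodesic from $u$. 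Coalescence of these $B^\xi$-geodesics is then obtained by adapting the Licea--Newman/Burton--Keane scheme of \cite{Lic-New-96} to the cocycle setting, as carried out in Appendix \ref{app:geod}; the translation invariance and ergodicity supplied by $B^\xi$ replace the curvature input needed in first-passage percolation.

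The main obstacle I expect is the measurability step in part \eqref{thm-1:geod:cont}: the naive Busemann limit \eqref{eq:grad:coc1} defines $B^\xi(x,x+e_i)$ through differences $\Gpp_{x,v_n}-\Gpp_{x+e_i,v_n}$ that do involve $\w_x$, so one must instead invoke the extended-space construction of Section \ref{sec:cocycles} and Appendix \ref{app:q} to realize $B^\xi$ as a function of the weights away from $x$. Once this realization is in hand, everything else funnels through cocycle geodesics and the planar non-crossing property of up-right geodesics.
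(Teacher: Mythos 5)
Your overall strategy---routing everything through the cocycle geodesics of Section \ref{sec:geod}---is exactly the paper's, and parts \eqref{thm-1:geod:exist} and \eqref{thm-1:geod:direct} are in the right shape. But there are two genuine problems, one of which is fatal to the argument for part \eqref{thm-1:geod:cont} as you have written it.

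The serious gap is the no-tie step. You claim that Theorem \ref{thm:var-buse} ``provides a version of $B^\xi(x,x+e_i)$ measurable with respect to $\{\w_y:y\ne x\}$'' and that continuity of the law of $\w_0$ then immediately kills ties. This cannot be right: the recovery identity \eqref{eq:VB} says $\w_x=B^\xi(x,x+e_1)\wedge B^\xi(x,x+e_2)$, so $\w_x$ is a deterministic function of the pair $(B^\xi(x,x+e_1),B^\xi(x,x+e_2))$ and the independence you need is structurally impossible. What Theorem \ref{th:cocycles}\eqref{th:cocycles:indep} actually gives is independence of $\{B^\xi(x,x+e_i)\}_{x\not\le z}$ from $\{\w_y:y\le z\}$ for $x\not\le z$, which is a one-sided statement and does not decouple $B^\xi(x,x+e_i)$ from $\w_x$ itself. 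The paper's Lemma \ref{lm:geod:unique:t} proves $\Pbig\{B^\xi(u,u+e_1)=B^\xi(u,u+e_2)\}=0$ by a completely different mechanism: it first invokes coalescence (Theorem \ref{th:geod:coal:t}) to force the cocycle geodesics out of $e_1$ and $e_2$ to meet at some $v$, then observes that a tie at $0$ would produce two distinct up-right paths from $0$ to $v$ with identical $\w$-sums, contradicting the i.i.d.\ continuity of the weight law. Note in particular that coalescence must come \emph{before} the no-tie statement, not after; your sketch has them in the wrong logical order.

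A second, less damaging issue is the description of directedness. The ergodic theorem for cocycles (Theorem \ref{th:Atilla}) is a uniform bound $\max_{|x|_1\le n}|F(0,x)|/n\to 0$; it does not produce an ``empirical $e_1$-frequency'' along a fixed path, and cocycle geodesics are not shown to have a single asymptotic direction equal to an endpoint of $\Uset_\xi$---only that their limit points lie in $\Uset_{\xi\pm}$. The actual mechanism in Theorem \ref{th:B-geod:direction} is a comparison: along the cocycle geodesic $\Gpp_{u,x_n}=B^{\xi\pm}(u,x_n)$, so $|x_n|_1^{-1}(\Gpp_{u,x_n}+h(\xi\pm)\cdot x_n)\to0$ by the cocycle ergodic theorem, while the shape theorem forces $\varliminf|x_n|_1^{-1}(\Gpp_{u,x_n}+h(\xi\pm)\cdot x_n)<0$ on the event that a limit direction escapes $\Uset_{\xi\pm}$. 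Relatedly, in your sandwich argument for ``every geodesic is $\Uset_\xi$-directed'' you assume you can pick a rational $\xi$ strictly between $\eta_1$ and $\eta_2$ with $\ximin,\ximax\in\Diff$; that need not exist without a regularity hypothesis, and the paper instead works with a dense countable $\Diff_0\subset\Diff$ and Lemma \ref{lm:sandwich}, taking limits over $\zeta\in\Diff_0$ to approximate $\ximin$.
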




In the next theorem we repeat  the  assumptions of   Theorem \ref{thm:buse} to have a Busemann function and then show that in a direction that satisfies the differentiability assumption there can be no other geodesic  except a Busemann geodesic.   
 
\begin{theorem}\label{thm:buse-geo}
As in  Theorem \ref{thm:buse} let  $\xi\in\ri\Uset$ with  $\Uset_\xi=[\ximin,\ximax]$ satisfy $\ximin, \xi, \ximax\in\Diff$.  Let $B$ be the limit from \eqref{eq:grad:coc1}. The following events have $\P$-probability one.
\begin{enumerate}[\ \ \rm(i)]
\item\label{thm:buse-geo:i} Every up-right path $x_{0,\infty}$ such that $\w_{x_k}=B(x_k,x_{k+1})$ for all $k\ge0$
is an infinite geodesic. We call such a path a {\sl Busemann geodesic}.
\item\label{thm:buse-geo:ii} Every geodesic $x_{0,\infty}$ that satisfies 
	\be\label{geod-98} \ximin\cdot e_1\le\varliminf_{n\to\infty}\frac{x_n\cdot e_1}n\le\varlimsup_{n\to\infty}\frac{x_n\cdot e_1}n\le\ximax\cdot e_1\ee
 is a Busemann geodesic.
\item\label{thm:buse-geo:iii}  For each $m\ge0$, for any sequence $v_n$ as in \eqref{eq:vn},  there exists $n_0$ such that if $n\ge n_0$, then for any geodesic $x_{0,\abs{v_n}_1}$ from  $x_0=0$ to $v_n$ we have $B(\w,x_i,x_{i+1})=\w_{x_i}$ for all $0\le i\le m$. 
\end{enumerate}
\end{theorem}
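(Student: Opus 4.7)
The plan rests on a preliminary inequality together with identity \eqref{w-geod} for finite geodesics. As a first step I would show that $\P$-almost surely,
\[ B(\w, y, y+e_j) \ge \w_y \qquad \text{for every } y \in \Z^2 \text{ and } j \in \{1,2\}. \]
For any sequence $v_n$ satisfying \eqref{eq:vn}, concatenating the single step $y \to y+e_j$ with an optimal path to $v_n$ gives $\Gpp_{y, v_n} \ge \w_y + \Gpp_{y+e_j, v_n}$. Theorem \ref{thm:buse} lets us pass to the limit, and countability of $\Z^2 \times \{e_1, e_2\}$ makes the resulting inequality hold simultaneously on a single full-measure event.

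Part \eqref{thm:buse-geo:i} is then a telescoping argument. Assuming $\w_{x_k} = B(x_k, x_{k+1})$ for every $k$, the cocycle property gives $\sum_{k=i}^{j-1} \w_{x_k} = B(x_i, x_j)$ for all $0 \le i < j$. For any competing admissible path $y_{0, j-i}$ from $x_i$ to $x_j$, the preliminary inequality together with the cocycle property yields
\[ \sum_{\ell=0}^{j-i-1} \w_{y_\ell} \;\le\; \sum_{\ell=0}^{j-i-1} B(y_\ell, y_{\ell+1}) \;=\; B(x_i, x_j). \]
Hence $\Gpp_{x_i, x_j} = B(x_i, x_j) = \sum_{k=i}^{j-1} \w_{x_k}$, so $x_{i,j}$ is a geodesic.

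For part \eqref{thm:buse-geo:ii}, let $x_{0,\infty}$ be a geodesic satisfying \eqref{geod-98}. Since $x_{0,N}$ is a finite geodesic and the tail $x_{i+1, N}$ is optimal from $x_{i+1}$ to $x_N$, identity \eqref{w-geod} gives $\Gpp_{x_i, x_N} - \Gpp_{x_{i+1}, x_N} = \w_{x_i}$. Because $|x_N|_1 / N \to 1$ as $N \to \infty$, the sequence $v_N := x_N$ inherits condition \eqref{eq:vn} from \eqref{geod-98}, so Theorem \ref{thm:buse} lets me send $N \to \infty$ in the left-hand side to obtain $B(x_i, x_{i+1}) = \w_{x_i}$ for every $i$.

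Part \eqref{thm:buse-geo:iii} is the delicate step because the geodesic depends on $n$, while Busemann convergence is available only vertex by vertex. Fix $m \ge 0$ and a sequence $v_n$ as in \eqref{eq:vn}, and set $\mathcal F = \{y \in \Z_+^2 : |y|_1 \le m\}$, a finite deterministic set containing the first $m+1$ vertices of any geodesic starting at $0$. On a full-measure event, $\Gpp_{y, v_n} - \Gpp_{y+e_j, v_n} \to B(y, y+e_j)$ simultaneously for all $(y, j) \in \mathcal F \times \{e_1, e_2\}$. Call $(y, j)$ \emph{bad} when $B(y, y+e_j) > \w_y$; for each bad pair choose $n_0(y, j)$ beyond which $\Gpp_{y, v_n} - \Gpp_{y+e_j, v_n} > \w_y$, and let $n_0$ be the maximum over the finitely many bad pairs in $\mathcal F$. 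For $n \ge n_0$, identity \eqref{w-geod} forces any geodesic from $0$ to $v_n$ to avoid every bad step while in $\mathcal F$, which gives $B(\w, x_i, x_{i+1}) = \w_{x_i}$ for $0 \le i \le m$. The main obstacle is this uniformization, and it is resolved precisely by the finiteness of $\mathcal F$.
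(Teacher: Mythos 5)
Your proof is correct. Parts \eqref{thm:buse-geo:i} and \eqref{thm:buse-geo:iii} match the paper's approach: part \eqref{thm:buse-geo:i} is the telescoping argument of Lemma \ref{lm:grad flow}(a) (using the inequality $B(y,y+e_j)\ge\w_y$, i.e.\ one side of the potential-recovery identity), and part \eqref{thm:buse-geo:iii} is the same finite-set uniformization the paper intends, though you spell out the ``bad pair'' bookkeeping over $\{y\in\Z_+^2:\abs{y}_1\le m\}$ more completely than the paper does (the paper only writes out the step $i=0$).

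Part \eqref{thm:buse-geo:ii} is where you take a genuinely different route. The paper fixes auxiliary directions $\zeta,\eta$ flanking $\Uset_\xi$, compares the given geodesic with finite geodesics to the deterministic targets $\fl{m\zeta}$ and $\fl{m\eta}$, and uses a planarity/crossing argument to conclude $\Gpp_{0,\fl{m\zeta}}-\Gpp_{e_1,\fl{m\zeta}}\le\Gpp_{0,\fl{m\zeta}}-\Gpp_{e_2,\fl{m\zeta}}$ when $x_1=e_1$; passing $m\to\infty$ and $\zeta\to\ximax$ then yields $B^\xi(0,e_1)\le B^\xi(0,e_2)$, whence $B(0,x_1)=\w_0$ by recovery, and the first-step claim is iterated. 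You instead feed the geodesic's own vertices $v_N:=x_N$ as the target sequence: geodesic optimality gives $\Gpp_{x_i,x_N}-\Gpp_{x_{i+1},x_N}=\w_{x_i}$ exactly, \eqref{geod-98} means $v_N$ satisfies \eqref{eq:vn}, and Theorem \ref{thm:buse} (which holds for every such sequence on a single full-measure event, $\w$-dependent sequences included) lets you pass to the limit directly. This avoids the auxiliary sequences, the planarity argument, and the two-parameter limit entirely, and it gives all steps $i$ at once rather than inducting from $i=0$. It is shorter and, in my view, cleaner; its essential observation --- that the geodesic's own lattice points are admissible Busemann targets --- is the same one the paper already exploits in part \eqref{thm:buse-geo:iii}, here applied to part \eqref{thm:buse-geo:ii} as well.
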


When the  distribution of  $\w_0$ is  not   continuous uniqueness of geodesics (Theorem \ref{thm-1:geod}\eqref{thm-1:geod:cont}) cannot hold. Then we can consider leftmost and rightmost geodesics.  The  {\sl leftmost } geodesic $\underline{x}_{\;\centerdot}$ (between two given points or in a given direction)  satisfies 
	$	\underline{x} _k \cdot e_1 \le x _k \cdot e_1 $ 
for any geodesic $x_\centerdot$  of the same category.   The rightmost geodesic satisfies the opposite inequality.  

\begin{theorem}\label{thm:lr-geod}
Let  $\xi\in\ri\Uset$ with  $\Uset_\xi=[\ximin,\ximax]$ satisfying $\ximin, \xi, \ximax\in\Diff$.   The following statements hold $\P$-almost surely.
\begin{enumerate}[\ \ \rm(i)]
\item\label{thm:lr-geod:i} There exists a leftmost $\Uset_\xi$-geodesic from each $u\in\Z^2$ and 
all these leftmost geodesics coalesce.  Same statement for rightmost. 
\item\label{thm:lr-geod:ii}  For any $u\in\Z^2$ and  sequence $v_n$ as in \eqref{eq:vn}, 
the leftmost geodesic from $u$ to $v_n$ converges to the leftmost $\Uset_\xi$-geodesic  from $u$ given in part 
\eqref{thm:lr-geod:i}.   
A similar statement holds for  rightmost geodesics. 
\end{enumerate}
\end{theorem}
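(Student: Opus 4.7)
My plan is to construct the leftmost and rightmost $\Uset_\xi$-geodesics directly from the Busemann function $B$ of Theorem~\ref{thm:buse}, and to identify them with the limits of the finite leftmost/rightmost geodesics via Theorem~\ref{thm:buse-geo}. By Theorem~\ref{thm:buse-geo}\eqref{thm:buse-geo:i}--\eqref{thm:buse-geo:ii}, the $\Uset_\xi$-geodesics from any $u\in\Z^2$ coincide a.s.\ with the \emph{Busemann paths} from $u$: up-right paths $x_{0,\infty}$ with $x_0=u$ and $B(x_k,x_{k+1})=\w_{x_k}$ for every $k$. The stationary identity $\w_x=\min(B(x,x+e_1),B(x,x+e_2))$ from the cocycle construction in Section~\ref{sec:cocycles} ensures that at least one step is a Busemann step at each vertex. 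Define the leftmost $\Uset_\xi$-geodesic $\underline{x}$ from $u$ greedily: at vertex $x$ take $x+e_2$ if $B(x,x+e_2)=\w_x$, and $x+e_1$ otherwise. Since $\underline{x}_k\cdot e_1$ is pointwise minimal over all Busemann paths from $u$, $\underline{x}$ is indeed the leftmost $\Uset_\xi$-geodesic; the rightmost is defined by the dual rule.

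Coalescence in part~\eqref{thm:lr-geod:i} would then follow from the cocycle-geodesic coalescence theorem of Appendix~\ref{app:geod} (adapted from \cite{Lic-New-96}) applied to the translation-stationary forest of leftmost $\Uset_\xi$-geodesics. Non-coalescence would force three disjoint $\Uset_\xi$-directed rays to emanate from a bounded column, contradicting the ergodic flux estimate for $B$. The rightmost case is symmetric.

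For the convergence in part~\eqref{thm:lr-geod:ii}, fix $u$, $m\ge 1$, and let $\underline{x}^n$ be the leftmost geodesic from $u$ to $v_n$. By Theorem~\ref{thm:buse-geo}\eqref{thm:buse-geo:iii}, for $n$ large $\underline{x}^n$ is a Busemann path in its first $m$ steps. I would prove by induction on $0\le i<m$ that $\underline{x}^n_i=\underline{x}_i$ eventually in $n$. At a common vertex $x=\underline{x}^n_i=\underline{x}_i$ with $B(x,x+e_1)\ne B(x,x+e_2)$, the Busemann limits $\Gpp_{x,v_n}-\Gpp_{x+e_j,v_n}\to B(x,x+e_j)$ pin down the unique finite geodesic step for $n$ large, and it coincides with the leftmost Busemann step.

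The hard part is the tie case $B(x,x+e_1)=B(x,x+e_2)=\w_x$, where both are Busemann steps, leftmost Busemann picks $e_2$, but $\Gpp_{x+e_2,v_n}-\Gpp_{x+e_1,v_n}$ tends to $0$ with a priori oscillating sign. My plan to resolve this uses coalescence from part~\eqref{thm:lr-geod:i}: the leftmost Busemann geodesics out of $x+e_1$ and $x+e_2$ coalesce at some common vertex $w$, and the cocycle identity gives $B(x+e_1,w)=B(x,w)-B(x,x+e_1)=B(x,w)-B(x,x+e_2)=B(x+e_2,w)$, so $\Gpp_{x+e_1,w}=\Gpp_{x+e_2,w}$. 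Combined with Theorem~\ref{thm:buse-geo}\eqref{thm:buse-geo:iii} applied at the base points $x+e_1$ and $x+e_2$ with window large enough to cover $w$, this should force every geodesic from $x+e_i$ to $v_n$ to pass through $w$ for $n$ large, whence $\Gpp_{x+e_1,v_n}=\Gpp_{x+e_2,v_n}$ and the leftmost finite geodesic takes $e_2$ at the tie, matching $\underline{x}$. The rightmost statement follows by the mirror argument.
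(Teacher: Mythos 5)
Your construction of the leftmost $\Uset_\xi$-geodesic as the Busemann path with $e_2$-preference is exactly the cocycle geodesic $x^{u,\tmin,\xi}$ used by the paper (via Lemma~\ref{lm:grad flow}\eqref{lm:grad flow:b} and Corollary~\ref{cor:leftmost}), and your coalescence argument for part~\eqref{thm:lr-geod:i} matches Theorem~\ref{th:geod:coal:t}, though the contradiction in Appendix~\ref{app:geod} comes from the Burton--Keane lack-of-space count rather than a ``flux estimate.'' So part~\eqref{thm:lr-geod:i} is fine and essentially the paper's route.

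For part~\eqref{thm:lr-geod:ii} you have correctly put your finger on the one genuinely delicate point, namely the tie case $B(x,x+e_1)=B(x,x+e_2)=\w_x$, where the Busemann limit only tells you that $\Gpp_{x+e_1,v_n}-\Gpp_{x+e_2,v_n}\to 0$ without specifying the sign. (The paper's own proof of part~\eqref{thm:lr-geod:ii} is a one-line citation of Theorem~\ref{thm:buse-geo}\eqref{thm:buse-geo:iii}, which determines the step of every finite geodesic only when there is \emph{no} tie; when there is a tie it says nothing beyond ``both steps are Busemann steps,'' so you are right that something extra is needed.) However, your proposed resolution is circular. The step ``Theorem~\ref{thm:buse-geo}\eqref{thm:buse-geo:iii} applied at $x+e_1$ and $x+e_2$ with window large enough to cover $w$ \ldots should force every geodesic from $x+e_i$ to $v_n$ to pass through $w$'' is not justified by Theorem~\ref{thm:buse-geo}\eqref{thm:buse-geo:iii}: that theorem only says that every finite geodesic from $x+e_i$ to $v_n$ follows minimal gradients of $B$ for the first $M$ steps. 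At a tie along the way there are several such Busemann paths, and nothing forces the finite geodesic to follow the specific one through $w$. To conclude passage through $w$ you would need to know that the leftmost finite geodesic from $x+e_i$ agrees with $x^{x+e_i,\tmin,\xi}$ for the first $M$ steps --- but that is exactly the statement of part~\eqref{thm:lr-geod:ii} with the base point shifted to $x+e_i$. The ``induction'' is on base points going outward along the (infinite) Busemann tree and has no terminating case, so the argument does not close.

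A way to make the easy half rigorous is the sandwich of Lemma~\ref{lm:sandwich}: a cocycle geodesic $x^{u,\tmin,\zeta}$ with $\zeta\cdot e_1<\ximin\cdot e_1$ is, by Lemma~\ref{lm:grad flow}\eqref{lm:grad flow:b}, the leftmost geodesic between any two of its points, hence stays weakly to the left of $\gamma^n$ once $x^{u,\tmin,\zeta}_{\abs{v_n}_1}\cdot e_1<v_n\cdot e_1$; letting $\zeta\to\ximin$ and using the monotone cocycle limit gives $\check{x}_k\cdot e_1\le\gamma^n_k\cdot e_1$ for $k\le m$ and $n$ large. The symmetric rightmost sandwich gives $\gamma^n_k\cdot e_1\le\hat{x}_k\cdot e_1$. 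This is stronger than what Theorem~\ref{thm:buse-geo}\eqref{thm:buse-geo:iii} alone gives, but it still does not pin $\gamma^n$ against $\check{x}$ at a tie, because $\check{x}\ne\hat{x}$ in the discrete case. You should either find a non-circular argument for the sign condition $\Gpp_{x+e_2,v_n}\ge\Gpp_{x+e_1,v_n}$ at ties (for example, ruling out that $\xi$ coincides with the right competition-interface direction $\cidr(T_x\what)$ of \eqref{ci12} at some vertex $x$ on the path) or flag this as an unresolved step.
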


Theorems \ref{thm-1:geod}, \ref{thm:buse-geo}, and \ref{thm:lr-geod} are proved at the end of Section \ref{sec:geod}.

\subsection{Competition interface}  \label{sec:ci} 

For this subsection assume that  $\P\{\w_0\le r\}$ is a continuous function of $r\in\R$.  Then  with probability one  no two finite paths of any lengths have equal weight and consequently for  any $v\in\Z_+^2$ there is a unique finite geodesic between $0$ and $v$.  Together these finite geodesics form the {\sl geodesic tree} $\cT_0$ rooted at $0$  that spans $\Z_+^2$.   The two subtrees 
rooted at $e_1$ and $e_2$ are separated by an up-right path $\varphi=(\varphi_k)_{k\ge 0}$  on the lattice $(\frac12,\frac12)+\Z_+^2$ with $\varphi_0=(\frac12,\frac12)$.    The path $\varphi$ is called the {\sl competition interface}.  
The term  comes from the interpretation that the subtrees 
 are two competing   infections 
on the lattice \cite{Fer-Mar-Pim-09,Fer-Pim-05}.  See Figure \ref{cif-fig}.

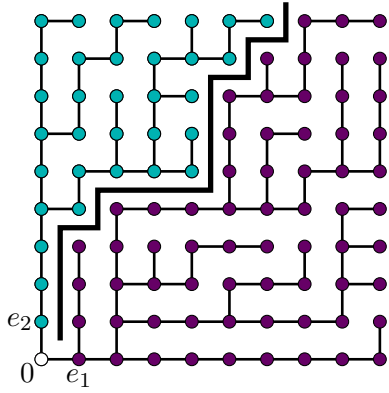
\begin{figure}[h]
\begin{center}

\begin{tikzpicture}[>=latex,scale=0.5]

\definecolor{sussexg}{rgb}{0,0.7,0.7}
\definecolor{sussexp}{rgb}{0.4,0,0.4}

\draw(-0.37,-0.35)node{$0$};
\draw(-0.57,1)node{$e_2$};
\draw(1,-0.5)node{$e_1$};


\draw[line width=1pt](9,0)--(0,0)--(0,9);     
\draw[line width=1pt](2,0)--(2,4)--(5,4)--(5,7)--(7,7)--(7,9)--(9,9);
\draw[line width=1pt](1,0)--(1,3);
\draw[line width=1pt](0,4)--(1,4)--(1,5)--(2,5)--(2,7);
\draw[line width=1pt](2,5)--(4,5)--(4,6);
\draw[line width=1pt](3,5)--(3,6)--(3,8)--(5,8)--(5,9)--(6,9);
\draw[line width=1pt](3,7)--(4,7);
\draw[line width=1pt](6,7)--(6,8);
\draw[line width=1pt](0,6)--(1,6)--(1,8)--(2,8)--(2,9)--(3,9);
\draw[line width=1pt](4,8)--(4,9);
\draw[line width=1pt](0,9)--(1,9);
\draw[line width=1pt](5,4)--(7,4)--(7,5)--(9,5)--(9,8);
\draw[line width=1pt](6,4)--(6,6)--(7,6);
\draw[line width=1pt](8,5)--(8,8);
\draw[line width=1pt](2,1)--(8,1)--(8,2)--(9,2);
\draw[line width=1pt](2,2)--(4,2)--(4,3)--(6,3);
\draw[line width=1pt](9,0)--(9,1);

\draw[line width=1pt](5,1)--(5,2)--(7,2)--(7,3);
\draw[line width=1pt](8,2)--(8,4)--(9,4);
\draw[line width=1pt](3,2)--(3,3);
\draw[line width=1pt](8,3)--(9,3);


\draw[line width=2.0pt](.5,.5)--(0.5,3.5)--(1.5,3.5)--(1.5, 4.5)--(4.5,4.5)--(4.5,7.5)--(5.5,7.5)--(5.5,8.5)--(6.5,8.5)--(6.5, 9.5);   
\foreach \x in {0,...,9}{
             \foreach \y in {0,...,9}{
\draw[ fill =sussexp](\x,\y)circle(1.7mm);    
}}

\foreach \x in{0,...,6}{
		\draw[ fill =sussexg](\x, 9)circle(1.7mm);    
				}

\foreach \x in{0,...,5}{
		\draw[ fill =sussexg](\x, 8)circle(1.7mm);    
				}

\foreach \x in{0,...,4}{
		\foreach \y in {5,6,7}{
		\draw[fill =sussexg](\x, \y)circle(1.7mm);    
				}}

\foreach \x in{0,...,1}{
		\draw[ fill =sussexg](\x, 4)circle(1.7mm);    
				}

\foreach \x in{0,...,0}{
		\foreach \y in {1,2,3}{
		\draw[fill =sussexg](\x, \y)circle(1.7mm);    
				}}

\draw[fill=white](0,0)circle(1.7mm); 

\end{tikzpicture}

\end{center}
\caption{\small   The geodesic tree $\mathcal{T}_0$ rooted at $0$.  
The competition interface  (solid line)  emanates from $(\tfrac12, \tfrac12)$ and separates 
the   subtrees of $\mathcal{T}_0$ rooted at $e_1$   and $e_2$. }
\label{cif-fig}\end{figure}

 Adopt  the convention that $\Gpp_{e_i, ne_j}=-\infty$ for $i\ne j$ and $n\ge 0$ (there is  no admissible  path from $e_i$ to $ ne_j$).    Fix $n\in\N$.   As $v$ moves to the right along $\abs{v}_1=n$, the function $\Gpp_{e_2,v}-\Gpp_{e_1,v}$ is nonincreasing.  This  is a consequence of Lemma \ref{lm:new:comp0}  below.   There is a unique $0\le k<n$ such that   
 \be\label{ci:19} \Gpp_{e_2,(k,n-k)}-\Gpp_{e_1,(k,n-k)}>0> \Gpp_{e_2,(k+1,n-k-1)}-\Gpp_{e_1,(k+1,n-k-1)}. \ee  This identifies the point  $\varphi_{n-1}=(k+\tfrac12,n-k-\tfrac12)$. 
 
 \begin{theorem}\label{thm:ci-1} 
Assume $\P\{\w_0\le r\}$ is continuous in $r$ and that $\gpp$ is differentiable at the endpoints of all   its linear segments.
 Then we have the law of large numbers  
\begin{align}\label{ci-lln2}
\cid(\w)= \lim_{n\to\infty} n^{-1}\varphi_n(\w)\qquad \text{$\P$-a.s.}   \end{align}
The   limit $\cid$ is almost surely an exposed point in $\ri\Uset$  and the 
  support  of its distribution intersects every open interval outside  the closed  line segments on which $\gpp$ is linear.    
\end{theorem}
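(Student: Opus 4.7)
The plan is to identify $\cid$ as the sign-change threshold of the Busemann differences $B^\xi(\w,e_2,e_1)$ as $\xi$ varies, and to sandwich $\varphi_n/n$ between such thresholds. By \eqref{ci:19} and the antidiagonal monotonicity of $v \mapsto \Gpp_{e_2,v} - \Gpp_{e_1,v}$ (Lemma \ref{lm:new:comp0}), the quantity $\varphi_{n-1}\cdot e_1$ is exactly the sign-change location of this function along $|v|_1 = n$, so the LLN reduces to pinning the limiting sign-change direction $\P$-almost surely.

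Choose a countable dense set $\cD \subset \ri\Uset$ of directions $\xi$ with $\ximin, \xi, \ximax \in \Diff$; this is possible since the concave $\gpp$ has at most countably many non-differentiability points and, by hypothesis, each maximal linear segment has differentiable endpoints. For each $\xi \in \cD$, Theorem \ref{thm:buse} supplies a Busemann cocycle $B^\xi$ with $B^\xi(\w,e_2,e_1) = \lim_n(\Gpp_{e_2,v_n} - \Gpp_{e_1,v_n})$ a.s.\ along $v_n/|v_n|_1 \to \xi$. The joint construction of the cocycles on the extended space in Section \ref{sec:cocycles} comes with a monotone coupling, so on a single full-probability event $\xi \mapsto B^\xi(\w,e_2,e_1)$ is nonincreasing in $\xi \cdot e_1$ across $\cD$. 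Continuity of the distribution of $\w_0$ excludes equal weights between distinct admissible paths and hence guarantees $B^\xi(\w,e_2,e_1) \ne 0$ for every $\xi \in \cD$ almost surely. On this event set
$$\cid(\w) \cdot e_1 = \sup\{\xi \cdot e_1 : \xi \in \cD,\, B^\xi(\w,e_2,e_1) > 0\} = \inf\{\xi \cdot e_1 : \xi \in \cD,\, B^\xi(\w,e_2,e_1) < 0\}.$$
For any $\xi \in \cD$ with $B^\xi(e_2,e_1)(\w) > 0$, the Busemann limit and the antidiagonal monotonicity force $\varphi_{|v_n|_1 - 1}\cdot e_1 \ge v_n \cdot e_1$ eventually for $v_n = \lfloor n\xi \rfloor$, which yields $\varliminf_n \varphi_n \cdot e_1/n \ge \xi \cdot e_1$; taking the supremum and pairing with the symmetric bound from the other sign delivers \eqref{ci-lln2}.

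For exposedness, suppose $\cid$ lay in a closed linear segment $[\zeta,\eta]$ of $\gpp$ with $\zeta,\eta \in \Diff$. Theorem \ref{thm:buse} forces the Busemann functions in all directions of $[\zeta, \eta]$ to coincide, so $B^\zeta(e_2,e_1)(\w) = B^\eta(e_2,e_1)(\w)$; the sign-change definition of $\cid$ would then make this common value simultaneously nonnegative and nonpositive, hence zero, contradicting the almost sure nonvanishing established above. Hence $\cid \in \EP$ almost surely.

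For the support claim, fix an open interval $J \subset \ri\Uset$ disjoint from every closed linear segment of $\gpp$ and take $\xi_1, \xi_2 \in \cD \cap J$ with $\xi_1 \cdot e_1 < \xi_2 \cdot e_1$. The monotone coupling identifies $\{\cid \cdot e_1 \in [\xi_1 \cdot e_1, \xi_2 \cdot e_1]\}$ with $\{B^{\xi_1}(e_2,e_1) > 0\} \cap \{B^{\xi_2}(e_2,e_1) < 0\}$. By \eqref{EB=Dg}, $\E[B^\xi(e_2,e_1)] = \nabla\gpp(\xi)\cdot(e_2 - e_1)$ is strictly decreasing in $\xi \cdot e_1$ on $J$ by strict concavity of $\gpp$ there. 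The main obstacle is upgrading this strict change in expectations to positive probability of the joint sign event, particularly when $J$ lies entirely on one side of the deterministic direction $\xi_0$ where $\E[B^{\xi_0}(e_2,e_1)] = 0$; I would attack this by first verifying that $B^\xi(e_2,e_1)$ has genuine fluctuations at some $\xi \in J$ (relying on the non-degeneracy of the i.i.d.\ weights $\w$), and then combining shift ergodicity of $\{T_x\}$ with the joint monotone family to drive the sign change into any prescribed non-flat interval with positive probability.
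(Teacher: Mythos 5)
Your LLN argument and your exposed-point argument are sound in outline and take a genuinely different route from the paper, but your handling of the support claim is an acknowledged gap, and the gap is real.

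On the LLN: the paper proves the law of large numbers (in the unnamed theorem preceding the proof of Theorem \ref{thm:ci-1}) by sandwiching the competition interface $\varphi$ between two cocycle geodesics $x_{0,\infty}^{0,\t,\zeta+}$ and $x_{0,\infty}^{0,\t,\eta-}$ for $\zeta\cdot e_1 < \cid\cdot e_1 < \eta\cdot e_1$, and then invoking directedness of those geodesics (Theorem \ref{th:B-geod:direction}). You instead combine \eqref{ci:19}, the antidiagonal monotonicity from Lemma \ref{lm:new:comp0}, and the Busemann limit from Theorem \ref{thm:buse} along a countable dense set $\cD$ of good directions; both routes work, and yours is arguably the more direct one since it stays entirely at the level of increments rather than passing through cocycle geodesics. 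Similarly, your exposed-point argument (Busemann cocycles coincide across a maximal linear segment, so $\cid$ landing in the closed segment would force the common $B(e_2,e_1)$ to be simultaneously $\geq 0$ and $\leq 0$, i.e.\ zero, which Lemma \ref{lm:geod:unique:t} forbids) is a valid alternative to the paper's Corollary \ref{cor:ci-support}, which instead counts geodesics (unique geodesic directed into each flat segment versus at least two non-coalescing geodesics in direction $\Uset_{\cid}$). Be aware that your appeal to ``continuity of the distribution of $\w_0$ excludes equal weights'' is short-selling Lemma \ref{lm:geod:unique:t}: that lemma is proved via coalescence of cocycle geodesics, not by a purely local ``no ties among paths'' observation, so you should cite it rather than re-derive it in passing.

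The support claim is where the proposal fails. You correctly reduce to showing $\P\{B^{\xi_1}(e_2,e_1) > 0 > B^{\xi_2}(e_2,e_1)\} > 0$ for $\xi_1\cdot e_1 < \xi_2\cdot e_1$ in the prescribed interval $J$, and you correctly observe that the strict decrease of $\E[B^{\xi}(e_2,e_1)] = \nabla\gpp(\xi)\cdot(e_2 - e_1)$ does not by itself deliver this: a monotone coupled family can have strictly decreasing means while the sign never changes on a given subinterval (e.g.\ two constants of the same sign). The plan you sketch --- check fluctuations and invoke ergodicity --- does not close this. Ergodicity plus $\E[B^{\xi_0}(e_2,e_1)] = 0$ at the centrosymmetric direction $\xi_0$ does give $\P\{B^{\xi_0}(e_2,e_1)>0\}>0$ and $\P\{B^{\xi_0}(e_2,e_1)<0\}>0$, hence that $\cid$ lands on either side of $\xi_0$ with positive probability, but it gives no control on \emph{where} on each side the sign change occurs, which is exactly the content of the support claim. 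The paper supplies the missing geometric input in Theorem \ref{th:two-geo}\eqref{th:two-geo:iv}: the cocycle geodesics $x_{0,\infty}^{0,\zeta+}$ and $x_{0,\infty}^{0,\eta-}$ are directed into the distinct sectors $\Uset_{\zeta+}$ and $\Uset_{\eta-}$ (Theorem \ref{th:B-geod:direction}) and hence must almost surely separate at some point $z$, and at that $z$ the competition interface direction $\cid(T_z\what)$ lies in $[\underline\zeta,\overline\eta]$; translation-invariance of $\P$ then converts this almost sure existence into $\P\{\cid\in[\underline\zeta,\overline\eta]\}>0$. This is the piece your proposal lacks, and it cannot be replaced by expectation monotonicity alone.
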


\begin{remark}\label{rem:cif-var}    Assume  that  $\P\{\w_0\le r\}$ is continuous and  that differentiability holds everywhere on $\ri\Uset$ so that no caveats are needed.  Connecting back  to the variational formulas,   the maximum   in \eqref{var15} and \eqref{var151} is taken at $i=2$ if $\xi\cdot e_1<\cid\cdot e_1$ and at $i=1$ if $\xi\cdot e_1>\cid\cdot e_1$.   This is a consequence of the following two facts:   (i)  $\w_0=B^\xi(0,e_1)\wedge B^\xi(0,e_2)$  as follows from \eqref{w-geod}, and (ii)  for $\xi\cdot e_1<\cid\cdot e_1<\zeta\cdot e_1$  we have   $B^\xi(0,e_1)> B^\xi(0,e_2)$ and  $B^\zeta(0,e_1)< B^\zeta(0,e_2)$.  The second fact will become clear in Section \ref{sec:ci-pf}.  
\end{remark}

The competition interface is a natural direction in which there are two geodesics from $0$.  Note that   nonuniqueness in the random direction $\cid$ does not violate the almost sure uniqueness in a fixed direction given in Theorem \ref{thm-1:geod}\eqref{thm-1:geod:cont}.    

 \begin{theorem}\label{thm:ci-2} 
Assume $\P\{\w_0\le r\}$ is continuous in $r$.
\begin{enumerate}[\ \ \rm(i)]
\item\label{thm:ci-2:i} Assume  $\gpp$ is differentiable at the endpoints of all   its linear segments.   Then  
$\P$-almost surely, for every $x\in\Z^2$, there exist at least two $\Uset_{\cid(T_x\w)}$-geodesics out of $x$ that do not coalesce.  
\item\label{thm:ci-2:ii}  Assume  $\gpp$ is strictly concave.  Then  with $\P$-probability one and for any $x\in\Z^2$, there cannot be two distinct geodesics from $x$ with a common direction other than   $\cid(T_x\w)$.  
\end{enumerate} 
\end{theorem}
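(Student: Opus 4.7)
The plan for part (i) is to construct two non-coalescing $\Uset_{\cid(T_x\w)}$-geodesics as the leftmost and rightmost $\Uset_{\cid(T_x\w)}$-geodesics from $x$ provided by Theorem \ref{thm:lr-geod}. Theorem \ref{thm:ci-1} asserts that $\cid(T_x\w)$ is almost surely an exposed point and hence lies in $\Diff$, so the hypotheses of Theorem \ref{thm:lr-geod} are met (with $\ximin = \xi = \ximax$). The task is then to show the two geodesics are distinct. Using Theorem \ref{thm:lr-geod}(ii), each is the limit of leftmost (respectively rightmost) finite geodesics from $x$ to lattice points $v_n$ with $v_n/|v_n|_1 \to \cid(T_x\w)$. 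I choose $v_n^-$ with $v_n^-\cdot e_1/|v_n^-|_1$ slightly smaller than $\cid(T_x\w)\cdot e_1$; for large $n$, the point $v_n^-$ then sits strictly to the left of the competition interface emanating from $x$ and hence lies in the $x+e_2$-subtree of the geodesic tree $\cT_x$, so the unique finite geodesic from $x$ to $v_n^-$ takes its first step to $x+e_2$. The limiting leftmost $\Uset_{\cid(T_x\w)}$-geodesic inherits this first step. Symmetrically, choosing $v_n^+$ on the other side yields the rightmost geodesic starting with the step to $x+e_1$. The two limits lie in disjoint subtrees of $\cT_x$ and so cannot coalesce.

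For part (ii) I will argue by contradiction. Suppose two distinct geodesics $\gamma^1 \ne \gamma^2$ emanate from $x$ with a common direction $\xi \ne \cid(T_x\w)$. Continuity of the weight distribution makes all lattice-to-lattice geodesics a.s.\ unique, so $\gamma^1$ and $\gamma^2$ agree on a common initial segment $x, x_1, \dotsc, y$ and then split at some $y \ge x$, with $\gamma^1$ proceeding through $y+e_1$ and $\gamma^2$ through $y+e_2$. The tails $\gamma^i|_{[y,\infty)}$ are then non-coalescing $\xi$-directed infinite geodesics from $y$ lying in the two subtrees of $\cT_y$. The direction-separation argument of part (i) applied at $y$ shows that geodesics in the $y+e_2$-subtree have directions $\zeta$ with $\zeta\cdot e_1 \le \cid(T_y\w)\cdot e_1$, while those in the $y+e_1$-subtree have $\zeta\cdot e_1 \ge \cid(T_y\w)\cdot e_1$, forcing $\xi = \cid(T_y\w)$. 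If $y = x$ this already contradicts the hypothesis $\xi \ne \cid(T_x\w)$.

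The case $y > x$ is the main obstacle, because $\xi = \cid(T_y\w)$ is a random direction and I cannot naively apply the fixed-direction uniqueness of Theorem \ref{thm-1:geod}(iii) via a countable union. The plan is to work through the Busemann cocycle framework: since $\cid(T_y\w) \in \EP \subset \Diff$ almost surely, Theorem \ref{thm:buse} supplies $B^\xi$, and Theorem \ref{thm:buse-geo}(ii), together with strict concavity (which collapses $\Uset_\xi$ to $\{\xi\}$), shows that every $\xi$-directed geodesic is a Busemann geodesic in the sense $\w_{x_k} = B^\xi(x_k, x_{k+1})$. A split in the Busemann tree at a vertex $z$ requires $B^\xi(z, z+e_1) = B^\xi(z, z+e_2) = \w_z$; by the relationship between Busemann values and the competition interface recalled in Remark \ref{rem:cif-var}, this tie is equivalent to $\xi = \cid(T_z\w)$. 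Propagating this characterization back along the shared segment from $y$ down to $x$ and invoking the continuity of the weight distribution to exclude exceptional ties off the competition-interface direction, the split vertex is forced to coincide with $x$, whence $\xi = \cid(T_x\w)$ and the contradiction follows.
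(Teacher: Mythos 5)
Your proposal diverges from the paper's own proof in both parts, and each divergence opens a genuine gap.

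For part~(i), the paper simply invokes Theorem~\ref{th:two-geo}\eqref{th:two-geo:i}, whose proof builds the two non-coalescing geodesics directly from the cocycles $\Bci^{\pm}$ defined in~\eqref{B*} by monotone limits $B^{\zeta\pm}\to\Bci^{\pm}$ as $\zeta\to\cid(\what)$. Because this limiting procedure works on a \emph{single} full-measure event (the one on which monotonicity~\eqref{eq:monotone} holds simultaneously for all $\zeta$), the random direction $\cid(T_x\w)$ causes no trouble. You instead try to invoke Theorem~\ref{thm:lr-geod} with $\xi=\cid(T_x\w)$. But Theorem~\ref{thm:lr-geod} is stated for a \emph{fixed} $\xi\in\ri\Uset$, and the almost-sure event it produces depends on $\xi$. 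You cannot plug the random, $\w$-dependent direction $\cid(T_x\w)$ into such a statement: $\P\{\w:\w\in\Omega_{\cid(T_x\w)}\}$ is not automatically $1$, and since $\cid$ has (under your differentiability hypothesis) a continuous distribution, there is no countable set to reduce to. This is exactly the obstacle the paper's $\Bci^{\pm}$-construction is designed to sidestep, and the paper in fact flags the issue explicitly (Remark~\ref{rm:cocycles}(c) and the discussion right after~\eqref{B*}). The fallback argument you sketch, comparing $v_n^{\pm}$ to the competition interface, would also need more care: both $v_n^{\pm}/\abs{v_n^{\pm}}_1$ and $\varphi_n/n$ converge to the same limit $\cid(T_x\w)$, so ``slightly smaller'' does not by itself place $v_n^-$ on the correct side of $\varphi$ for large $n$ without controlling the relative rates.

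For part~(ii), you correctly identify the crux: a split at $y>x$ only forces $\xi=\cid(T_y\w)$, and you must somehow upgrade this to $\xi=\cid(T_x\w)$. But the ``propagation'' you propose does not supply an argument. You assert that the tie $B^\xi(z,z+e_1)=B^\xi(z,z+e_2)$ is equivalent to $\xi=\cid(T_z\w)$ and that ``continuity of the weight distribution excludes exceptional ties off the competition-interface direction,'' but that exclusion (Lemma~\ref{lm:geod:unique:t}) holds for a \emph{fixed} direction with probability one; for the random $\xi=\cid(T_y\w)$ there \emph{is} a tie at $y$ by construction, and nothing you have written rules out $z=y$ being the first tie on the geodesic while $\xi\ne\cid(T_x\w)$. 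The paper's proof routes instead through Theorem~\ref{th:two-geo}\eqref{th:two-geo:ii}, established as part of the same single-null-set cocycle framework, together with directedness of all geodesics under strict concavity (Theorem~\ref{thm-1:geod}\eqref{thm-1:geod:direct}). You would need to engage with that mechanism (or supply an independent argument that $\cid(T_y\w)=\cid(T_x\w)$ when $y$ is a split point reached from $x$) to close the gap; as written, the last paragraph of your proposal is a plan, not a proof.
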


 For the exactly solvable  corner growth model with exponential weights Coupier \cite{Cou-11} proved that the set of directions with two non-coalescing  geodesics in $\Z_+^2$  is countable and dense in $\Uset$.   Here we have a partial result towards characterizing this set as $\{\cid(T_x\w)\}_{x\in\Z_+^2}$.   Partial, because we consider only pairs of geodesics from a common initial point.   

Point \eqref{thm:ci-2:i} of Theorem \ref{thm:ci-2}  is actually true without the differentiability assumption,  but at this stage of the paper we have no definition of $\cid$ without that assumption.  This will change in Theorem \ref{th:two-geo} in Section \ref{sec:ci-pf}.     
In point \eqref{thm:ci-2:ii} above $\gpp$ has no linear segments and so the differentiability of $\gpp$ at endpoints of linear segments is vacuously true.  
 
 Theorems \ref{thm:ci-1} and  \ref{thm:ci-2} are  proved in Section \ref{sec:ci-pf}.   An additional fact proved there is that   $\P(\cid=\xi)>0$ is possible only if $\xi\notin\Diff$.   In light of the expectation that $\gpp$ is differentiable, the expected result is that $\cid$ has a continuous distribution.  
 

When weights $\w_0$ do  not have a  continuous distribution, there are two competition interfaces: one  for the  tree of leftmost geodesics   and one 
for the tree of  rightmost geodesics.      We compute the limit distributions of the two competition interfaces for geometric weights in Sections \ref{subsec:solv} and \ref{sec:solv}. 


\subsection{Exactly solvable models}\label{subsec:solv}
We illustrate our results in the two exactly solvable cases: the distribution of the  mean $\Ew$ weights $\w_x$ is 
\be\label{cases7} \begin{aligned}
\text{exponential:  } \P\{\w_x\ge t\}&=\Ew^{-1} e^{-t/\Ew} \text{ for  $t\ge0$ with $\sigma^2=\Ew^2$,   }\\
\text{or} \; \; \text{geometric: } \P\{\w_x=k\}&=\Ew^{-1}(1-\Ew^{-1})^{k-1} \text{ for  $k\in\N$ with  $\sigma^2=\Ew(\Ew-1)$.}
\end{aligned}\ee
Calculations behind the claims below are sketched in Section \ref{sec:solv}.

For both cases in \eqref{cases7}  the point-to-point limit   function is 
 \[\gpp(\xi)=\Ew(\xi\cdot e_1+\xi\cdot e_2)+2\sigma\sqrt{(\xi\cdot e_1)(\xi\cdot e_2)}\,.\]
 In the exponential case this formula was first derived by Rost \cite{Ros-81} (who presented the model in its coupling with TASEP without the last-passage formulation)  while early derivations of the geometric case appeared in  \cite{Coh-Elk-Pro-96, Joc-Pro-Sho-98, Sep-98-mprf-1}.  
  Convex duality \eqref{eq:duality} becomes 
\begin{align*}
&\xi\in\ri\Uset\text{ is dual to $h$ if and only if }\\
&\qquad  h=\big(\Ew+\sigma^2\sqrt{{\xi\cdot e_1}/{\xi\cdot e_2}}+t,\, \Ew+\sqrt{{\xi\cdot e_2}/{\xi\cdot e_1}}+t\big),\quad t\in\R\,.\end{align*}
This in turn gives an explicit formula for $\gpl(h)$.

Since the $\gpp$ above  is  differentiable and strictly concave, all points of $\ri\Uset$ are exposed points of differentiability.   
 Theorem \ref{thm:buse} implies that Busemann functions \eqref{eq:grad:coc1} exist in all directions  $\xi\in\ri\Uset$. 
They  minimize formulas \eqref{eq:g:K-var} and \eqref{eq:gpp:K-var}
as given in \eqref{var15} and \eqref{var151}. 
For each $\xi\in\ri\Uset$ the processes $\{ B^\xi(ke_1, (k+1)e_1): k\in\Z_+\}  $
and $\{ B^\xi(ke_2, (k+1)e_2): k\in\Z_+\}  $ are i.i.d.\ processes  independent of each other, exponential or geometric depending on the case,  with means 
\be \label{geom:B}  
\begin{aligned} 
\E[B^\xi(ke_1, (k+1)e_1)]&=\Ew+\sigma  \sqrt{\xi\cdot e_2/\xi\cdot e_1}\\
 \E[B^\xi(ke_2, (k+1)e_2)]&=\Ew+\sigma \sqrt{\xi\cdot e_1/\xi\cdot e_2}\,.
 \end{aligned}\ee

  Section \ref{subsec:geo} gives the following results on geodesics.
Every infinite geodesic has a direction and for every fixed direction $\xi\in\ri\Uset$ there exists a $\xi$-geodesic. 
In the exponential case $\xi$-geodesics are unique and coalesce. In the geometric case uniqueness cannot hold, but there exists a unique leftmost $\xi$-geodesic out of each lattice point and
these leftmost $\xi$-geodesics coalesce. The same holds for rightmost $\xi$-geodesics. Finite (leftmost/rightmost) geodesics from $u\in\Z^2$ to $v_n$ converge to infinite (leftmost/rightmost) 
$\xi$-geodesics out of $u$, as $v_n/\abs{v_n}_1\to\xi$ with $\abs{v_n}_1\to\infty$. 

In the exponential case the  distribution of the asymptotic direction   $\cid$ of the competition interface  given by Theorem \ref{thm:ci-1}  can be computed explicitly.  For the angle  $\theta_*=\tan^{-1}(\cid\cdot e_2/\cid\cdot e_1)$ of the vector $\cid$, \begin{align}\label{eq:ci-exp}
\P\{\theta_*\le t\}=\frac{\sqrt{\sin t}}{\sqrt{\sin t}+\sqrt{\cos t}},\qquad t\in[0,\pi/2].
\end{align}

In the exponential  case these results for  geodesics and the competition interface were shown in \cite{Fer-Pim-05}.  This paper utilized   techniques  for geodesics  from \cite{New-95} and   the coupling of the exponential corner growth model with the totally asymmetric simple exclusion process (TASEP).   For this case our approach provides new proofs.  

The model with  geometric weights has a tree of leftmost geodesics with competition interface $\varphi^{(l)}=(\varphi^{(l)}_k)_{k\ge 0}$  and a tree of  rightmost geodesics with competition interface $\varphi^{(r)}=(\varphi^{(r)}_k)_{k\ge 0}$.    Note that  $\varphi^{(r)}$ is to the {\sl left} of  $\varphi^{(l)}$  because in \eqref{ci:19} there is now a middle range   $\Gpp_{e_2,(k,n-k)}-\Gpp_{e_1,(k,n-k)}=0$ that is to the right (left) of  $\varphi^{(r)}$ ($\varphi^{(l)}$).  Strict concavity of the limit $\gpp$ implies (with the arguments of Section \ref{sec:ci-pf})  the almost sure  limits 
\[  n^{-1}  \varphi^{(l)}_n \to \cid^{(l)} \quad \text{and}\quad 
n^{-1}  \varphi^{(r)}_n \to \cid^{(r)}. \]
The angles $\theta_*^{(a)} =\tan^{-1} ( \cid^{(a)}\!\cdot e_2/\cid^{(a)}\!\cdot e_1)$  ($a\in\{l,r\}$)  have these distributions (with $p_0=m_0^{-1}$ denoting the success probability of the geometric): for  $t\in[0,\pi/2]$ 
\be\label{geom:theta} \begin{aligned} 
\P\{\theta_*^{(r)}\le t\}&=\frac{\sqrt{(1-p_0)\sin t}}{\sqrt{(1-p_0)\sin t}+\sqrt{\cos t}}\,  \\
\text{and}\qquad  \P\{\theta_*^{(l)}\le t\}&=\frac{\sqrt{\sin t}}{\sqrt{\sin t}+\sqrt{(1-p_0)\cos t}}\, .\end{aligned}\ee
Taking $p_0\to0$ recovers \eqref{eq:ci-exp} of the exponential case.  
For the details, see Section \ref{sec:solv}.

\subsection{Flat edge in the percolation cone}

We describe a known nontrivial example where the assumption of  differentiable endpoints of  a maximal linear segment is satisfied.  A short detour into oriented percolation is needed.  

In {\sl oriented site percolation} vertices of $\Z^2$ are assigned i.i.d.\ $\{0,1\}$-valued random variables $\{\sigma_z\}_{z\in\Z^2}$ with $p_1=\P\{\sigma_0=1\}$.
For  points  $u\le v$ in $\Z^2$  we write   $u\to v$ (there is an open path from $u$   to $v$) if there exists an   up-right path $u=x_0, x_1,\dotsc, x_m=v$ with $x_{i}-x_{i-1}\in\{e_1,e_2\}$, $m=\abs{v-u}_1$, and such that $\sigma_{x_i}=1$ for $i=1,\dotsc,m$.  (The openness of a path does not depend on the weight at the initial point of the path.)  
  The {\sl percolation}  event  $\{u\to\infty\}$ is the existence of  an infinite open up-right path from point $u$.  
There exists a critical threshold $\pc \in (0,1)$ such that if $p_1 < \pc$ then $\P\{ 0\to{\infty}\} = 0$ and if $p_1 > \pc$
then $\P\{ 0\to\infty \} > 0$.   (The facts we need about  oriented site percolation  are proved in article  \cite{Dur-84}  for  oriented edge percolation.  The proofs apply to site percolation just as well.)    

   Let  $\cla_n = \{ u \in \Z^2_+: \abs{u}_1=n, \, 0 \to u\}$ denote the set of vertices on level $n$ that can be reached from the origin along open paths.   The  right edge   $a_n = \max_{u \in \cla_n}\{ u \cdot e_1 \}$ is  defined on the event $\{\cla_n \neq \varnothing\}$.    When $p_1\in(\pc,1)$ there exists a  constant $\beta_{p_1} \in (1/2, 1)$ such that   \cite[eqn.~(7) on p.~1005]{Dur-84}
		\[
			\lim_{n\to \infty} \frac{a_n}{n}\one\{ 0\to{\infty}\} = \beta_{p_1} \one\{ 0\to{\infty}\} \qquad
			\P\textrm{-a.s. }  
		\]
Let  $\etamax=(\beta_{p_1}, 1-\beta_{p_1})$ and $\etamin=(1-\beta_{p_1}, \beta_{p_1})$. The {\sl percolation cone} is  the set $\{\xi\in\R_+^2:  \xi/\abs{\xi}_1\in [\etamin, \etamax]\}$.  

The point of this for the corner growth model is that if the $\w$ weights have a  maximum that percolates,  $\gpp$ is linear on the percolation cone and differentiable on the edges.   This is the content of the next theorem.  



%

\begin{theorem} \label{th:flat-edge}  Assume that  $\{\w_x\}_{x\in\Z^2}$ are i.i.d., $\E\abs{\w_0}^p<\infty$ for some $p>2$ and  $ \w_x\le 1$.  Suppose  $\opc<p_1=\P\{\w_0=1\}<1$.     Let $\xi\in\Uset$.  Then $\gpp(\xi)\le 1$,  and $\gpp(\xi)=1$  if and only if $\xi\in [\etamin, \etamax]$.   The endpoints  $\etamin$ and  $\etamax$ are points of differentiability of $\gpp$.  

\end{theorem}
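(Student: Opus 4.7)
The plan is to break the proof into four steps: (i) the upper bound $\gpp \le 1$ on $\Uset$, (ii) the identity $\gpp \equiv 1$ on $[\etamin, \etamax]$, (iii) the strict inequality $\gpp(\xi) < 1$ for $\xi$ outside this segment, and (iv) differentiability of $\gpp$ at the endpoints $\etamin$ and $\etamax$. Step (i) is immediate: since $\w_x \le 1$, every admissible $n$-step path contributes at most $n$, so $\gpp(\xi) \le \abs{\xi}_1 = 1$ for $\xi \in \Uset$. For step (ii), the assumption $p_1 > \opc$ gives $\P(0 \to \infty) > 0$, and by \cite{Dur-84} the right-edge satisfies $a_n/n \to \beta_{p_1}$ on that event. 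Thus the random vertex $v_n = (a_n, n-a_n)$ satisfies $v_n/n \to \etamax$, and the open path from $0$ to $v_n$ consists entirely of weight-$1$ vertices, so $\Gpp_{0,v_n} \ge n - 1$. The uniform shape theorem of Appendix \ref{app:shape} together with continuity of $\gpp$ at the boundary of $\Uset$ then forces $\gpp(\etamax) \ge 1$; the $e_1 \leftrightarrow e_2$ symmetry yields $\gpp(\etamin) \ge 1$, and concavity gives $\gpp \equiv 1$ on $[\etamin, \etamax]$.

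For step (iii), take $\xi = (s, 1-s)$ with $s > \beta_{p_1}$ and fix $\eta > 0$ with $\beta_{p_1} + \eta < s$. I would run a renormalization argument: partition $\Z^2$ into disjoint $L \times L$ blocks, and in each block consider the local oriented-percolation right-edge $a_L^{\text{loc}}$. Durrett-type concentration gives $\P(a_L^{\text{loc}} \le (\beta_{p_1} + \eta)L) \ge 1 - \epsilon$ for $L = L(\epsilon)$ large. An up-right path of length $n$ targeting $\lfloor n\xi\rfloor$ must, on average, advance more than $(\beta_{p_1} + \eta)L$ in the $e_1$-direction per block it traverses; in any block where the concentration estimate holds, meeting this advance requires passing through a vertex with $\w < 1$, since an all-$1$ path would be blocked by the local right-edge. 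Independence of non-overlapping blocks combined with $\P(\w_0 < 1) = 1 - p_1 > 0$ produces order $n/L$ forced sub-$1$ vertices along every admissible path, each contributing at least $1 - \E[\w_0 \mid \w_0 < 1] > 0$ to the deficit, so $\gpp(\xi) \le 1 - c(\xi) < 1$.

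The main obstacle is step (iv). Writing $g(s) := \gpp(s, 1-s)$ on $[0,1]$, concavity together with $g \equiv 1$ on $[1-\beta_{p_1}, \beta_{p_1}]$ yields left derivative zero at $s = \beta_{p_1}$, so differentiability there reduces to the quantitative statement $g(\beta_{p_1} + t) = 1 - o(t)$ as $t \downarrow 0$. This is the technically demanding point and I would adapt the approach of \cite{Mar-02, Auf-Dam-13}. The idea is to build near-optimal paths by exploiting fluctuations of the right-edge: via renormalization and BK-type inequalities for oriented percolation, one shows that for $L$ large there exist open up-right paths from $0$ of length $L$ reaching lattice points with first coordinate at least $\beta_{p_1} L + \Delta_L$, where $\Delta_L$ grows fast enough that concatenating $\Theta(n/L(t))$ such paths for a carefully chosen $L(t)$ produces a path ending within $O(1)$ of $\lfloor n(\beta_{p_1} + t,\, 1 - \beta_{p_1} - t)\rfloor$ while leaving only $o(tn)$ vertices with weight $< 1$. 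This yields $g(\beta_{p_1}+t) \ge 1 - o(t)$ and closes the proof. Producing the fluctuation estimate $\Delta_L$ with sufficient quantitative strength is the crux and is where the adaptation of the first-passage percolation arguments demands the most care.
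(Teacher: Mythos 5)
Your steps (i) and (ii) match the paper's argument: $\gpp\le 1$ is trivial from $\w_x\le 1$, and $\gpp\equiv 1$ on the cone follows from tracking the right edge $a_n$ on $\{0\to\infty\}$ together with the $L^1$ shape theorem and concavity. Your step (iii) is also in the right spirit --- the paper's Proposition~\ref{prop:marchand} runs exactly the kind of block renormalization you describe, though it is more careful: it colors an $N$-block ``black'' if \emph{passage times} across the block in the relevant directions are uniformly below $N-\delta_0$ (rather than tracking a local right edge), shows whiteness is exponentially rare because it forces an oriented path in the thresholded process $\{\w\ge 1-\delta_0\}$, and then bounds the number of blocks a near-optimal path can traverse that are either white or traversed in a steep direction. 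You would want to reformulate your ``blocked by the local right-edge'' claim in those terms, but the renormalization skeleton is right.

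The genuine gap is in step (iv), and it is not just a technical obstacle you could grind through. You propose to exploit \emph{fluctuations} of the right edge: find open segments of length $L$ reaching $e_1$-coordinate $\beta_{p_1}L+\Delta_L$, choose $L=L(t)$ with $\Delta_L/L\approx t$, and concatenate $n/L$ of them, paying $O(1)$ in weight per splice so the total loss is $O(n/L)$. For this loss to be $o(tn)$ you need $1/L=o(t)$, i.e.\ $\Delta_L/L=t$ with $\Delta_L$ growing at least like $L^\epsilon$ for some $\epsilon>0$. No such polynomial lower bound on right-edge fluctuations for oriented percolation is available to you --- the conjectural KPZ exponent $1/3$ is not a theorem here --- and without it your choice of $L(t)$ is not justified. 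There is also an unaddressed splicing issue: each concatenated segment is an \emph{open} path, and a purely open path of length $n$ to first coordinate $(\beta_{p_1}+t)n$ cannot exist (it would contradict $a_n/n\to\beta_{p_1}$); so the splices must pass through sub-$1$ vertices, and you never specify why only $O(1)$ of them per splice suffice to re-enter an open path of the required strength.

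The paper's Appendix~\ref{sec:flat-diff-pf} avoids fluctuations entirely with a \emph{local configuration seeding} argument, which is what the cited \cite{Mar-02,Auf-Dam-13} actually provide. One fixes $\aaa\ge 2$ and a constant $c_0<1$ with $\P(c_0\le\w_0<1)>0$, and defines an ``$\aaa$-triangle'' at a site: a short column of sub-$1$ weights, one site with weight in $[c_0,1)$, and an $\aaa$-sized triangle of weight-$1$ sites. Whenever an $\aaa$-triangle occurs at the lowest point of the evolving oriented cluster, one artificially occupies an extra antidiagonal segment $\Saset_\aaa$ of length $\aaa$ and restarts. This (Lemma~\ref{op:lm4} and~\ref{dif:lm5}) gains expected $e_1$-displacement $\aaa\rho_\aaa$ per $\aaa$ time steps, where $\rho_\aaa>0$ is the fixed positive probability of an $\aaa$-triangle. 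Crucially (Lemma~\ref{op:lmG6}), each seeding costs at most $1-c_0$ in passage time because the ``jump'' passes through the single weight-$\ge c_0$ vertex. Comparing to the shape function over $n\aaa$ steps gives $\bar g(\beta_{p_1}+\rho_\aaa)\ge 1+(c_0-1)\rho_\aaa/\aaa$, hence the one-sided difference quotient is bounded below by $(c_0-1)/\aaa$, and sending $\aaa\to\infty$ yields $\bar g'(\beta_{p_1}+)=0$. This uses only the LLN for the right edge and positivity of $\rho_\aaa$ --- no fluctuation input at all --- which is what makes it work and what your proposal is missing.
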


The theorem above summarizes a development  that goes through   papers \cite{Auf-Dam-13,Dur-Lig-81,Mar-02}.  The proofs in the literature are for first-passage percolation.    We give a proof of Theorem \ref{th:flat-edge} in Appendix \ref{sec:cone-pf}, by adapting and simplifying  the  first-passage percolation arguments for the directed corner growth model.    

As a corollary,  our results that assume differentiable endpoints of a maximal linear segment are valid for the percolation cone.  

\begin{theorem}
Assume \eqref{2d-ass}, $ \w_x\le 1$ and   $\opc<p_1=\P\{\w_0=1\}<1$.  
There exists a stationary $L^1(\P)$ cocycle $\{B(x,y):x,y\in\Z^2\}$ and an event  $\Omega_0$ with $\P(\Omega_0)=1$ such that  the following statements hold for each 
$\w\in\Omega_0$.  Let   $v_n\in\Z_+^2$ be a sequence  such that 
\[\abs{v_n}_1\to\infty\quad\text{and}\quad 1-\beta_{p_1}\le\varliminf_{n\to\infty}\frac{v_n\cdot e_1}{\abs{v_n}_1}\le\varlimsup_{n\to\infty}\frac{v_n\cdot e_1}{\abs{v_n}_1}\le\beta_{p_1}. \]
Then 
	\[ B(\w, x,y) = \lim_{n\to \infty} \big( \Gpp_{x, v_n}(\w) - \Gpp_{y, v_n}(\w) \big)  \]
						 for all  $x,y\in\Z^2$. 
  For each $m\ge0$  there exists $n_0$ such that if $n\ge n_0$, then  any geodesic $x_{0,\abs{v_n}_1}$ from  $x_0=0$ to $v_n$ satisfies  $B(\w,x_i,x_{i+1})=\w_{x_i}$ for all $0\le i\le m$. 

Furthermore,  
\[\E[B(x,x+e_1)]=  \E[B(x,x+e_2)]=1. \] 
\end{theorem}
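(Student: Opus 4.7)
The plan is to obtain this statement as a direct corollary of Theorem \ref{thm:buse}, Theorem \ref{thm:buse-geo}\eqref{thm:buse-geo:iii}, and the flat-edge Theorem \ref{th:flat-edge}. Fix any $\xi$ in the relative interior of the segment $[\etamin, \etamax]$. Theorem \ref{th:flat-edge} asserts that $\gpp(\zeta) = 1$ for every $\zeta \in [\etamin, \etamax]$ and that both endpoints lie in $\Diff$. Consequently the maximal linear segment through $\xi$ is exactly $\Uset_\xi = [\etamin, \etamax]$, so that $\ximin = \etamin$ and $\ximax = \etamax$; differentiability of $\gpp$ at the interior point $\xi$ is automatic because the two one-sided gradients on the interior of a linear segment coincide. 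All three points $\ximin$, $\xi$, $\ximax$ therefore belong to $\Diff$, and the hypotheses of Theorem \ref{thm:buse} are satisfied.

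The direction condition placed on $v_n$ in the statement to be proved, namely that the asymptotic $e_1$-coordinate of $v_n/|v_n|_1$ lies in $[1-\beta_{p_1}, \beta_{p_1}]$, is precisely \eqref{eq:vn} for this choice of $\xi$. Theorem \ref{thm:buse} then produces the stationary $L^1(\P)$ cocycle $B$ and a full-measure event $\Omega_0$ on which the Busemann limit $B(\w, x, y) = \lim_{n\to\infty} (\Gpp_{x,v_n}(\w) - \Gpp_{y,v_n}(\w))$ holds for all $x, y \in \Z^2$ and every admissible sequence $v_n$. The claim about finite geodesics follows immediately from Theorem \ref{thm:buse-geo}\eqref{thm:buse-geo:iii} applied with the same $\xi$: for each fixed $m$, once $n$ is large enough, every geodesic $x_{0, |v_n|_1}$ from $0$ to $v_n$ satisfies $B(\w, x_i, x_{i+1}) = \w_{x_i}$ for $0 \le i \le m$.

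To evaluate the expectations, combine $1$-homogeneity of $\gpp$ with $\gpp \equiv 1$ on $[\etamin, \etamax]$ to conclude that $\gpp(\zeta) = \zeta \cdot e_1 + \zeta \cdot e_2$ on the entire two-dimensional percolation cone $\{\zeta \in \R_+^2 : \zeta/|\zeta|_1 \in [\etamin, \etamax]\}$. Hence $\nabla \gpp(\zeta) = (1,1)$ at every interior point of $[\etamin, \etamax]$, and by the one-sided limit relations \eqref{nabla-g-lim} the same gradient persists at the differentiability points $\etamin$ and $\etamax$. Applying \eqref{EB=Dg} at any $\zeta \in \Uset_\xi$ yields $\E[B(x, x+e_1)] = \E[B(x, x+e_2)] = 1$, which completes the argument.

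The main obstacle is not in the present deduction but in Theorem \ref{th:flat-edge} itself, specifically the assertion that $\etamin$ and $\etamax$ are points of differentiability of $\gpp$; this is the delicate input that will be supplied in Appendix \ref{sec:cone-pf} by adapting arguments of \cite{Auf-Dam-13, Mar-02} from first-passage percolation. Once that flat-edge information is granted, the statement here is a clean repackaging of the general Busemann and geodesic results into the vocabulary of the percolation cone.
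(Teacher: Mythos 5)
Your proposal is correct and follows exactly the path the paper intends: the statement is a direct corollary of Theorem \ref{thm:buse}, Theorem \ref{thm:buse-geo}\eqref{thm:buse-geo:iii}, and equation \eqref{EB=Dg}, once Theorem \ref{th:flat-edge} supplies the facts that $\gpp\equiv 1$ on $[\etamin,\etamax]$ and that $\etamin,\etamax\in\Diff$. The observations that interior points of the linear segment are automatically in $\Diff$ and that $1$-homogeneity forces $\nabla\gpp=(1,1)$ on $\Uset_\xi$ are both correct.
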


This completes the presentation of results and we turn to developing the proofs. 

\section{Convex duality}
\label{sec:duality}

  By homogeneity we can represent  $\gpp$ by a single variable function.  A way of doing this that ties in naturally with the  queuing  theory arguments we use later  is   to   define  
  	\begin{align}
		\gppa(s)=\gpp(1,s)=\gpp(s,1)\quad \text{for}  \quad 0\le  s<\infty.    \label{eq:gbar}
	\end{align}
Function   $\gppa$ is real-valued, continuous and  concave.  Consequently 
one-sided derivatives $\gppa'(s\pm)$ exist and are monotone: $\gppa'(s_0+)\ge\gppa'(s_1-)\ge\gppa'(s_1+)$  for $0\le s_0<s_1$.  
Symmetry and homogeneity of $\gpp$ give  $\gppa(s)=s\gppa(s^{-1})$. 
 
\begin{lemma} 
\label{gppa-lm}
The derivatives satisfy  $\gppa'(s\pm)>\Ew$ for all $s\in\R_+$, 	
 	$\gppa'(0+)=\infty$, and    
	$\gppa'(\infty-)\equiv\ddd\lim_{s\to\infty}\gppa'(s\pm) =\gppa(0)=\Ew$.   
	 \end{lemma}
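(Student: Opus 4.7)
The plan is to extract all three claims from two ingredients already quoted in the excerpt: the law-of-large-numbers value $\gppa(0)=\gpp(1,0)=\Ew$ (only one admissible path runs along an axis, so the limit is a sample mean), and Martin's boundary asymptotic $\gpp(1,t)=\Ew+2\sigma\sqrt{t}+o(\sqrt{t})$ as $t\searrow 0$. The bridge between the behavior near $s=0$ and near $s=\infty$ is the symmetry/homogeneity identity $\gppa(s)=s\gppa(1/s)$, which was noted in \eqref{eq:gbar} and the sentence after.

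The claim $\gppa'(0+)=\infty$ follows at once by dividing the Martin expansion by $s$: $(\gppa(s)-\gppa(0))/s = 2\sigma/\sqrt{s}+o(1/\sqrt{s})\to\infty$, using the nondegeneracy hypothesis $\sigma>0$ from \eqref{2d-ass}. For $\gppa'(\infty-)=\Ew$, I would invoke the standard fact that for a concave function on $\R_+$, the ratio $\gppa(s)/s$ is nonincreasing and $\lim_{s\to\infty}\gppa(s)/s=\lim_{s\to\infty}\gppa'(s+)$. By symmetry $\gppa(s)/s=\gppa(1/s)$, and continuity of $\gppa$ at $0$ gives $\gppa(1/s)\to\gppa(0)=\Ew$, yielding $\gppa'(\infty-)=\Ew$.

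The strict inequality $\gppa'(s\pm)>\Ew$ is the only step requiring real work. Monotonicity of the one-sided derivatives together with the limit at infinity immediately gives $\gppa'(s+)\ge\Ew$ for every $s\ge0$. Suppose for contradiction that $\gppa'(s_0+)=\Ew$ at some $s_0\ge 0$. Then concavity forces $\gppa$ to be affine with slope $\Ew$ on $[s_0,\infty)$, so $\gppa(s)-\Ew s$ is bounded for $s\ge s_0$. I would then transfer Martin's expansion through the symmetry $\gppa(s)=s\gppa(1/s)$ to obtain
\begin{align*}
\gppa(s)\;=\;s\bigl(\Ew+2\sigma\sqrt{1/s}+o(1/\sqrt{s})\bigr)\;=\;\Ew s+2\sigma\sqrt{s}+o(\sqrt{s})\quad\text{as }s\to\infty,
\end{align*}
so $\gppa(s)-\Ew s\to\infty$, a contradiction. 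Hence $\gppa'(s+)>\Ew$ for every $s\ge 0$, and concavity then also gives $\gppa'(s-)\ge\gppa'(s+)>\Ew$.

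The main (and essentially only) obstacle is the strict inequality: the value at infinity and the blow-up at zero are direct consequences of Martin's asymptotic, but ruling out asymptotic tangency of $\gppa$ to the line of slope $\Ew$ needs the large-$s$ expansion, which is obtained by pushing the small-$s$ expansion through the symmetry identity. Once this rigidity argument is in place, the proof is routine convex analysis on top of the quoted inputs.
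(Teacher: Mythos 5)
Your proof is correct and follows essentially the same route as the paper: Martin's boundary asymptotic gives $\gppa(0)=\Ew$ and $\gppa'(0+)=\infty$, the symmetry $\gppa(s)=s\gppa(1/s)$ yields $\gppa'(\infty-)=\Ew$, and pushing the small-$s$ expansion through the symmetry to obtain $\gppa(s)=\Ew s+2\sigma\sqrt{s}+o(\sqrt{s})$ rules out $\gppa'$ touching $\Ew$ on a half-line. You merely spell out explicitly the step (that $\gppa'(s_0+)=\Ew$ for some $s_0$ would, by monotonicity of one-sided derivatives and the boundary value at infinity, force $\gppa$ to be affine with slope $\Ew$ on $[s_0,\infty)$) that the paper leaves implicit.
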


\begin{proof}
	The boundary shape universality of J.~Martin 
	\cite[Theorem 2.4]{Mar-04} says that 
		\begin{align}
		\label{eq:g-asym}
			\gppa(s)=\Ew+2\sigma\sqrt{ s}+o(\sqrt{s}\,)  \quad \text{as} \;  s\searrow 0.  
		\end{align}
	This gives $\gppa(0)=\Ew$ and 
  	$\gppa'(0+)=\infty$.
 	Lastly,  
		\[
			\gppa'(\infty-)=\lim_{s\to\infty}s^{-1}\gppa(s)=\lim_{s\to\infty}\gppa(s^{-1})=\gppa(0)=\Ew.
		\]
	Martin's asymptotic \eqref{eq:g-asym}  and $\gppa(s)=s\gppa(s^{-1})$ give 
		\begin{align}
		\label{eq:g-asym2}
			\gppa(s)=s\Ew+2\sigma\sqrt{ s}+o(\sqrt{s}\,)  \quad \text{as} \;  s\nearrow \infty.  
		\end{align}
	This is incompatible with having $\gppa'(s)=\Ew$ for $s\ge s_0$ for any
	$s_0<\infty$.  
\end{proof}

The lemma above has two 
important geometric consequences: (i)   
any subinterval of $\Uset$ on which $\gpp$ is linear must lie in the interior $\ri\Uset$ and (ii) 
 the boundary $\{\xi:  \gpp(\xi)=1\}$  of the limit shape is asymptotic to the axes.  

Define  
	\begin{align}
		\f(\alpha)=\sup_{s\ge0}\{\gppa(s)-s\alpha\} \quad \text{for}  \quad  \Ew< \alpha<\infty. \label{eq:flux}
	\end{align}

\begin{lemma}\label{lm:f-properties}
	Function  $f$ is a strictly decreasing, continuous and convex involution 
	of the interval $(\Ew,\infty)$ onto itself, with  limits $\f(\Ew+)=\infty$ and $\f(\infty-)=\Ew$. That $f$ is an   involution means that $\f(\f(\alpha))=\alpha$.  
\end{lemma}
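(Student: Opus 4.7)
The plan is to recognize $f$ as the concave Legendre transform of $\gppa$ and exploit the homogeneity identity $\gppa(s)=s\gppa(1/s)$ to obtain the involution. Convexity and lower semicontinuity of $f$ are automatic from being a supremum of affine functions of $\alpha$. Finiteness on $(\Ew,\infty)$ follows from the asymptotic \eqref{eq:g-asym2}: for $\alpha>\Ew$, $\gppa(s)-s\alpha\to-\infty$ as $s\to\infty$, and together with continuity of $\gppa$ the supremum is attained at some $s^*(\alpha)\in[0,\infty)$. Since $\gppa'(0+)=\infty$ by Lemma \ref{gppa-lm}, no maximizer can sit at $0$, so $s^*(\alpha)>0$. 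Strict monotonicity then follows: for $\alpha<\alpha'$ and any maximizer $s^*>0$ of $f(\alpha')$, $f(\alpha')=\gppa(s^*)-s^*\alpha'<\gppa(s^*)-s^*\alpha\le f(\alpha)$. Continuity of $f$ on the open interval $(\Ew,\infty)$ is then automatic from convexity.

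For the endpoint limits and the claim $f$ maps into $(\Ew,\infty)$: using \eqref{eq:g-asym} near $s=0$, $\gppa(s)-s\alpha=\Ew+2\sigma\sqrt{s}-s\alpha+o(\sqrt{s})>\Ew$ for small $s>0$, hence $f(\alpha)>\Ew$. As $\alpha\searrow\Ew$, \eqref{eq:g-asym2} yields $f(\alpha)\ge\sup_{s\ge s_0}\{s(\Ew-\alpha)+\sigma\sqrt{s}\}\to\infty$. As $\alpha\to\infty$, the maximizer $s^*(\alpha)$ must go to $0$, since by concavity of $\gppa$ any maximizer satisfies $\gppa'(s^*-)\ge\alpha$ while $\gppa'$ is nonincreasing with $\gppa'(\infty-)=\Ew$; consequently $f(\alpha)=\gppa(s^*(\alpha))-s^*(\alpha)\alpha\to\gppa(0)=\Ew$.

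The involution is the heart of the matter. Substituting $t=1/s$ in the defining supremum and using $\gppa(s)=s\gppa(1/s)$ gives the alternate representation
\[
f(\alpha)=\sup_{s>0}s[\gppa(1/s)-\alpha]=\sup_{t>0}\frac{\gppa(t)-\alpha}{t}.
\]
Iterating, and using the primal bound $\gppa(t)-f(\alpha)\le t\alpha$ which follows directly from the original definition of $f(\alpha)$,
\[
f(f(\alpha))=\sup_{t>0}\frac{\gppa(t)-f(\alpha)}{t}\le\sup_{t>0}\frac{t\alpha}{t}=\alpha,
\]
with equality attained at $t=s^*(\alpha)>0$, where $\gppa(s^*)-f(\alpha)=s^*\alpha$ holds by construction. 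Hence $f(f(\alpha))=\alpha$. Combined with continuity, strict monotonicity, and the boundary limits, this shows $f$ is a continuous involutive bijection of $(\Ew,\infty)$ onto itself.

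The main obstacle is keeping the substitution $t=1/s$ legitimate, which requires maximizers to lie strictly inside $(0,\infty)$; this is exactly what $\gppa'(0+)=\infty$ from Lemma \ref{gppa-lm} provides. The remaining work for finiteness and endpoint behavior is done by Martin's boundary asymptotics \eqref{eq:g-asym} and \eqref{eq:g-asym2}.
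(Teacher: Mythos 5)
Your proof is correct, and the key step — the involution — is obtained by a genuinely different and, I think, slicker route than the paper's. The paper proves $\f(\f(\alpha))=\alpha$ by the supporting-hyperplane characterization \eqref{eq:f=g-sa} together with the one-sided derivative identity \eqref{eq:gppa(1/s)} obtained by differentiating $\gppa(s)=s\gppa(s^{-1})$, matching subdifferential intervals. You instead apply the change of variables $t=1/s$ directly in the defining supremum, which turns the homogeneity identity into the alternate representation $\f(\alpha)=\sup_{t>0}t^{-1}[\gppa(t)-\alpha]$; iterating and invoking the primal bound $\gppa(t)-\f(\alpha)\le t\alpha$ (from the very definition of $\f$) gives $\f(\f(\alpha))\le\alpha$ at once, with equality at an interior maximizer. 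This sidesteps any reference to derivatives of $\gppa$ and only needs $\gppa'(0+)=\infty$ to rule out a maximizer at $s=0$. The endpoint limits are also handled differently: the paper derives them cleanly once the involution and strict monotonicity are in hand (a continuous, strictly decreasing involutive bijection of $(\Ew,\infty)$ onto itself forces $\f(\Ew+)=\infty$, $\f(\infty-)=\Ew$), whereas you estimate them directly from Martin's asymptotics — both work, but the paper's is shorter once the involution is established. One cosmetic note: in the lower bound for $\f(\Ew+)=\infty$ you wrote $\sigma\sqrt{s}$ where the asymptotic \eqref{eq:g-asym2} gives $2\sigma\sqrt{s}$; since you are anyway discarding part of the constant and the argument only needs some positive coefficient on $\sqrt{s}$, this is harmless, but worth fixing for precision.
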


\begin{proof}
   	Asymptotics 
   	\eqref{eq:g-asym} and  \eqref{eq:g-asym2} imply  that   
	$\Ew< f(\alpha)<\infty$ for all $\alpha>\Ew$ and also that 
	the supremum in \eqref{eq:flux} is attained at some $s$. 
 	Furthermore, $\alpha<\beta$ implies $\f(\beta)=\gamma(s_0)-s_0\beta$ 
	with $s_0>0$ and $\f(\beta)<\gamma(s_0)-s_0\alpha\le\f(\alpha)$. 
	As a supremum of linear functions $\f$ is convex, and  
	hence  continuous on the open interval $(\Ew,\infty)$. 

We show how 	the   symmetry of $\gpp$ implies   that $\f$ is an involution.  
	By  concavity of $\gppa$,   
		\begin{align}
		\label{eq:f=g-sa}
			\f(\alpha)=\gppa(s)-s\alpha\quad\text{if and only if  $\alpha\in[\gppa'(s+),\gppa'(s-)]$}
		\end{align}
	and by Lemma \ref{gppa-lm} the intervals on the right cover $(\Ew,\infty)$. 
	Since $\f$ is strictly decreasing the above is the same as 
		\begin{align}
		\label{eq:a=g-f/s}
			\alpha=\gppa(s^{-1})-s^{-1}\f(\alpha)\quad\text{if and only if $\f(\alpha)\in[\f(\gppa'(s-)),\f(\gppa'(s+))]$.}
		\end{align}
	Differentiating $\gppa(s)=s\gppa(s^{-1})$ gives 
		\begin{align}
		\label{eq:gppa(1/s)}
			\gppa'(s\pm)=\gppa(s^{-1})-s^{-1}\gppa'(s^{-1}\mp).
		\end{align} 
	By \eqref{eq:f=g-sa} and  \eqref{eq:gppa(1/s)} 
	the condition in \eqref{eq:a=g-f/s} can be rewritten as
		 \begin{align}\label{eq:f=ga'}
			\f(\alpha)\in[\gppa(s)-s\gppa'(s-),\gppa(s)-s\gppa'(s+)]=[\gppa'(s^{-1}+),\gppa'(s^{-1}-)].
		 \end{align}
 Combining this with  \eqref{eq:f=g-sa} and  \eqref{eq:a=g-f/s} shows that $\alpha=\f(\f(\alpha))$.  
	The claim about the limits follows from $\f$ being a decreasing involution.
\end{proof}
%
%

Extend these functions to the entire real line by  $\gppa(s)=-\infty$ when $s<0$ and $\f(\alpha)=\infty$ when $\alpha\le\Ew$.  Then convex duality gives 
	\be\label{eq:ga=f*}
		\gppa(s)=\inf_{\alpha>\Ew}\{\f(\alpha)+s\alpha\}.
	\ee

The natural bijection between $s\in(0,\infty)$ and $\xi\in\ri\Uset$ that goes together with \eqref{eq:gbar} is 
\be\label{s-xi} s=\xi\cdot e_1/\xi\cdot e_2. \ee  
Then direct differentiation, \eqref{eq:f=g-sa} and \eqref{eq:gppa(1/s)} give 
		\begin{align}
		\label{grad g-1}
			\nabla\gpp(\xi\pm) 
			=\bigl(\gppa'(s\pm),\gppa'(s^{-1}\mp)\bigr)
			=\bigl(\gppa'(s\pm),\f(\gppa'(s\pm))\bigr).
		\end{align}
Since  $\f$ is linear on $[\gppa'(s+),\gppa'(s-)]$,    we  get the following connection between the gradients of   $\gpp$  and the graph of $\f$:   
		\be
		\label{f-9} 
			[\nabla\gpp(\xi+),\nabla\gpp(\xi-)]=\{(\alpha,\f(\alpha)):\alpha\in[\gppa'(s+),\gppa'(s-)]\} \qquad\text{for $\xi\in\ri\Uset$}.
		\ee
		
%

The next theorem details the duality between tilts $h$ and velocities $\xi$. 
 
\begin{theorem}\label{th:tilt-velocity}  
 	\begin{enumerate}[\ \ \rm(i)]
%
	\item\label{th:tilt-velocity:h->xi} Let $h\in\R^2$. There exists a unique $t=t(h)\in\R$ such that 
		\be
			h-t(e_1+e_2)\in-[\nabla\gpp(\xi+),\nabla\gpp(\xi-)]
		\label{eq:h-t}
		\ee
	 for some $\xi\in\ri\Uset$. 
	 The set  of  $\xi$  for which \eqref{eq:h-t} holds is a nonempty {\rm(}but possibly degenerate{\rm)} line segment 
 	$[\ximin(h),\ximax(h)]\subset\ri\Uset$.    If  $\ximin(h)\ne\ximax(h)$ then  $[\ximin(h),\ximax(h)]$ is a maximal line segment on which $\gpp$ is linear.  
	\item\label{th:tilt-velocity:xi-h} $\xi\in\ri\Uset$ and $h\in\R^2$ satisfy duality \eqref{eq:duality} if and only if     \eqref{eq:h-t} holds. 
	\end{enumerate}
\end{theorem}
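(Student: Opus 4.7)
My plan is to reduce both parts to the scalar duality between the involution $\f$ of Lemma \ref{lm:f-properties} and the one-sided derivatives $\gppa'(s\pm)$ of Lemma \ref{gppa-lm}, exploiting the parameterization $s=\xi\cdot e_1/\xi\cdot e_2$ and the explicit description \eqref{f-9} of the set of supergradients. By \eqref{f-9} every vector in $[\nabla\gpp(\xi+),\nabla\gpp(\xi-)]$ has the form $(\alpha,\f(\alpha))$ for some $\alpha\in[\gppa'(s+),\gppa'(s-)]$. Plugging this into \eqref{eq:h-t} and reading off the two components yields $h\cdot e_1=t-\alpha$ and $h\cdot e_2=t-\f(\alpha)$, which after subtraction gives the scalar equation $\alpha-\f(\alpha)=h\cdot e_2-h\cdot e_1$. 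Since $\f$ is a strictly decreasing continuous involution of $(\Ew,\infty)$ onto itself, the map $\alpha\mapsto\alpha-\f(\alpha)$ is strictly increasing and continuous with range $\R$, so there is a unique $\alpha(h)\in(\Ew,\infty)$ solving this scalar equation. It then determines $t(h)=h\cdot e_1+\alpha(h)$ uniquely, independent of the choice of $\xi$ in \eqref{eq:h-t}.

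Next I would describe the $\xi$'s: by \eqref{f-9}, a given $\xi\in\ri\Uset$ satisfies \eqref{eq:h-t} for this $t(h)$ iff $\alpha(h)\in[\gppa'(s+),\gppa'(s-)]$ where $s=\xi\cdot e_1/\xi\cdot e_2$. Because $s\mapsto\gppa'(s\pm)$ is nonincreasing with full range $(\Ew,\infty)$ by Lemma \ref{gppa-lm}, the set of admissible $s$ is a nonempty closed subinterval of $(0,\infty)$, which pulls back to a closed line segment $[\ximin(h),\ximax(h)]\subset\ri\Uset$, possibly a single point. If the segment is nondegenerate, then for every $s$ strictly interior to the $s$-interval the sandwich $\gppa'(s+)\le\alpha(h)\le\gppa'(s-)$ forces equality, so $\gppa$ is differentiable there with constant derivative $\alpha(h)$ and hence affine; translated back to $\Uset$ this says $\gpp$ is affine with slope $(\alpha(h),\f(\alpha(h)))$ on $[\ximin(h),\ximax(h)]$. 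Maximality is then automatic: any strictly longer linear segment with this slope would force $\gppa'=\alpha(h)$ on a strictly larger $s$-interval, contradicting the construction of $\ximin(h),\ximax(h)$ as the extremes.

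For part (ii) I would use that by \eqref{h-xi} and continuity of $\gpp$ on the compact set $\Uset$, a pair $\xi\in\ri\Uset,\,h\in\R^2$ satisfies \eqref{eq:duality} iff $\xi$ maximizes $\zeta\mapsto\gpp(\zeta)+h\cdot\zeta$ over $\Uset$. Since $\xi$ is in the relative interior, this is equivalent to the one-sided first-order conditions in the tangent direction $e_1-e_2$,
\begin{equation*}
\nabla\gpp(\xi+)\cdot(e_1-e_2)+h\cdot(e_1-e_2)\le 0\le \nabla\gpp(\xi-)\cdot(e_1-e_2)+h\cdot(e_1-e_2).
\end{equation*}
Substituting \eqref{grad g-1} turns this into
\begin{equation*}
\gppa'(s+)-\f(\gppa'(s+))\le h\cdot e_2-h\cdot e_1\le \gppa'(s-)-\f(\gppa'(s-)).
\end{equation*}
By the strict monotonicity of $\alpha\mapsto\alpha-\f(\alpha)$ this is the same as saying that the unique $\alpha(h)$ solving $\alpha-\f(\alpha)=h\cdot e_2-h\cdot e_1$ lies in $[\gppa'(s+),\gppa'(s-)]$, which is precisely condition \eqref{eq:h-t} from part (i). Hence duality at $\xi\in\ri\Uset$ is equivalent to \eqref{eq:h-t}.

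The step I expect to be the most delicate is the translation between the one-sided gradients $\nabla\gpp(\xi\pm)$ as originally defined and the pair $(\gppa'(s\pm),\f(\gppa'(s\pm)))$ appearing in \eqref{grad g-1}: one must keep straight that the $+$ side at $\xi$ corresponds to $s$ increasing and verify that the $(e_1-e_2)$ directional derivative along $\Uset$ is correctly captured componentwise by these one-sided partials. Once this bookkeeping is in order, both parts of the theorem follow directly from the monotonicity and range statements in Lemmas \ref{gppa-lm} and \ref{lm:f-properties}.
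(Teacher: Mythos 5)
Your proposal is correct, and for part (i) it follows the paper's argument essentially verbatim: both of you observe that the strictly decreasing graph $\{(\alpha,\f(\alpha))\}$ meets every $45^\circ$ diagonal in a unique point, giving the bijection $h\leftrightarrow(\alpha,t)$, and then pull back the admissible $\alpha$-interval to a closed segment in $\ri\Uset$ via \eqref{grad g-1}--\eqref{f-9} and the monotonicity of $\gppa'(s\pm)$.

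For part (ii) your route differs from the paper's in a small but real way, and I think it is worth recording. The paper first establishes $\gpl(-q)=0$ for every supergradient $q$ of $\gpp$ (eqn.\ \eqref{g(h)=0}) and uses the translation identity $\gpl(h+t(e_1+e_2))=\gpl(h)+t$ to compute $\gpl(h)=t(h)$; it then matches $\gpp(\xi)+h\cdot\xi$ against this value and reads off the bracketing condition $\alpha\in[\gppa'(s+),\gppa'(s-)]$. You instead reformulate duality \eqref{eq:duality} as $\xi$ being a maximizer of the concave function $\zeta\mapsto\gpp(\zeta)+h\cdot\zeta$ on $\Uset$, and for $\xi\in\ri\Uset$ this reduces to the two one-sided first-order conditions in the tangent direction $e_1-e_2$. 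This is valid here because, by \eqref{f-9}, the superdifferential of $\gpp$ at $\xi$ is exactly the segment $[\nabla\gpp(\xi+),\nabla\gpp(\xi-)]$, so the one-sided directional derivatives along $\Uset$ are indeed $\nabla\gpp(\xi\pm)\cdot(e_1-e_2)$ — the bookkeeping point you flag at the end. Your version is more self-contained (it never evaluates $\gpl$), while the paper's version gives the bonus identity $\gpl(h)=t(h)$ which it reuses later in Theorem \ref{th:var-sol}; both are legitimate and rest on the same bijection and the monotonicity of $\alpha\mapsto\alpha-\f(\alpha)$.
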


\begin{proof}
	The graph $\{(\alpha,\f(\alpha)):\alpha>\Ew\}$ is strictly decreasing with 
	limits $f(\Ew+)=\infty$ and $f(\infty-)=\Ew$.  Since every 45 degree diagonal 
	intersects it at a unique point, the equation
	\begin{align}\label{eq:h-(a,t)}
			h=-(\alpha,\f(\alpha))+t(e_1+e_2) 
		\end{align}
defines  a bijection  $\R^2\ni h\longleftrightarrow (\alpha,t)\in(\Ew,\infty)\times\R$  illustrated in Figure \ref{fig:3.3}. 
Combining this with \eqref{f-9} shows that \eqref{eq:h-t} happens for a unique $t$ and for at least one $\xi\in\ri\Uset$.  

\begin{figure}[h]
	\begin{center}
		\begin{tikzpicture}[>=latex,  scale=.4]

			\draw[<->](0,15)--(0,0)--(15,0)node[right]{$\alpha$};
			\draw[dashed] (2,0)node[below]{$m_0$}--(2,15);
			\draw[dashed] (0,2)node[left]{$m_0$}--(15,2);
			\draw[semitransparent] (0,0) -- (13,13);
			
%
			\draw [line width=2.5pt, color= nicosred](3,6)--(4,4)--(6,3);
			\draw[domain=6:15,variable=\x,color=nicosred,line width=2.5pt]plot[smooth]({\x},{2+6/\x});
			\draw[domain=6:15,variable=\y,color=nicosred,line width=2.5pt]plot[smooth]({2+6/\y},{\y});
			
			\draw[semitransparent] (25/4,1)--(15,39/4);
			\draw(11,5.9)node[left]{$t(e_1+e_2)$}; 
			\draw[->, line width=2pt](13,31/4)node[right]{$-h$}--(8,11/4);
			\shade[ball color=red](8,11/4)circle(2.5mm);
			\shade[ball color=red](13,31/4)circle(2.5mm);
			\draw(11.5,11/4) node[above]{$(\alpha, f(\alpha))$}; 
			\shade[ball color=red](13,31/4)circle(2.5mm);
		\end{tikzpicture}
	\end{center}
	\caption{The graph of $\f$ and bijection \eqref{eq:h-(a,t)} between $(\alpha, t)$ and $h$.
			}
	\label{fig:3.3}
\end{figure}
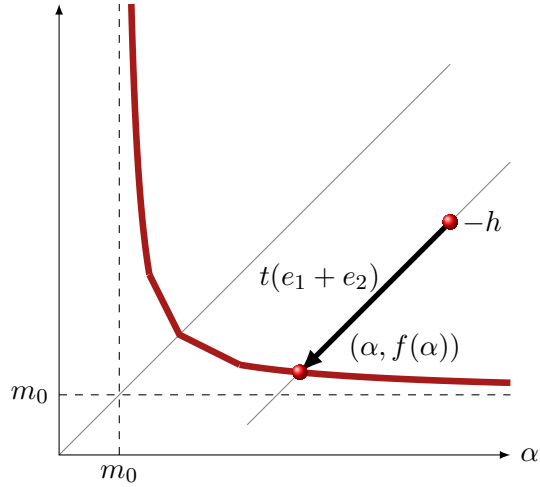

%
%
%
%
%

Once $h$ and $t=t(h)$ are given,  the geometry of the gradients (\eqref{grad g-1}--\eqref{f-9} and   limits \eqref{nabla-g-lim}) can be used to argue the claims about the $\xi$   that satisfy \eqref{eq:h-t}.    This proves part \eqref{th:tilt-velocity:h->xi}.  

That $h$ of the form \eqref{eq:h-t} is dual to $\xi$ follows readily from the fact that gradients are dual and  $\gpl(h+t(e_1+e_2))=\gpl(h)+t$ (this last from Definition \eqref{eq:g:p2l}).   

Note the following general facts for  any $q\in[\nabla\gpp(\zeta+),\nabla\gpp(\zeta-)]$.    By concavity   $\gpp(\eta)\le \gpp(\zeta)+q\cdot(\eta-\zeta)$ for all $\eta$.   Combining this with homogeneity gives $\gpp(\zeta)=q\cdot\zeta$.  Together with duality \eqref{h-xi}  we have 
	\begin{align}\label{g(h)=0}
	 \gpl(-q)=0  \; \text{ for } \; q\in\bigcup_{\zeta\in\ri\Uset}[\nabla\gpp(\zeta+),\nabla\gpp(\zeta-)]. 
	\end{align}  

It remains to show that if  $h$  is dual to $\xi$ then it satisfies \eqref{eq:h-t}.  Let $(\alpha, t)$ be determined by \eqref{eq:h-(a,t)}.  From the last two paragraphs  
\[  \gpl(h)=  \gpl(-\alpha, -\f(\alpha))+t =t. \]
  Let $s=\xi\cdot e_1/\xi\cdot e_2$  so that 
 	\[\gpp(\xi)+ h\cdot\xi=  \frac{\gppa(s)}{1+s}-\frac{\alpha s+\f(\alpha)}{1+s}  +t. \]
 Thus duality $\gpl(h)=\gpp(\xi)+ h\cdot\xi$ implies   $\gppa(s)=\alpha s+\f(\alpha)$ which happens if and only if $\alpha\in[\gppa'(s+),\gppa'(s-)]$.    \eqref{f-9} now implies \eqref{eq:h-t}. 
\end{proof}

\section{Stationary cocycles}\label{sec:cocycles}

In this section we begin with the   stationary cocycles, then show how these define stationary last-passage percolation processes and also solve the variational formulas for $\gpp(\xi)$ and $\gpl(h)$. 

\subsection{Existence and properties of stationary cocycles} 
We come to the technical centerpiece of the paper.  By appeal to queueing fixed points,   in Appendix \ref{app:q} we construct a family of cocycles  $\{B^\xi_{\pm}\}_{\xi\in\ri\Uset}$  on an extended space   $\Ombig=\Omega\times\Omega'$.     The next theorem gives the existence statement  and summarizes the properties of these cocycles.   
Assumption  \eqref{2d-ass} is in force.  This is the only place where our proofs actually use the assumption $\P(\w_0\ge c)=1$,  and the only reason   is that the queueing results we reference have been proved only for $\w_0\ge 0$.  

    The cocycles of interest are related to the last-passage weights in the manner described in the next definition.  A potential is simply   a measurable function $V:\Omhat\to\R$.   The case relevant to us will be  $V(\what)=\w_0$ where $\what=(\w, \w')\in\Ombig$ is a configuration in the extended space and contains the original weights $\w$ as a component. 
	\begin{definition}
		\label{def:bdry-model}
			A stationary $L^1$ cocycle $B$ on $\Omhat$  {\rm recovers} 
			potential $V$ if 
				\be \label{eq:VB}
					V(\what)=\min_{i\in\{1,2\}} B(\what, 0,e_i)\quad\text{ for }\Pbig\text{-a.e.\ }\what.
				\ee
	 \end{definition}

 The extended space 
 is  the Polish space $\Ombig=\Omega\times \R^{\{1,2\}\times\cA_0\times\Z^2}$  where $\cA_0$ is a certain countable subset of the interval $(\Ew, \infty)$.  A precise  description of $\cA_0$ appears  in the beginning of the proof of Theorem \ref{th:cocycles} on page \pageref{A_0}  in Appendix \ref{app:q}.  Let   $\kSbig$ denote the Borel $\sigma$-algebra of $\Ombig$.     Generic elements  of $\Ombig$ are denoted by 
$\what=(\w, \w')$ with $\w=(\w_x)_{x\in\Z^2}\in\Omega=\R^{\Z^2}$ as before and $\w'=(\w^{i,\alpha}_x)_{i\in\{1,2\}, \,\alpha\in\cA_0, \, x\in\Z^2}$.   Spatial translations act on the $x$-index in the usual manner:  $(T_x\what)_y=\what_{x+y}$ for $x,y\in\Z^2$ where $\what_x=(\w_x,  \w'_x)=(\w_x,  (\w^{i,\alpha}_x)_{i\in\{1,2\}, \,\alpha\in\cA_0})$.

 
 \begin{theorem}\label{th:cocycles}
There exist functions  $\{B^{\xi}_{+}(\what, x,y),  B^{\xi}_{-}(\what, x,y):x,y\in\Z^2,\xi\in\ri\Uset\}$ on $\Ombig$ and a translation invariant 
  Borel probability measure $\Pbig$ on the space $(\Ombig, \kSbig)$ such that  the following properties hold.  
\begin{enumerate}[\ \ \rm(i)]  
\item\label{th:cocycles:indep} 
For each $\xi\in\ri\Uset$, $x\in\Z^2$, and   $i\in\{1,2\}$,  the  function   $B^\xi_{\pm}(\what, x,x+e_i)$ is a function only of  $(\w_x^{i,\alpha}:\alpha\in\cA_0)$.  
Under $\Pbig$, the marginal distribution of the configuration $\w$ is the i.i.d.\ measure $\P$ specified in assumption \eqref{2d-ass}. 
The  $\R^3$-valued  process  $\{\varphi^{\xi,+}_x\}_{x\in\Z^2}$ defined by   
\[ \varphi^{\xi,+}_x(\what)=(\w_x, B^{\xi}_{+}(x,x+e_1), B^{\xi}_{+}(x,x+e_2))\] is separately ergodic under both translations $T_{e_1}$ and $T_{e_2}$.  The same holds with $\xi+$ replaced by $\xi-$.  For each $z\in\Z^2$,  the variables 
$\{ (\w_x, B^{\xi}_{+}(\what,x,x+e_i), B^{\xi}_{-}(\what,x,x+e_i)) : x\not\le z, \, \xi\in\ri\Uset, \, i\in\{1,2\} \}$ are  independent of $\{ \w_x : x\le z \}$.   \\[-8pt]

 \item\label{th:cocycles:exist}  Each process $B^{\xi}_{+}=\{B^{\xi}_{+}(x,y)\}_{x,y\in\Z^2}$ is a stationary $L^1(\Pbig)$ cocycle {\rm(}Definition \ref{def:cK}{\rm)}  that recovers the  potential $V(\what)=\w_0$ {\rm(}Definition \ref{def:bdry-model}{\rm)}, and the same is true of $B^{\xi}_{-}$.   The associated tilt vectors   $h_\pm(\xi)=h(B^{\xi}_{\pm})$ defined by \eqref{eq:EB}  satisfy
					\begin{align}\label{eq:h=grad} h_\pm(\xi)=-\nabla\gpp(\xi\pm)\end{align}
				and are dual  to velocity $\xi$ as in \eqref{eq:duality}.   \\[-8pt]
				
	\item\label{th:cocycles:flat} 			 No two distinct cocycles have  a common tilt vector.  That is, if $h(\xi+)=h(\zeta-)$ then $B^{\xi}_{+}(\what,x,x+e_i)=B^{\zeta}_{-}(\what,x,x+e_i)$, 
and similarly $h(\xi+)=h(\zeta+)$ implies  $B^{\xi}_{+}(\what,x,x+e_i)=B^{\zeta}_{+}(\what,x,x+e_i)$.    These equalities  hold   without any  almost sure modifier    because they come for each $\what$  from the construction.   In particular,  if   $\xi$ is a point of differentiability for $\gpp$ then 
	\[  	 B^{\xi}_{+}(\what,x,x+e_i)= B^{\xi}_{-}(\what,x,x+e_i) = B^{\xi}(\what,x,x+e_i)
	 \]
	where the second equality defines the cocycle $B^{\xi}$.     \\[-8pt]
	
\item\label{th:cocycles:cont}     The following inequalities hold  $\Pbig$-almost surely, simultaneously for all    $x\in\Z^2$ and  $\xi,\zeta\in\ri\Uset$: if     $\xi\cdot e_1<\zeta\cdot e_1$ then  
					\be\begin{aligned}  
						B^{\xi}_{-}(x,x+e_1) &\ge B^{\xi}_{+}(x,x+e_1) \ge B^{\zeta}_{-}(x,x+e_1) \\\quad\text{and}\quad B^{\xi}_{-}(x,x+e_2) &\le B^{\xi}_{+}(x,x+e_2) \le B^{\zeta}_{-}(x,x+e_2). 
					\end{aligned} \label{eq:monotone} 
					\ee
					
Fix $\zeta\in\ri\Uset$ and  	let  $\xi_n\to\zeta$ in $\ri\Uset$.  If $\xi_n\cdot e_1\searrow\zeta\cdot e_1$  then for all $x\in\Z^2$ and  $i=1,2$ 
					\begin{align}\label{eq:cont}
						\lim_{n\to\infty}B^{\xi_n}_{\pm}(x,x+e_i) = B^{\zeta}_{+}(x,x+e_i)\qquad \text{$\Pbig$-a.s.\ and in $L^1(\Pbig)$.}   
					\end{align}
Similarly,  if $\xi_n\cdot e_1\nearrow\zeta\cdot e_1$,  limit  {eq:cont} holds $\Pbig$-a.s.\ and in $L^1(\Pbig)$ with $\zeta+$ replaced by  $\zeta-$ on the right.  
\end{enumerate} 
\end{theorem}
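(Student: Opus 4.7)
The plan is to construct the cocycles from queueing fixed points via the well-known dictionary between last-passage percolation and the $\cdot/G/1/\infty$ single-server queue. Reading LPP column by column, $B(x,x+e_1)$ plays the role of an interarrival time and $B(x,x+e_2)$ the role of an inter-departure (i.e.\ arrival at the next column), while $\w_x$ is the service time. The cocycle recovery property \eqref{eq:VB} is then exactly the Lindley/$\max$-plus fixed point equation $B(0,e_2) = \max\bigl(\w_0, \, \w_0 + B(e_1,e_1+e_2) - B(0,e_1)\bigr)$, and existence of a translation-invariant stationary interarrival law with prescribed mean $\alpha \in (\Ew,\infty)$ is precisely the Mairesse--Prabhakar queueing fixed point theorem. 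So the first step is to extract, for each $\alpha$ in a countable dense subset $\cA_0 \subset (\Ew,\infty)$ chosen to contain the tilts of all endpoints of linear segments of $\gppa$, the fixed-point law $\mu^\alpha$ and use it to define a stationary cocycle $B^\alpha$ on $\Omega$ with $\E[B^\alpha(0,e_1)] = \alpha$ and, by the queueing conservation identity, $\E[B^\alpha(0,e_2)] = f(\alpha)$.

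Next I would bundle all these cocycles onto one extended probability space $\Ombig = \Omega \times \R^{\{1,2\}\times \cA_0 \times \Z^2}$, so that for each site $x$ the values $\{B^\alpha(\what,x,x+e_i) : \alpha \in \cA_0, i \in \{1,2\}\}$ are measurable functions of $(\w_x^{i,\alpha})_{i,\alpha}$ only. The coupling is engineered so that the $\w$-marginal remains the i.i.d.\ law $\P$, and so that monotonicity in $\alpha$ holds pointwise: larger $\alpha$ (slower nominal arrival) gives smaller horizontal and larger vertical increments. This is the content of the monotone coupling of queueing fixed points in Prabhakar's work, and it is the step that will encode \eqref{eq:monotone}. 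The strict northeast independence in (i) should then fall out of the construction, because each $B^\alpha(x,\cdot)$ is built from service times at sites strictly northeast of $x$ (one reads the stationary regime \emph{into} the corner from infinity), making it independent of $\{\w_y : y \le z\}$ for $z$ strictly southwest of $x$. Separate ergodicity under $T_{e_1}$ and $T_{e_2}$ comes from the ergodicity of the Mairesse--Prabhakar fixed points under arrival and departure shifts, respectively.

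Now I would translate from the $\alpha$-indexing to the $\xi$-indexing using Theorem \ref{th:tilt-velocity} and \eqref{f-9}: the bijection $\alpha \longleftrightarrow (\alpha,f(\alpha))$ together with the identification $[\nabla\gpp(\xi+),\nabla\gpp(\xi-)] = \{(\alpha,f(\alpha)) : \alpha \in [\gppa'(s+),\gppa'(s-)]\}$ prescribes which $\alpha$ should serve as the tilt for $B^{\xi\pm}$. For $\xi$ whose associated tilt parameter is already in $\cA_0$ we set $B^{\xi\pm} = B^\alpha$; for general $\xi$ we define $B^{\xi\pm}$ as the monotone limit along $\cA_0$ of the appropriate one-sided net, which exists pointwise by the monotonicity enforced in the coupling. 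This automatically gives \eqref{th:cocycles:flat}, since cocycles with identical tilt come from the same underlying $\alpha$ and hence are literally equal, not merely equal in law. The tilt identity \eqref{eq:h=grad} then follows from $\E[B^\alpha(0,e_1)]=\alpha$, $\E[B^\alpha(0,e_2)]=f(\alpha)$ combined with \eqref{grad g-1}. Continuity \eqref{eq:cont} follows by a monotone-convergence argument: the one-sided limits along $\xi_n \to \zeta$ exist pointwise by monotonicity, the means converge to $-\nabla\gpp(\zeta\pm)\cdot e_i$ by \eqref{nabla-g-lim}, so Scheffé gives $L^1$ convergence and, together with monotonicity, a.s.\ convergence.

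The main obstacle, as I see it, is engineering the simultaneous monotone coupling of uncountably many queueing fixed points with prescribed marginals on one probability space, while preserving both the i.i.d.\ structure of the $\w$-marginal and the strict northeast independence. The Mairesse--Prabhakar theorem only gives existence and uniqueness of each $\mu^\alpha$ separately; the joint coupling that monotonically interlaces them and is driven by a single $\Z^2$-indexed service field $\w$ is a genuinely nontrivial construction, and it is what forces the enlargement to $\Ombig$ with the auxiliary coordinates $(\w_x^{i,\alpha})$. All later properties in the paper — Busemann limits, geodesic structure, variational corrector identities — are pulled back from this coupling, so getting the construction right in this single theorem is what the rest of the paper rests on.
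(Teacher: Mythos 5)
Your proposal follows essentially the same route as the paper: Mairesse–Prabhakar queueing fixed points indexed by a countable set $\cA_0$ of tilt parameters $\gppa'(s\pm)$, simultaneously coupled on the extended space $\Ombig$ so that the $\w$-marginal stays i.i.d.\ and monotonicity in $\alpha$ holds pointwise (the paper achieves this by starting from deterministic interarrivals and passing to a weak limit), followed by one-sided monotone extension to all $\xi\in\ri\Uset$. The one place you understate the work is the $T_{e_2}$-ergodicity: this is not immediate from the fixed-point ergodicity in the arrival direction but requires the transposition identity $(\Ap_{i,j},\Sp_{i,j},\Wp_{i,j})=(\W_{j-1,i+1}+\S_{j-1,i+1},\S_{j,i},\A_{j-1,i+1}-\S_{j,i})$ together with the verification that $\f(\alpha)\in\cA$, which is exactly what Lemma~\ref{q:erg-lm} does.
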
\smallskip

\begin{remark}\label{rm:cocycles} 
The construction of the cocycles has this property:  there is  a   countable dense set $\Uset_0$ of $\Uset$ such that, for $\xi\in\Uset_0$,   the cocycles are coordinate projections  $B^{\xi}_{\pm}(x,x+e_i)=\w^{i, \gppa(s\pm)}_x$ where $s$ is defined by \eqref{s-xi}.  A point  $\zeta\in\Uset\smallsetminus\Uset_0$ will lie in $\Diff$ and we define $B^\zeta$ through one-sided limits from  $B^{\xi}_{\pm}$, $\xi\in\Uset_0$.   We comment next on various technical properties of the cocycles that are important for the sequel.  

\smallskip

(a) A natural question is whether  $B^{\xi}_{\pm}(\what, x, y)$    can be defined as a function of $\w$ alone, or equivalently, whether it is  $\kS$-measurable.  This is important because we  use the cocycles to solve the variational formulas for the limits and to  construct geodesics, and it would be desirable to work on the original weight space $\Omega$ rather than on the artificially extended space $\Ombig=\Omega\times\Omega'$.    We can make this $\kS$-measurability claim for those cocycles that arise as Busemann functions or their limits. 
(see Remark \ref{bu:kS} below).  

\smallskip

(b)   By part (\ref{th:cocycles:flat}),   if $\gpp$ is linear on the line segment $[\xi',\xi'']\subset\ri\Uset$ with $\xi'\cdot e_1<\xi''\cdot e_1$, then  
\begin{align*}
  B^{\xi'}_{+}(\what, x, x+e_i)=B^{\xi}(\what, x, x+e_i)&=B^{\xi''}_{-}(\what, x, x+e_i) \\
    & \qquad \qquad \forall\,   \what\in\Ombig, \, \xi\in\;]\xi', \xi''[ \,, \ i\in\{1,2\}. 
    \end{align*}
The equalities in part (\ref{th:cocycles:flat}) do not extend to $B^{\xi}_{+}(x,y)$ for all $x, y$ without exceptional $\Pbig$-null sets because the additivity   $ B^{\xi}_{+}(x,z)=B^{\xi}_{+}(x,y)+B^{\xi}_{+}(y,z)$ cannot be  valid  for each $\what$, only almost surely.   

\smallskip

(c)  When we use these cocycles to construct geodesics in Section \ref{sec:geod},  it is convenient to have a  single null set outside of which the ordering \eqref{eq:monotone} is valid for all $\xi, \zeta$.   For the countable family  $\{ B^{\xi}_{\pm}\}_{\xi\in\Uset_0}$ we can arrange for  \eqref{eq:monotone} to hold outside a single $\Pbig$-null event.  By defining  $\{B^\zeta\}_{\zeta\in\Uset\smallsetminus\Uset_0}$ through   limits from the left,  we extend inequalities   \eqref{eq:monotone} to all $\xi, \zeta\in\ri\Uset$ outside a single null set.  This is good enough for a definition of the entire family $\{B^{\xi}_{\pm}\}_{\xi\in\ri\Uset}$.   But in order to claim that limits from left and right agree at a particular $\zeta$, we have to allow for an exceptional $\Pbig$-null event that is specific to $\zeta$.   Thus the limit \eqref{eq:cont} is not claimed outside a single null set for all $\zeta$.  


(d)  When $\P\{\w_0\le r\}$ is a continuous function of $r$ it is natural to ask whether  $B^\xi(x,y)$ can be modified to be continuous in $\xi$.    We do not know the answer.  




\end{remark}

\subsection{Stationary last-passage percolation} \label{sec:stat-lpp}

Fix a  cocycle $B(\what, x,y)=B^{\xi}_{\pm}(\what, x,y)$ from Theorem \ref{th:cocycles}.  Fix a point $v\in\Z^2$ that will serve as an origin.   
By part \eqref{th:cocycles:indep} of Theorem \ref{th:cocycles}, the weights $\{\w_x: x\le v-e_1-e_2\}$ are independent of  $\{ B(v-(k+1)e_i, v-ke_i): k\in\Z_+, \, i\in\{1,2\}\}$.  These define a stationary last-passage percolation process in the third quadrant relative to the  origin at $v$ in the following sense.   Define  passage times $\Gne_{u,v}$ that  use the cocycle as 
edge weights   on the 
north and  east boundaries   and  weights  $\w_x$  in the bulk  $x\le v-e_1-e_2 $: 	\begin{align}
	\label{eq:Gnecorner}
		\begin{split}
		\Gne_{u,v}&=B(u,v)\quad\text{for $u\in\{v-ke_i: k\in\Z_+, \, i\in\{1,2\}\}$}\\
	\qquad 	\text{and} \qquad 		\Gne_{u,v}
			&=\w_u+\Gne_{u+e_1,v}\vee \Gne_{u+e_2,v}\qquad \text{for } \  u\le v-e_1-e_2\,.
		\end{split}
	\end{align}
It is immediate from  recovery 
$\w_x=B(x,x+e_1)\wedge B(x,x+e_2)$     and additivity of $B$ that 
\[\Gne_{u,v}=  B(u,v)  \qquad \text{for all $u\le v$.  } \] 
Process $\{\Gne_{u,v}: u\le v\}$ is stationary in the sense that the increments
\be\label{stat-lpp5} \Gne_{x,v}-\Gne_{x+e_i,v} = B(x,x+e_i)  \ee
are stationary under lattice translations and, as the equation above reveals,  do not depend on the choice of the origin $v$ (as long as we stay southwest of the origin).

 \begin{remark}   In the exactly solvable cases where $\w_x$ is either exponential or geometric,  more is known.  Given the stationary cocycle,  define weights 
 \[Y_x=B(x-e_1,x)\wedge B(x-e_2,x).  \]
 Then the  weights   $\{Y_x\}$ have the same i.i.d.\ distribution as the original weights $\{\w_x\}$.  Furthermore,  $\{Y_x: x\ge v+e_1+e_2\}$ are independent of  $\{ B(v+ke_i, v+(k+1)e_i): k\in\Z_+, \, i\in\{1,2\}\}$.   Hence a  stationary last-passage percolation process can be defined   in the first quadrant with cocycles on the south and west boundaries: 
\begin{align*} 	
		\begin{split}
		\Gsw_{v, x}&=B(v,x)\quad\text{for $x\in\{v+ke_i: k\in\Z_+, \, i\in\{1,2\}\}$}\\
	\qquad 	\text{and} \qquad 		\Gsw_{v,x}
			&=Y_x+\Gsw_{v, x-e_1}\vee \Gsw_{v, x-e_2}\qquad \text{for } \  x\ge v+e_1+e_2\,.
		\end{split}
	\end{align*}

   This feature appears in \cite{Bal-Cat-Sep-06}  as the ``Burke property'' of the exponential last-passage model.  It also works for the log-gamma polymer in positive temperature \cite{Geo-etal-15-, Sep-12}.  We do not know presently if this works in the general last-passage case.  It would follow for example if we knew that the distributions of the  cocycles of Theorem \ref{th:cocycles} satisfy this lattice  symmetry:  $\{B(x,y): x,y\in\Z^2\}\overset{d}=\{B(-y,-x): x,y\in\Z^2\}$.  
 \end{remark}
  
\subsection{Solution to the variational formulas}

We solve  variational formulas  \eqref{eq:g:K-var}--\eqref{eq:gpp:K-var}    with the cocycles on the extended space $\OBPbig$.    Once we have identified some cocycles as Busemann functions in Section \ref{sec:busemann},   we prove  Theorem \ref{thm:var-buse}   as a corollary at the end of Section \ref{sec:busemann}.  

	Theorem 3.6 in \cite{Geo-Ras-Sep-13a-}  says that if a cocycle  $B$ recovers $V(\what)$,  $h(B)$ is defined by  \eqref{eq:EB},  and  centered cocycle $F$ is defined by   \eqref{FF}, then
$F$ is a minimizer in  \eqref{eq:g:K-var} for any $h\in\R^2$ that 
satisfies $(h-h(B))\cdot(e_2-e_1)=0$.
For  such $h$,  the essential supremum over $\what$  in \eqref{eq:g:K-var} disappears and we have 
\be \label{eq:Kvar:min}	\begin{aligned}
	\gpl(h)&\;=\;\max\{V(\what)+h\cdot e_1+F(\what, 0,e_1),V(\what)+h\cdot e_2+F(\what, 0,e_2)\}\\
	&\;=\;(h-h(B))\cdot z  \qquad \text{ for $\Pbig$-a.e.\ $\what$ and any $z\in\{e_1,e_2\}$.
} 
	\end{aligned}\ee

 Recall from Theorem \ref{th:tilt-velocity} that $\gpp$ is linear over each line segment   $[\ximin(h),\ximax(h)]$ and hence, 
by part \eqref{th:cocycles:flat} of Theorem \ref{th:cocycles}, 
cocycles $B^\xi$ (and hence the  tilts $h(\xi)$ they define) coincide for all $\xi\in\,]\ximin(h),\ximax(h)[$. 

\begin{theorem}\label{th:var-sol}
	Let $\{ B^{\xi}_{\pm}\}$ be the cocycles given  in  Theorem \ref{th:cocycles}.  Fix $h\in\R^2$. Let $t(h)$, $\ximin(h)$, and $\ximax(h)$ be as in Theorem \ref{th:tilt-velocity}. One has the following three cases.
		\begin{enumerate}[\ \ \rm(i)]
			\item\label{th:var-sol:flat} $\ximin(h)\ne\ximax(h)$:   For any {\rm(}and hence all{\rm)} $\xi\in\; ]\ximin(h),\ximax(h)[\,$  let 
				\begin{align}
				\label{eq:F-solution}
					F^\xi(x,y)=h(\xi)\cdot(x-y)-B^\xi(x,y). 
				\end{align}
	Then   $F^\xi$ solves \eqref{eq:g:K-var}: 
	for $\Pbig$-almost every $\what$
			\begin{align}
			\label{eq:solution}
				\gpl(h)=\max\{\w_0+h\cdot e_1+F^\xi(\what, 0,e_1),\,\w_0+h\cdot e_2+F^\xi(\what, 0,e_2)\} = t(h).
			\end{align}
		\item\label{th:var-sol:diff} $\ximin(h)=\ximax(h)=\xi\in\Diff$: \eqref{eq:solution} holds for $F^\xi$ defined as in \eqref{eq:F-solution}.
		\item\label{th:var-sol:notdiff} $\ximin(h)=\ximax(h)=\xi\not\in\Diff$: Let $\theta\in[0,1]$ be such that \[ h-t(h)(e_1+e_2)=\theta h(\xi-)+(1-\theta)h(\xi+)\] and define 
			\[
				F^{\xi}_{\pm}(x,y)=h_\pm(\xi)\cdot(x-y)-B^{\xi}_{\pm}(x,y)\quad\text{and}\quad F(x,y)=\theta F^{\xi}_{-}(x,y)+(1-\theta)F^{\xi}_{+}(x,y).
			\]
		Then $F$ solves \eqref{eq:g:K-var}: 
	for $\P$-almost every $\w$
			\begin{align}
			\label{eq:solution2}
				\gpl(h)=\Pbig\text{-}\esssup_{\what}\max\{\w_0+h\cdot e_1+F(\what, 0,e_1),\w_0+h\cdot e_2+F(\what, 0,e_2)\} = t(h).
			\end{align}
	If $\theta\in\{0,1\}$, then the essential supremum is not needed in \eqref{eq:solution2}, i.e.\ \eqref{eq:solution} holds almost surely with $F$ in place of $F^\xi$.
	\end{enumerate}
\end{theorem}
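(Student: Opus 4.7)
The strategy is to apply the master identity \eqref{eq:Kvar:min} (from Theorem 3.6 of \cite{Geo-Ras-Sep-13a-}) whenever the tilt $h$ is related to a single cocycle's tilt by adding a multiple of $e_1+e_2$, and to handle the remaining non-differentiable mixed case via a convex combination. In every case the cocycles $B^\xi$, $B^{\xi\pm}$ recover $V(\what)=\w_0$ by Theorem \ref{th:cocycles}\eqref{th:cocycles:exist} and have tilts $h(\xi\pm)=-\nabla\gpp(\xi\pm)$ by \eqref{eq:h=grad}.

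For parts \eqref{th:var-sol:flat} and \eqref{th:var-sol:diff}, we observe that $\nabla\gpp(\xi+)=\nabla\gpp(\xi-)=\nabla\gpp(\xi)$: in \eqref{th:var-sol:flat} because $\xi$ is interior to a maximal linear segment of $\gpp$, so Theorem \ref{th:cocycles}\eqref{th:cocycles:flat} collapses $B^{\xi\pm}$ to a common $B^\xi$; in \eqref{th:var-sol:diff} by definition of $\Diff$. Theorem \ref{th:tilt-velocity}\eqref{th:tilt-velocity:h->xi} then yields $h-h(\xi)=t(h)(e_1+e_2)$, so in particular $(h-h(\xi))\cdot(e_2-e_1)=0$ and \eqref{eq:Kvar:min} applied with $B=B^\xi$ and $F=F^\xi$ gives \eqref{eq:solution}, with common value $(h-h(\xi))\cdot z=t(h)$ for $z\in\{e_1,e_2\}$.

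For part \eqref{th:var-sol:notdiff} with $\theta\in\{0,1\}$, the same argument applies with $B^{\xi-}$ or $B^{\xi+}$: here $h-t(h)(e_1+e_2)$ equals $h(\xi-)$ or $h(\xi+)$ respectively, so \eqref{eq:Kvar:min} yields \eqref{eq:solution} (no essential supremum needed) with $F^{\xi\mp}$ in place of $F^\xi$. For $\theta\in(0,1)$, $F=\theta F^{\xi-}+(1-\theta)F^{\xi+}$ is a centered cocycle, and a direct computation using \eqref{FF}, the identity $h=\theta h(\xi-)+(1-\theta)h(\xi+)+t(h)(e_1+e_2)$, and $\theta+(1-\theta)=1$ gives
\begin{align*}
\w_0 + h\cdot e_i + F(0,e_i) = t(h) + \theta\bigl(\w_0 - B^{\xi-}(0,e_i)\bigr) + (1-\theta)\bigl(\w_0 - B^{\xi+}(0,e_i)\bigr).
\end{align*}
Recovery forces $\w_0\le B^{\xi\pm}(0,e_i)$ for each $i\in\{1,2\}$, so both bracketed terms are nonpositive and the maximum over $i$ is at most $t(h)$. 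Hence the essential supremum in \eqref{eq:solution2} is at most $t(h)=\gpl(h)$. The matching lower bound is the generic infimum inequality in \eqref{eq:g:K-var} applied to the cocycle $F$ (pushed down to $\Omega$ by conditional averaging on $\w$, using Jensen's inequality for the max to compare essential suprema), which gives $\gpl(h)\le\Pbig\text{-}\esssup_\what \max_i\{\w_0+h\cdot e_i+F(\what,0,e_i)\}$. Combining the two inequalities yields \eqref{eq:solution2}.

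The main obstacle is the case $\theta\in(0,1)$ with $\xi\notin\Diff$: here $B^{\xi-}$ and $B^{\xi+}$ are genuinely distinct by Theorem \ref{th:cocycles}\eqref{th:cocycles:flat}, and on the event where their minimizing indices $i$ disagree both parenthesized differences are strictly negative, so the pointwise maximum drops strictly below $t(h)$. The essential supremum is therefore truly needed, and its attaining $t(h)$ is inherited from the variational formula rather than proved by an explicit pointwise identity. Control of this exceptional event is the only non-routine aspect of the argument.
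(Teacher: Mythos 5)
Your handling of parts (i), (ii), and (iii) with $\theta\in\{0,1\}$ is correct and follows the paper's route: in each of these cases the relevant $h$ lies on the $e_1+e_2$ diagonal through a single cocycle's tilt, so \eqref{eq:Kvar:min} applies directly and delivers \eqref{eq:solution} with value $t(h)$. Your pointwise computation giving $\Pbig\text{-}\esssup \le t(h)$ in the $\theta\in(0,1)$ case, and the lower bound $\gpl(h)\le\Pbig\text{-}\esssup$ from \eqref{eq:g:K-var} (with the push-down via conditional averaging), are also sound and match the paper's argument.

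The gap is in closing part (iii) with $\theta\in(0,1)$. What you establish is the sandwich
$\gpl(h)\le\Pbig\text{-}\esssup_{\what}\max_i\{\cdots\}\le t(h)$,
and then you write that ``combining the two inequalities yields \eqref{eq:solution2}.'' That sandwich alone only gives $\gpl(h)\le t(h)$; it does not produce the claimed chain of equalities. To close the squeeze you need the separate identity $\gpl(h)=t(h)$, which you assert (``at most $t(h)=\gpl(h)$'') but never justify. It cannot come from \eqref{eq:Kvar:min} here, precisely because $h-h(\xi\pm)$ is not a multiple of $e_1+e_2$ when $\theta\in(0,1)$ and $\xi\notin\Diff$; that mechanism was what delivered the equality in your earlier cases. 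The paper supplies the missing ingredient with a short separate computation: \eqref{g(h)=0} gives $\gpl(h(\xi\pm))=0$, duality gives $\gpl(h)=\gpp(\xi)+h\cdot\xi$, and since $\xi\cdot(e_1+e_2)=1$ one expands
$\gpl(h)=\gpp(\xi)+h\cdot\xi = t(h)+\theta\bigl(\gpp(\xi)+h(\xi-)\cdot\xi\bigr)+(1-\theta)\bigl(\gpp(\xi)+h(\xi+)\cdot\xi\bigr)=t(h)+\theta\,\gpl(h(\xi-))+(1-\theta)\,\gpl(h(\xi+))=t(h)$.
Without this step (or an equivalent derivation of $\gpl(h)=t(h)$, e.g.\ from the observation $\nabla\gpp(\xi\pm)\cdot\xi=\gpp(\xi)$), the argument for \eqref{eq:solution2} does not close.
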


Here is the qualitative descriptions of the cases above. 

(i) The graph  of $\f$ has a corner at the point $(\alpha, \f(\alpha))$ where it crosses the $45^o$ diagonal that contains $-h$.  Correspondingly,  $\gppa$ is linear on the interval $[\smin, \smax]$ and $\gpp$ is linear on $[\ximin(h),\ximax(h)]$ with gradient $\nabla\gpp(\xi)=-(\alpha, \f(\alpha))$ at  interior points $\xi\in\; ]\ximin(h),\ximax(h)[$. 

(ii)    $\gpp$ is strictly concave and differentiable at  $\xi$ dual  to tilt $h$.

(iii)   $\gpp$ is strictly concave but not differentiable at  $\xi$ dual  to tilt $h$.

\begin{proof}[Proof of Theorem \ref{th:var-sol}]
	By \eqref{th:cocycles:exist} of Theorem \ref{th:cocycles} the cocycles $B^\xi$ and $B^{\xi}_{\pm}$ that appear in claims (\ref{th:var-sol:flat})--(\ref{th:var-sol:notdiff}) recover the potential as required by Definition \ref{def:bdry-model} and hence conclusion \eqref{eq:Kvar:min} is in force. 
	
	In cases \eqref{th:var-sol:flat} and \eqref{th:var-sol:diff} \eqref{eq:Kvar:min} implies
		\[
			\gpl(h)=\max\{\w_0+h\cdot e_1+F^\xi(0,e_1),\w_0+h\cdot e_2+F^\xi(0,e_2)\} = (h-h(\xi))\cdot e_1.
		\]
	  The last term equals $t(h)$, by combining  \eqref{eq:h-t}  and \eqref{eq:h=grad}. The same proof works for \eqref{th:var-sol:notdiff} when $\theta\in\{0,1\}$.
	
	For the last case \eqref{th:var-sol:notdiff},  
	\eqref{eq:h=grad} and \eqref{g(h)=0} imply $\gpl(h_\pm(\xi))=0$.
	Then Theorem \ref{th:tilt-velocity}  implies  
		\begin{align*}
			\gpl(h)&=\gpp(\xi)+h\cdot\xi=t(h)+\theta(\gpp(\xi)+h(\xi-)\cdot\xi)+(1-\theta)(\gpp(\xi)+h(\xi+)\cdot\xi)\\
			          &=t(h)+\theta\gpl(h(\xi-))+(1-\theta)\gpl(h(\xi+))\\
			          &=t(h).
		\end{align*}
	Furthermore, we have for $\P$-almost every $\w$
		\[ \min\{\theta B^{\xi}_{-}(0,e_1)+(1-\theta)B^{\xi}_{+}(0,e_1),\theta B^{\xi}_{-}(0,e_2)+(1-\theta)B^{\xi}_{+}(0,e_2)\}\ge\w_0.\]
	This translates into 
		\[ \Pbig\text{-}\esssup_{\what} \max\{\w_0+h\cdot e_1+F(0,e_1),\w_0+h\cdot e_2+F(0,e_2)\} \le t(h) = \gpl(h).  \]
	Formula \eqref{eq:g:K-var} implies then that the above inequality is in fact an equality and \eqref{eq:solution2} is proved. 
\end{proof}

We state also the corresponding theorem for the point-to-point case, though there is nothing to prove.  

\begin{theorem}\label{thm:var-p2p}
	Let $\xi\in\ri\Uset$. Cocycles $B^{\xi}_{\pm}$ solve \eqref{eq:gpp:K-var}:
			\begin{align}
			\label{eq:solution-pp}
			\begin{split}
				\gpp(\xi)&=\max\{\w_0-B^{\xi}_{\pm}(\what, 0,e_1)-h_\pm(\xi)\cdot\xi\,,\,\w_0-B^{\xi}_{\pm}(\what, 0,e_2)-h_\pm(\xi)\cdot\xi\} \\
				&= -h_\pm(\xi)\cdot\xi \qquad \text{for $\Pbig$-a.e.~$\what$.} 
			\end{split}
			\end{align}
	If $\xi\not\in\Diff$ and $\theta\in(0,1)$, then cocycle $B=\theta B^{\xi}_{-}+(1-\theta) B^{\xi}_{+}$ also solves \eqref{eq:gpp:K-var}:
			\begin{align*}
				\gpp(\xi)&=\Pbig\text{-}\esssup_{\what}\max\{\w_0-B(\what, 0,e_1)-h(B)\cdot\xi\,,\,\w_0-B(\what, 0,e_2)-h(B)\cdot\xi\}\\
				& = -h(B)\cdot\xi.
			\end{align*}
\end{theorem}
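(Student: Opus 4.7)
The plan is to observe that this statement is essentially a direct corollary of Theorem \ref{th:cocycles} combined with equation \eqref{g(h)=0} from the proof of Theorem \ref{th:tilt-velocity}. The two ingredients are (a) the recovery property $\w_0=\min_i B^{\xi\pm}(0,e_i)$ which makes the maximum over $i\in\{1,2\}$ trivial, and (b) the identity $\gpp(\xi)=\nabla\gpp(\xi\pm)\cdot\xi$ which comes from concavity and homogeneity of $\gpp$ and lets one evaluate $-h(\xi\pm)\cdot\xi$ explicitly.

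For the first part (cocycles $B^{\xi\pm}$), I would argue as follows. By Theorem \ref{th:cocycles}\eqref{th:cocycles:exist}, $B^{\xi\pm}$ recovers $V(\what)=\w_0$, which means $B^{\xi\pm}(0,e_i)\ge\w_0$ for $i=1,2$ with equality for some $i$, $\Pbig$-a.s. Hence
\[
\max_{i\in\{1,2\}}\{\w_0-B^{\xi\pm}(\what,0,e_i)\}=0\qquad\Pbig\text{-a.s.}
\]
Shifting by $-h(\xi\pm)\cdot\xi$ gives the first equality in \eqref{eq:solution-pp}. For the second equality, the associated tilts satisfy $h(\xi\pm)=-\nabla\gpp(\xi\pm)$ by \eqref{eq:h=grad}, so from the observation recorded in \eqref{g(h)=0} (that $\gpp(\zeta)=q\cdot\zeta$ for any $q\in[\nabla\gpp(\zeta+),\nabla\gpp(\zeta-)]$) one has $-h(\xi\pm)\cdot\xi=\nabla\gpp(\xi\pm)\cdot\xi=\gpp(\xi)$. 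This verifies \eqref{eq:solution-pp} directly, and in particular shows the essential supremum in the variational formula \eqref{eq:gpp:K-var} is not needed.

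For the second part (the convex combination $B=\theta B^{\xi-}+(1-\theta)B^{\xi+}$ at $\xi\notin\Diff$), I would proceed as follows. By linearity of expectation, $h(B)=\theta h(\xi-)+(1-\theta)h(\xi+)\in -[\nabla\gpp(\xi+),\nabla\gpp(\xi-)]$, so again $-h(B)\cdot\xi=\gpp(\xi)$. The recovery property applied to each of $B^{\xi\pm}$ gives $B^{\xi\pm}(0,e_i)\ge\w_0$ for both $i$, hence the convex combination satisfies $B(0,e_i)\ge\w_0$ for both $i$, yielding
\[
\max_{i\in\{1,2\}}\{\w_0-B(\what,0,e_i)-h(B)\cdot\xi\}\le -h(B)\cdot\xi=\gpp(\xi)\qquad\Pbig\text{-a.s.}
\]
Taking the essential supremum therefore gives an upper bound of $\gpp(\xi)$. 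The matching lower bound is immediate from the variational formula \eqref{eq:gpp:K-var}, since $B$ is an admissible stationary $L^1$ cocycle. Combining the two directions yields the claimed equality.

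There is no real obstacle here: the main subtlety, such as it is, is that for the convex combination $B$ the recovery property need not hold (one may only have $B(0,e_i)\ge\w_0$ strictly for both $i$), which is precisely why the essential supremum cannot in general be removed in this case, in contrast to the $B^{\xi\pm}$ case. All non-trivial content has already been packaged into Theorem \ref{th:cocycles} and the duality theorem, so the argument reduces to assembling these inputs.
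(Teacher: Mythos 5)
Your proposal is correct and in substance follows the same path as the paper's (very brief) proof, which cites Theorem \ref{th:cocycles}\eqref{th:cocycles:exist}, duality \eqref{eq:duality}, and equation \eqref{eq:solution}; you simply unwind the point-to-line identity \eqref{eq:solution} into the point-to-point form directly, using recovery to kill the max and the homogeneity relation $\gpp(\xi)=\nabla\gpp(\xi\pm)\cdot\xi$ (implicit in \eqref{g(h)=0}) in place of an explicit appeal to \eqref{eq:duality}. The treatment of the convex combination, via $B(0,e_i)\ge\w_0$ for both $i$ plus the matching lower bound from \eqref{eq:gpp:K-var}, is exactly the argument the paper gives for \eqref{eq:solution2}, to which its one-line proof defers.
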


The above theorem follows directly from \eqref{eq:duality}, Theorem \ref{th:cocycles}\eqref{th:cocycles:exist}, \eqref{eq:solution}, and 
the fact that cocycles $B^{\xi}_{\pm}$ recover $V(\what)=\w_0$. The last claim follows similarly to \eqref{eq:solution2}.
 
\section{Busemann functions}
\label{sec:busemann}

In this section we prove the  existence of   Busemann functions.   As before    \eqref{2d-ass} is a standing assumption.
Recall the line segment $\Uset_\xi=[\ximin, \ximax]$ with $\ximin\cdot e_1\le \ximax\cdot e_1$ from \eqref{eq:sector1}--\eqref{eq:sector2}
and the cocycles $B^{\xi}_{\pm}$ constructed on the extended space $\OBPbig$ in Theorem \ref{th:cocycles}.   

\begin{theorem}\label{th:construction} 
Fix $\xi\in\ri\Uset$.  
Then there exists an event  $\Ombig_0$ with $\Pbig(\Ombig_0)=1$ such that for each 
$\what\in\Ombig_0$ and  for any  sequence $v_n\in\Z_+^2$ that satisfies 
\begin{align}\label{eq:vn66}
\abs{v_n}_1\to\infty\quad\text{and}\quad\ximin\cdot e_1\le\varliminf_{n\to\infty}\frac{v_n\cdot e_1}{\abs{v_n}_1}\le\varlimsup_{n\to\infty}\frac{v_n\cdot e_1}{\abs{v_n}_1}\le\ximax\cdot e_1,
\end{align}
  we have
\be\label{bu:G1}  \begin{aligned}   B^{\ximax}_{+}(\what, x,x+e_1)&\le \varliminf_{n\to \infty} \big( \Gpp_{x, v_n}(\w) - \Gpp_{x+e_1, v_n}(\w) \big) \\ &\le \varlimsup_{n\to \infty} \big( \Gpp_{x, v_n}(\w) - \Gpp_{x+e_1, v_n}(\w)  \big) \le B^{\ximin}_{-}(\what, x,x+e_1)  
\end{aligned}  \ee
and 
\be\label{bu:G2}  
\begin{aligned}  B^{\ximin}_{-}(\what, x,x+e_2)  &\le \varliminf_{n\to \infty} \big( \Gpp_{x, v_n}(\w) - \Gpp_{x+e_2, v_n}(\w) \big)\\
&\le \varlimsup_{n\to \infty} \big( \Gpp_{x, v_n}(\w) - \Gpp_{x+e_2, v_n}(\w) \big) \le B^{\ximax}_{+}(\what, x,x+e_2).  
\end{aligned}  \ee
\end{theorem}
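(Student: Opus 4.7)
The plan is to compare the bulk LPP with the stationary last-passage processes $\Gne_{u,v_n}=B^{\zeta\pm}(u,v_n)$ from Section \ref{sec:stat-lpp}, using cocycles $B^{\zeta\pm}$ whose direction parameter $\zeta$ approaches $\ximin$ from the left (for the upper bounds) or $\ximax$ from the right (for the lower bounds). I will sketch the argument for the upper bound
\[
\varlimsup_{n\to\infty}\bigl[\Gpp_{x,v_n}(\w)-\Gpp_{x+e_1,v_n}(\w)\bigr]\;\le\; B^{\ximin-}(\what,\,x,\,x+e_1)
\]
from \eqref{bu:G1}; the other three inequalities follow by symmetric arguments, exchanging $e_1\leftrightarrow e_2$ and/or $\ximin\leftrightarrow\ximax$ and reversing the side of approach.

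The first ingredient is the pointwise bound $\Gpp_{u,v}\le B^{\zeta\pm}(u,v)$ for all $u\le v$: by the recovery property $\w_y=B^{\zeta\pm}(y,y+e_1)\wedge B^{\zeta\pm}(y,y+e_2)$ of Theorem \ref{th:cocycles}\eqref{th:cocycles:exist} together with cocycle additivity, telescoping gives $\sum_k\w_{x_k}\le\sum_k B^{\zeta\pm}(x_k,x_{k+1})=B^{\zeta\pm}(u,v)$ along any up-right path from $u$ to $v$. The second ingredient is the continuity \eqref{eq:cont}, which allows us to fix a sequence $\zeta_k\in\ri\Uset$ with $\zeta_k\cdot e_1\nearrow\ximin\cdot e_1$ along which $B^{\zeta_k+}(x,x+e_1)\to B^{\ximin-}(x,x+e_1)$ $\Pbig$-almost surely. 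It then suffices to prove, for each fixed $k$, the claimed upper bound with $B^{\zeta_k+}(x,x+e_1)$ in place of $B^{\ximin-}(x,x+e_1)$.

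Fix $\zeta=\zeta_k$ and consider the $B^{\zeta+}$-stationary LPP with origin $v_n$. Its optimizing path from $u$ to $v_n$ decomposes as a bulk $\Gpp$-geodesic from $u$ to a first exit point $Z_n^{(u)}$ on the north or east edge of $v_n$, followed by the forced straight boundary segment to $v_n$. The cocycle ergodic theorem of Appendix \ref{app:aux} and the bulk shape theorem yield $n^{-1}B^{\zeta+}(0,v_n)\to\nabla\gpp(\zeta)\cdot\xi>\gpp(\xi)$ strictly, since $\zeta$ lies strictly to the left of the maximal linear segment $[\ximin,\ximax]$ of $\gpp$ and the hypothesis $\ximin,\ximax\in\Diff$ forces $\gpp$ to be linear there with common gradient $\nabla\gpp(\ximin)$. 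This strict separation forces the stationary geodesic to exit on the north edge, $Z_n^{(u)}=v_n-K_n^{(u)}e_1$, with $K_n^{(u)}/\abs{v_n}_1$ converging to a positive constant. Setting $\Delta_u^n:=B^{\zeta+}(u,v_n)-\Gpp_{u,v_n}\ge 0$, the exit-point identity $B^{\zeta+}(u,v_n)=\Gpp_{u,Z_n^{(u)}}+B^{\zeta+}(Z_n^{(u)},v_n)$ together with the superadditivity of bulk LPP yields the pointwise estimate
\[
\Delta_u^n\;\le\;\sum_{j=1}^{K_n^{(u)}}\bigl[B^{\zeta+}(v_n-je_1,\,v_n-(j-1)e_1)-\w_{v_n-je_1}\bigr].
\]

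The gradient identity $\Gpp_{x,v_n}-\Gpp_{x+e_1,v_n}=B^{\zeta+}(x,x+e_1)+\Delta_{x+e_1}^n-\Delta_x^n$ now reduces the upper bound to proving $\varlimsup_n(\Delta_{x+e_1}^n-\Delta_x^n)\le 0$. This is the main technical obstacle, since both $\Delta_u^n$ are individually of order $\abs{v_n}_1$, so the required cancellation must be extracted from the joint geometry of the two stationary geodesics rather than from their magnitudes. The argument uses the no-crossing (planarity) property of up-right geodesics to arrange the two $B^{\zeta+}$-stationary geodesics so that the one from $x+e_1$ lies weakly east of the one from $x$; this forces $K_n^{(x+e_1)}\le K_n^{(x)}$, and a term-by-term comparison of the resulting north-edge sums, combined with the equality form of the exit-point decomposition at one of the two vertices, produces the required non-positive asymptotic residue. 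Letting $n\to\infty$ and then $k\to\infty$ via continuity \eqref{eq:cont} gives the desired upper bound; the three remaining bounds in \eqref{bu:G1}--\eqref{bu:G2} follow by the symmetric procedure already indicated.
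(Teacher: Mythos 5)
Your overall strategy matches the paper's: compare the bulk increments with the $B^{\zeta\pm}$-stationary last-passage process, show that in the relevant direction the stationary optimizer exits on the north boundary, and then take monotone limits $\zeta\to\ximin$ (resp.\ $\ximax$) using the cocycle continuity \eqref{eq:cont}. But the central step of your proposal is left as a handwave that does not actually close the gap, and two of the surrounding claims are off.

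\textbf{The key gap.} The ``reduction'' to proving $\varlimsup_n(\Delta_{x+e_1}^n-\Delta_x^n)\le 0$ is circular. The gradient identity
\[
\Gpp_{x,v_n}-\Gpp_{x+e_1,v_n}=B^{\zeta+}(x,x+e_1)+\Delta_{x+e_1}^n-\Delta_x^n
\]
is an exact algebraic rearrangement, so showing $\varlimsup_n(\Delta_{x+e_1}^n-\Delta_x^n)\le 0$ is literally the statement you want, not a simpler subclaim. You correctly flag this as ``the main technical obstacle,'' but the proposed fix --- planarity of the two stationary geodesics plus a ``term-by-term comparison of the north-edge sums'' --- is not carried out and does not visibly supply the needed cancellation. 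The ordering $K_n^{(x+e_1)}\le K_n^{(x)}$ and the nonnegativity $\Delta_u^n\ge 0$ are insufficient by themselves, because both $\Delta_u^n$ grow like $c\abs{v_n}_1$ and nothing in your sketch controls the difference. What is actually needed is a monotonicity of last-passage increments in the endpoint, namely Lemma~\ref{lm:new:comp0} (increments increase as the target moves north, decrease as it moves east). The paper uses this to bound $\Gpp_{x,v_n}-\Gpp_{x+e_1,v_n}$ by the increment of the LPP $\Gn$ with cocycle weights on the north boundary of $v_n+e_2$ only, and then identifies $\Gn$ with the full $\Gne$ for large $n$ via the exit-direction Lemma~\ref{lm:exit:con}. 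Your sketch invokes the exit-direction fact but not the increment monotonicity, and without the latter (or some substitute) the chain of inequalities does not close.

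\textbf{Smaller issues.} (i) You import the hypothesis $\ximin,\ximax\in\Diff$; this is not an assumption of Theorem~\ref{th:construction} (it appears only in Corollary~\ref{cor:buse}). The theorem as stated treats arbitrary $\xi\in\ri\Uset$ and gives one-sided bounds by $B^{\ximin-}$, $B^{\ximax+}$. (ii) You only treat sequences with $v_n/\abs{v_n}_1\to\xi$, but \eqref{eq:vn66} allows the direction of $v_n$ to oscillate within $[\ximin,\ximax]$; the paper's Step~2 handles this by a further sandwich between $\fl{a_n\eta_\ell}$ and $\fl{a_n\zeta_\ell}$, again using Lemma~\ref{lm:new:comp0}. (iii) Your assertion that ``strict separation forces the exit on the north edge'' is itself the content of a nontrivial lemma (Lemma~\ref{lm:exit:con}) and would need the law-of-large-numbers comparison of the two boundary cases, not just the shape-theorem evaluation $n^{-1}B^{\zeta+}(0,v_n)\to -h(\zeta+)\cdot\xi > \gpp(\xi)$.
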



The interesting cases are of course the ones where we have a limit.   For the corollary note that if  
$\xi, \ximin, \ximax\in\Diff$  then by Theorem \ref{th:cocycles}\eqref{th:cocycles:flat}  $B^{\ximin}_{\pm}=B^{\xi}=B^{\ximax}_{\pm}$.    

\begin{corollary}\label{cor:buse} 
 Assume that $\xi$, $\ximin$ and $\ximax$ are points of differentiability of $\gpp$.
Then there exists an event  $\Ombig_0$ with $\Pbig(\Ombig_0)=1$ such that for each 
$\what\in\Omega_0$,   for any  sequence $v_n\in\Z_+^2$ that satisfies \eqref{eq:vn66}, 
and  for all $x,y\in\Z^2$, 
\be \label{eq:grad:coc} 
							B^{\xi}(\what,x,y) = \lim_{n\to \infty} \big( \Gpp_{x, v_n}(\w) - \Gpp_{y, v_n}(\w)  \big). 
						\ee 
						
In particular, if $\gpp$ is differentiable everywhere on $\ri\Uset$,  then  for each direction $\xi\in\ri\Uset$ there is an event of full $\Pbig$-probability on which   limit \eqref{eq:grad:coc}  holds for any sequence $v_n/\abs{v_n}_1\to\xi$.  
 \end{corollary}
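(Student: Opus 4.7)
The plan is to read off the corollary from Theorem \ref{th:construction} by showing that the differentiability hypothesis collapses the outer one-sided cocycle bounds in \eqref{bu:G1}--\eqref{bu:G2} into genuine two-sided limits, and then to promote the resulting limits for nearest-neighbor increments to arbitrary pairs $(x,y)$.

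First I would apply Theorem \ref{th:construction} on the $\Pbig$-full-measure event it provides to obtain, for every sequence $v_n$ obeying \eqref{eq:vn66} and every $x\in\Z^2$, the sandwich bounds \eqref{bu:G1}--\eqref{bu:G2}. The key observation is that under the assumption $\ximin,\xi,\ximax\in\Diff$, the cocycles $B^{\ximin-}$, $B^{\xi}$ and $B^{\ximax+}$ are literally the same. Indeed, since $\gpp$ is linear on $\Uset_\xi=[\ximin,\ximax]$, the gradients $\nabla\gpp(\ximin+)$, $\nabla\gpp(\xi)$ and $\nabla\gpp(\ximax-)$ coincide, and the differentiability of $\gpp$ at these three points identifies them with $\nabla\gpp(\ximin-)$ and $\nabla\gpp(\ximax+)$ as well. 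By \eqref{eq:h=grad} the tilts $h(\ximin-)$, $h(\xi)$ and $h(\ximax+)$ all agree, and Theorem \ref{th:cocycles}\eqref{th:cocycles:flat} then identifies the corresponding cocycles pointwise in $\what$. This forces equality throughout \eqref{bu:G1}--\eqref{bu:G2}, so \eqref{eq:grad:coc} holds whenever $y=x\pm e_i$.

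Second, I would extend from nearest-neighbor pairs to arbitrary $x,y\in\Z^2$ by telescoping. Choose a lattice path $x=z_0,z_1,\dotsc,z_m=y$ with $z_{k+1}-z_k\in\{\pm e_1,\pm e_2\}$. Since $v_n/\abs{v_n}_1$ lies eventually near $\ri\Uset$ and $\abs{v_n}_1\to\infty$, we have $v_n\ge z_k$ for every $k$ once $n$ is large, so
\[
\Gpp_{x,v_n}-\Gpp_{y,v_n}=\sum_{k=0}^{m-1}\bigl(\Gpp_{z_k,v_n}-\Gpp_{z_{k+1},v_n}\bigr).
\]
Each summand converges to $B^{\xi}(\what,z_k,z_{k+1})$ by the first step, using the antisymmetry $B^{\xi}(u,v)=-B^{\xi}(v,u)$ to handle steps in the $-e_i$ directions, and the cocycle property of $B^{\xi}$ then sums these increments to $B^{\xi}(\what,x,y)$. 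Since $\Z^2\times\Z^2$ is countable and a single $\Pbig$-null event accommodates both the nearest-neighbor limits and the cocycle additivity of $B^{\xi}$, one event $\Ombig_0$ suffices for all pairs $(x,y)$ simultaneously.

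The closing ``in particular'' assertion is then immediate: if $\gpp$ is differentiable throughout $\ri\Uset$, every $\xi\in\ri\Uset$ automatically satisfies $\ximin,\xi,\ximax\in\Diff$, and any sequence with $v_n/\abs{v_n}_1\to\xi$ satisfies \eqref{eq:vn66} because $\xi\in[\ximin,\ximax]$. I anticipate no serious obstacle: the substantive work is already in Theorems \ref{th:cocycles} and \ref{th:construction}, and the only mild technicality is the telescoping along non-up-right paths, which is dispatched by antisymmetry. The whole deduction is in effect a repackaging of Theorem \ref{th:construction} once the identification $B^{\ximin-}=B^{\xi}=B^{\ximax+}$ from Theorem \ref{th:cocycles}\eqref{th:cocycles:flat} is in hand.
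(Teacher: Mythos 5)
Your proposal is correct and matches the paper's (largely implicit) argument: the paper states the corollary immediately after Theorem \ref{th:construction} with just the remark that differentiability of $\gpp$ at $\ximin,\xi,\ximax$ together with Theorem \ref{th:cocycles}\eqref{th:cocycles:flat} forces $B^{\ximin-}=B^{\xi}=B^{\ximax+}$, collapsing the sandwich \eqref{bu:G1}--\eqref{bu:G2} to a limit. Your additional care in spelling out the telescoping extension to arbitrary pairs $(x,y)$ (via antisymmetry and the cocycle property on a common null set) is the right way to make the step the paper leaves unstated fully rigorous, and the ``in particular'' clause follows exactly as you say.
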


Before turning to proofs, let us derive the relevant results of Section \ref{sec:results} and address the question of measurability of cocycles raised in Remark \ref{rm:cocycles}(a). 
 
\begin{proof}[Proof of Theorem \ref{thm:buse}]  Immediate consequence of Corollary \ref{cor:buse}.  
Equation \eqref{EB=Dg} follows from \eqref{eq:h=grad}.
\end{proof}

\begin{proof}[Proof of Theorem \ref{thm:var-buse}] 
The theorem follows from  
Theorems \ref{th:var-sol} and  \ref{thm:var-p2p}
because the Busemann function $B^\xi$ is the  cocycle $B^\xi$ from Theorem \ref{th:cocycles}. 
\end{proof}


\begin{remark}[$\kS$-measurability of cocycles]\label{bu:kS} 
A consequence of limit \eqref{eq:grad:coc}  is that the cocycle  $B^{\xi}(\what,x,y)$ is actually a function of $\w$  alone, in other words,    $\kS$-measurable.   
Furthermore,  all cocycles $B^{\zeta}_{\pm}$  that can be obtained as limits,  as these $\xi$-points converge to $\zeta\pm$ on $\ri\Uset$, are also  $\kS$-measurable.     In particular,  if  $\gpp$ is differentiable at the endpoints of its linear segments (if any),  all the cocycles $\{B^{\zeta}_{\pm}:\zeta\in\ri\Uset\}$ described in Theorem \ref{th:cocycles}  are $\kS$-measurable.  
At points $\zeta\notin\Diff$ of strict concavity this follows  because $\zeta$ can be approached from both sides by points  $\xi\in\EP$ which satisfy \eqref{eq:grad:coc}.  
\end{remark}
 
 The remainder of this section proves Theorem \ref{th:construction}.   
 We begin with   a general comparison lemma.   
With  arbitrary real weights $\{\wt Y_x\}_{x\in\Z^2}$ define  last passage times   
 	\[\wt \Gpp_{u,v}=\max_{x_{0,n}}\sum_{k=0}^{n-1}\wt Y_{x_k}.\]
The maximum is over up-right paths from $x_0=u$ to $x_n=v$ with $n=|v-u|_1$.  The convention is $\wt \Gpp_{v,v}=0$.  
For  $x\le v-e_1$ and $y\le v-e_2$  denote the increments  by 
	\[	\wt I_{x,v} = \wt \Gpp_{x,v} - \wt \Gpp_{x+e_1,v} \qquad  \text{ and } \qquad  \wt J_{y,v} = \wt \Gpp_{y,v} - \wt \Gpp_{y+e_2,v}\,. \]
	
\begin{lemma}\label{lm:new:comp0}
	For  $x\le v-e_1$ and $y\le v-e_2$ 
		\be
		\label{eq:new:comp0}
			\wt I_{x,v+e_2} \ge \wt I_{x,v} \ge \wt I_{x,v+e_1} 
			\qquad  \text{ and } \qquad 
			\wt J_{y, v+e_2} \le \wt J_{y,v} \le \wt J_{y, v+e_1}\,.
		\ee
\end{lemma}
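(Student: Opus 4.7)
The plan is to prove the two $\wt I$ inequalities by the standard path-crossing/splicing argument for last-passage times; the two $\wt J$ inequalities then follow by the same method under the $e_1 \leftrightarrow e_2$ symmetry, so I focus on $\wt I$. The first step is to rewrite each desired inequality as an ``anti-crossing'' statement about sums of $\wt\Gpp$ values. For instance, $\wt I_{x,v}\ge \wt I_{x,v+e_1}$ is equivalent to
\[
\wt\Gpp_{x,v+e_1} + \wt\Gpp_{x+e_1,v} \;\le\; \wt\Gpp_{x,v} + \wt\Gpp_{x+e_1,v+e_1},
\]
and $\wt I_{x,v+e_2}\ge \wt I_{x,v}$ is equivalent to
\[
\wt\Gpp_{x,v} + \wt\Gpp_{x+e_1,v+e_2} \;\le\; \wt\Gpp_{x+e_1,v} + \wt\Gpp_{x,v+e_2}.
\]

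For the first displayed inequality I fix optimal up-right paths $\pi_1$ from $x$ to $v+e_1$ and $\pi_2$ from $x+e_1$ to $v$ that realize its left-hand side. Parametrize each by its anti-diagonal level $s = u\cdot(e_1+e_2)$, which increases by exactly one at every step. On the common range of levels $[\abs{x}_1+1,\,v\cdot(e_1+e_2)]$, at the lowest level one has $\pi_1(s)\cdot e_1 \le \pi_2(s)\cdot e_1$ (since $\pi_2$ is still at $x+e_1$ while $\pi_1$ has stepped away from $x$), and at the highest level the inequality reverses ($\pi_2$ sits at $v$ while $\pi_1$ is still one step short of $v+e_1$). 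The signed integer difference $\pi_1(s)\cdot e_1 - \pi_2(s)\cdot e_1$ changes by at most one per level, so it must hit zero at some $s^\ast$, producing a vertex $w$ lying on both paths.

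The splice then produces a new admissible path from $x$ to $v$ by concatenating $\pi_1$ up to $w$ with $\pi_2$ from $w$ onward, and another from $x+e_1$ to $v+e_1$ by concatenating $\pi_2$ up to $w$ with $\pi_1$ from $w$ onward. A direct weight accounting, using that $\wt Y_w$ is counted exactly once in each spliced path, just as in each original path, shows that the sum of the two new path weights equals the sum of the two old ones. Bounding each new weight by the corresponding $\wt\Gpp$ then yields the displayed inequality. The second inequality $\wt I_{x,v+e_2}\ge \wt I_{x,v}$ is proved by the same recipe, this time with $\pi_1$ from $x+e_1$ to $v+e_2$ and $\pi_2$ from $x$ to $v$; the direction of the crossing at the extreme levels reverses, delivering the opposite-sense sum inequality. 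The $\wt J$ bounds follow by interchanging $e_1$ and $e_2$ throughout.

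The only step that requires real care is the crossing argument: two up-right paths with distinct endpoints need not intersect transversally, so a naive planarity argument does not apply. What saves the proof is that the integer-valued signed $e_1$-coordinate difference changes by at most one per anti-diagonal step and takes non-positive and non-negative values at the two extreme levels of the common range, hence must vanish at some level and yield a common vertex for the splice. Everything else reduces to routine bookkeeping of path weights.
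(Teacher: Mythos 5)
Your proof is correct, but it uses a genuinely different technique from the paper. The paper's proof proceeds by induction from the boundary: it first verifies the inequalities directly when $x$ or $y$ sits on the north or east boundary of the rectangle spanned by $v$, then uses the recursion $\wt\Gpp_{u,v}=\wt Y_u+\wt\Gpp_{u+e_1,v}\vee\wt\Gpp_{u+e_2,v}$ to propagate the inequalities inward, one vertex at a time. You instead rewrite each increment comparison as a ``quadrangle'' inequality of the form $\wt\Gpp_{x,v+e_1}+\wt\Gpp_{x+e_1,v}\le\wt\Gpp_{x,v}+\wt\Gpp_{x+e_1,v+e_1}$ and establish it by the classical path-splicing argument: take geodesics realizing the left side, find a common vertex via the intermediate-value argument on the signed $e_1$-coordinate difference along antidiagonal levels, swap tails, and compare. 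Your handling of the crossing is careful and correct --- the observation that the integer-valued difference moves by at most one per level and has opposite signs at the endpoints of the common range is exactly what rescues the argument when the paths merely touch rather than cross transversally --- and the bookkeeping works out with the paper's convention (initial vertex counted, terminal vertex not; the shared vertex $w$ contributes once in each original and once in each spliced path). Each approach has its virtues: the paper's induction is purely local, manipulating the Lindley-type recursion without ever reasoning about paths globally and generalizes easily to situations where the recursion has boundary data of a different type; your splicing argument is shorter, conceptually transparent, and makes the planarity behind the monotonicity manifest.
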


\begin{proof}
	Let $v=(m,n)$. 
	The proof goes   by an induction argument. 
	Suppose $x = (k,n)$ for some $k < m$. Then on the north boundary
		\begin{align*}
			\wt I_{(k,n), (m, n+1)} 
			&= \wt \Gpp_{(k,n), (m, n+1)} - \wt \Gpp_{(k +1,n), (m, n+1)} \\
			&= \wt Y_{k,n} + \wt \Gpp_{(k+1,n), (m, n+1)} \vee \wt \Gpp_{(k,n+1), (m, n+1)} - \wt \Gpp_{(k +1,n), (m, n+1)}\\
			&\ge \wt Y_{k,n}= \wt \Gpp_{(k,n), (m, n)} - \wt \Gpp_{(k+1,n), (m, n)} = \wt I_{(k,n), (m,n)}\,.
		\end{align*}
	On the east boundary, when $y= (m,\ell)$ for some $\ell < n$
		\begin{align*}
			\wt J_{(m,\ell),(m,n+1)}&=\wt \Gpp_{(m,\ell),(m,n+1)} - \wt \Gpp_{(m,\ell+1),(m,n+1)}\\
			&=\wt Y_{m,\ell}  = \wt \Gpp_{(m,\ell),(m,n)}-\wt \Gpp_{(m,\ell+1),(m,n)}  = \wt J_{(m,\ell),(m,n)}\,. 
		\end{align*}
	These inequalities start the induction. Now let   $u\le v- e_1- e_2$.
	Assume by induction  that \eqref{eq:new:comp0} holds for $x=u+e_2$ and $y=u+e_1$.  
		\begin{align*}
			\wt I_{u,v+e_2}
			&=\wt \Gpp_{u,v+e_2}-\wt \Gpp_{u+e_1,v+e_2}
			= \wt Y_u + (\wt \Gpp_{u+e_2,v+e_2}-\wt \Gpp_{u+e_1,v+e_2})^+ \\
			&=\wt Y_u+(\wt I_{u+e_2,v+e_2} - \wt J_{u+e_1,v+e_2})^+  \\
			&\ge  \wt Y_u+(\wt I_{u+e_2,v} - \wt J_{u+e_1,v})^+ = \wt I_{u,v}\,.  
		\end{align*} 
		For the last equality simply reverse the first three equalities with $v$ instead of $v+e_2$. 
	A similar argument works for $\wt I_{u,v}\ge \wt I_{u,v+e_1}$ and a symmetric argument works for the $\wt J$ inequalities. 	
\end{proof}

The estimates needed for the proof of Theorem \ref{th:construction} come from coupling $\Gpp_{u,v}$ with the stationary LPP described in Section \ref{sec:stat-lpp}.  For the next two lemmas fix a  
  cocycle $B(\what, x,y)=B^{\zeta}_{\pm}(\what, x,y)$ from Theorem \ref{th:cocycles} and   let $r=\zeta\cdot e_1/\zeta\cdot e_2$ so that $\alpha=\gppa'(r\pm)$ satisfies
\be\label{bu:a}  \alpha=\Ebig[B(x,x+e_1)]  \quad\text{and}\quad  \f(\alpha)=\Ebig[B(x,x+e_2)]. \ee
 As in \eqref{eq:Gnecorner} define 
 	\begin{align}
	\label{Gnecorner5}
		\begin{split}
		\Gne_{u,v}&=B(u,v)\qquad\text{for $u\in\{v-ke_i: k\in\Z_+, \, i\in\{1,2\}\}$}\\[3pt]
	\qquad 	\text{and} \qquad 		\Gne_{u,v}
			&=\w_u+\Gne_{u+e_1,v}\vee \Gne_{u+e_2,v}\qquad \text{for } \  u\le v-e_1-e_2\,.
		\end{split}
	\end{align}

Let  $\Gne_{u,v}(A)$ denote a maximum over paths restricted to the set $A$.  In particular, below we use 
  \[  \Gne_{0,v_n}(v_n-e_i\in x_\centerdot) = \max_{x_\centerdot\,: \, x_{\abs{v_n}_1-1}=v_n-e_i} \sum_{k=0}^{\abs{v_n}_1-1}  \wt Y_{x_k}
  \]
  where the maximum is restricted to  paths that go through the point $v_n-e_i$, and the 
 weights are from \eqref{Gnecorner5}:  $\wt Y_x=\w_x$ for $x\le v-e_1-e_2$ while 
 $\wt Y_{v-ke_i}=B(v-ke_i, v-(k-1)e_i)$.


Figure \ref{fig:7.5}  makes the limits of the next lemma obvious.   But  a.s.\ convergence requires some technicalities  because the north-east  boundaries themselves are translated as the limit is taken.  

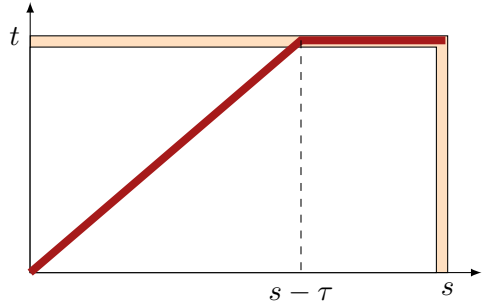
\begin{figure}[h]
	\begin{center}
		\begin{tikzpicture}[>=latex, scale=0.6]

\draw[<->] (0,6)--(0,0)--(10,0); 

			\fill[color=orange, nearly transparent](0,0)rectangle(9.25,5.25); 
			\draw(0,0)rectangle(9.25,5.25); 
			\fill[color=white](0,0)rectangle(9,5);
			\draw(0,0)rectangle(9,5);
			
			\draw(9.25,0)node[below]{$s$};
			\draw(0,5.25)node[left]{$t$};
			\draw[color=nicosred, line width=3pt](0,0)--(6,5.15)--(9.2, 5.15); 
			
			\draw[dashed] (6,5.15)--(6,0)node[below]{$s-\tau$};

%
%
%
%
%
%
%
%
%
%
%
%
%

		\end{tikzpicture}
	\end{center}
	\caption{\small Illustration of \eqref{eq:hor:lln}.  Forcing the last step to be $e_1$ restricts the maximization   to paths that hit the north boundary instead of the east boundary.  The path from $0$ to $(s-\tau,t)$ contributes $\gpp(s-\tau,t)$ and the remaining segment of length $\tau$ on the north boundary  contributes $\alpha \tau$. 
			}
	\label{fig:7.5}
\end{figure}

 \begin{lemma}
	Assume \eqref{2d-ass}. Fix $(s,t) \in \R^2_+$. Let $v_n\in\Z_+^2$ be such that    $v_n/\abs{v_n}_1\to(s,t)/(s+t)$ as $n\to\infty$ and  $\abs{v_n}_1\ge\eta_0n$ for some constant $\eta_0>0$. Then we have the following almost sure  limits:
		\be\label{eq:hor:lln}
			\abs{v_n}_1^{-1}\,\Gne_{0,v_n}(v_n-e_1\in x_\centerdot) \; { \longrightarrow }\;
			(s+t)^{-1}\sup_{0 \le \tau \le s}\{ \alpha\tau + \gpp(s-\tau,t) \}  
		\ee

and 
 		\be\label{eq:vert:lln}
			\abs{v_n}_1^{-1}\,\Gne_{0,v_n}(v_n-e_2\in x_\centerdot) \; {\longrightarrow} \;
			(s+t)^{-1}\sup_{0 \le \tau \le t}\{ f(\alpha)\tau + \gpp(s, t-\tau) \}. 
		\ee
 \end{lemma}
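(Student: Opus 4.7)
The plan is to decompose the constrained passage time according to the first point where the path enters the north boundary of the stationary rectangle, and then analyze each piece via a shape theorem.  Writing $v_n = (M_n, N_n)$, any admissible path from $0$ to $v_n$ whose last step is $e_1$ reaches the north boundary $\{(k, N_n):0\le k\le M_n\}$ at some first point $(k, N_n)$ with $k \le M_n - 1$, coming from $(k, N_n - 1)$ via an $e_2$ step, and then traverses boundary edges east to $v_n$.  Unrolling recursion \eqref{Gnecorner5} gives
\begin{equation*}
\Gne_{0,v_n}(v_n - e_1 \in x_\centerdot) \;=\; \max_{0 \le k \le M_n - 1}\bigl\{\Gpp_{0,(k,N_n-1)} + \w_{(k, N_n - 1)} + B\bigl((k,N_n),v_n\bigr)\bigr\}.
\end{equation*}

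For fixed $\tau \in [0, s]$ and any sequence $k_n$ with $k_n/\abs{v_n}_1 \to (s-\tau)/(s+t)$, the LPP shape theorem (Appendix \ref{app:shape}) together with the $1$-homogeneity of $\gpp$ yields $\abs{v_n}_1^{-1}\Gpp_{0,(k_n, N_n-1)} \to (s+t)^{-1}\gpp(s-\tau, t)$ almost surely.  The single-weight term $\w_{(k_n, N_n - 1)}$ contributes $o(\abs{v_n}_1)$, since under the $\pp$-th moment hypothesis in \eqref{2d-ass} one has $\max_{0 \le k \le M_n}\lvert\w_{(k, N_n - 1)}\rvert = o(\abs{v_n}_1)$ almost surely. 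For the boundary term, Theorem \ref{th:cocycles}\eqref{th:cocycles:exist} and \eqref{bu:a} give that $B$ is a stationary $L^1(\Pbig)$ cocycle with $\Ebig[B(0,e_1)] = \alpha$ and $\Ebig[B(0,e_2)] = \f(\alpha)$; the cocycle ergodic theorem from Appendix \ref{app:aux} then supplies the a.s.\ limit $\abs{y_n}_1^{-1}B(0, y_n) \to \alpha\, \eta\cdot e_1 + \f(\alpha)\, \eta\cdot e_2$ for deterministic $y_n \in \Z_+^2$ with $y_n/\abs{y_n}_1 \to \eta$. Combined with the cocycle identity $B((k_n, N_n), v_n) = B(0, v_n) - B(0, (k_n, N_n))$, this yields $\abs{v_n}_1^{-1} B((k_n, N_n), v_n) \to \alpha \tau/(s+t)$. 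Taking $k = k_n$ then produces $\varliminf_n \abs{v_n}_1^{-1}\Gne_{0,v_n}(v_n - e_1 \in x_\centerdot) \ge (s+t)^{-1}(\gpp(s-\tau, t) + \alpha \tau)$; ranging $\tau$ over a countable dense subset of $[0,s]$ gives the $\varliminf \ge \sup$ direction.

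For the matching $\varlimsup$, I plan to partition $[0,s]$ into a finite grid of mesh $\delta$ and invoke uniform versions of both shape theorems (the $\Gpp$ shape theorem upgrades to a uniform statement on compact sets, while the cocycle shape theorem from Appendix \ref{app:aux} controls $\abs{v_n}_1^{-1}B((k, N_n), v_n)$ uniformly in $k$). The maximum over $k$ then differs from the maximum over grid points by $o(1) + O(\delta)$; sending $n \to \infty$ and $\delta \to 0$, combined with continuity of $\tau \mapsto \gpp(s - \tau, t) + \alpha \tau$ on $[0,s]$, produces the desired upper bound. Limit \eqref{eq:vert:lln} follows by the symmetric argument, decomposing instead over first entry into the east boundary and exchanging the roles of $(e_1, s, \alpha)$ and $(e_2, t, \f(\alpha))$.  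The principal technical point is establishing simultaneous uniformity of the cocycle shape bound across a boundary whose height $N_n \to \infty$ with $n$; this is precisely what the ergodic theorem for stationary $L^1$ cocycles in Appendix \ref{app:aux} is designed to deliver.
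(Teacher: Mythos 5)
Your decomposition of $\Gne_{0,v_n}(v_n - e_1 \in x_\centerdot)$ over the first entry point to the north boundary is exactly the one the paper uses, and your lower bound argument (fix $\tau$, choose $k_n$, apply the shape theorem for $\Gpp$ plus the cocycle ergodic theorem for $B$) is the same in substance. The genuine difference is in the upper bound. The paper does not attempt a uniform comparison of $\Gpp_{0,(k,N_n-1)}$ to $\gpp$ over all $k$ at once; instead it applies the shape theorem only at the finitely many grid points $q_j^n$, uses the exact subadditivity inequality $\Gpp_{0,v_n-(\ell,1)}+\sum_{k=q_j^n+1}^\ell\w_{v_n-(k,1)}\le\Gpp_{0,v_n-(q_j^n,1)}$ to relate a general entry point to the nearest grid point, and controls the resulting sum of centered weights by Doob plus Burkholder (this is where the hypothesis $\abs{v_n}_1\ge\eta_0 n$ enters, to make Borel--Cantelli go through). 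You instead invoke the uniform shape theorem \eqref{lln5} directly to say $\max_k\abs{\Gpp_{0,(k,N_n-1)}-\gpp(k,N_n-1)}=o(\abs{v_n}_1)$. That route is legitimate and arguably cleaner, since Appendix \ref{app:shape} proves the uniform shape statement under a moment condition implied by \eqref{2d-ass}; note, though, that \eqref{lln5} is stated over $\{|x|_1=n\}$ and you need the standard upgrade to $\{|x|_1\le n\}$, since your $(k,N_n-1)$ sweep over a range of $\ell^1$-levels. That upgrade is routine (the paper makes the same move for the cocycle in \eqref{b:F-erg}), but it should be stated. With the uniform shape theorem in hand your $\delta$-grid is actually redundant: continuity of $\gpp$ on the compact simplex already converts $\max_k\{\gpp(k,N_n-1)+\alpha(M_n-k)\}$ into the desired supremum after normalization, so you could drop the grid and Burkholder together. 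In short: same decomposition, same lower bound, and a simpler but valid upper bound that trades the paper's subadditivity-plus-Burkholder interpolation for a direct appeal to the uniform $\Gpp$ shape theorem.
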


\begin{proof}
	We prove \eqref{eq:hor:lln}. Fix $\e>0$, 
	  let  $M=\fl{\e^{-1}}$,  and 
		\[
			q^n_j = j\Bigl\lfloor\frac{\e\abs{v_n}_1 s}{s+t}\Bigr\rfloor \, \text{ for }0\le j \le M-1, 
				\text{ and } q^n_{M}= v_n\cdot e_1.
		\]
		For large enough $n$ it is the case that  $q^n_{M-1}<v_n\cdot e_1$.  

Suppose    a maximal path for $\Gne_{0,v_n}(v_n-e_1\in x_\centerdot)$ enters 
	 the north boundary from  the bulk at the point 
  $v_n-(\ell,0)$  with $q^n_{j} < \ell\le q^n_{j+1}$.  Then 
		\begin{align*}
			&\Gne_{0,v_n}(v_n-e_1\in x_\centerdot) 
= \Gpp_{0, v_n-(\ell,1)} + \w_{v_n-(\ell,1)} + 
B( v_n-(\ell,0), v_n)      
\\
			&\quad 
	\le \Gpp_{0, v_n-(q^n_j,1)}   +  q^n_j\alpha - \sum_{k=q^n_j+1}^{\ell-1} \bigl(\w_{v_n-(k,1)} -\Ew\bigr)  +(\ell-1-q^n_j)  \Ew \\
&\qquad\qquad  + \; \;  
\bigl( B( v_n-(\ell,0), v_n) -\ell\alpha\bigr)      
+ (\ell-q^n_j)\alpha . 	
		\end{align*}
 The two main  terms come  right after the inequality above  and the rest are errors.  	The   inequality 	comes from 
 \[ \Gpp_{0, v_n-(\ell,1)} +   \sum_{k=q^n_j+1}^{\ell} \w_{v_n-(k,1)} \le  \Gpp_{0, v_n-(q^n_j,1)}   \]
 and  algebraic rearrangement.  
		
Define the centered cocycle 
$   F(x,y)= h(B)\cdot(x-y)-B(x,y) $   so that 
\[   
B( v_n-(\ell,0), v_n) -\ell\alpha= F(0, v_n-(\ell,0))  -F(0, v_n).  
\] 
The potential-recovery property \eqref{eq:VB}   $\w_0=B(0,e_1)\wedge B(0,e_2)$ gives 
\[  F(0, e_i) \le \alpha\vee\f(\alpha) -\w_0\qquad\text{for $i\in\{1,2\}$.}   \]  		
The i.i.d.\ distribution of $\{\w_x\}$ and  $\E(\abs{\w_0}^{\pp})<\infty$ with $\pp>2$  are  strong enough to guarantee that Lemma  \ref{th:Atilla}   from Appendix \ref{app:aux} applies and gives 
\be\label{b:F-erg}      \lim_{N\to\infty} \; \frac1N \; \max_{x\ge 0\,:\,\abs{x}_1\le N} \abs{F(\what,0,x)} =0\qquad
\text{for a.e.\ $\what$.}    	
\ee	
		
 Collect the bounds for all the intervals $(q^n_j, q^n_{j+1}]$ and let $C$ denote a constant.  Abbreviate $S^n_{j,m}=\sum_{k=q^n_j+1}^{q^n_j+m} \bigl(\w_{v_n-(k,1)} -\Ew\bigr)$. 
	\be	\begin{aligned}
			&\Gne_{0,v_n}(v_n-e_1\in x_\centerdot)  
				\;\le\;\max_{0\le j \le M-1 }\!\!\Big\{  \Gpp_{0, v_n-(q^n_j,1)}   +  q^n_j\alpha + C(q^n_{j+1}-q^n_j)  \\
	&\quad +   \max_{0\le m< q^n_{j+1}-q^n_j} \abs{S^n_{j,m}} 
+  \max_{q^n_{j} < \ell\le q^n_{j+1}} F(0, v_n-(\ell,0))  -F(0, v_n) 
\Big\}.
		\end{aligned}\label{G:bnd} \ee 
Divide through by $\abs{v_n}_1$ and let $n\to\infty$.   Limit   \eqref{lln5}  gives convergence of the $G$-term on the right.  
  We claim that the terms on the second line of \eqref{G:bnd} vanish.  Limit  \eqref{b:F-erg}    takes care of the $F$-terms. 		
 Combine Doob's maximal inequality for martingales with 	Burkholder's inequality 
 \cite[Thm.~3.2]{Bur-73}	to obtain, for $\delta>0$,  
\begin{align*}
&\P\bigl\{  \,\max_{0\le m< q^n_{j+1}-q^n_j} \abs{S^n_{j,m}} \ge \delta\abs{v_n}_1 \bigr\} \le \frac{ \E\bigl[ \abs{S^n_{j, q^n_{j+1}-q^n_j}}^{\pp}\bigr] }{\delta^{\pp}\abs{v_n}_1^{\pp} } \\
&\qquad\qquad\qquad
 \le  \frac{C}{\delta^{\pp}\abs{v_n}_1^{\pp} } \, { \E\biggl[ \,\biggl\lvert \,\sum_{i=1}^{q^n_{j+1}-q^n_j} \bigl(\w_{i,0} -\Ew\bigr)^2\biggr\rvert^{\pp/2} \,  \biggr] }  
\le \frac{C}{\abs{v_n}_1^{\pp/2} } . 
\end{align*} 	
Thus Borel-Cantelli takes 	care of the $S^n_{j,m}$-term on the second line of \eqref{G:bnd}.  (This is the place where the assumption $\abs{v_n}_1\ge \eta_0n$ is used.)   We have the upper bound
\begin{align*}
\varlimsup_{n\to\infty}   \abs{v_n}_1^{-1} \Gne_{0,v_n}(v_n-e_1\in x_\centerdot)  
    \le 
 (s+t)^{-1} \max_{0\le j \le M-1 }\big[\gpp(s-sj\e, t) +sj\e \alpha + C\e s \big].
 \end{align*} 
 Let $\e\searrow 0$ to complete the proof of the upper bound.

  To get the  matching lower bound let  the supremum $\sup_{\tau\in [0,s]} \{ \tau\alpha +  \gpp(s-\tau , t)\}$ be attained at $\tau^* \in [0,s]$.  
	With $m_n=\abs{v_n}_1/(s+t)$ we have 
 		\begin{align*}
	\Gne_{0,v_n}(v_n-e_1\in x_\centerdot) &\ge 	\Gpp_{0,v_n-(\fl{m_n\tau^*}\vee1,1)}\; +\; \w_{v_n-(\fl{m_n\tau^*}\vee1,1)} \\[2pt]  & \qquad  
	+  \; B(v_n- (\fl{m_n\tau^*}\vee1,0), \, v_n). 
		\end{align*}
Use again the cocycle $F$ from above,  and let $n\to\infty$ to get 
		\begin{align*}
\varliminf_{n\to\infty}  		\abs{v_n}_1^{-1}\Gne_{0,v_n}(v_n-e_1\in x_\centerdot)
		\ge  (s+t)^{-1}[\gpp(s-\tau^*,t) + \tau^* \alpha] . 
		\end{align*}
This completes  the proof of  \eqref{eq:hor:lln}.
\end{proof}

 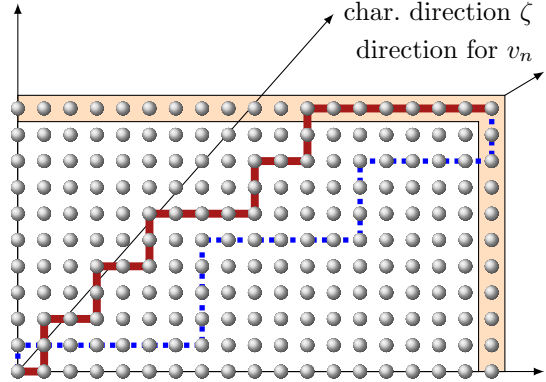
\begin{figure}[h]
	\begin{center}
		\begin{tikzpicture}[ >=latex, scale=0.7]

			\draw[<->] (0,7)--(0,0)--(10,0); 

			\fill[color=orange, nearly transparent](0,0)rectangle(9.25,5.25); 
			\draw(0,0)rectangle(9.25,5.25); 
			\fill[color=white](0,0)rectangle(8.75,4.75);
			\draw(0,0)rectangle(8.75,4.75);

			\draw[->] (0,0)--(6,6.8) node[right]{\small char.~direction $\zeta$};

			\draw[line width=3pt, color=nicosred](9,5)--(5.5,5)--(5.5,4)--(4.5,4)--(4.5,3)--(2.5, 3)--(2.5, 2)--(1.5,2)--(1.5,1)--(.5,1)--(.5,0)--(0,0); 
			
			\draw[line width=2pt, dotted, color=blue](9,5)--(9,4)--(6.5,4)--(6.5,2.5)--(3.5, 2.5)--(3.5,1.5)--(3.5,0.5)--(.5,.5)--(0,.5)--(0,0); 
			
			 \foreach \x in {0,...,18}{
              			 \foreach \y in {0,...,10}{
					\shade[ball color=really-light-gray](\x*0.5,\y*0.5)circle(1.3mm);  
								}
							} 
			\draw[->] (9.25, 5.25)--(10,5.7)node[above left]{\small direction for $v_n$ }; 
			


			
			

					\end{tikzpicture}
	\end{center}
	\caption{\small  Illustration of Lemma \ref{lm:exit:con}.  With $\alpha$-boundaries geodesics tend to go in the $\alpha$-characteristic direction $\zeta$.  If   $v_n$ converges in a direction  below $\zeta$,  maximal paths to $v_n$ tend  to hit  the north boundary. The dotted path that hits  the east boundary is unlikely to  be maximal
	for large $n$. }
	\label{fig:7.6}
\end{figure}

Continue with the stationary LPP defined by \eqref{Gnecorner5} in terms of a cocycle $B=B^{\zeta}_{\pm}$, with $r=\zeta\cdot e_1/\zeta\cdot e_2$ and $\alpha$ as in \eqref{bu:a}.    Let us call the direction $\zeta$ {\sl characteristic} for $\alpha$.  
  The next lemma shows 	that in stationary LPP a maximizing path  to a point   below the characteristic direction    will eventually hit    the north boundary before the east boundary.  (Illustration in Figure \ref{fig:7.6}.)    We leave to the reader the analogous result to a point above the characteristic line.   

\begin{lemma} \label{lm:exit:con}    Let $s\in(r, \infty)$.  Let   $v_n\in\Z_+^2$  be such that   $v_n/\abs{v_n}_1\to(s,1)/(1+s)$ and $\abs{v_n}_1\ge\eta_0n$ for some constant $\eta_0>0$.
Assume that $\gppa'(r+)>\gppa'(s-)$.  
	Then
 	$\Pbig$-a.s.\ there exists a random $n_0 < \infty$ such that for all $n \ge n_0$, 
		\be\label{eq:move:e1}
		\Gne_{0,v_n} = 
		\Gne_{0, v_n}(v_n-e_1\in x_\centerdot). 
		\ee
\end{lemma}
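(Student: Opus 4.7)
The plan is to apply the preceding lemma \eqref{eq:hor:lln}--\eqref{eq:vert:lln} to the direction $(s,1)/(s+1)$ and then show that the limit for paths exiting through the north boundary is strictly larger than the limit for paths exiting through the east boundary. Once that strict inequality is established, it will follow that for all sufficiently large $n$,
\[
  \Gne_{0,v_n}(v_n-e_1\in x_\centerdot)\; >\; \Gne_{0,v_n}(v_n-e_2\in x_\centerdot),
\]
and since $\Gne_{0,v_n}$ equals the maximum of these two quantities, \eqref{eq:move:e1} follows.

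\smallskip
First I would evaluate the north-exit limit. By \eqref{eq:hor:lln} it equals
\[
  L_1 = (s+1)^{-1}\sup_{0\le\tau\le s}\{\alpha\tau+\gpp(s-\tau,1)\} = (s+1)^{-1}\Bigl(\alpha s+\sup_{0\le u\le s}\{\gppa(u)-\alpha u\}\Bigr),
\]
after the change of variable $u=s-\tau$. Since $\alpha\in[\gppa'(r+),\gppa'(r-)]$, \eqref{eq:f=g-sa} shows that the \emph{unrestricted} supremum of $\gppa(u)-\alpha u$ over $u\ge 0$ is attained at $u=r$ and equals $f(\alpha)$. Because $s>r$ this maximizer lies in $[0,s]$, and hence
\[
  L_1 = (s+1)^{-1}\bigl(\alpha s+f(\alpha)\bigr).
\]

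\smallskip
Next I would evaluate the east-exit limit. By \eqref{eq:vert:lln} and the substitution $\xi_2=1-\tau$ followed by $s'=\xi_2/s$,
\[
  L_2 = (s+1)^{-1}\Bigl(f(\alpha)+s\sup_{0\le s'\le 1/s}\{\gppa(s')-f(\alpha)s'\}\Bigr).
\]
By the involution property (Lemma \ref{lm:f-properties}) and \eqref{grad g-1}, the unrestricted supremum of $\gppa(s')-f(\alpha)s'$ is attained precisely when $f(\alpha)\in[\gppa'(s'+),\gppa'(s'-)]$, which by \eqref{eq:gppa(1/s)} occurs at $s'=1/r$. Since $s>r$ we have $1/s<1/r$, so the restricted maximizer is at the right endpoint $s'=1/s$, and the homogeneity identity $s\,\gppa(1/s)=\gppa(s)$ yields
\[
  L_2 = (s+1)^{-1}\bigl(f(\alpha)+\gppa(s)-f(\alpha)\bigr) = (s+1)^{-1}\gppa(s).
\]

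\smallskip
Finally, the strict inequality $L_1>L_2$ reduces to $\alpha s+f(\alpha)>\gppa(s)$. By definition of $f$, we always have $\alpha s+f(\alpha)\ge \gppa(s)$, with equality iff $\alpha\in[\gppa'(s+),\gppa'(s-)]$. The hypothesis $\gppa'(r+)>\gppa'(s-)$ combined with $\alpha\ge\gppa'(r+)$ gives $\alpha>\gppa'(s-)\ge\gppa'(s+)$, hence $\alpha\notin[\gppa'(s+),\gppa'(s-)]$ and the inequality is strict. Combined with the two a.s.\ limits from the previous lemma (which apply because $v_n/\abs{v_n}_1\to(s,1)/(s+1)$ with $\abs{v_n}_1\ge\eta_0 n$), this gives the existence of a $\Pbig$-a.s.\ finite $n_0$ as claimed.

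\smallskip
The main technical obstacle is the correct identification of the optimizers in the two constrained suprema through the convex-duality dictionary between $\gppa$ and $f$, and ensuring the argument goes through at non-differentiable $\zeta$ by taking the appropriate one-sided derivatives. Everything else is direct from the preceding lemma.
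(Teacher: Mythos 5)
Your proof is correct, and it is a genuinely different (and arguably cleaner) route than the paper's. The paper does not compute the north-exit limit $L_1$ explicitly: it only applies \eqref{eq:vert:lln} to identify $L_2=(s+1)^{-1}\gppa(s)$, and then runs a contradiction argument. Assuming the east-exit case happens infinitely often with positive probability, it bounds the north-exit passage time from below by a hand-chosen path through $v_n-(\fl{m_n\tau^*}+1,1)$ with $\tau^*\in(0,r)$ small, takes $n\to\infty$ with the shape theorem and the cocycle ergodic theorem, and finds $L_2\ge(s+1)^{-1}[\alpha\tau^*+\gppa(s-\tau^*)]>L_2$, a contradiction. You instead invoke both \eqref{eq:hor:lln} and \eqref{eq:vert:lln} symmetrically, evaluate both constrained suprema exactly using the dual pair $(\gppa,f)$, and obtain the closed forms $L_1=(s+1)^{-1}(\alpha s+f(\alpha))$ and $L_2=(s+1)^{-1}\gppa(s)$, so that the conclusion is immediate from the strict inequality $\alpha s+f(\alpha)>\gppa(s)$ (which follows from $\alpha\ge\gppa'(r+)>\gppa'(s-)\ge\gppa'(s+)$). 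Both approaches turn on the same underlying convex-duality facts \eqref{eq:f=g-sa}, \eqref{eq:gppa(1/s)}, \eqref{eq:f=ga'}; the paper's version saves the explicit evaluation of $L_1$ at the cost of an extra reduction-to-absurdity and a redo of the ergodic limits for a specific path, while your version gets a clean closed-form picture of both limits for free. The one place to watch is the identification of the constrained optimizers: you correctly note that the unconstrained maximizer $u=r$ of $\gppa(u)-\alpha u$ lies inside $[0,s]$ (since $s>r$), while the unconstrained maximizer $s'=1/r$ of $\gppa(s')-f(\alpha)s'$ lies outside $[0,1/s]$, pushing the constrained optimum to the right endpoint by concavity. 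That is exactly right, and the one-sided-derivative bookkeeping at the possibly nondifferentiable points $r$ and $s$ is handled correctly.
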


\begin{proof} 
	The right derivative at $\tau=0$ of 
$\alpha \tau + \gpp(s-\tau, 1) = \alpha \tau +  \gppa(s- \tau)$	
	equals 
		\[
			\alpha - \gppa'(s-) > \alpha - \gppa'(r+) \ge 0.
		\]
	The last inequality above follows from the assumption on $r$.  
	Thus we can find  $\tau^*\in(0,r)$ such that 
		\be\label{eq:hor:ineq}
			\alpha \tau^*+\gpp(s-\tau^*, 1) > \gpp(s,1).
		\ee
		
	  To produce a contradiction let $A$ be the event  on which $\Gne_{0,v_n} = \Gne_{0, v_n}(v_n-e_2\in x_\centerdot)$ for infinitely many $n$ and assume $\Pbig(A)>0$.    Let $m_n=\abs{v_n}_1/(1+s)$.   
	On $A$ we have for infinitely many $n$ 
		\begin{align*} 
			&\abs{v_n}^{-1}	\Gne_{0,v_n}(v_n-e_2\in x_\centerdot) \; = \; \abs{v_n}^{-1} \Gne_{0, v_n}\\[3pt]
			&\qquad \ge \abs{v_n}^{-1} B(v_n-(\fl{m_n\tau^*}+1)e_1, v_n)
			+ \abs{v_n}^{-1}\Gpp_{0,v_n-(\fl{m_n\tau^*}+1,1)}\\
			&\qquad\qquad +\abs{v_n}^{-1}\w_{v_n -(\fl{m_n\tau^*}+1,1)}\notag.
		\end{align*}
		Apply \eqref{eq:vert:lln} to the leftmost quantity.  Apply limits \eqref{lln5} and \eqref{b:F-erg} and stationarity and integrability of $\w_x$  to the expression on the right. 
	  Both extremes  of the above inequality converge almost surely.   
Hence on the event $A$ the inequality is preserved to the limit and yields (after multiplication by $1+s$) 
		\begin{align*}
		\begin{split}
		\sup_{0\le \tau \le 1} \{ \f(\alpha) \tau + \gpp(s,1-\tau)\}  
		\ge \alpha \tau^* + \gpp(s-\tau^*,1).
		\end{split}
		\end{align*}
The   supremum of the left-hand side is achieved at $\tau = 0$ because the right derivative  equals 
\[  \f(\alpha) -\gppa'(\tfrac{1-\tau}s-) \le \f(\alpha) -\gppa'(r^{-1}-)  \le 0
\]
where the  first   inequality comes from  $s^{-1}<r^{-1}$ and the second from   \eqref{eq:f=ga'}. Therefore
		\[  \gpp(s,1) \ge \alpha \tau^* + \gpp(s-\tau^*, 1) \]
	which contradicts \eqref{eq:hor:ineq}. Consequently $\Pbig(A)=0$ and \eqref{eq:move:e1} holds for $n$ large.
\end{proof}

\begin{proof}[Proof of Theorem \ref{th:construction}.] The proof  goes in two steps.\medskip

{\bf Step 1.} First consider a fixed $\xi=(\frac{s}{1+s}, \frac{1}{1+s})\in\ri\Uset$ and a sequence $v_n$ such 
  that $v_n/\abs{v_n}_1\to \xi$ and $\abs{v_n}_1\ge \eta_0 n$ for some $\eta_0>0$. 
  We prove that  the last inequality of \eqref{bu:G1} holds almost surely.  
	Let $\zeta=(\frac{r}{1+r}, \frac{1}{1+r})$ satisfy  $\zeta\cdot e_1<\ximin\cdot e_1$ so that  $\gppa'(r+)>\gppa'(s-)$ and Lemma \ref{lm:exit:con} can be applied. 
	Use cocycle  $B^{\zeta}_{+}$ from Theorem \ref{th:cocycles} to define   last-passage times    $\Gne_{u, v}$ as in \eqref{Gnecorner5}.   Furthermore,  define last-passage times    $\Gn_{u, v}$ that use cocycles only on the north boundary and bulk weights elsewhere: 
		\begin{align*}
		\begin{split}
			\Gn_{v-ke_1,v}=B^{\zeta}_{+}(v-ke_1,v) \,,
			&\qquad \Gn_{v-\ell e_2,v}=\sum_{j=1}^{\ell} \w_{v-je_2}\,,\\
			\quad\text{and}\qquad 	\Gn_{u,v}
			&=\w_u+\Gn_{u+e_1,v}\vee\Gn_{u+e_2,v}\quad \text{ for } \ u\le v-e_1-e_2\,.
		\end{split}
	\end{align*}
For large  $n$  we have 
		\begin{align*}
			\Gpp_{x,v_n} -  \Gpp_{x+e_1,v_n} &\le   \Gn_{x,v_n+e_2}-  \Gn_{x+e_1,v_n+e_2} \\
			 &  =  \Gne_{x,v_n+e_1+e_2}(v_n+ e_2\in x_\centerdot)-  \Gne_{x+e_1,v_n+e_1+e_2}(v_n+ e_2\in x_\centerdot)\\
		&  =  \Gne_{x,v_n+e_1+e_2} -  \Gne_{x+e_1,v_n+e_1+e_2}
		= B^{\zeta}_{+}(x, x+e_1).   		\end{align*}
The first inequality above is the first inequality of \eqref{eq:new:comp0}.  The first equality above  is obvious.  The second  equality is Lemma \ref{lm:exit:con} and the last     equality is \eqref{stat-lpp5}. 	Thus 
		\[\varlimsup_{n\to \infty} \big(\Gpp_{x,v_n} -  \Gpp_{x+e_1,v_n}\big) \le B^{\zeta}_{+}(x,x+e_1).\]

	Let $\zeta\cdot e_1$  increase to $\ximin\cdot e_1$.  Theorem \ref{th:cocycles}\eqref{th:cocycles:cont} implies
		\[\varlimsup_{n\to \infty}\big( \Gpp_{x,v_n} -  \Gpp_{x+e_1,v_n}\big) \le B^{\ximin}_{-}(x,x+e_1).\]
		
	An analogous  argument gives the matching lower bound (first inequality of \eqref{bu:G1}) by taking $\zeta\cdot e_1>\ximax\cdot e_1$ and by reworking Lemma \ref{lm:exit:con} for the case where the direction of $v_n$ is above the characteristic direction $\zeta$.  
	Similar  reasoning  works for vertical increments  $\Gpp_{x,v_n} -  \Gpp_{x+e_2,v_n}$.\medskip

{\bf Step 2.} We prove the full statement of the theorem. 
Let $\eta_\ell$ and $\zeta_\ell$ be two sequences in $\ri\Uset$ such that
$\eta_\ell\cdot e_1<\ximin\cdot e_1$, $\ximax\cdot e_1<\zeta_\ell\cdot e_1$, $\eta_\ell\to\ximin$, and $\zeta_\ell\to\ximax$. Let $\Ombig_0$ be the event on 
  which  limits \eqref{eq:cont} holds for directions $\ximax$ and $\ximin$ (with sequences $\zeta_\ell$ and $\eta_\ell$, respectively) and
\eqref{bu:G1} holds  for each  direction $\zeta_\ell$ with sequence $\fl{n\zeta_\ell}$, and for each    direction $\eta_\ell$ with  sequence $\fl{n\eta_\ell}$.  
$\Pbig(\Ombig_0)=1$ by Theorem \ref{th:cocycles}\eqref{th:cocycles:cont} and Step 1.

Fix $\ell$ and a sequence   $v_n$ as in \eqref{eq:vn66}.  Abbreviate $a_n=\abs{v_n}_1$.  For large  $n$  
\[\fl{a_n\eta_\ell}\cdot e_1<v_n\cdot e_1<\fl{a_n\zeta_\ell}\cdot e_1
\quad\text{and}\quad 
\fl{a_n\eta_\ell}\cdot e_2>v_n\cdot e_2>\fl{a_n\zeta_\ell}\cdot e_2.
\]
By repeated application of Lemma \ref{lm:new:comp0}  
	\[\Gpp_{x,\fl{a_n\zeta_\ell}}-\Gpp_{x+e_1,\fl{a_n\zeta_\ell}}\le\Gpp_{x,v_n}-\Gpp_{x+e_1,v_n}\le\Gpp_{x,\fl{a_n\eta_\ell}}-\Gpp_{x+e_1,\fl{a_n\eta_\ell}}.\]
Take $n\to\infty$ and apply \eqref{bu:G1} to the sequences $\fl{a_n\zeta_\ell}$ and  $\fl{a_n\eta_\ell}$.  This works because $\fl{a_n\zeta_\ell}$  is a subset of $\fl{n\zeta_\ell}$ that escapes to infinity.  Thus for  $\what\in\Ombig_0$
	\begin{align*}
	B^{\overline\zeta_\ell}_{+}(\what,x,x+e_1)&\le\varliminf_{n\to\infty}(\Gpp_{x,v_n}(\w)-\Gpp_{x+e_1,v_n}(\w))\\
	&\le\varlimsup_{n\to\infty}(\Gpp_{x,v_n}(\w)-\Gpp_{x+e_1,v_n}(\w))
	\le B^{\underline\eta_\ell}_{-}(\what,x,x+e_1).
	\end{align*}
	
Take $\ell\to\infty$ and apply \eqref{eq:cont} to arrive at \eqref{bu:G1} as stated.
  \eqref{bu:G2} follow similarly. 
\end{proof}

\section{Directional geodesics}
\label{sec:geod}

This section proves the results on geodesics.  We  work on the extended space $\Ombig=\Omega\times\Omega'$ and define geodesics in terms of   the cocycles $B^{\xi}_{\pm}$ constructed in Theorem \ref{th:cocycles}.    The idea   is in the next  lemma, followed by  the definition of cocycle geodesics.

\begin{lemma}\label{lm:grad flow}
Let $B$ be any stationary   cocycle that recovers potential $V(\what)=\w_0$, as in Definitions \ref{def:cK} and \ref{def:bdry-model}. 
Fix $\what$ so that properties \eqref{def:cK:stat}--\eqref{def:cK:coc}   of Definition \ref{def:cK} and \eqref{eq:VB} hold for all translations $T_x\what$.

\smallskip

\begin{enumerate}[\ \ \rm(a)]
\item\label{lm:grad flow:a}  Let $x_{m,n}=(x_k)_{k=m}^n$ be any up-right path  that  follows  minimal  gradients of $B$, that is, 
\[\w_{x_k}=B(\what, x_k,x_{k+1}) \qquad\text{for all $m\le k<n$.}\]
  Then $x_{m,n}$ is a geodesic from $x_m$ to $x_n$:
	\be\label{g:GB} \Gpp_{x_m,x_n}(\w)=\sum_{k=m}^{n-1}\w_{x_k}=B(\what, x_m,x_n).\ee
\item\label{lm:grad flow:b}
Let $x_{m,n}=(x_k)_{k=m}^n$ be an up-right path such that for all $m\le k<n$  
\begin{align*}
\begin{split}
\text{either}\quad\w_{x_k}&=B(x_k,x_{k+1})<B(x_k,x_k+e_1)\vee B(x_k,x_k+e_2)\\
\text{or}\quad x_{k+1}&=x_k+e_2\text{ and }B(x_k,x_k+e_1)=B(x_k,x_k+e_2).
\end{split}
\end{align*}
In other words, path $x_{m,n}$ follows   minimal gradients of $B$    and  takes an $e_2$-step in a tie. 
Then $x_{m,n}$ is the leftmost geodesic from $x_m$ to $x_n$.  Precisely,  if $\bar x_{m,n}$ is an up-right path from $\bar x_m=x_m$ to $\bar x_n=x_n$ and $\Gpp_{x_m,x_n}=\sum_{k=m}^{n-1}\w_{\bar x_k}$,
then $x_k\cdot e_1\le \bar x_k\cdot e_1$ for all $m\le k\le n$.

If ties are broken by   $e_1$-steps the resulting geodesic is   the rightmost geodesic between $x_m$ and $x_n$:
$x_k\cdot e_1\ge \bar x_k\cdot e_1$ for all $m\le k<n$.
\end{enumerate}
\end{lemma}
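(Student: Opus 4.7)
The plan is to reduce both parts to the recovery identity $\w_x=\min_{i\in\{1,2\}} B(\what,x,x+e_i)$ from \eqref{eq:VB} combined with the telescoping provided by the cocycle property \eqref{def:cK:coc}, which converts sums of cocycle increments along any up-right path into the single gradient $B(x_m,x_n)$.

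For part \eqref{lm:grad flow:a}, I would first observe that for every up-right path $y_{m,n}$ from $x_m$ to $x_n$,
$$\sum_{k=m}^{n-1}\w_{y_k} \;\le\; \sum_{k=m}^{n-1}B(y_k,y_{k+1}) \;=\; B(x_m,x_n),$$
where the inequality applies recovery termwise and the equality is telescoping via the cocycle property. For the distinguished path satisfying $\w_{x_k}=B(x_k,x_{k+1})$ the termwise inequality is an equality, so $\sum_{k=m}^{n-1}\w_{x_k}=B(x_m,x_n)$, which simultaneously exhibits $x_{m,n}$ as a geodesic and establishes the chain of equalities in \eqref{g:GB}.

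For part \eqref{lm:grad flow:b}, I would first run the same bound backwards on an arbitrary geodesic $\bar x_{m,n}$ from $x_m$ to $x_n$: since $\Gpp_{x_m,x_n}=B(x_m,x_n)$ by part \eqref{lm:grad flow:a}, the two path sums must coincide, and because they dominate termwise, equality is forced at every step, giving $\w_{\bar x_k}=B(\bar x_k,\bar x_{k+1})$ for all $m\le k<n$. Next I argue by contradiction. If $x_{m,n}$ is not the leftmost geodesic, let $k_0$ be minimal with $x_{k_0}\cdot e_1>\bar x_{k_0}\cdot e_1$. Both paths have the same length, so $x_k\cdot e_1+x_k\cdot e_2=\bar x_k\cdot e_1+\bar x_k\cdot e_2$ at every $k$; together with the minimality of $k_0$ and the single-step gain of at most $1$ in the $e_1$-coordinate, this forces $x_{k_0-1}=\bar x_{k_0-1}=:z$, with $x$ taking an $e_1$-step and $\bar x$ an $e_2$-step out of $z$.

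The contradiction then falls out at $z$: the tie-breaking hypothesis on $x$ permits an $e_1$-step out of $z$ only when $B(z,z+e_1)<B(z,z+e_2)$, while the termwise equality applied to $\bar x$ gives $\w_z=B(z,z+e_2)$, and recovery $\w_z=\min_i B(z,z+e_i)$ then forces $B(z,z+e_2)\le B(z,z+e_1)$. The rightmost statement is symmetric under interchange of $e_1$ and $e_2$. The only part requiring genuine care is verifying $x_{k_0-1}=\bar x_{k_0-1}$ at the first-divergence index; once that identification is made, the rest is routine bookkeeping inside the recovery/cocycle framework.
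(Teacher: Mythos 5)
Your proof is correct, and part (a) is identical to the paper's: recovery bounds each step and the cocycle property telescopes. For part (b) you take a slightly rearranged route: instead of the paper's direct strict passage-time inequality $\w_{x_k}+\Gpp_{x_k+e_2,x_n}<B(x_k,x_k+e_2)+B(x_k+e_2,x_n)=\Gpp_{x_k,x_n}$ at a common point $x_k=\bar x_k$ (which forces $\bar x$ to also take $e_1$), you first extract the clean intermediate fact that \emph{every} geodesic from $x_m$ to $x_n$ realizes recovery termwise, $\w_{\bar x_k}=B(\bar x_k,\bar x_{k+1})$, and then derive a contradiction at the first index where $\bar x$ would cross to the left of $x$. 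Both arguments rest on the same two pillars (termwise recovery and $\Gpp\le B$), and the identification $x_{k_0-1}=\bar x_{k_0-1}$ at the first divergence index is handled correctly by the equal-$\ell^1$-length bookkeeping; your version has the minor advantage of making the termwise-recovery-along-geodesics observation explicit, which is a reusable fact, while the paper's is a touch shorter.
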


\begin{proof}
Part \eqref{lm:grad flow:a}. Any up-right path $\xbar_{m,n}$ from $\xbar_m=x_m$ to $\xbar_n=x_n$ satisfies 
	\[\sum_{k=m}^{n-1}\w_{\xbar_k}\le \sum_{k=m}^{n-1}B(\xbar_k,\xbar_{k+1})=B(x_m,x_n)=\sum_{k=m}^{n-1}B(x_k,x_{k+1})=\sum_{k=m}^{n-1}\w_{x_k}.  \]

Part \eqref{lm:grad flow:b}. 
$x_{m,n}$ is a geodesic by part \eqref{lm:grad flow:a}. To prove that it is the leftmost geodesic 
assume $\bar x_k=x_k$ and $x_{k+1}=x_k+e_1$. Then $\w_{x_k}=B(x_k,x_k+e_1)<B(x_k,x_k+e_2)$.   Recovery of the weights gives    $\Gpp_{x,y}\le B(x,y)$ for all $x\le y$. 
Combined   with \eqref{g:GB},  
	\[\w_{x_k}+\Gpp_{x_k+e_2,x_n}<B(x_k,x_k+e_2)+B(x_k+e_2,x_n)=B(x_k,x_n)=\Gpp_{x_k,x_n}.\]
Hence also  $\bar x_{k+1}=\bar x_k+e_1$ and  the claim about being the leftmost geodesic is proved. The other claim is symmetric. 
\end{proof}

Next we define a cocycle geodesic, that is, a geodesic constructed by following minimal gradients of a   cocycle $B^{\xi}_{\pm}$  constructed in Theorem \ref{th:cocycles}.   Since our treatment allows discrete distributions, we introduce a    function $\t$ on $\Z^2$   to resolve ties.   
For $\xi\in\ri\Uset$, $u\in\Z^2$, and $\t\in\{e_1,e_2\}^{\Z^2}$, let $x^{u,\t,\xi,\pm}_{0,\infty}$ be the up-right path (one path for $\xi+$, one for $\xi-$) starting at $x^{u,\t,\xi,\pm}_0=u$ and satisfying for all $n\ge0$
	\[x^{u,\t,\xi,\pm}_{n+1}
	=\begin{cases}
	x^{u,\t,\xi,\pm}_n+e_1&\text{if }
	B^{\xi}_{\pm}(x^{u,\t,\xi,\pm}_n,x^{u,\t,\xi,\pm}_n+e_1)<B^{\xi}_{\pm}(x^{u,\t,\xi,\pm}_n,x^{u,\t,\xi,\pm}_n+e_2),\\
	x^{u,\t,\xi,\pm}_n+e_2&\text{if }
	B^{\xi}_{\pm}(x^{u,\t,\xi,\pm}_n,x^{u,\t,\xi,\pm}_n+e_2)<B^{\xi}_{\pm}(x^{u,\t,\xi,\pm}_n,x^{u,\t,\xi,\pm}_n+e_1),\\
	x^{u,\t,\xi,\pm}_n+\t(x^{u,\t,\xi,\pm}_n)&\text{if }
	B^{\xi}_{\pm}(x^{u,\t,\xi,\pm}_n,x^{u,\t,\xi,\pm}_n+e_1)=B^{\xi}_{\pm}(x^{u,\t,\xi,\pm}_n,x^{u,\t,\xi,\pm}_n+e_2).
	\end{cases}\]
Cocycles 	
  $B^{\xi}_{\pm}$  satisfy  $\w_x=B^{\xi}_{\pm}(\what, x,x+e_1)\wedge B^{\xi}_{\pm}(\what, x,x+e_2)$   
  (Theorem \ref{th:cocycles}(\ref{th:cocycles:exist})) and so  by Lemma \ref{lm:grad flow}\eqref{lm:grad flow:a}, $x^{u,\t,\xi,\pm}_{0,\infty}$ is an infinite geodesic.  Since the cocycles are 
 defined on the  space $\Ombig$,  the geodesics are measurable functions on $\Ombig$.  
Recall from  Remark \ref{bu:kS} that under certain conditions a cocycle   $B^{\xi}_{\pm}$ is  $\kS$-measurable.   When that happens,  the geodesic $x^{u,\t,\xi,\pm}_{0,\infty}$ can be defined on $\Omega$ without the artificial extension to the space $\Ombig=\Omega\times\Omega'$.  
In particular, if $\gpp$ is differentiable at the endpoints of its linear segments (if any), all geodesics $x^{u,\t,\xi,\pm}_{0,\infty}$ are   $\kS$-measurable.

If we restrict ourselves to the event $\Ombig_0$ of full $\Pbig$-measure on which monotonicity 
  \eqref{eq:monotone} holds for all $\xi,\zeta\in\ri\Uset$,  we can order  these geodesics in a  natural way from left to right.  
Define a partial ordering 
on $\{e_1,e_2\}^{\Z^2}$ by   $e_2\preceq e_1$   
and then $\t\preceq\t'$   coordinatewise.   Then  on the event $\Ombig_0$,  
for any  $u\in\Z^2$, $\t\preceq\t'$,  
$\xi,\zeta\in\ri\Uset$ with $\xi\cdot e_1<\zeta\cdot e_1$, and for all $n\ge0$,
	\begin{align}\label{eq:geod-order}
	 x_n^{u,\t,\xi,\pm}\cdot e_1\le x_n^{u,\t',\xi,\pm}\cdot e_1,\ \ 
	 x_n^{u,\t,\xi,-}\cdot e_1\le x_n^{u,\t,\xi,+}\cdot e_1,\ \ 
	 \text{and }\ x_n^{u,\t,\xi,+}\cdot e_1\le x_n^{u,\t,\zeta,-}\cdot e_1.
	\end{align}
	
  The  leftmost and rightmost tie-breaking rules are defined by $\tmin_x=e_2$ and $\tmax_x=e_1$ for all $x\in\Z^2$. 
 The cocycle limit \eqref{eq:cont} forces the cocycle geodesics to converge also, as the next lemma shows. 

\begin{lemma}\label{lm:cont geod}
Fix $\xi$ and let  $\zeta_n\to\xi$ in $\ri\Uset$.  
If  
$\zeta_n\cdot e_1>\xi\cdot e_1$  $\forall n$  then for all $u\in\Z^2$ 
	\begin{align}\label{eq:cont geod}
	\Pbig\{\forall k\ge0\;\exists n_0<\infty:n\ge n_0\Rightarrow x_{0,k}^{u,\tmax,\zeta_n,\pm}=x_{0,k}^{u,\tmax,\xi,+}\}=1.
	\end{align}
Similarly,  if $\zeta_n\cdot e_1\nearrow\xi\cdot e_1$ the limit holds with $\xi-$ on the right and $\tmax$ replaced by $\tmin$.  
\end{lemma}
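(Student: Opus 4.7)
My plan is to fix an event of full $\Pbig$-measure on which both the monotonicity \eqref{eq:monotone} and the one-sided cocycle convergence \eqref{eq:cont} hold simultaneously at every lattice point, and then run a path-by-path inductive argument for any fixed initial segment of length $k$. Only a countable union of null sets has to be excluded, so such an event exists by Theorem \ref{th:cocycles}\eqref{th:cocycles:cont}.

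Consider first the case $\zeta_n \cdot e_1 \searrow \xi \cdot e_1$. The key observation is that the comparison between $B^{\zeta_n\pm}(y,y+e_1)$ and $B^{\zeta_n\pm}(y,y+e_2)$ at each site $y$ splits into three cases according to the strict ordering of $B^{\xi+}(y,y+e_1)$ versus $B^{\xi+}(y,y+e_2)$. If the $\xi+$ cocycle strictly prefers $e_1$, monotonicity alone --- which pushes $B^{\zeta_n\pm}(y,y+e_1)$ down and $B^{\zeta_n\pm}(y,y+e_2)$ up relative to their $\xi+$ values --- forces the same strict preference for every $n$, so both geodesics step by $e_1$. If the $\xi+$ cocycle strictly prefers $e_2$, then monotonicity points the wrong way, and the convergence \eqref{eq:cont} is needed to secure the preference for all sufficiently large $n$. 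In a tie, monotonicity alone gives $B^{\zeta_n\pm}(y,y+e_1) \le B^{\zeta_n\pm}(y,y+e_2)$, and the $\tmax$ rule then selects the same $e_1$-step as the $\xi+$ geodesic for every $n$.

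With this case analysis in hand, I would proceed by induction on $j \le k$. Assuming the first $j$ steps agree and both geodesics occupy the common vertex $y = x^{u,\tmax,\xi+}_j$, the trichotomy above produces a threshold $n_j$ beyond which the $(j{+}1)$-st steps also agree. The induction closes after $k$ applications, giving the desired eventual agreement up to step $k$. The opposite limit $\zeta_n \cdot e_1 \nearrow \xi \cdot e_1$ is symmetric under the substitutions $\xi+ \leftrightarrow \xi-$ and $\tmax \leftrightarrow \tmin$; the tie-breaking role of $\tmin$ (selecting $e_2$) is precisely compatible with the reversed monotonicity inequalities.

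The main obstacle --- really a bookkeeping point rather than a substantive difficulty --- is aligning the direction of monotonicity with the tie-breaking convention so that ties are handled by monotonicity alone, and convergence is only called upon in the non-tie case where monotonicity has the wrong sign. This is why the statement pairs $\xi+$ with $\tmax$ for right limits and $\xi-$ with $\tmin$ for left limits: with the opposite pairings, ties in the limiting cocycle could produce arbitrarily many switches in the approximating geodesics and the eventual-agreement conclusion would fail. Once the correct matching is in place, the argument reduces to the elementary inductive scheme above.
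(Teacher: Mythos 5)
Your proof is correct and follows essentially the same route as the paper's: handle the tie case purely by monotonicity \eqref{eq:monotone} and the $\tmax$ rule, use the convergence \eqref{eq:cont} to lock in the sign of the difference at non-tie sites for large $n$, and run the finite induction along the shared initial segment. Your extra observation that the "prefers $e_1$" subcase is already settled by monotonicity alone, and that the $\tmax/\xi+$ (resp. $\tmin/\xi-$) pairing is forced by the direction of the monotone inequalities, is a nice refinement of the argument rather than a departure from it.
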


\begin{proof}
It is enough to prove the statement for $u=0$. By  \eqref{eq:cont}, for a given $k$ and  large enough $n$,  if $x\ge0$ with $\abs{x}_1\le k$ and
$B^{\xi}_{+}(x,x+e_1)\ne B^{\xi}_{+}(x,x+e_2)$, then $B^{\zeta_n}_{\pm}(x,x+e_1)-B^{\zeta_n}_{\pm}(x,x+e_2)$ does not vanish and has the same sign as $B^{\xi}_{+}(x,x+e_1)-B^{\xi}_{+}(x,x+e_2)$.
From such an $x$ geodesics following the minimal gradient of $B^{\zeta_n}_{\pm}$ or the minimal gradient of $B^{\xi}_{+}$ stay together for their next step.
On the other hand, when $B^{\xi}_{+}(x,x+e_1)= B^{\xi}_{+}(x,x+e_2)$, monotonicity \eqref{eq:monotone} implies 
	\[B^{\zeta_n}_{\pm}(x,x+e_1)\le B^{\xi}_{+}(x,x+e_1)=B^{\xi}_{+}(x,x+e_2)\le B^{\zeta_n}_{\pm}(x,x+e_2).\]
Once again, both the geodesic following the minimal gradient of $B^{\zeta_n}_{\pm}$ and rules $\tmax$ and the one following the minimal gradients of $B^{\xi}_{+}$ and rules $\tmax$ will next take 
the same $e_1$-step. This proves \eqref{eq:cont geod}. The other claim is similar. 
\end{proof}

Recall the line segments  $\Uset_\xi$,  $\Uset_{\xi\pm}$ defined in \eqref{eq:sector1}--\eqref{eq:sector2}.  
The endpoints  of   $\Uset_\xi=[\ximin,  \ximax]$   are  given  by
	\begin{align*}
	&\ximax\cdot e_1=\sup\{\alpha:(\alpha,1-\alpha)\in\Uset_{\xi+}\}\quad\text{and}\quad\ximin\cdot e_1=\inf\{\alpha:(\alpha,1-\alpha)\in\Uset_{\xi-}\}.
	\end{align*}
By Lemma \ref{gppa-lm} both points are again in $\ri\Uset$.  When needed we extend this definition to the endpoints of $\Uset$ by  $\Uset_{e_i}=\Uset_{e_i\pm}=\{e_i\}$, $i\in\{1,2\}$.

The next theorem concerns the direction of the cocycle  geodesics.

\begin{theorem}\label{th:B-geod:direction}
We have these two statements:
	\be\label{g:77} \Pbig\Big\{\forall\xi\in\ri\Uset,\forall\t\in\{e_1,e_2\}^{\Z^2},\forall u\in\Z^2:x^{u,\t,\xi,\pm}_{0,\infty}\text{ is $\Uset_{\xi\pm}$-directed}\Big\}=1.\ee
	If $\xi\in\Diff$ then the statement should be taken without the $\pm$.   
\end{theorem}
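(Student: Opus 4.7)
The plan is, for each fixed $\xi\in\ri\Uset$, $u\in\Z^2$, and tiebreaker $\t$, to show that every subsequential limit of $x_n/n$ (with $x_n=x^{u,\t,\xi+}_n$, say) belongs to $\Uset_{\xi+}$; the $\xi-$ case is identical. The starting point is the cocycle identity along the geodesic: since $x_n$ is built by following minimal gradients of $B^{\xi+}$, Lemma \ref{lm:grad flow}\eqref{lm:grad flow:a} together with telescoping and the cocycle property give
\[
\Gpp_{u,x_n}(\w)\;=\;\sum_{k=0}^{n-1}\w_{x_k}\;=\;B^{\xi+}(\what,u,x_n).
\]
Since $|x_n-u|_1=n$, any subsequential limit $v$ of $(x_n-u)/n$ lies in $\Uset$.

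Along a chosen subsequence $n_k$ realizing such a limit, I will invoke two ergodic inputs. The $L^1$ shape theorem of Appendix \ref{app:shape} yields $\Gpp_{u,x_{n_k}}/n_k\to\gpp(v)$ almost surely. For the right-hand side, introduce the centered cocycle $F(x,y)=h(\xi+)\cdot(x-y)-B^{\xi+}(x,y)$ as in \eqref{FF} and apply Lemma \ref{th:Atilla} of Appendix \ref{app:aux} to obtain
\[
\lim_{n\to\infty}\max_{x\ge u,\,|x-u|_1\le n}\frac{|F(u,x)|}{n}\;=\;0\qquad\Pbig\text{-a.s.}
\]
Combined with $-h(\xi+)=\nabla\gpp(\xi+)$ from Theorem \ref{th:cocycles}\eqref{th:cocycles:exist}, this gives $B^{\xi+}(\what,u,x_{n_k})/n_k\to\nabla\gpp(\xi+)\cdot v$, and equating the two limits produces
\[
\gpp(v)\;=\;\nabla\gpp(\xi+)\cdot v.
\]
Concavity of $\gpp$ makes $\nabla\gpp(\xi+)$ a supergradient at $\xi$, and homogeneity forces $\gpp(\xi)=\nabla\gpp(\xi+)\cdot\xi$, so $\gpp(\zeta)\le\nabla\gpp(\xi+)\cdot\zeta$ for every $\zeta\in\Uset$. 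The equality set is exactly $\Uset_{\xi+}$ by definition \eqref{eq:sector1}, and hence $v\in\Uset_{\xi+}$.

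To promote this to the simultaneous statement \eqref{g:77}, I will take a countable intersection over $u\in\Z^2$, reduce the tiebreaker $\t$ to the two extremes $\tmin,\tmax$ by the monotonicity \eqref{eq:geod-order} (any intermediate $\t$-geodesic is sandwiched in $e_1$-coordinate between the $\tmin$- and $\tmax$-geodesics, and $\Uset_{\xi+}$ being an interval preserves directedness under sandwiching), and handle the uncountable parameter $\xi$ by fixing a countable dense $\Uset_0\subset\ri\Uset$, applying the fixed-data argument on $\Uset_0$, and then sandwiching a general $\xi$ between neighboring points of $\Uset_0$ using the $\xi$-monotonicity in \eqref{eq:geod-order} combined with the continuity \eqref{eq:cont geod}. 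When $\xi\in\Diff$, Theorem \ref{th:cocycles}\eqref{th:cocycles:flat} gives $B^{\xi+}=B^{\xi-}$ so the two geodesic families coincide and $\Uset_{\xi+}=\Uset_{\xi-}=\Uset_\xi$, which accounts for the final clause of the theorem.

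The main obstacle will be the uniform control required of the shape theorem and of the cocycle ergodic theorem: the subsequence $x_{n_k}$ along which I pass to the limit is random, so I need convergence uniformly over all endpoints $x\ge u$ with $|x-u|_1\le n$ rather than along a prescribed deterministic direction. Both Appendix \ref{app:shape} (the $L^1$ shape theorem) and Lemma \ref{th:Atilla} (a maximal-ergodic bound for cocycles) are crafted to supply precisely this uniformity under the moment hypothesis in \eqref{2d-ass}, which is why that hypothesis is invoked. A secondary, more bookkeeping, difficulty is ensuring that the extension from $\Uset_0$ to all of $\ri\Uset$ is executed on a single null-exceptional event, for which the monotonicity \eqref{eq:geod-order} jointly in $\xi$ and $\t$ (holding outside one $\Pbig$-null event as noted in Remark \ref{rm:cocycles}(c)) is the essential tool.
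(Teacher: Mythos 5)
Your proposal follows the same route as the paper's proof: the core is the identity $\Gpp_{u,x_n}=B^{\xi+}(\what,u,x_n)$ from Lemma \ref{lm:grad flow}\eqref{lm:grad flow:a}, the uniform cocycle ergodic theorem (Lemma \ref{th:Atilla}) to control $F(u,x_n)/n$, the $L^1$ shape theorem (Appendix \ref{app:shape}) for $\Gpp_{u,x_n}/n$ along the random geodesic, and the conclusion that any limiting direction $v$ satisfies $\gpp(v)=\nabla\gpp(\xi+)\cdot v$ and hence $v\in\Uset_{\xi+}$. The paper phrases this as a contradiction with the $\varlimsup$ of $x_n\cdot e_1/|x_n|_1$ for $\tmax$ and the $\varliminf$ for $\tmin$, sandwiching intermediate $\t$ by \eqref{eq:geod-order}, but the content is identical to your subsequential-limit framing. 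One small inaccuracy in your sketch: the extension from a countable dense $\Uset_0$ to a general $\zeta\in\ri\Uset$ does not need Lemma \ref{lm:cont geod} (which concerns local convergence of geodesic paths); the monotone ordering \eqref{eq:geod-order} alone suffices, combined with the observation that for $\zeta\in\Diff\smallsetminus\Uset_0$ the right endpoints $\ximax$ of nearby $\xi\in\Uset_0$ converge to $\zetamax$ because $\nabla\gpp(\xi)\to\nabla\gpp(\zeta)$ without equality (so no linear segment gets stuck). Also, your subsequential-limit argument should note explicitly that $v\notin\{e_1,e_2\}$, which follows from Lemma \ref{gppa-lm} since $\gpp(e_i)=\Ew<\nabla\gpp(\xi+)\cdot e_i$; the paper's $\varlimsup$ formulation sidesteps this because $\ximax\in\ri\Uset$.
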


\begin{proof}
Fix $\xi\in\ri\Uset$ and abbreviate $x_n=x^{u,\tmax,\xi,+}_n$.    Since $B^{\xi}_{+}$ recovers weights $\w$, 
Lemma \ref{lm:grad flow}\eqref{lm:grad flow:a} implies that $\Gpp_{u,x_n}=B^{\xi}_{+}(u,x_n)$. 
 Furthermore, $B^{\xi}_{+}(x,y)+h(\xi+)\cdot(y-x)$ is a centered cocycle, as in Definition \ref{def:cK}. Theorem \ref{th:Atilla} implies then 
	\[\lim_{n\to\infty}\abs{x_n}_1^{-1} ( \Gpp_{u,x_n} + h(\xi+)\cdot x_n  ) =0\qquad\Pbig\text{-almost surely}.\]

Define $\zeta(\what)\in\Uset$ by $\zeta\cdot e_1 = \varlimsup \frac{x_n\cdot e_1}{\abs{x_n}_1}$. 
If $\zeta\cdot e_1 >  \ximax\cdot e_1 $ then $\zeta\not\in\Uset_{\xi+}$ and hence
	\[\gpp(\zeta) +h(\xi+)\cdot\zeta=\gpp(\zeta)  - \nabla \gpp(\xi+) \cdot\zeta  <  \gpp(\xi)  - \nabla\gpp(\xi+) \cdot\xi = 0.\]  
(The first and last equalities come from \eqref{eq:h=grad} and \eqref{eq:solution-pp}.)
Consequently, by the shape theorem (limit \eqref{lln5}), 
on the event $\{\zeta\cdot e_1>\ximax\cdot e_1\}$  
\[  \varliminf_{n\to\infty}  \abs{x_n}_1^{-1} ( \Gpp_{u,x_n} + h(\xi+)\cdot x_n ) <0.  \] 
This proves that 
\[\Pbig\Bigl\{\,\varlimsup_{n\to\infty}  \frac{x_n^{u,\tmax,\xi,+}\cdot e_1}{\abs{x_n^{u,\tmax,\xi,+}}_1}\le\ximax\cdot e_1\Bigr\}=1. \]
Repeat the same argument with $\tmax$ replaced by $\tmin$ and $\ximax$ by the other endpoint of 
$\Uset_{\xi+}$ (which is either $\xi$ or $\ximin$). To capture all $\t$ use   geodesics ordering \eqref{eq:geod-order}.   An analogous argument works for $\xi-$.  We have,  for a given $\xi$,  
	\be\label{g:78}  \Pbig\Bigl\{  \forall\t\in\{e_1,e_2\}^{\Z^2},\forall u\in\Z^2:x_{0,\infty}^{u,\t,\xi,\pm}\text{  is }\Uset_{\xi\pm}\text{-directed}\Big\}=1.\ee
 
Let $\Ombig_0$ be an event of full $\Pbig$-probability on which all cocycle  geodesics satisfy the ordering \eqref{eq:geod-order}, and   the event in \eqref{g:78} holds 
for both $+$ and $-$ and for $\xi$ in  a countable set $\Uset_0$ that contains all points of nondifferentiability of $\gpp$ and a countable dense subset of $\Diff$.   We argue that   $\Ombig_0$  is contained in the event in \eqref{g:77}.  

Let $\zeta\notin\Uset_0$ and let $\zetamax$ denote the right endpoint of $\Uset_{\zeta}$.  We show that 
\be\label{g:79}   \varlimsup_{n\to\infty}  \frac{x_n^{u,\tmax,\zeta}\cdot e_1}{\abs{x_n^{u,\tmax,\zeta}}_1}\le\zetamax\cdot e_1 \qquad\text{on the event $\Ombig_0$.} \ee
(Note that $\zeta\in\Diff$ so there is no $\pm$ distinction in the cocycle geodesic.)    The $\varliminf $ with $\tmin$ and $\ge \zetamin\cdot e_1$ comes of course with the same argument.  

If $\zeta\cdot e_1< \zetamax\cdot e_1$ pick  $\xi\in\Diff\cap\Uset_0$ so  that  $\zeta\cdot e_1< \xi\cdot e_1<\zetamax\cdot e_1$.    Then $\ximax=\zetamax$ and  \eqref{g:79} follows from the ordering.  

If $\zeta= \zetamax$, let $\e>0$ and  pick  $\xi\in\Diff\cap\Uset_0$ so  that  
$\zeta\cdot e_1< \xi\cdot e_1\le \ximax\cdot e_1<\zeta\cdot e_1+\e$.  This is possible because $\nabla\gpp(\xi)$ converges to but never equals $\nabla\gpp(\zeta)$  as  $\xi\cdot e_1\searrow \zeta\cdot e_1$.  Again by the ordering 
\[     \varlimsup_{n\to\infty}  \frac{x_n^{u,\tmax,\zeta}\cdot e_1}{\abs{x_n^{u,\tmax,\zeta}}_1}\le
\varlimsup_{n\to\infty}  \frac{x_n^{u,\tmax,\xi}\cdot e_1}{\abs{x_n^{u,\tmax,\xi}}_1}\le\ximax\cdot e_1 <\zeta\cdot e_1+\e. \] 
This completes the proof of Theorem \ref{th:B-geod:direction}.  
 \end{proof}



\begin{lemma}\label{lm:sandwich}

{\rm (a)}   Fix $\xi\in\ri\Uset$.  Then the  following statement holds $\Pbig$-almost surely.  For any  geodesic $x_{0,\infty}$  
\begin{align}\label{eq:sandwich}
\varlimsup_{n\to\infty}\frac{x_n\cdot e_1}{\abs{x_n}_1} \ge \xi\cdot e_1 \quad\text{implies that}\quad 
		x_n\cdot e_1  \ge  x^{x_0,\tmin,\ximin,-}_n\cdot e_1 \quad\text{for all }n\ge0   
		\end{align}
and 
\begin{align}\label{eq:sandwich2}
\varliminf_{n\to\infty}\frac{x_n\cdot e_1}{\abs{x_n}_1} \le \xi\cdot e_1 \quad\text{implies that}\quad 
		x_n\cdot e_1  \le  x^{x_0,\tmax,\ximax,+}_n\cdot e_1 \quad\text{for all }n\ge0. 
		\end{align}
		
	
{\rm (b)} Fix a maximal line segment $[\zetamin, \zetamax]$ on which $\gpp$ is linear and such that $\zetamin\cdot e_1<\zetamax\cdot e_1$.  Assume 	$\zetamin$ and  $\zetamax$ are both points of differentiability of $\gpp$.    Then the  following statement holds $\Pbig$-almost surely.  Any  geodesic $x_{0,\infty}$   such that   a limit point of  $x_n/\abs{x_n}_1$  lies in $[\zetamin, \zetamax]$
satisfies 
	\begin{align}\label{eq:sandwich4}
		x^{x_0,\tmin,\zetamin}_n\cdot e_1\le x_n\cdot e_1\le x^{x_0,\tmax,\zetamax}_n\cdot e_1\quad\text{for all }n\ge0.
	\end{align}
\end{lemma}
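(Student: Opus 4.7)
The strategy is to prove part (a) by a planar crossing argument and then deduce part (b) by two applications of part (a). I focus on \eqref{eq:sandwich}; the proof of \eqref{eq:sandwich2} is symmetric, with the roles of $e_1,e_2$ and of $\tmin,\tmax$ and $\ximin-,\ximax+$ interchanged. Set $z_\cdot := x^{x_0,\tmin,\ximin-}_\cdot$. Two ingredients drive the argument: by Theorem \ref{th:B-geod:direction}, $\Pbig$-almost surely $z_\cdot$ is $\Uset_{\ximin-}$-directed, so $\varlimsup_n z_n\cdot e_1/n\le\ximin\cdot e_1$; and by Lemma \ref{lm:grad flow}\eqref{lm:grad flow:b}, for each $n$ the finite path $z_{0,n}$ is the leftmost geodesic from $x_0$ to $z_n$.

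Suppose for contradiction that a geodesic $x_{0,\infty}$ satisfies $\varlimsup_n x_n\cdot e_1/n\ge\xi\cdot e_1$ but $x_N\cdot e_1<z_N\cdot e_1$ at some $N\ge 1$. The core reduction is this: if $x$ and $z$ ever meet at some time $m\ge N$ (i.e.\ $x_m=z_m$), then $x_{0,m}$ is a geodesic from $x_0$ to $z_m$, and the leftmost-geodesic property forces $z_k\cdot e_1\le x_k\cdot e_1$ for $0\le k\le m$, contradicting $x_N\cdot e_1<z_N\cdot e_1$. Because $d_n:=x_n\cdot e_1-z_n\cdot e_1$ satisfies $d_0=0$, $d_N<0$, and $\abs{d_{n+1}-d_n}\le 1$, producing a meeting reduces to exhibiting some $m>N$ with $d_m\ge 0$, by a discrete intermediate-value step. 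In the generic case $\ximin\cdot e_1<\xi\cdot e_1$ this is immediate: fixing $\e\in(0,(\xi-\ximin)\cdot e_1/2)$, the direction property of $z$ gives $z_n\cdot e_1\le(\ximin\cdot e_1+\e)n$ for all large $n$, while $\varlimsup x_n\cdot e_1/n\ge\xi\cdot e_1$ yields $x_{n_k}\cdot e_1\ge(\xi\cdot e_1-\e)n_k$ along a subsequence, so $d_{n_k}>0$. The delicate case is $\xi=\ximin$, where the two $\varlimsup$'s may coincide. I plan to reduce it to the generic case by approximation: pick a sequence $\xi^{(k)}\in\ri\Uset$ with $\xi^{(k)}\cdot e_1\nearrow\ximin\cdot e_1$ such that, by Lemma \ref{lm:cont geod}, the approximating cocycle geodesics $w^{(k)}_\cdot:=x^{x_0,\tmin,\xi^{(k)}-}_\cdot$ satisfy $w^{(k)}_{0,N}=z_{0,N}$ for all large $k$, while $\varlimsup w^{(k)}_n\cdot e_1/n<\xi\cdot e_1$. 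The generic case applied to $w^{(k)}$ produces a meeting $x_{m_k}=w^{(k)}_{m_k}$ with $m_k\ge N$; the leftmost-geodesic property of $w^{(k)}$ then gives $w^{(k)}_N\cdot e_1\le x_N\cdot e_1$, contradicting $w^{(k)}_N\cdot e_1=z_N\cdot e_1>x_N\cdot e_1$.

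Part (b) will follow from two applications of (a). A geodesic $x_{0,\infty}$ with a limit point $\eta\in[\zetamin,\zetamax]$ satisfies $\varlimsup_n x_n\cdot e_1/n\ge\eta\cdot e_1\ge\zetamin\cdot e_1$ and $\varliminf_n x_n\cdot e_1/n\le\eta\cdot e_1\le\zetamax\cdot e_1$, so applying \eqref{eq:sandwich} with $\xi=\zetamin$ (noting that $\ximin(\zetamin)=\zetamin$ and that differentiability at $\zetamin$ together with Theorem \ref{th:cocycles}\eqref{th:cocycles:flat} yields $B^{\zetamin-}=B^{\zetamin}$, whence $x^{x_0,\tmin,\zetamin-}_\cdot=x^{x_0,\tmin,\zetamin}_\cdot$) gives the left bound, and \eqref{eq:sandwich2} with $\xi=\zetamax$ gives the right bound. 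The main obstacle I foresee is the construction of the approximating sequence $\xi^{(k)}$ in the delicate case of part (a): when $\Uset_{\ximin-}$ is a nontrivial linear segment $[\zeta_1,\ximin]$, approximation from within the segment produces $w^{(k)}=z$ (same cocycle by Theorem \ref{th:cocycles}\eqref{th:cocycles:flat}) with no direction gap, so one must instead approximate through directions outside $[\zeta_1,\ximin]$ and invoke the cocycle-geodesic ordering \eqref{eq:geod-order} and the coalescence structure of cocycle geodesics from Appendix \ref{app:geod} to bridge $w^{(k)}_N$ back to $z_N$; all exceptional events must then be collected into a single $\Pbig$-null set using the countability of the approximating sequence.
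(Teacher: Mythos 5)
The proof of part (a) has a genuine gap in what you call the \emph{generic case} $\ximin\cdot e_1<\xi\cdot e_1$. You assert that because $z_\cdot := x^{x_0,\tmin,\ximin-}_\cdot$ is $\Uset_{\ximin-}$-directed, one has $\varlimsup_n z_n\cdot e_1/\abs{z_n}_1 \le\ximin\cdot e_1$. This is false whenever $\ximin\in\Diff$. Indeed, if $\ximin\ne\xi$ then $\gpp$ is linear on $[\ximin,\xi]\subseteq\Uset_{\xi-}$; differentiability at $\ximin$ then forces $\nabla\gpp(\ximin-)=\nabla\gpp(\ximin+)$, so $\Uset_{\ximin-}$ is the entire maximal linear segment of $\gpp$ through $\ximin$, which contains $\xi$ (and can extend all the way to $\ximax$). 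Thus $z$ can be directed at or to the right of $\xi$, your bound $z_n\cdot e_1\le(\ximin\cdot e_1+\e)\abs{z_n}_1$ fails, and no crossing between $x$ and $z$ is forced. This is precisely the configuration of part (b), so it is not an edge case.

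The remedy is your own approximation idea, deployed \emph{uniformly} rather than only when $\xi=\ximin$ — and this is exactly what the paper does. Choose $\zeta_\ell\in\Diff$ with $\zeta_\ell\cdot e_1\nearrow\ximin\cdot e_1$ strictly from the left. Then $\xi\notin\Uset_{\zeta_\ell}$ in every case: if $\xi\in\Uset_{\zeta_\ell}$ then $\gpp$ would be linear on $[\zeta_\ell,\xi]$, so $\Uset_{\xi-}\supseteq[\zeta_\ell,\xi]$ and hence $\ximin\cdot e_1\le\zeta_\ell\cdot e_1$, a contradiction. Since $\zeta_\ell\cdot e_1<\xi\cdot e_1$ and $\Uset_{\zeta_\ell}$ is a closed segment not containing $\xi$, the right endpoint of $\Uset_{\zeta_\ell}$ has $e_1$-coordinate strictly below $\xi\cdot e_1$, which supplies the direction gap. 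Your crossing/leftmost argument then applies verbatim to the pair $(x^{x_0,\tmin,\zeta_\ell},\,x)$, and Lemma \ref{lm:cont geod} transfers the conclusion to $z$ as $\ell\to\infty$ (collecting the countably many null sets as you note).

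Two smaller comments. The configuration you worry about in your last paragraph, $\Uset_{\ximin-}=[\zeta_1,\ximin]$ nondegenerate with $\zeta_1\cdot e_1<\ximin\cdot e_1$, cannot arise in your ``delicate case'' $\xi=\ximin$: there $\Uset_{\ximin-}=\Uset_{\xi-}$ and $\ximin$ is by definition its \emph{left} endpoint. When $\Uset_{\ximin-}=[\zeta_1,\ximin]$ genuinely ends at $\ximin$ (necessarily $\ximin\ne\xi$ and $\ximin\notin\Diff$), interior approximants $\zeta_\ell\in(\zeta_1,\ximin)$ do give the same cocycle as $B^{\ximin-}$, but this is harmless — the relevant gap is between $\Uset_{\zeta_\ell}=[\zeta_1,\ximin]$ and $\xi$, and $\xi\cdot e_1>\ximin\cdot e_1$ — so no appeal to the coalescence machinery is needed. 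Finally, your reduction of part (b) to two applications of part (a) is correct and is a slight tidying of the paper, which re-runs the approximation argument for (b).
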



\begin{proof}  Part (a).  
We prove    \eqref{eq:sandwich}.  \eqref{eq:sandwich2} is proved similarly. 

  Fix a sequence 
 $\zeta_\ell\in\Diff$ such that   $\zeta_\ell\cdot e_1\nearrow \ximin\cdot e_1$ so that, in particular,  $\xi\not\in\Uset_{\zeta_\ell}$.     The good event of full $\Pbig$-probability is the one on which 
$x_{0,\infty}^{x_0,\tmin,\zeta_\ell}$ is $\Uset_{\zeta_\ell}$-directed (Theorem \ref{th:B-geod:direction}),    $x_{0,\infty}^{x_0,\tmin,\zeta_\ell}$ is the leftmost geodesic between any two of its points   (Lemma \ref{lm:grad flow}\eqref{lm:grad flow:b}  applied to cocycle $B^{\zeta_\ell}$) and  $x_{0,\infty}^{x_0,\tmin,\zeta_\ell}\to x^{x_0,\tmin,\ximin,-}_{0,\infty}$  (Lemma \ref{lm:cont geod}).  

By the leftmost property,  if 
  $x_{0,\infty}^{x_0,\tmin,\zeta_\ell}$ ever goes strictly to the right of
$x_{0,\infty}$, these two geodesics  cannot touch   again at any later time.  
But   by virtue of the limit points,  $x_n^{x_0,\tmin,\zeta_\ell}\cdot e_1<x_n\cdot e_1$ for  infinitely many $n$.  
Hence  $x_{0,\infty}^{x_0,\tmin,\zeta_\ell}$ stays  weakly to the left of $x_{0,\infty}$.  Let $\ell\to\infty$. 
 
Part (b) is proved similarly.  The differentiability assumption implies that the geodesic $x_{0,\infty}^{x_0,\tmin,\zetamin}$ can be approached from the left by geodesics $x_{0,\infty}^{x_0,\tmin,\zeta_\ell}$ such that  $\zetamin\not\in\Uset_{\zeta_\ell}$.  
\end{proof}

 Recall the set $\EP$ of exposed points of differentiability defined below \eqref{eq:epod}.  Define sets $\EPlim^+$ and $\EPlim^-$  of their one-sided limit points  by
	\begin{align*}
	\EPlim^\pm=\{\zeta\in\ri\Uset:\exists\xi_n\in\EP\text{ such that }\xi_n\to\zeta \text{ and }\pm\xi_n\cdot e_1>\pm\zeta\cdot e_1\}. 
	\end{align*}

\begin{corollary}\label{cor:leftmost}
  Fix $\xi\in\ri\Uset$ such that $\ximin\in\EP\cup\EPlim^-$.  Then  $\Pbig$-almost surely and for all $u\in\Z^2$,  $x^{u,\tmin,\ximin,-}_{0,\infty}$  
is the leftmost $\Uset_{\xi-}$-geodesic out of $u$.
Similarly, for a fixed $\xi$ with $\ximax\in\EP\cup\EPlim^+$, $\Pbig$-almost surely and for all $u\in\Z^2$, $x^{u,\tmax,\ximax,+}_{0,\infty}$ is the rightmost $\Uset_{\xi+}$-geodesic out of  $u$.
\end{corollary}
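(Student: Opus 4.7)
The plan is to assemble three pieces already in hand: Theorem~\ref{th:B-geod:direction} to pin down the limiting direction of the cocycle geodesic $x^{u,\tmin,\ximin-}_{0,\infty}$, Lemma~\ref{lm:grad flow}\eqref{lm:grad flow:a} to confirm it is a geodesic, and the first paragraph of Lemma~\ref{lm:sandwich} to show any competing $\Uset_{\xi-}$-geodesic lies weakly to its right. The two halves of the corollary are mirror images, so I would write out only the leftmost case.

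The first step is a geometric reduction: under the hypothesis $\ximin\in\EP\cup\EPlim^-$, the left endpoint of $\Uset_{\ximin}$ is $\ximin$ itself and $\Uset_{\ximin-}\subseteq\Uset_{\xi-}$. The case $\ximin\in\EP$ is immediate, since then $\Uset_{\ximin}=\{\ximin\}$. If instead $\ximin\in\EPlim^-$ with approximating exposed points $\xi_n\to\ximin$ from the left, any nondegenerate extension of $\Uset_{\ximin-}$ strictly to the left of $\ximin$ would be a maximal linear segment of $\gpp$ eventually containing $\xi_n$, contradicting exposedness. An extension to the right of $\ximin$ is possible only when $\ximin\in\Diff$, and in that situation $\Uset_{\ximin-}$ coincides with the maximal linear segment of $\gpp$ through $\ximin$; but $\ximin$ is by construction the left endpoint of $\Uset_\xi$ and $\gpp$ has constant gradient $\nabla\gpp(\xi-)$ throughout $\Uset_{\xi-}$, so this maximal segment is exactly $\Uset_{\xi-}$.

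Next, on the full-probability event furnished by Theorem~\ref{th:B-geod:direction}, the cocycle path $x^{u,\tmin,\ximin-}_{0,\infty}$ is $\Uset_{\ximin-}$-directed simultaneously for every $u\in\Z^2$; by the previous step it is therefore $\Uset_{\xi-}$-directed, and Lemma~\ref{lm:grad flow}\eqref{lm:grad flow:a} identifies it as a genuine geodesic. Now let $x_{0,\infty}$ be any $\Uset_{\xi-}$-geodesic from $u$. Since $\Uset_{\xi-}\subseteq\{\zeta\in\Uset:\zeta\cdot e_1\ge\ximin\cdot e_1\}$, its limit points satisfy $\varlimsup_n x_n\cdot e_1/\abs{x_n}_1\ge\ximin\cdot e_1$. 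Applying Lemma~\ref{lm:sandwich} with the lemma's $\xi$ replaced by our $\ximin$ (so that the $\ximin$ appearing in the lemma's conclusion is again $\ximin$ by the previous paragraph) yields $x_n\cdot e_1\ge x^{u,\tmin,\ximin-}_n\cdot e_1$ for all $n\ge0$, which is the leftmost property. Intersecting the two full-probability events gives the statement uniformly in $u$. The rightmost case follows identically from \eqref{eq:sandwich2} of Lemma~\ref{lm:sandwich}, using $\ximax\in\EP\cup\EPlim^+$ to secure that the right endpoint of $\Uset_{\ximax}$ is $\ximax$ itself and $\Uset_{\ximax+}\subseteq\Uset_{\xi+}$.

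The step I expect to be the main obstacle is the geometric reduction, particularly the scenario $\ximin\in\EPlim^-\cap\Diff$, where $\Uset_{\ximin-}$ can genuinely extend to the right of $\ximin$ and one must check that this extension cannot push outside $\Uset_{\xi-}$ and so spoil the directional conclusion of Theorem~\ref{th:B-geod:direction}; the argument hinges on the fact that $\ximin$ is the left endpoint of $\Uset_\xi$ and that $\gpp$ is linear with the single slope $\nabla\gpp(\xi-)$ throughout $\Uset_{\xi-}$.
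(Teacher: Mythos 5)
Your proposal is correct and follows essentially the same route as the paper's own proof: directedness of the cocycle geodesic via Theorem~\ref{th:B-geod:direction}, the geometric observation that $\ximin\in\EP\cup\EPlim^-$ rules out a nondegenerate linear segment of $\gpp$ abutting $\ximin$ on the left (so that $\underline\ximin=\ximin$ and $\Uset_{\ximin-}\subseteq\Uset_{\xi-}$), and then the leftmost inequality from Lemma~\ref{lm:sandwich}(a) applied with $\ximin$ in the role of the lemma's $\xi$. The only difference is that you unwind the case $\ximin\in\EPlim^-\cap\Diff$ in more detail than the paper, which simply records that $\Uset_{\ximin-}$ is either $\{\ximin\}$ or $\Uset_{\xi-}$; your additional care there is sound but not a new method.
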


\begin{proof}
Theorem \ref{th:B-geod:direction} implies that $x_{0,\infty}^{u,\tmin,\ximin,-}$ is a $\Uset_{\ximin-}$-geodesic.
If $\ximin\in\EP\cup\EPlim^-$ then either $\Uset_{\ximin-}=\Uset_{\xi-}$ or $\Uset_{\ximin-}=\{\ximin\}\subset\Uset_{\xi-}$.
Thus, $x_{0,\infty}^{u,\tmin,\ximin,-}$ is a $\Uset_{\xi-}$-geodesic.   $\ximin\in\EP\cup\EPlim^-$ implies that there cannot be  a linear segment adjacent to $\ximin$ to the left, and consequently $\underline\ximin=\ximin$.  
Lemma \ref{lm:sandwich}(a) implies that $x_{0,\infty}^{u,\tmin,\ximin,-}$ is to the left of any other $\Uset_{\xi-}$-geodesic out of $u$. 
The claim about rightmost geodesics is proved similarly.
\end{proof}

The next result concerns coalescence of cocycle geodesics $\{x^{u,\t,\xi,\pm}_{0,\infty}:u\in\Z^2\}$, for fixed $\t$, $\pm$,  and $\xi\in\ri\Uset$. 

\begin{theorem}\label{th:geod:coal:t}
  Fix $\t\in\{\tmin,\tmax\}$ and $\xi\in\ri\Uset$. 
Then $\Pbig$-almost surely, for all $u,v\in\Z^2$, there exist $n,m\ge0$ such that
$x^{u,\t,\xi,-}_{n,\infty}=x^{v,\t,\xi,-}_{m,\infty}$, with a similar statement for $+$. 
\end{theorem}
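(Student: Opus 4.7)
The plan is to adapt the Licea--Newman coalescence argument \cite{Lic-New-96} (as the paper already signals for Appendix \ref{app:geod}) to our deterministic cocycle-geodesic setting. Two reductions come for free. First, because the geodesics $\gamma^u := x^{u,\t,\xi-}_{0,\infty}$ and $\gamma^v := x^{v,\t,\xi-}_{0,\infty}$ follow the same deterministic prescription (minimal gradient of $B^{\xi-}$ with tie-breaker $\t$), any common vertex $w$ of $\gamma^u$ and $\gamma^v$ forces them to coincide from $w$ onwards. Thus the conclusion of the theorem is equivalent to $\gamma^u\cap\gamma^v\neq\varnothing$. Second, by chaining unit steps and using the translation invariance of $\Pbig$, it suffices to prove that for each $z\in\{e_1,e_2\}$, $\Pbig(\gamma^0\cap\gamma^{z}\ne\varnothing)=1$. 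We focus on $z=e_1$; the $e_2$ case is symmetric.

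Suppose for contradiction that $p:=\Pbig(\gamma^0\cap\gamma^{e_1}=\varnothing)>0$. By the stationarity of the cocycle under $T_{e_1}$, the events $A_k:=\{\gamma^{ke_1}\cap\gamma^{(k+1)e_1}=\varnothing\}$ all have probability $p$. By the $T_{e_1}$-ergodicity granted by Theorem \ref{th:cocycles}\eqref{th:cocycles:indep}, Birkhoff's theorem yields
\[
N^{-1}\sum_{k=0}^{N-1}\mathbf{1}_{A_k}\;\longrightarrow\;p\qquad\Pbig\text{-a.s.}
\]
Each occurrence of $A_k$ produces a distinct coalescence class among the geodesics $\gamma^{ke_1}$, so along the segment $\{ke_1: 0\le k<N\}$ we obtain at least $(p-o(1))N$ pairwise non-coalescing $\Uset_{\xi-}$-directed geodesic classes (the $\Uset_{\xi-}$-direction comes from Theorem \ref{th:B-geod:direction}). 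By planarity and the deterministic rule, these classes are horizontally ordered and their infinite rays must exit any large box $\Lambda_L=[-L,L]^2$ through pairwise disjoint arcs of $\partial\Lambda_L$. The shape theorem together with Theorem \ref{th:B-geod:direction} confines these exit arcs to a neighborhood of the $\Uset_{\xi-}$-cone of angular diameter $o(1)$ as $L\to\infty$. Choosing $L$ large enough relative to $N$ produces $\Omega(N)$ exits crammed into a region of $\partial\Lambda_L$ of sub-linear size in $N$, a contradiction.

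The main obstacle will be making the final box-counting step fully quantitative, particularly when $\Uset_{\xi-}$ is a nondegenerate linear segment of $\gpp$ rather than a single direction. In that case the asymptotic directions of the non-coalescing geodesic classes need not separate, so one needs a uniform transversal fluctuation bound (supplied via the moment hypothesis in \eqref{2d-ass} and the $L^1$ shape theorem in Appendix \ref{app:shape}) to guarantee that the exit arcs on $\partial\Lambda_L$ are genuinely sub-linear in $N$. A secondary technical issue is the deterministic tie-breaker $\t$, which is not in general $T_{e_1}$-invariant; the clean route is to run the argument first for the translation-invariant choices $\tmin$ and $\tmax$, and then to extend to an arbitrary $\t$ using the sandwich ordering in \eqref{eq:geod-order}: any $\t$-tree is squeezed between the $\tmin$- and $\tmax$-trees, and one verifies that coalescence of the outer two forces coalescence of the one in between.
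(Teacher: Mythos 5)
Your reductions are fine and the overall pointer to Licea--Newman is correct, but the final counting step has a genuine gap that no transversal-fluctuation bound can repair. The $(p-o(1))N$ pairwise non-coalescing geodesic classes you produce start from the $N$ consecutive sites $0,e_1,\dotsc,(N-1)e_1$. Even if each geodesic marched deterministically in direction $\xi$ with zero wandering, the exit points on $\partial\Lambda_L$ would still be separated by displacements of order $N$, inherited from the starting segment and not from the geodesics' fluctuations. Thus the portion of $\partial\Lambda_L$ containing all $\Omega(N)$ exits has size $\Omega(N)$ for every $L$, and there is no pigeonhole contradiction: you have an $N$-versus-$N$ count, not the $L^2$-versus-$L$ count that the lack-of-space argument requires.

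What you are missing is the local weight-modification at the heart of the Licea--Newman proof, carried out in Appendix \ref{app:geod} as Lemma \ref{lm:chuck} together with Lemmas \ref{g:lm:77} and \ref{lm:geo below}. One must start from \emph{three} pairwise non-intersecting geodesics emanating from $0$, $me_2$, $\ell e_2$, and then modify the weights strictly between the outer two to lie near the essential infimum (and, depending on the case, force the weights along the west and north boundaries of a box to be large). The point is that this makes the \emph{middle} geodesic ``shielded'': with positive probability it never meets \emph{any} geodesic that starts south or west of a fixed rectangle, not merely the two specified neighbours. Shielding is a local event of positive probability, so ergodicity under \emph{both} $T_{e_1}$ and $T_{e_2}$ (Theorem \ref{th:cocycles}\eqref{th:cocycles:indep}), applied across the two-dimensional square $[0,L]^2$ rather than a one-dimensional segment, plants $\delta L^2$ pairwise-disjoint shielded geodesics emanating from inside $[0,L]^2$, and these must exit through at most $2L$ north/east boundary sites --- the genuine Burton--Keane contradiction. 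Secondary remark: you are right that the use of stationarity is delicate when $\t$ is not translation-invariant, but your sandwich route does not obviously close it either, since coalescence of the $\tmin$- and $\tmax$-trees does not force coalescence of a sandwiched $\t$-tree --- a sandwiched path can slip between two merged outer paths without ever touching a common vertex, and the ordering \eqref{eq:geod-order} alone does not forbid this.
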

%
%

Theorem \ref{th:geod:coal:t} is proved by  adapting the argument  of \cite{Lic-New-96},  originally presented  for first passage percolation and  later ported  by \cite{Fer-Pim-05} to the exactly solvable  corner growth model  with exponential weights.   Briefly, the idea is the following.    Stationarity and two nonintersecting geodesics create three nonintersecting geodesics.   A modification of the weights turns  the middle geodesic of the triple   into a geodesic that stays disjoint from all geodesics that emanate from sufficiently far away.    Stationarity again gives at least $\delta L^2$ such disjoint geodesics emanating from an $L\times L$ square.  This gives a contradiction because there are only $2L$ boundary points for these geodesics to exit through.  
   The details are in Appendix \ref{app:geod}.  

\smallskip 
 
To get to  uniqueness of geodesics, we   show that continuity of the distribution of   $\w_0$ prevents   ties between cocycle  weights.    (The construction of the cocycles implies, through eqn.~\eqref{eq:ne-induction:q}, that the variables $B^{\xi}_{\pm}(x,y)$ have continuous marginal distributions, but here we need a property of the joint distribution.)    Consequently  for a given $\xi$,   $\Pbig$-almost surely     cocycle geodesics $x_{0,\infty}^{u,\t,\xi,\pm}$ do not depend on $\t$.

\begin{lemma}\label{lm:geod:unique:t}
Assume \eqref{2d-ass} and  that $\P\{\w_0\le r\}$ is a continuous function of $r\in\R$. Fix $\xi\in\ri\Uset$. Then for all $u\in\Z^2$, 
\[\Pbig\{B^{\xi}_{+}(u,u+e_1)=B^{\xi}_{+}(u,u+e_2)\}=\Pbig\{B^{\xi}_{-}(u,u+e_1)=B^{\xi}_{-}(u,u+e_2)\}=0.\]
\end{lemma}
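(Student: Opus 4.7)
The strategy is to use the independence built into the extended measure $\Pbig$ to reduce the claim to a statement about two independent random variables, and then close via the continuity of the law of $\w_0$. By stationarity of $\Pbig$ under $T_u$ it suffices to treat $u = 0$. Write $Y_i = B^{\xi+}(0, e_i)$ for $i = 1, 2$; the $\xi-$ case is identical.

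\textbf{Step 1 (independence of $Y_1$ and $Y_2$).} By Theorem \ref{th:cocycles}\eqref{th:cocycles:indep}, each $Y_i$ is a measurable function of the auxiliary block $(\w_0^{i,\alpha})_{\alpha \in \cA_0}$ alone. I would then invoke the product structure built into the construction of $\Pbig$ in Appendix \ref{app:q} (based on the queueing framework of Mairesse--Prabhakar), which arranges the two blocks indexed by $i = 1$ and $i = 2$ to be independent. Consequently $Y_1$ and $Y_2$ are independent under $\Pbig$; let $\mu_1, \mu_2$ denote their marginal laws.

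\textbf{Step 2 (continuity rules out common atoms).} Independence gives
\[
\Pbig(Y_1 = Y_2) = \sum_{c \in \R} \mu_1(\{c\})\,\mu_2(\{c\}),
\]
so the lemma reduces to showing $Y_1$ and $Y_2$ have no common atom. If some $c \in \R$ were a common atom, then using the recovery property $\w_0 = Y_1 \wedge Y_2$ from Theorem \ref{th:cocycles}\eqref{th:cocycles:exist} together with independence,
\[
\Pbig(\w_0 = c) \;\ge\; \Pbig(Y_1 = c,\, Y_2 = c) \;=\; \mu_1(\{c\})\,\mu_2(\{c\}) \;>\; 0,
\]
contradicting the hypothesis that $r \mapsto \P\{\w_0 \le r\}$ is continuous. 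The displayed sum therefore vanishes.

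\textbf{Main obstacle.} The only non-routine ingredient is the same-site independence used in Step 1; it is not spelled out in the statement of Theorem \ref{th:cocycles} and must be read off from the explicit queueing construction in Appendix \ref{app:q}. If that formulation is awkward to cite directly, a natural workaround is the cocycle identity
\[
Y_1 - Y_2 \;=\; B^{\xi+}(e_2,\, e_1 + e_2) - B^{\xi+}(e_1,\, e_1 + e_2),
\]
whose right-hand side lives at the two distinct sites $e_1$ and $e_2$. A site-disjoint independence of the two summands follows more easily from Theorem \ref{th:cocycles}\eqref{th:cocycles:indep} (applied with $z = e_1$ and $z = e_2$), and stationarity of $\Pbig$ preserves their marginals $\mu_1$ and $\mu_2$; the argument in Step 2 then closes the proof without relying on same-site independence.
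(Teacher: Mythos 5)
The proposal has a genuine gap at exactly the point you flag. The independence in Step~1 is not a technical citation issue but rather the unknown part of the theory. In the queueing coupling of Appendix~\ref{app:q}, equation~\eqref{q:map} gives
\[
B^{\xi+}(0,e_1)=\w^{1,\alpha}_{0,0}=\A^\alpha_{-1,1}
\quad\text{and}\quad
B^{\xi+}(0,e_2)=\w^{2,\alpha}_{0,0}=\W^\alpha_{0,0}+\S_{0,0},
\]
and the conservation law~\eqref{eq:closed-loop:q} ties these together as
$\A^\alpha_{-1,1}=\W^\alpha_{0,0}+\S_{0,0}+\A^\alpha_{-1,0}-\W^\alpha_{-1,0}-\S_{-1,0}$,
so they are built from overlapping inputs and are manifestly not independent for general weights. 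Their independence is precisely the Burke/output property, which holds in the exactly solvable cases but, as the paper points out after the stationary LPP Remark in Section~\ref{sec:stat-lpp}, is \emph{not} known for general last-passage weights. So the product structure you invoke does not exist under assumption~\eqref{2d-ass} alone.

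The workaround also fails. Rewriting $Y_1-Y_2=B^{\xi+}(e_2,e_1+e_2)-B^{\xi+}(e_1,e_1+e_2)$ shifts the two summands to different sites, but the independence statement in Theorem~\ref{th:cocycles}\eqref{th:cocycles:indep} says only that cocycle variables at sites $x\not\le z$ are independent of the raw weights $\{\w_x:x\le z\}$. It does not say cocycle increments at distinct sites are independent of each other. In the queueing picture $B^{\xi+}(e_1,e_1+e_2)=\W^\alpha_{-1,0}+\S_{-1,0}$ and $B^{\xi+}(e_2,e_1+e_2)=\A^\alpha_{-1,0}$ both live on level $k=0$ of the same coupled system, and since a queueing fixed point $\{\A_{n,0}\}_n$ is only stationary ergodic, $\A^\alpha_{-1,0}$ need not be independent of $\W^\alpha_{-1,0}$. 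Your Step~2 is fine, but it has nothing to land on.

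The paper avoids independence entirely. It assumes a tie occurs with positive probability, invokes coalescence of cocycle geodesics (Theorem~\ref{th:geod:coal:t}) so that the geodesics from $e_1$ and $e_2$ following $B^{\xi+}$ meet at some deterministic $v$ with positive probability, and then uses additivity plus Lemma~\ref{lm:grad flow}\eqref{lm:grad flow:a} to equate the weight sums along two distinct up-right paths from $0$ to $v$. That equality of two finite i.i.d.\ weight sums with positive probability contradicts continuity of the law of $\w_0$ — no independence between cocycle increments is ever needed. If you want to salvage your line of attack, you would essentially have to prove a Burke-type theorem for the general fixed point first, which is an open problem.
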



\begin{proof} 
Due to shift invariance it is enough to prove the claim for $u=0$. 
We  work with the case   $\xi+$, the other case being similar.

Assume by way of contradiction that the probability in question is positive. Pick an arbitrary $\t\in\{e_1,e_2\}^{\Z^2}$.
By Theorem \ref{th:geod:coal:t}, $x_{0,\infty}^{e_2,\t,\xi,+}$ and $x_{0,\infty}^{e_1,\t,\xi,+}$   coalesce with probability one. 
Hence there exists   $v\in\Z^2$ and  $n\ge1$ such that
	\[\P\big\{B^{\xi}_{+}(0,e_1)=B^{\xi}_{+}(0,e_2),\,x_n^{e_1,\t,\xi,+}=x_n^{e_2,\t,\xi,+}=v\big\}>0.\]
Note that if $B^{\xi}_{+}(0,e_1)=B^{\xi}_{+}(0,e_2)$ then both are equal to $\w_0$. Furthermore, by Lemma \ref{lm:grad flow}\eqref{lm:grad flow:a} we have
	\[B^{\xi}_{+}(e_1,v)=\sum_{k=0}^{n-1}\w(x_k^{e_1,\t,\xi,+})\quad\text{and}\quad B^{\xi}_{+}(e_2,v)=\sum_{k=0}^{n-1}\w(x_k^{e_2,\t,\xi,+}).\]
(For aesthetic reasons we wrote $\w(x)$ instead of $\w_x$.) Thus
	\begin{align*}
	&\w_0+\sum_{k=0}^{n-1}\w({x_k^{e_1,\t,\xi,+}})=B^{\xi}_{+}(0,e_1)+B^{\xi}_{+}(e_1,v)= B^{\xi}_{+}(0,v)\\
	&\qquad=B^{\xi}_{+}(0,e_2)+B^{\xi}_{+}(e_2,v)=\w_0+\sum_{k=0}^{n-1}\w({x_k^{e_2,\t,\xi,+}}).
	\end{align*}
The fact that this happens with positive probability contradicts the assumption that $\w_x$ are i.i.d.\ and have a continuous distribution. The lemma is proved.
\end{proof}

It is known that, in general, uniqueness of geodesics cannot hold simultaneously for all directions.  In our development this is a consequence of Theorem \ref{th:two-geo} below.  


\begin{proof}[Proof of Theorem \ref{thm-1:geod}]  
Part \eqref{thm-1:geod:exist}.   The existence of $\Uset_{\xi\pm}$-directed geodesics for $\xi\in\ri\Uset$  follows by fixing $\t$ and  taking geodesics  $x^{u,\t,\xi,\pm}_{0,\infty}$ from  Theorem \ref{th:B-geod:direction}.   For $\xi=e_i$  geodesics are simply  $x_{0,\infty}=(ne_i)_{n\ge0}$.   

 Let  $\Diff_0$ be a dense countable subset of $\Diff$.  Let   $\Ombig_0$  be the event   of full $\Pbig$-probability on which event \eqref{g:77} holds  
and Lemma \ref{lm:sandwich}(a) holds  for each  $u\in\Z^2$ and   $\zeta\in\Diff_0$.  We  show that   on  $\Ombig_0$,  every geodesic is $\Uset_\xi$-directed for some $\xi\in\Uset$.  

  Fix $\what\in\Ombig_0$ and an arbitrary geodesic $x_{0,\infty}$.  
Define $\xi'\in\Uset$ by 
\[   \xi'\cdot e_1=\varlimsup_{n\to\infty}\frac{x_n\cdot e_1}{\abs{x_n}_1}.  \]  
Let $\xi=\ximin'=$ the left endpoint of $\Uset_{\xi'}$.  
We claim that   $x_{0,\infty}$ is $\Uset_\xi=[\ximin, \ximax]$-directed.  
If $\xi'=e_2$ then  $x_n/\abs{x_n}_1\to e_2$ and $\Uset_\xi=\{e_2\}$ and the case is closed.
Suppose  $\xi'\ne e_2$.

The definition of $\xi$ implies that $\xi'\in\Uset_{\xi+}$ and so 
\[  \varlimsup_{n\to\infty}\frac{x_n\cdot e_1}{\abs{x_n}_1} =\xi'\cdot e_1\le     \ximax\cdot e_1.  \]

From the other direction,  for any $\zeta\in\Diff_0$ such that $ \zeta\cdot e_1<  \xi'\cdot e_1$ we have 
\[  \varlimsup_{n\to\infty}\frac{x_n\cdot e_1}{\abs{x_n}_1} > \zeta\cdot e_1 \]
which by \eqref{eq:sandwich} implies  $x_n\cdot e_1  \ge  x^{x_0,\tmin,\zetamin}_n\cdot e_1$. 
Then  by  \eqref{g:77} 
\[  \varliminf_{n\to\infty}\frac{x_n\cdot e_1}{\abs{x_n}_1} \; \ge \; \varliminf_{n\to\infty} \frac{x^{x_0,\tmin,\zetamin}_n\cdot e_1}{\abs{x^{x_0,\tmin,\zetamin}_n}_1} 
\;\ge\; \underline\zetamin\cdot e_1 \]
where  $\underline\zetamin=$  the left endpoint of $\Uset_{\zetamin}$.  It remains to observe that we can take $\underline\zetamin\cdot e_1$ arbitrarily close to $\ximin\cdot e_1$.    If $ \xi\cdot e_1<  \xi'\cdot e_1$  then we   take  $ \xi\cdot e_1< \zeta\cdot e_1<  \xi'\cdot e_1$ in which case $\zetamin=\xi$ and $\underline\zetamin=\ximin$.    If $\xi=\xi'$ then also $\ximin=\ximin'=\xi$. In this case,  as  $\Diff_0\ni\zeta\nearrow \xi$, $\nabla g(\zeta)$ approaches but never equals $\nabla g(\xi-)$ because there is no flat segment of $\gpp$ adjacent to $\xi$ on the left.  This forces both $\zetamin$ and $\underline\zetamin$ to converge to $\xi$.


\smallskip 

Part  \eqref{thm-1:geod:direct} .   If $\gpp$ is strictly concave then  $\Uset_\xi=\{\xi\}$ for all $\xi\in\ri\Uset$ and part  \eqref{thm-1:geod:direct}  follows from part \eqref{thm-1:geod:exist}.  

\smallskip 

Part \eqref{thm-1:geod:cont}.   
By Theorem \ref{th:cocycles}\eqref{th:cocycles:flat} there is a single cocycle $B^\xi$ simultaneously  for all $\xi\in[\zetamin,\zetamax]$.   Consequently cocycle geodesics 
$x_{0,\infty}^{x_0,\t,\zetamin}$ and $x_{0,\infty}^{x_0,\t,\zetamax}$ coincide for any given tie breaking function $\t$.  
By Corollary  \ref{cor:buse} this cocycle $B^\xi$ is $\kS$-measurable and hence so are the cocycle geodesics. On the event of full $\P$-probability   on which there are no ties between $B^\xi(x,x+e_1)$ and $B^\xi(x,x+e_2)$ the tie breaking function $\t$ makes no difference.  Hence 
the left and right-hand side of \eqref{eq:sandwich4} coincide.   Thus there is no room for two $[\zetamin,\zetamax]$-directed geodesics from any point.    Coalescence comes from Theorem \ref{th:geod:coal:t}.
  \end{proof}

\begin{proof}[Proof of Theorem \ref{thm:buse-geo}]
Part \eqref{thm:buse-geo:i} follows from Lemma \ref{lm:grad flow}.\medskip

Part \eqref{thm:buse-geo:ii}. 
Take sequences $\eta_n, \zeta_n\in\ri\Uset$ with $\eta_n\cdot e_1<\ximin\cdot e_1\le \ximax\cdot e_1<\zeta_n\cdot e_1$ and $\zeta_n\to\ximax$,  $\eta_n\to\ximin$. Consider  the full measure event on which Theorem \ref{th:construction} holds for each  $\zeta_n$ and $\eta_n$ 
with sequences $v_m=\fl{m\zeta_n}$ and $\fl{m\eta_n}$, and on  which 
continuity \eqref{eq:cont} holds as $\zeta_n\to\ximax$,  $\eta_n\to\ximin$. In the rest of the proof we drop the index $n$ from $\zeta_n$ and $\eta_n$.

We prove the case of an infinite geodesic 
  $x_{0,\infty}$   that satisfies $x_0=0$ and  \eqref{geod-98}. 
For large $m$, $\fl{m\eta\cdot e_1}<x_m\cdot e_1<\fl{m\zeta\cdot e_1}$.

Consider first the case $x_1=e_1$. If there exists a geodesic from $0$ to $\fl{m\zeta}$ that goes through $e_2$, then this geodesic would intersect $x_{0,\infty}$ and thus there would
exist another geodesic that goes from $0$ to $\fl{m\zeta}$ passing through $e_1$. In this case we would have $\Gpp_{e_1,\fl{m\zeta}}=\Gpp_{e_2,\fl{m\zeta}}$. On the other hand, if there exists a geodesic from $0$ to $\fl{m\zeta}$ that goes through $e_1$, then we would have $\Gpp_{e_1,\fl{m\zeta}}\ge\Gpp_{e_2,\fl{m\zeta}}$. Thus,
in either case, we have 
	\[\Gpp_{0,\fl{m\zeta}}-\Gpp_{e_1,\fl{m\zeta}}\le \Gpp_{0,\fl{m\zeta}}-\Gpp_{e_2,\fl{m\zeta}}.\]
Taking $m\to\infty$ and applying Theorem \ref{th:construction} we have $B^{\overline\zeta}_{+}(0,e_1)\le B^{\overline\zeta}_{+}(0,e_2)$.
Taking $\zeta\to\ximax$ and applying \eqref{eq:cont} we have $B^{\ximax}_{+}(0,e_1)\le B^{\ximax}_{+}(0,e_2)$.
Since $\ximax$ and $\xi$ are points of differentiability of $\gpp$, we have $B^{\ximax}_{+}=B^{\xi}$. Consequently, we have shown $B^\xi(0,e_1)\le B^{\xi}(0,e_2)$.  Since $B^\xi$ recovers the potential
(Definition \ref{def:bdry-model}),    the first step satisfies $\w_{0}=B^\xi(0,e_1)\wedge B^{\xi}(0,e_2)=B^\xi(0, x_1)$.

When $x_1=e_2$ repeat the same argument   with $\eta$ in place of $\zeta$ to get  $B^{\xi}(0,e_2)\le B^\xi(0,e_1)$. This proves the theorem for the first step of the geodesic and that 
  is enough.\medskip

Part \eqref{thm:buse-geo:iii}. 
The statement holds if $B^\xi(0,e_1)=B^\xi(0,e_2)$, since then both are equal to $\w_0$ by potential recovery \eqref{eq:VB}.
If $\w_0=B^\xi(0,e_1)<B^\xi(0,e_2)$ then convergence \eqref{eq:grad:coc1} implies that for $n$ large enough $\Gpp_{e_1,v_n}>\Gpp_{e_2,v_n}$.
In this case any maximizing path from $0$ to $v_n$ will have to start with an $e_1$-step and the claim of the lemma is again true.
The case $B^\xi(0,e_1)>B^\xi(0,e_2)$ is similar.
\end{proof}

\begin{proof}[Proof of Theorem \ref{thm:lr-geod}]
Part  \eqref{thm:lr-geod:i} follows from  Corollary \ref{cor:leftmost} and Theorem \ref{th:geod:coal:t}. Part \eqref{thm:lr-geod:ii} 
follows from Theorem \ref{thm:buse-geo}\eqref{thm:buse-geo:iii} and the fact that the geodesics in Corollary \ref{cor:leftmost} are 
the Busemann geodesics from Theorem \ref{thm:buse-geo}.
\end{proof}

\section{Competition interface}\label{sec:ci-pf} 

In this section we prove the results of Section \ref{sec:ci}.  As before, we begin by studying the situation on the extended space $\Ombig$ with the help of the cocycles $B^{\zeta}_{\pm}$ of Theorem \ref{th:cocycles}.

\begin{lemma}\label{aux:ci}
Define $B^{e_1}_{-}$ as the monotone limit of $B^{\zeta}_{\pm}$ when $\zeta\to e_1$. Then $\Pbig$-almost surely $B^{e_1}_{-}(0,e_1)=\w_0$ and $B^{e_1}_{-}(0,e_2)=\infty$.
A symmetric statement holds for the limit as $\zeta\to e_2$.
\end{lemma}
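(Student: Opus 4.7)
The proof splits into two parts, both exploiting the monotone structure provided by Theorem \ref{th:cocycles}. Monotonicity \eqref{eq:monotone} ensures that as $\zeta \to e_1$ with $\zeta \cdot e_1 \nearrow 1$, the increment $B^{\zeta\pm}(0, e_1)$ decreases monotonically while $B^{\zeta\pm}(0, e_2)$ increases monotonically. Together with the recovery bound $B^{\zeta\pm}(0, e_i) \ge \w_0$, both pointwise limits $B^{e_1-}(0, e_i)$ exist in $[\w_0, \infty]$; these are the values defined in the lemma.

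For the first identity $B^{e_1-}(0, e_1) = \w_0$, the plan is to compute expectations. By Theorem \ref{th:cocycles}\eqref{th:cocycles:exist} together with \eqref{eq:h=grad} and \eqref{grad g-1}, $\E[B^{\zeta\pm}(0, e_1)] = \gppa'(s\pm)$ where $s = (\zeta \cdot e_1)/(\zeta \cdot e_2) \to \infty$ as $\zeta \to e_1$. Lemma \ref{gppa-lm} gives $\gppa'(s\pm) \to \Ew = \E[\w_0]$. Monotone convergence applied to the nonnegative decreasing sequence $B^{\zeta\pm}(0, e_1) - \w_0$ yields $\E[B^{e_1-}(0, e_1) - \w_0] = 0$, which together with nonnegativity forces $B^{e_1-}(0, e_1) = \w_0$ almost surely.

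The second identity is more delicate. The analogous expectation argument gives $\E[B^{\zeta\pm}(0, e_2)] = f(\gppa'(s\pm)) \to f(\Ew+) = \infty$ by Lemma \ref{lm:f-properties}, hence $\E[B^{e_1-}(0, e_2)] = \infty$ by monotone convergence; however, divergent expectation does not by itself imply almost sure infinity. My plan is to extract a contradiction from a cocycle identity that ties $B^{e_1-}(0, e_2)$ to a recurrent random walk. Using the first part and translation invariance, $B^{e_1-}(je_1, (j+1)e_1) = \w_{je_1}$ a.s.\ for every $j$, so by additivity $B^{e_1-}(0, ke_1) = \sum_{j=0}^{k-1} \w_{je_1}$ and $B^{e_1-}(e_2, e_2 + ke_1) = \sum_{j=0}^{k-1} \w_{e_2+je_1}$ a.s. Equating the two decompositions of $B^{e_1-}(0, ke_1 + e_2)$ through $ke_1$ and through $e_2$ yields, a.s.\ for every $k \ge 1$,
\[
  B^{e_1-}(ke_1, ke_1+e_2) = B^{e_1-}(0, e_2) - S_k, \qquad S_k = \sum_{j=0}^{k-1}(\w_{je_1} - \w_{e_2+je_1}).
\]
Here $(S_k)$ is a one-dimensional random walk with i.i.d.\ mean-zero increments of finite positive variance $2\sigma^2$, hence recurrent, so $\sup_k S_k = +\infty$ a.s. On the event $\{B^{e_1-}(0, e_2) < \infty\}$ the recovery lower bound $B^{e_1-}(ke_1, ke_1+e_2) \ge \w_{ke_1} \ge c$ combined with the identity gives $S_k \le B^{e_1-}(0, e_2) - c < \infty$ for every $k$, contradicting recurrence. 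Therefore $B^{e_1-}(0, e_2) = \infty$ almost surely.

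The main obstacle is precisely this upgrade from divergent expectation to pointwise infinity. The key idea, and the reason the argument works here, is that Part 1 pins down the $e_1$-increments of $B^{e_1-}$ exactly as the i.i.d.\ weights, which through the cocycle property couples $B^{e_1-}(0, e_2)$ to a recurrent random walk; the deterministic lower bound $c$ on the weights then rules out any bounded orbit and forces the limit at $e_2$ to be infinite.
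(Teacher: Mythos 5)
Your proof is correct and follows essentially the same path as the paper's. Part 1 is the same expectation computation. For Part 2, the paper derives the recursion $B^{e_1-}(ne_1,ne_1+e_2)=\w_{ne_1}+(B^{e_1-}((n+1)e_1,(n+1)e_1+e_2)-\w_{ne_1+e_2})^+$ from cocycle additivity plus recovery, drops the $(\cdot)^+$, iterates to get a lower bound by a mean-zero random walk, and lets $n\to\infty$; you instead write out the cocycle identity for the square $[0,ke_1+e_2]$ directly, use Part~1 to compute the horizontal increments, and invoke the recovery lower bound $B^{e_1-}(ke_1,ke_1+e_2)\ge c$ to force the same random walk to be bounded above on $\{B^{e_1-}(0,e_2)<\infty\}$ — a rearrangement of the same inequality with the same contradiction from recurrence. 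The one step you leave implicit (as does the paper) is that cocycle additivity survives the monotone limit $\zeta\to e_1$; this is fine here because in each additivity relation at least one of the terms ($e_1$-increments, identified with $\w$-weights by Part~1) converges to a finite limit, so no $\infty-\infty$ arises.
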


\begin{proof}
The limit in the claim exists due to monotonicity \eqref{eq:monotone}. Furthermore, by potential recovery we have almost surely 
$B^{e_1}_{-}(0,e_1)\ge\w_0$. On the other hand, dominated convergence, \eqref{grad g-1} and Lemma \ref{gppa-lm} give
	\[\Ebig[B^{e_1}_{-}(0,e_1)]=\lim_{\zeta\to e_1}\Ebig[B^{\zeta}_{\pm}(0,e_1)]=\lim_{\zeta\to e_1} e_1\cdot\nabla\gpp(\zeta\pm)=\Ew=\Ebig[\w_0].\]
  Thus, $B^{e_1}_{-}(0,e_1)=\w_0$ almost surely. 

The cocycle property (Definition \ref{def:cK}\eqref{def:cK:coc}) and recovery (Definition \ref{def:bdry-model}), both of which are satisfied by $B^{e_1}_{-}$, imply
the relation 
\begin{align*}
&B^{e_1}_{-}(ne_1,ne_1+e_2) \\
&\qquad =\w_{ne_1}+\big(B^{e_1}_{-}((n+1)e_1,(n+1)e_1+e_2)-B^{e_1}_{-}(ne_1+e_2,(n+1)e_1+e_2)\big)^+\\
&\qquad =\w_{ne_1}+\big(B^{e_1}_{-}((n+1)e_1,(n+1)e_1+e_2)-\w_{ne_1+e_2}\big)^+\,.
\end{align*}
The second equality is from the just proved identity $B^{e_1}_{-}(x,x+e_1)=\w_x$.

Repeatedly dropping the outer $+$-part and applying the same formula inductively leads to 
 \begin{align*} B^{e_1}_{-}(0,e_2)&\ge\w_0+\sum_{i=1}^{n}(\w_{ie_1}-\w_{(i-1)e_1+e_2})\\
 &\qquad + \big(B^{e_1}_{-}((n+1)e_1,(n+1)e_1+e_2)-\w_{ne_1+e_2}\big)^+.\end{align*} 
Since the  summands are i.i.d.\ with mean $0$, taking $n\to\infty$ gives  $B^{e_1}_{-}(0,e_2)=\infty$ almost surely.
\end{proof}

Next we use the cocycles to define a random variable on $\Ombig$  that represents the asymptotic direction of the competition interface.    Assume now that $\P\{\w_0\le r\}$ is continuous in $r$.
By Lemma \ref{lm:geod:unique:t}, with $\Pbig$-probability one,  $B^{\xi}_{\pm}(0,e_1)\ne B^{\xi}_{\pm}(0,e_2)$ for all rational $\xi\in\ri\Uset$. Furthermore, monotonicity \eqref{eq:monotone}
gives that
	\[B^{\zeta}_{+}(0,e_1)-B^{\zeta}_{+}(0,e_2)\le B^{\zeta}_{-}(0,e_1)-B^{\zeta}_{-}(0,e_2) \le B^{\eta}_{+}(0,e_1)-B^{\eta}_{+}(0,e_2)\]
when $\zeta\cdot e_1>\eta\cdot e_1$. Lemma \ref{aux:ci} implies that $B^{\zeta}_{\pm}(0,e_1)-B^{\zeta}_{\pm}(0,e_2)$ converges to $-\infty$ as $\zeta\to e_1$ and to $\infty$ as $\zeta\to e_2$. 
Thus there exists   a unique $\cid(\what)\in\ri\Uset$ such that   for rational $\zeta\in\ri\Uset$, 
\begin{align}\label{ci}
\begin{split}
&B^{\zeta}_{\pm}(\what, 0,e_1)<B^{\zeta}_{\pm}(\what, 0,e_2)\quad\text{if }\zeta\cdot e_1>\cid(\what)\cdot e_1\\
\text{ and}\qquad &B^{\zeta}_{\pm}(\what, 0,e_1)>B^{\zeta}_{\pm}(\what, 0,e_2)\quad\text{if }\zeta\cdot e_1<\cid(\what)\cdot e_1.
\end{split}
\end{align}

\begin{theorem}\label{th:two-geo}
Assume $\P\{\w_0\le r\}$ is continuous in $r$. Then on the extended space $\OBPbig$ of Theorem \ref{th:cocycles}  the  random variable $\cid(\what)\in\ri\Uset$  defined by \eqref{ci}   has the following properties. 
\begin{enumerate}[\ \ \rm(i)]
\item\label{th:two-geo:i}  $\Pbig$-almost surely, for every $x\in\Z^2$, there exist at least two $\Uset_{\cid(T_x\what)}$-geodesics out of $x$ that do not coalesce.  
\item\label{th:two-geo:ii} Recall  $\Uset_{\cid(\what)}=[\cidmin(\what),\cidmax(\what)]$ from \eqref{eq:sector2}. Then the following holds $\Pbig$-almost surely. Let   $x'_{0,\infty}$ and $x''_{0,\infty}$ be any geodesics with   
\[  \varliminf_{n\to\infty}  \frac{x'_n\cdot e_1}n<\cidmin(\what) 
\qquad\text{and}\qquad 
\varlimsup_{n\to\infty}  \frac{x''_n\cdot e_1}n>\cidmax(\what). \]  
Then   $x'_1=e_2$ and  $x''_1=e_1$.  
\item\label{th:two-geo:iii} $\Phat\{\what: \cid(\what)=\xi\}=0$ for any $\xi\in\Diff$. 
\item\label{th:two-geo:iv}   Fix $\zeta,\eta\in\ri\Uset$  such that $\zeta\cdot e_1<\eta\cdot e_1$ and $\nabla\gpp(\zeta+)\not=\nabla\gpp(\eta-)$,  with    $\Uset_\zeta=[\underline\zeta,\overline\zeta]$ and $\Uset_\eta=[\underline\eta,\overline\eta]$.
Then for $\Pbig$-almost every $\what$ there exists $z\in\Z^2$ such that $\cid(T_z\what)\in[\underline\zeta,\overline\eta]$. 
\end{enumerate}
\end{theorem}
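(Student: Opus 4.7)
The plan is to run the entire argument on the extended space $\OBPbig$ using the cocycle family $\{B^{\xi\pm}\}$ of Theorem \ref{th:cocycles}, the sign-change definition \eqref{ci} of $\cid$, and the geodesic toolkit already developed: the continuity Lemma \ref{lm:cont geod}, the direction Theorem \ref{th:B-geod:direction}, the sandwich Lemma \ref{lm:sandwich}, the no-tie Lemma \ref{lm:geod:unique:t}, and the coalescence Theorem \ref{th:geod:coal:t}. The standing assumption that $\P\{\w_0\le r\}$ is continuous is also used to ensure almost sure uniqueness of finite $\w$-geodesics between prescribed lattice endpoints.

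For parts \eqref{th:two-geo:i} and \eqref{th:two-geo:ii}, fix $x\in\Z^2$ and set $\xi=\cid(T_x\what)$. Approximate $\xi$ by rational sequences $\zeta_n\cdot e_1\searrow\xi\cdot e_1$ and $\eta_n\cdot e_1\nearrow\xi\cdot e_1$ and build the cocycle geodesics $\gamma^+:=x^{x,\tmax,\xi+}_{0,\infty}$ and $\gamma^-:=x^{x,\tmin,\xi-}_{0,\infty}$ as the limits of $x^{x,\tmax,\zeta_n}_{0,\infty}$ and $x^{x,\tmin,\eta_n}_{0,\infty}$ provided by Lemma \ref{lm:cont geod}. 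Definition \eqref{ci} applied at $T_x\what$ forces each prelimit $x^{x,\tmax,\zeta_n}_{0,\infty}$ to begin with $e_1$ and each $x^{x,\tmin,\eta_n}_{0,\infty}$ with $e_2$, so $\gamma^+_1=x+e_1$ and $\gamma^-_1=x+e_2$; Theorem \ref{th:B-geod:direction} places both inside $\Uset_\xi$. Non-coalescence now follows cheaply: the continuous distribution of $\w_0$ makes the $\w$-geodesic between any two prescribed lattice points a.s.\ unique, so a common vertex $z$ visited by both $\gamma^+$ and $\gamma^-$ would produce two distinct $\w$-geodesics from $x$ to $z$ (distinguished by the initial step), contradicting uniqueness. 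Part \eqref{th:two-geo:ii} falls out of Lemma \ref{lm:sandwich} applied with $\xi=\cid$: a geodesic $x'_{0,\infty}$ whose $\varliminf$ of $x'_n\cdot e_1/n$ lies strictly below $\cidmin\cdot e_1$ is squeezed weakly to the left of $\gamma^-$, and since $\gamma^-_1=x+e_2$, this forces $x'_1=e_2$; the statement for $x''$ is symmetric.

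Part \eqref{th:two-geo:iii} is the shortest. Assume $\Pbig\{\cid=\xi\}>0$ for some $\xi\in\Diff$. Taking rational $\zeta\cdot e_1\searrow\xi\cdot e_1$ and $\zeta\cdot e_1\nearrow\xi\cdot e_1$ inside \eqref{ci} and invoking continuity \eqref{eq:cont} gives, on this event, $B^{\xi+}(0,e_1)\le B^{\xi+}(0,e_2)$ together with $B^{\xi-}(0,e_1)\ge B^{\xi-}(0,e_2)$. Theorem \ref{th:cocycles}\eqref{th:cocycles:flat} at $\xi\in\Diff$ identifies $B^{\xi+}=B^{\xi-}=B^\xi$, so both inequalities collapse to $B^\xi(0,e_1)=B^\xi(0,e_2)$ on a positive-probability event, contradicting Lemma \ref{lm:geod:unique:t}.

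Part \eqref{th:two-geo:iv} is the main obstacle. The event $A:=\{\exists z\in\Z^2:\cid(T_z\what)\in[\underline\zeta,\overline\eta]\}$ is invariant under all shifts $T_y$, so by the ergodicity asserted in Theorem \ref{th:cocycles}\eqref{th:cocycles:indep} (applied through the countable family of rational cocycles that determine $\cid$) we have $\Pbig(A)\in\{0,1\}$. Suppose $\Pbig(A)=0$: then a.s., for every $z$, either $\cid(T_z\what)\cdot e_1<\underline\zeta\cdot e_1$ or $\cid(T_z\what)\cdot e_1>\overline\eta\cdot e_1$. In either case \eqref{ci} combined with monotonicity \eqref{eq:monotone} and continuity \eqref{eq:cont} forces the pair of cocycle differences $B^{\zeta+}(T_z,0,e_1)-B^{\zeta+}(T_z,0,e_2)$ and $B^{\eta-}(T_z,0,e_1)-B^{\eta-}(T_z,0,e_2)$ to carry a common sign, which is strict by Lemma \ref{lm:geod:unique:t}. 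Hence the Busemann trees generated by $B^{\zeta+}$ and $B^{\eta-}$ under any common tie-breaking rule coincide step-by-step from every vertex. Applying Theorem \ref{th:geod:coal:t} inside this shared tree, any two vertices $x,y\in\Z^2$ are joined by a tree path along which the cocycle values reduce to the same $\w$-sum by Lemma \ref{lm:grad flow}\eqref{lm:grad flow:a}, giving $B^{\zeta+}(x,y)=B^{\eta-}(x,y)$ almost surely for all $x,y$. Taking expectations with $(x,y)=(0,e_i)$ yields $\nabla\gpp(\zeta+)=\nabla\gpp(\eta-)$, contradicting the hypothesis. The hard part here is that the step-by-step identification of the two Busemann trees has to be coupled with the coalescence theorem to push local equality of cocycles to global equality, which is the step that ultimately delivers the contradiction.
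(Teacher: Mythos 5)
Parts \eqref{th:two-geo:i}--\eqref{th:two-geo:iii} of your proposal track the paper's own argument closely. For \eqref{th:two-geo:i}, the paper likewise builds the two $\Uset_{\cid}$-directed geodesics by following minimal gradients of the limit cocycles $\Bci^\pm$ from \eqref{B*} and invokes continuity of the law of $\w_0$ to prevent coalescence; and for \eqref{th:two-geo:iii} your contrapositive formulation is logically equivalent to the paper's direct case analysis on the sign of $B^\xi(0,e_1)-B^\xi(0,e_2)$. One caution in \eqref{th:two-geo:i}--\eqref{th:two-geo:ii}: you invoke Lemma \ref{lm:cont geod} at the random direction $\xi=\cid(T_x\what)$. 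That lemma, like \eqref{eq:cont}, is established only for a fixed $\zeta$ with a $\zeta$-specific null set. The paper is careful to sidestep exactly this (see the remark in the proof following \eqref{B*}), working instead through the monotone-limit cocycles $\Bci^\pm$ and the global ordering \eqref{eq:monotone}, which do hold outside a single null set. Your geodesics $\gamma^\pm$ should be defined as the minimal-gradient paths for $\Bci^\pm$ rather than as $x^{x,\t,\cid\pm}_{0,\infty}$.

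The real problem is part \eqref{th:two-geo:iv}, and it is not with the ``hard step'' you flagged but with the ergodicity appeal that precedes it. You write that ``by the ergodicity asserted in Theorem \ref{th:cocycles}\eqref{th:cocycles:indep} (applied through the countable family of rational cocycles that determine $\cid$) we have $\Pbig(A)\in\{0,1\}$.'' Theorem \ref{th:cocycles}\eqref{th:cocycles:indep} asserts ergodicity of each process $\varphi^{\xi\pm}_x=(\w_x,B^{\xi\pm}(x,x+e_1),B^{\xi\pm}(x,x+e_2))$ \emph{separately} for each fixed $\xi$. It does \emph{not} assert joint ergodicity of the family $\{\varphi^{\xi\pm}:\xi\in\Uset_0\}$. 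The event $A$ depends on $\cid$, which by \eqref{ci} is a functional of the entire countable family of cocycles; single-cocycle ergodicity gives you nothing about $\Pbig(A)$. Nothing in the paper establishes the joint ergodicity your zero-one law requires, so this is a genuine gap, not a routine omission. (And even if joint ergodicity were available, the continuation is needlessly circuitous: once the $B^{\zeta+}$- and $B^{\eta-}$-trees coincide step by step, Theorem \ref{th:B-geod:direction} already forces them to have the same direction, contradicting $\nabla\gpp(\zeta+)\ne\nabla\gpp(\eta-)$ directly, with no need to route through coalescence and cocycle expectations.)

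The paper's proof of \eqref{th:two-geo:iv} runs in the opposite direction and avoids ergodicity altogether: it starts from the cocycle geodesics $x^{0,\zeta+}_{0,\infty}$ and $x^{0,\eta-}_{0,\infty}$, uses Theorem \ref{th:B-geod:direction} and $\Uset_{\zeta+}\ne\Uset_{\eta-}$ to force them to separate at some (a.s.\ finite) vertex $z$, then uses the geodesic ordering \eqref{eq:geod-order} to pin down the $e_1$ and $e_2$ steps at $z$, and finally reads off from \eqref{ci} and monotonicity \eqref{eq:monotone} that $\cid(T_z\what)\in[\underline\zeta,\overline\eta]$. This produces the witness $z$ constructively, giving the full-measure conclusion directly. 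I recommend replacing your ergodicity-plus-contradiction scheme for \eqref{th:two-geo:iv} with this separation argument; your sign analysis at a general $z$, which is essentially correct, is still the right computation to extract the conclusion at the separation point.
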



\begin{proof}

%
%

Define 
\begin{align}\label{B*} 
\begin{split}
\Bci_+(\what,x,y)&=\lim_{\zeta\cdot e_1\searrow\cid(\what)\cdot e_1}B^{\zeta}_{\pm}(\what,x,y)  \\
\text{ and}\qquad \Bci_-(\what,x,y)&=\lim_{\zeta\cdot e_1\nearrow\cid(\what)\cdot e_1}B^{\zeta}_{\pm}(\what,x,y)\,.
\end{split} \end{align}   
As pointed out in Remark \ref{rm:cocycles}(c), we have to keep the  $\Bci_\pm$ distinction   even if $\gpp$ is everywhere differentiable, because
direction $\cid$ is random and continuity \eqref{eq:cont} has not been shown simultaneously  for all directions with a single  $\Pbig$-null set.   

In any case, $\Bci_\pm$ satisfy the cocycle property (Definition \ref{def:cK}\eqref{def:cK:coc}) and recovery $\w_x=\min_{i=1,2} \Bci_{\pm}(\what,x,x+e_i)$ (Definition \ref{def:bdry-model}).  
From \eqref{ci} we have  $\Bci_+(0,e_1)\le \Bci_+(0,e_2)$  and $\Bci_-(0,e_1)\ge \Bci_-(0,e_2)$.
By Lemma \ref{lm:grad flow}   there exists  a geodesic from $0$   through $e_1$  (by following  minimal $\Bci_+$ gradients) and another through $e_2$ (by following  minimal $\Bci_-$ gradients), with an arbitrary tie breaking function $\t$. These two geodesics cannot coalesce because $\w_0$ has a continuous distribution.

Let $\zeta\cdot e_1<\cid\cdot e_1<\eta\cdot e_1$. By  the limits in \eqref{B*} and monotonicity \eqref{eq:monotone}, 
\begin{align*}
\begin{split}
B^{\zeta}_{+}(\what,x,x+e_1)&\ge \Bci_\pm(\what,x,x+e_1)\ge B^{\eta}_{-}(\what,x,x+e_1)\\
\text{ and}\qquad B^{\zeta}_{+}(\what,x,x+e_2)&\le \Bci_\pm(\what,x,x+e_2)\le B^{\eta}_{-}(\what,x,x+e_2). 
\end{split} \end{align*}  
These inequalities imply that the $\Bci_\pm$-geodesics stay 
  to the right of $x_{0,\infty}^{0,\zeta,+}$   and to the left of $x_{0,\infty}^{0,\eta,-}$.   
  By Theorem \ref{th:B-geod:direction} these geodesics are $\Uset_{\zeta+}$- and $\Uset_{\eta-}$-directed, respectively.
Hence the $\Bci_\pm$-geodesics are $\Uset_{\cid}$-directed. Part \eqref{th:two-geo:i} is proved.

In part \eqref{th:two-geo:ii}  we prove the first claim, the other claim being similar.  The assumption allows us to pick a rational $\eta\in\ri\Uset$ such that
$\varliminf x_n'\cdot e_1/n<\underline\eta\cdot e_1\le\eta\cdot e_1<\cid\cdot e_1$. 
Since $\w_0$ has a continuous distribution and geodesic  $x_{0,\infty}^{0,\eta,-}$ is $\Uset_{\eta-}$-directed, geodesic $x_{0,\infty}$ has to stay always to the left of it.
  \eqref{ci} implies $x_1^{0,\eta,-}=e_2$. Hence  also $x_1=e_2$. The claim is proved.

For part \eqref{th:two-geo:iii} fix $\xi\in\ri\Diff$, which implies   $B^{\xi}_{\pm}=B^\xi$.   By Lemma \ref{lm:geod:unique:t},   $B^\xi(0,e_1)\ne B^\xi(0,e_2)$ almost surely.    Let $\zeta\cdot e_1\searrow\xi\cdot e_1$ along rational points $\zeta\in\ri\Uset$.  By \eqref{eq:cont},  $B^{\zeta}_{\pm}(0, e_i)\to B^\xi(0,e_i)$ a.s.         Then on the event  $B^\xi(0,e_1)> B^\xi(0,e_2)$  there almost surely exists  a rational $\zeta$ such that $\zeta\cdot e_1>\xi\cdot e_1$ and  $B^{\zeta}_{\pm}(0,e_1)> B^{\zeta}_{\pm}(0,e_2)$.  By \eqref{ci} this forces  
$\cid\cdot e_1\ge\zeta\cdot e_1>\xi\cdot e_1$.   Similarly on the event $B^\xi(0,e_1)< B^\xi(0,e_2)$ we   have almost surely $\cid\cdot e_1<\xi\cdot e_1$.  The conclusion is that 
 $\P(\cid=\xi)=0$ and part  \eqref{th:two-geo:iii} is proved. 
 

In part \eqref{th:two-geo:iv}, 
  $\Uset_{\zeta+}\ne\Uset_{\eta-}$ and directedness   (Theorem \ref{th:B-geod:direction}) force  the   cocycle geodesics $x_{0,\infty}^{0,\eta,-}$ and $x_{0,\infty}^{0,\zeta,+}$   to separate.   If $n\ge 0$ is the time after which they separate, then by
    cocycle geodesics ordering \eqref{eq:geod-order}   there exists   $z\in\Z^2$ such that $x_n^{0,\eta,-}=x_n^{0,\zeta,+}=z$, $x_{n+1}^{0,\eta,-}=z+e_1$, and $x_{n+1}^{0,\zeta,+}=z+e_2$. 
  Definition \eqref{ci} implies that $\zetamin\cdot e_1\le\cid(T_z\what)\cdot e_1\le\etamax\cdot e_1$.  For suppose  $\etamax\cdot e_1< \cid(T_z\what)\cdot e_1$.   Pick a rational point strictly between $\etamax$ and $\cid(T_z\what)$.   The second line of \eqref{ci} and  ordering \eqref{eq:monotone} imply that $B^{\eta}_{-}(\what, z,z+e_1)> B^{\eta}_{-}(\what, z,z+e_2)$, contradicting the choice $x_{n+1}^{0,\eta,-}=z+e_1$.  
\end{proof}

\begin{corollary}\label{cor:ci-support}
Assume $\P\{\w_0\le r\}$ is continuous in $r$ and $\gpp$ is differentiable at the endpoints of all  its linear segments. Then $\cid$ lies almost surely outside the union of the closed  linear segments of $\gpp$.  Equivalently, $\cid$ is almost surely an exposed point.  
\end{corollary}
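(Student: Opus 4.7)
The plan is to show, for each maximal non-degenerate closed linear segment $[\zeta_k,\eta_k]$ of $\gpp$ on $\ri\Uset$ (with $\zeta_k\cdot e_1<\eta_k\cdot e_1$), that $\Pbig\{\cid\in[\zeta_k,\eta_k]\}=0$, and then to take a countable union over the at most countably many such segments. The endpoints are disposed of by Theorem \ref{th:two-geo}\eqref{th:two-geo:iii}: the hypothesis that $\gpp$ is differentiable at the endpoints of its linear segments places $\zeta_k,\eta_k\in\Diff$, and hence $\Pbig\{\cid=\zeta_k\}=\Pbig\{\cid=\eta_k\}=0$.

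For the open interior $\,]\zeta_k,\eta_k[\,$ I would invoke Theorem \ref{th:cocycles}\eqref{th:cocycles:flat} together with Remark \ref{rm:cocycles}(b): since $\gpp$ is linear on $[\zeta_k,\eta_k]$ the gradients $\nabla\gpp(\xi\pm)$ agree for all $\xi\in[\zeta_k,\eta_k]$, so the cocycles $B^{\xi\pm}$ for $\xi\in[\zeta_k,\eta_k]$ all coincide deterministically on $\Ombig$ with a single cocycle $B_k$. Lemma \ref{lm:geod:unique:t} then gives $B_k(0,e_1)\ne B_k(0,e_2)$ $\Pbig$-almost surely. On the event $\{B_k(0,e_1)>B_k(0,e_2)\}$, the defining relation \eqref{ci} of $\cid$ applied with any rational $\xi'\in\,]\zeta_k,\eta_k[\,$ forces $\xi'\cdot e_1\le\cid\cdot e_1$; letting $\xi'\cdot e_1\nearrow\eta_k\cdot e_1$ along rational values yields $\cid\cdot e_1\ge\eta_k\cdot e_1$. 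Symmetrically, on $\{B_k(0,e_1)<B_k(0,e_2)\}$ we get $\cid\cdot e_1\le\zeta_k\cdot e_1$. In either case $\cid\notin\,]\zeta_k,\eta_k[\,$ almost surely, which together with the endpoint bounds yields $\Pbig\{\cid\in[\zeta_k,\eta_k]\}=0$; the countable union over $k$ proves the first claim.

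The equivalence with being an exposed point is just a rereading of \eqref{eq:epod}: under the hypothesis, a point $\xi\in\ri\Uset$ lies outside every closed non-degenerate linear segment of $\gpp$ precisely when some supporting affine function at $\xi$ meets the graph of $\gpp$ only at $\xi$, which is the exposed-point condition \eqref{eq:epod}. The main obstacle is the interior argument, but it is dispatched immediately by the cocycle-collapse identity in Theorem \ref{th:cocycles}\eqref{th:cocycles:flat} combined with the atomlessness provided by Lemma \ref{lm:geod:unique:t}; the endpoints and the set-theoretic equivalence are essentially tautologies once the linear-segment part is in place.
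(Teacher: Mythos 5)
Your proof is correct, and it takes a genuinely different route from the paper's. The paper's argument is a quick contradiction between two geodesic results: Theorem \ref{thm-1:geod}\eqref{thm-1:geod:cont} gives, for any linear segment with endpoints in $\Diff$, a $\P$-a.s.\ unique directed geodesic from each site, while Theorem \ref{th:two-geo}\eqref{th:two-geo:i} produces two non-coalescing $\Uset_{\cid}$-geodesics; these cannot coexist if $\cid$ lands in a flat segment. Your argument bypasses geodesics altogether and works directly with the cocycle picture and the defining relation \eqref{ci} of $\cid$: the cocycle-collapse in Theorem \ref{th:cocycles}\eqref{th:cocycles:flat} together with the atomlessness from Lemma \ref{lm:geod:unique:t} shows the sign of $B_k(0,e_1)-B_k(0,e_2)$ is constant over the rationals in $\,]\zeta_k,\eta_k[\,$ and a.s.\ nonzero, which by \eqref{ci} pushes $\cid$ to one side of the interval, and the endpoints are removed via Theorem \ref{th:two-geo}\eqref{th:two-geo:iii}. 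This is essentially the same calculation the paper uses to prove Theorem \ref{th:two-geo}\eqref{th:two-geo:iii} for a single $\xi\in\Diff$, extended to a whole flat segment using the collapse; it is arguably more self-contained, while the paper's route is shorter once the geodesic machinery is in hand. One small caveat applies to both proofs: the final sentence identifying ``outside all closed linear segments'' with ``exposed'' silently ignores possible isolated points of nondifferentiability that are not endpoints of any linear segment (for which $\Uset_\xi=\{\xi\}$ yet $\xi\notin\Diff$, hence $\xi$ is not exposed by \eqref{eq:epod}); the paper's own proof has the same gap, so this is an imprecision inherited from the statement rather than a flaw introduced by you.
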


\begin{proof}
By Theorem \ref{thm-1:geod}\eqref{thm-1:geod:cont}  each linear segment has a unique geodesic from $0$  directed into it.  Since there are at most countably many linear segments of $\gpp$, Theorem \ref{th:two-geo}\eqref{th:two-geo:i}   contradicts  
  $\cid$ lying on a flat segment.    Under the differentiability assumption endpoints of flat segments are not exposed.  
\end{proof}

The next theorem identifies   the asymptotic direction of the competition interface $\varphi=(\varphi_k)_{0\le k<\infty}$ defined in Section \ref{sec:ci}.
 
  \begin{theorem}
Assume $\P\{\w_0\le r\}$ is continuous in $r$.
\begin{enumerate}[\ \ \rm(i)]
\item All limit points of the asymptotic velocity of the competition interface are in 
$\Uset_{\cid(\what)}$: for $\Phat$-almost every $\what$
\begin{align}\label{ci-lln1}
\cidmin(\what)\cdot e_1\le\varliminf_{n\to\infty} n^{-1}\varphi_n(\w)\cdot e_1\le\varlimsup_{n\to\infty} n^{-1}\varphi_n(\w)\cdot e_1\le\cidmax(\what)\cdot e_1.
\end{align}
\item If $\gpp$ is differentiable at the endpoints of its linear segments then $\cid$ is $\kS$-measurable and gives the asymptotic direction of the competition interface: $\Pbig$-almost surely
\begin{align}\label{ci-lln3}
\lim_{n\to\infty} n^{-1}\varphi_n(\w)=\cid(\what).
\end{align}
\end{enumerate}
\end{theorem}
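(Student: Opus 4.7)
The plan is to use Theorem \ref{th:two-geo}\eqref{th:two-geo:i}, which produces two non-coalescing $\Uset_{\cid(\what)}$-directed geodesics out of $0$: one, call it $\pi^{(1)}$, whose first step is $e_1$, and another, $\pi^{(2)}$, whose first step is $e_2$. These are constructed in the proof of Theorem \ref{th:two-geo} by following minimal gradients of the cocycles $\Bci^\pm$ defined in \eqref{B*}, and under the continuity of the distribution of $\w_0$ Lemma \ref{lm:geod:unique:t} rules out ties in these cocycles, so the two geodesics are unambiguously separated at their first step.

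The next step is to reformulate the competition interface in terms of finite geodesics. Writing $\varphi_{n-1}\cdot e_1=k_n+\tfrac12$, the definition of $k_n$ together with $\P$-a.s.\ uniqueness of finite geodesics (again from the continuity assumption) says that the finite geodesic from $0$ to $(k,n-k)$ goes through $e_2$ iff $k\le k_n$ and through $e_1$ iff $k\ge k_n+1$. Setting $\pi^{(2)}_n=(j_n,n-j_n)$ and $\pi^{(1)}_n=(j'_n,n-j'_n)$, the initial segments $\pi^{(1)}_{0,n}$ and $\pi^{(2)}_{0,n}$ are themselves the unique finite geodesics between their endpoints, which forces the sandwich $j_n\le k_n\le j'_n-1$ for every $n$.

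Part (i) is then immediate: since $\pi^{(1)}$ and $\pi^{(2)}$ are both $\Uset_{\cid(\what)}$-directed, every limit point of $j_n/n$ and of $j'_n/n$ lies in $[\cidmin(\what)\cdot e_1,\cidmax(\what)\cdot e_1]$, and therefore
\[\cidmin\cdot e_1\le\varliminf_{n\to\infty}\frac{j_n}n\le\varliminf_{n\to\infty}\frac{k_n}n\le\varlimsup_{n\to\infty}\frac{k_n}n\le\varlimsup_{n\to\infty}\frac{j'_n}n\le\cidmax\cdot e_1,\]
which is \eqref{ci-lln1} after noting $\varphi_n\cdot e_1=k_{n+1}+\tfrac12$. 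For part (ii) the added differentiability hypothesis triggers Corollary \ref{cor:ci-support}, so $\cid\in\EP$ $\Pbig$-a.s.\ and hence $\cidmin=\cid=\cidmax$ $\Pbig$-a.s.; part (i) then upgrades to $n^{-1}\varphi_n\to\cid$ $\Pbig$-a.s. For $\kS$-measurability of $\cid$, observe that each $\varphi_n(\w)$ is $\kS$-measurable by construction, so $\cid$ coincides $\Pbig$-a.s.\ with the $\kS$-measurable random variable $\lim_n n^{-1}\varphi_n(\w)$ (extended arbitrarily on the negligible set where the limit fails), hence $\cid$ can be taken $\kS$-measurable up to modification on a $\Pbig$-null set.

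The main conceptual work is already encapsulated in Theorem \ref{th:two-geo}\eqref{th:two-geo:i}, which bridges the abstract cocycle-based definition of $\cid$ and the finite-geodesic picture of the competition interface; once the two sandwiching geodesics are in hand, the rest of the argument reduces to the a.s.\ uniqueness of finite geodesics, which is exactly why continuity of the distribution of $\w_0$ is assumed.
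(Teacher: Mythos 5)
Your proposal is correct and follows essentially the same sandwich strategy as the paper, but packages it slightly differently: where the paper sandwiches $\varphi$ between cocycle geodesics $x_{0,\infty}^{0,\t,\zeta+}$ and $x_{0,\infty}^{0,\t,\eta-}$ for directions $\zeta\cdot e_1<\cid\cdot e_1<\eta\cdot e_1$, uses Theorem \ref{th:B-geod:direction} for their directedness, and only then takes limits $\zeta,\eta\to\cid$, you invoke the already-proved Theorem \ref{th:two-geo}\eqref{th:two-geo:i} to get two $\Uset_{\cid}$-directed geodesics through $e_1$ and $e_2$ in one stroke (the limit argument being encapsulated there). Your bookkeeping $j_n\le k_n\le j'_n-1$ is a useful explicit version of the paper's assertion that $\varphi$ ``has to stay between'' the two geodesics, and your derivation of $\kS$-measurability of $\cid$ from the a.s.\ limit \eqref{ci-lln3} (instead of from measurability of the cocycles as in Remark \ref{bu:kS}) is a tidy alternative. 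One inaccuracy to flag: Lemma \ref{lm:geod:unique:t} is proved for a \emph{fixed deterministic} $\xi$, so it does not rule out ties $\Bci^\pm(0,e_1)=\Bci^\pm(0,e_2)$ at the random direction $\cid$; indeed such ties are exactly what one should expect at the transition direction, and the proof of Theorem \ref{th:two-geo}\eqref{th:two-geo:i} exploits the non-strict inequalities $\Bci^+(0,e_1)\le\Bci^+(0,e_2)$ and $\Bci^-(0,e_1)\ge\Bci^-(0,e_2)$ by choosing the tie-break appropriately. This slip is inessential, though: your argument only needs the existence of the two geodesics with prescribed first steps, which Theorem \ref{th:two-geo}\eqref{th:two-geo:i} already furnishes.
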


\begin{proof}
Fix $\t\in\{e_1,e_2\}^{\Z^2}$. 
By \eqref{ci}, if $\zeta\cdot e_1<\cid(\what)\cdot e_1<\eta\cdot e_1$, then 
$x_1^{0,\t,\zeta,\pm}=e_2$
and $x_1^{0,\t,\eta,\pm}=e_1$.
Since the path  $\varphi$ separates the geodesics that go through $e_1$ and $e_2$, it  has to stay  between 
$x_{0,\infty}^{0,\t,\zeta,+}$ and $x_{0,\infty}^{0,\t,\eta,-}$. By Theorem \ref{th:B-geod:direction} these geodesics are $\Uset_{\zeta+}$ and $\Uset_{\eta-}$ directed,
and we have
\[\underline\zeta\cdot e_1\le\varliminf_{n\to\infty} n^{-1}\varphi_n\cdot e_1\le\varlimsup_{n\to\infty} n^{-1}\varphi_n\cdot e_1\le\overline\eta\cdot e_1.\]
Claim \eqref{ci-lln1} follows by taking $\zeta$ and $\eta$ to $\cid$.

If $\gpp$ is differentiable at the endpoints of its linear segments, then cocycles are $\kS$-measurable and hence so is $\cid$. Furthermore, $\cid$ is an exposed point by Corollary \ref{cor:ci-support}. In this case, $\cidmin=\cidmax$ and 
claim \eqref{ci-lln3} is proved.
\end{proof}

\begin{proof}[Proof of Theorem \ref{thm:ci-1}]  Limit \eqref{ci-lln2} is in \eqref{ci-lln3}.  The fact that the limit lies in $\ri\Uset$ is in the construction in the paragraph that contains \eqref{ci}, and the properties  of the limit    are in Theorem  \ref{th:two-geo}\eqref{th:two-geo:iv} 
and Corollary \ref{cor:ci-support}.   
\end{proof}

\begin{proof}[Proof of Theorem \ref{thm:ci-2}]    Part \eqref{thm:ci-2:i} comes directly from Theorem \ref{th:two-geo}\eqref{th:two-geo:i}.   For part \eqref{thm:ci-2:ii} assume  $\gpp$  strictly concave.  Then   $\Uset_\xi=\{\xi\}$ and   by Theorem \ref{thm-1:geod}\eqref{thm-1:geod:direct} every geodesic is directed. In this case, Theorem \ref{th:two-geo}\eqref{th:two-geo:ii}  implies  that with $\P$-probability one, there cannot be two distinct geodesics from $0$ with a common direction other than   $\cid$.  
\end{proof}


As mentioned at the end of Section \ref{sec:ci}, if   $\P\{\w_0\le r\}$ is not continuous in $r$, we have competition interfaces $\varphi^{(l)}$ and $\varphi^{(r)}$ for the trees of leftmost and rightmost geodesics.  Their limiting directions $\cidr(\what),\,\cidl(\what)\in\ri\Uset$ are defined by 
 \begin{align}\label{ci12}
\begin{split}
&B^{\zeta}_{\pm}(\what, 0,e_1)> B^{\zeta}_{\pm}(\what,0,e_2)\quad\text{if }\zeta\cdot e_1<\cidr(\what)\cdot e_1,\\
&B^{\zeta}_{\pm}(\what,0,e_1)=B^{\zeta}_{\pm}(\what,0,e_2)\quad\text{if }\cidr(\what)\cdot e_1<\zeta\cdot e_1<\cidl(\what)\cdot e_1\\
\text{and}\qquad&B^{\zeta}_{\pm}(\what,0,e_1)<B^{\zeta}_{\pm}(\what,0,e_2)\quad\text{if }\zeta\cdot e_1>\cidl(\what)\cdot e_1.
\end{split}
\end{align}
With this definition    limit  \eqref{ci-lln1}  is valid also with superscripts $(l)$ and $(r)$.    Consequently    $n^{-1}\varphi^{(a)}_n(\w)\to \cid^{(a)}(\what)$ a.s.\ for $a\in\{l,r\}$ under the assumption that $\gpp$ is strictly concave.  

\section{Exactly solvable models}\label{sec:solv}
 
We go through some details for the exactly solvable models  discussed in Section \ref{subsec:solv}.   
 
\subsection{Geometric weights} 
The weights $\{\w_x\}$  are  i.i.d.\   with $\P(\w_x=k)=(1- {\Ew}^{-1})^{k-1} {\Ew}^{-1}$ for $k\in\N$, mean $\Ew=\E(\w_0)>1$ and variance $\sigma^2=\Ew(\Ew-1)$.    



Begin by investigating the queueing fixed point.   With $\{\S_{n,0}\}$ i.i.d.\ geometric with mean $\Ew$, let the initial arrival process $\{A_{n,0}\}$ be  i.i.d.\ geometric with mean $\alpha$.  Let $J_n=S_{n,0}+W_{n,0}$.   Then equations \eqref{q:lin}    and  \eqref{def:A} show that   the process $\{(A_{n,1}, J_{n+1}): n\in\Z\}$ is an irreducible  aperiodic  Markov chain with transition probability 
\be\label{g:trans}  \begin{aligned}
&P( A_{n,1}=b, J_{n+1}=j\,\vert\, A_{n-1,1}=a, J_{n}=i)\\
&\qquad\qquad 
=  P\bigl\{  (A_{n,0}-i)^+ +S_{n+1,0}=b,  \,  (i-A_{n,0})^+ +S_{n+1,0}= j\bigr\}.  
 \end{aligned}\ee
Note that the equations also show that $(A_{n,0}, S_{n+1,0})$ are independent of $(A_{n-1,1}, J_{n})$.    Since  the process $\{(A_{n,1}, J_{n+1}): n\in\Z\}$ is  stationary, its marginal must be the unique invariant distribution of transition \eqref{g:trans}, namely 
\[   P(A_{n-1,1}=k, J_{n}=j) =   (1-{\alpha}^{-1})^{k-1}  {\alpha}^{-1} \cdot   (1- {\f(\alpha)}^{-1})^{j-1}  {\f(\alpha)}^{-1} \qquad \text{for $k,j\in\N$.}  
\]
with $\f(\alpha) = \Ew\frac{\alpha-1}{\alpha-\Ew} $. 
This shows that i.i.d.\ mean $\alpha$ geometric is a queuing  fixed point. 
Next solve for $\gppa(s)=\inf_{\alpha>\Ew}\{\alpha s+\f(\alpha)\}$.  The unique minimizing $\alpha$ in terms of $s=\xi\cdot e_1/\xi\cdot e_2$ is 
\[\alpha = \Ew+\sigma  \sqrt{\xi\cdot e_2/\xi\cdot e_1}\]
  which defines the  bijection between  $\xi\in\ri\Uset$  and $\alpha\in(\Ew,\infty)$.  From this  
\[\f(\alpha) = \Ew\frac{\alpha-1}{\alpha-\Ew}  = \Ew+\sigma \sqrt{\xi\cdot e_1/\xi\cdot e_2}.  
 \]

The terms in the sum $J_n=S_{n,0}+W_{n,0}$ are independent, so we can  also find the distribution of the waiting time: 
\[  P(W_{n,0}=0)= \frac{\alpha-\Ew}{\alpha-1}, \quad 
P(W_{n,0}=k)= \frac{\Ew-1}{\alpha-1} \cdot   \Bigl(1-\frac1{\f(\alpha)}\,\Bigr)^{k-1} \frac1{\f(\alpha)} \quad (k\ge 1). 
\]

The distributions of $\cidr$ and $\cidl$ claimed in \eqref{geom:theta} come from \eqref{ci12}, knowing that  $B^{(a,1-a)}(0,e_1)$ and $B^{(a,1-a)}(0,e_2)$ are independent geometrics with means  \eqref{geom:B}.   The calculation  for $\cidr$ goes   
\begin{align*} \P\{\cidr\cdot e_1>a\} &= \P\{ B^{(a,1-a)}(0,e_1)>B^{(a,1-a)}(0,e_2) \} 
= \frac{\alpha-\Ew}{\alpha}\\
&= \frac{\sqrt{(\Ew-1)(1-a)}}{\sqrt{\Ew a}+ \sqrt{(\Ew-1)(1-a)}} 
\end{align*}
from which the first formula of \eqref{geom:theta} follows.   Similar computation for $\cidl$.

\subsection{Exponential weights}\label{subsec:exponential}
The weights $\{\w_x\}$  are  i.i.d.\ exponential with mean $\Ew=\E(\w_x)>0$ and variance $\sigma^2=\Ew^2$, with marginal distribution 
\[  \P(\w_x> t)= \Ew^{-1}e^{-t/\Ew}\qquad\text{for $t\ge0$.}  \]
The queuing fixed point can be derived as in the geometric case.  
 The distribution of   $\cid$ comes from knowing that 
  $B^{(a,1-a)}(0,e_1)$ and $B^{(a,1-a)}(0,e_2)$ are independent exponentials   with parameters 
$\sqrt{a}/(\sqrt{a}+\sqrt{1-a})$ and $\sqrt{1-a}/(\sqrt{a}+\sqrt{1-a})$.
Hence   
	\[\P\{\cid\cdot e_1>a\} = \P\{ B^{(a,1-a)}(0,e_1)>B^{(a,1-a)}(0,e_2) \}= \frac{\sqrt{1-a}}{\sqrt{a}+\sqrt{1-a}}\,.\]
Equation \eqref{eq:ci-exp} follows.

\appendix

\section{Cocycles from queuing fixed points}
\label{app:q} 

This section proves Theorem \ref{th:cocycles}.  By shifting the variables $\{\w_x, B^{\xi}_{\pm}(x,x+e_i)\}$    in Theorem \ref{th:cocycles} if necessary, we can assume without loss of generality that $\P\{\w_0\ge0\}=1$.   Then the weights $\w_x$ can represent service times and  we can tap into queueing theory.    We switch now to terminology and notation from queuing theory  to enable the reader to relate this appendix to the existing queueing literature.

	Consider an infinite sequence of  $\;\cdot/\text{G}/1/\infty/\text{FIFO}$   queues in tandem.  That is, each  queue or service station (these terms are used almost interchangeably)   has a  general service time distribution (the law of $\w_x$ under $\P$), a single server,   unbounded room for customers waiting to be served,  and customers obey first-in-first-out discipline.  The   service stations are  
	indexed by $k\in\Z_+$ and a bi-infinite sequence  of customers is indexed by $n\in\Z$.  Customers enter the system at station 0 and move from station to station in order.   The server at station $k$ serves one customer at a time.  Once the service of customer  $n$ is complete at station $k$, customer $n$ moves to the  back of the queue at station $k+1$ and customer $n+1$ enters service at station $k$ if he was already waiting in the queue.   If the queue at station $k$ is empty after the departure of customer $n$, then server $k$ remains idle until customer $n+1$ arrives.   Each customer retains his integer label as he moves through the system.  
	
Here is the   mathematical apparatus.    The system needs two ingredients: an initial inter-arrival process $\{\A_{n,0}:n\in\Z\}$ and the service times $\{\S_{n,k}:n\in\Z, k\in\Z_+\}$.  $A_{n,0}\ge 0$ is the time between the arrival of customer $n$ and customer $n+1$ at queue $0$.  $S_{n,k}\ge 0$ is the amount  of time the service of customer $n$ takes at station $k$.  Let  $\{\S_{n,k}:n\in\Z, k\in\Z_+\}$ be i.i.d.\ such that  $\S_{0,0}$ has the   distribution of  $\w_0$ under $\P$.    Assume $\{\A_{n,0}:n\in\Z\}$ is stationary, ergodic, and independent of 
	$\{\S_{n,k}:k\in\Z_+,n\in\Z\}$.    Assume 
\be\label{EA>ES}	 E[\S_{0,0}]=\Ew<E[\A_{0,0}]<\infty. \ee
This guarantees in particular a  {\sl stable} system where queues do not blow up.  	
	   The service time distribution is taken to be fixed,  while the input $\{A_{n,0}\}$ varies, by analogy with varying the initial distribution of a Markov process.  

 As a product of an ergodic process and an i.i.d.\ process  
$(\A_{n,0}, \S_{n,k})_{n\in\Z, k\in\Z_+}$  is   stationary and ergodic under   translations of the $n$-index.   	
Consequently  the entire queuing system is  stationary and ergodic under translations of the $n$-index.   The  issue of interest is finding  input processes $\{A_{n,0}\}$ such that the system is also stationary   under translations of the $k$-index.  Such a process or its distribution on $\R^\Z$ will be called a {\sl fixed point}  of the queueing operator.  

  Next we develop the iterative equations that describe the evolution of the system from station to station, as $k$ increases.  
 These are the variables.    $\A_{n,k}$ is  the inter-arrival time between customers $n$ and $n+1$ at queue $k$, or, equivalently, the inter-departure time between customers $n$ and $n+1$ from  queue $k-1$.   $W_{n,k}$ is the waiting time of customer $n$ at queue $k$, that is, the time between the arrival of customer $n$ at queue $k$ and the beginning of his service at queue $k$.  
 The total time customer $n$ spends at station $k$ is  
   	the sojourn time  $\W_{n,k}+\S_{n,k}$.  

	
The development begins with the waiting times.  Define  the stationary, ergodic process $\{W_{n,0}\}_{n\in\Z}$   by  	
 		\begin{align}
		\label{def:W}
   			\W_{n,0} = \Big( \sup_{j\le n-1} \sum_{i=j}^{n-1} ( \S_{i,0} - \A_{i,0} ) \Big)^+ . 
   		\end{align}
By the ergodic theorem and \eqref{EA>ES}  			
		\begin{align*}
			\W_{n,0}<\infty\text{ for all }n\in\Z.
		\end{align*}
 Process $\{W_{n,0}\}$ satisfies  Lindley's equation: 
	\begin{align}
   			&\W_{n+1,0} 
			=(\W_{n,0}+\S_{n,0}-\A_{n,0})^+.\label{q:lin}
  		 \end{align}
This equation agrees naturally with  the queuing interpretation.    If  $W_{n,0}+S_{n,0}< A_{n,0}$ then customer $n$ leaves  station $0$ before customer $n+1$ arrives, and consequently customer $n+1$ has no wait  and $W_{n+1,0}=0$.  In the complementary case customer $n+1$   waits time $W_{n+1,0}=W_{n,0}+S_{n,0}-A_{n,0}$ before entering service at station $0$.


  	With some additional work we prove the following. 	
		
	\begin{lemma}  $n^{-1}W_{n,0}\to 0$ almost surely  as $n\to\infty$.  
	\label{t:lm-W_n} \end{lemma}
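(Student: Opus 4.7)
Let $Y_i = \S_{i,0} - \A_{i,0}$, which is stationary and ergodic under the $n$-shift because $\{\A_{n,0}\}_{n\in\Z}$ is stationary ergodic and $\{\S_{n,0}\}_{n\in\Z}$ is i.i.d.\ and independent of it; its mean is $-\delta := \Ew - E[\A_{0,0}] < 0$ by \eqref{EA>ES}. Introduce the two-sided random walk $Z_m = \sum_{i=0}^{m-1} Y_i$ for $m \ge 0$ and $Z_m = -\sum_{i=m}^{-1} Y_i$ for $m < 0$, so that $\sum_{i=j}^{n-1} Y_i = Z_n - Z_j$ for all $j \le n-1$. Definition \eqref{def:W} then rewrites as
\[
   \W_{n,0} \;=\; \Bigl(Z_n - \inf_{j \le n-1} Z_j\Bigr)^{\!+}.
\]

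Birkhoff's ergodic theorem applied to $(Y_i)$ and to its time reversal yields $Z_m/m \to -\delta$ almost surely as $|m| \to \infty$; in particular $Z_n \to -\infty$ and the backward partial sums $V_k := \sum_{i=-k}^{-1} Y_i$ tend to $-\infty$. Split $\inf_{j \le n-1} Z_j = \min\bigl(\inf_{j<0} Z_j,\;\inf_{0 \le j \le n-1} Z_j\bigr)$. The first piece equals $-A^*$ where $A^* := \sup_{k \ge 1} V_k$, and $A^* < \infty$ almost surely since $V_k \to -\infty$; crucially, this finiteness comes from the negative drift alone and requires no extra moment assumption on $\A$. For the second piece, fix $\varepsilon > 0$ and pick a random $N < \infty$ with $Z_j \ge -(\delta+\varepsilon) j$ for all $j \ge N$; setting $C := \max_{0 \le j < N} |Z_j|$ gives $\inf_{0 \le j \le n-1} Z_j \ge -(\delta+\varepsilon) n - C$.

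Combining the two bounds with the upper estimate $Z_n \le (-\delta+\varepsilon) n$ that holds for all large $n$ almost surely,
\[
   \W_{n,0} \;\le\; Z_n - \inf_{j \le n-1} Z_j \;\le\; 2\varepsilon n + A^* + C
\]
eventually. Dividing by $n$ and letting $n \to \infty$ gives $\limsup_n n^{-1}\W_{n,0} \le 2\varepsilon$ almost surely; since $\varepsilon > 0$ is arbitrary and $\W_{n,0} \ge 0$, the lemma follows.

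\textbf{Main obstacle.} There is no genuine obstacle; the argument is a direct application of Birkhoff's theorem in two directions. The only delicate point is controlling the infimum separately on the negative index set (where $A^* < \infty$ is obtained from negative drift, without moment hypotheses on $\A$ beyond $E[\A_{0,0}]<\infty$) and on the non-negative index set (where the ergodic theorem supplies the linear lower bound $Z_j \ge -(\delta+\varepsilon) j$).
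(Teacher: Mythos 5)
Your proof is correct and takes a genuinely different route from the paper's. The paper introduces the auxiliary recursion $W^\e_{n+1}(a)=(W^\e_n(a)+U_n-E(U_0)+\e)^+$, which is a version of the Lindley recursion perturbed to have \emph{positive} mean increment $\e$; one checks by induction that it pathwise dominates $W_{n,0}$ (since $-E(U_0)+\e>0$), that its positive drift eventually removes the $(\cdot)^+$, and that the ergodic theorem then gives $n^{-1}W^\e_n(a)\to\e$, after which $\e\searrow0$. You instead work directly with the random walk representation $\W_{n,0}=\bigl(Z_n-\inf_{j\le n-1}Z_j\bigr)^+$, split the infimum into the $j<0$ part (finite because the reversed walk drifts to $-\infty$, i.e.\ exactly the already-known finiteness of $W_{0,0}$) and the $0\le j\le n-1$ part (bounded by $-(\delta+\e)n-C$ eventually via Birkhoff), and combine with $Z_n\le(-\delta+\e)n$. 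Your approach avoids introducing an auxiliary process and makes the "negative drift" mechanism explicit, at the price of needing Birkhoff in both directions and a little bookkeeping; the paper's coupling argument is shorter once one sees the trick, and it packages the whole thing into a single one-sided ergodic average. Both are self-contained under the standing hypothesis \eqref{EA>ES}; neither needs any moment on $\A$ beyond $E[\A_{0,0}]<\infty$.

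One small cosmetic point: your observation that $A^*<\infty$ ``crucially'' requires no extra moment assumption is correct but not new here --- it is exactly the same fact as $W_{0,0}<\infty$, which the paper already establishes right before the lemma from the same ergodic-drift reasoning.
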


	\begin{proof}   Abbreviate $\U_n=\S_{n,0}-\A_{n,0}$.   For $a\ge 0$ and $\e>0$  define 
	\begin{align*}
 	W^\e_0(a)&=a\\
 	W^\e_{n+1}(a)  &= \bigl(   W^\e_n(a)  + U_n -E(U_0) +\e \bigr)^+\qquad\text{for $n\ge 0$}.
	\end{align*}
	Check inductively  that 
	\[    W^\e_n(0)  = \Bigl( \;\max_{0\le m<n}  \sum_{k=m}^{n-1}  [U_k -E(U_0) +\e] \Bigr)^+ .\]
	Consequently 
	\[   W^\e_n(a)\ge  W^\e_n(0)  \ge  \sum_{k=0}^{n-1}  [U_k -E(U_0) +\e]
	\longrightarrow \infty   \quad\text{ as $n\to\infty$.} \]  
	Thus $W^\e_n(a)>0$ for large $n$ which implies, from its definition, that 
	for large $n$ 
	\[    W^\e_{n+1}(a)  =    W^\e_n(a)  + U_n -E(U_0) +\e.  \] 
	Another application of the ergodic theorem gives   $n^{-1}   W^\e_n(a)  \to \e$  $\P$-a.s.\ as $n\to\infty$.  

	Now for the conclusion.   Since $W_{0,0}=W^\e_0(W_{0,0}) $, we can check inductively that 
	\begin{align*}   W_{n+1,0}&=(\W_{n,0}+U_n)^+\le \bigl(W^\e_n(\W_{0,0})+U_n\bigr)^+ \\
	&\le  \bigl(W^\e_n(\W_{0,0})+U_n-E(U_0) +\e \bigr)^+=W^\e_{n+1}(\W_{0,0}). 
	\end{align*}
	From this,  $0\le n^{-1}W_{n,0}\le n^{-1}   W^\e_n(W_{0,0}) \to\e$, and we let $\e\searrow0$. 
	\end{proof} 			
		
	The  stationary and ergodic process  $\{\A_{n,1}:n\in\Z\}$ of inter-departure times from queue 0 (equivalently, inter-arrival times at queue $1$)   is  defined by 
		\begin{align}
			\A_{n,1} = (\A_{n,0} - \S_{n,0} - \W_{n,0} )^+ + \S_{n+1,0}, 
		\label{def:A}
		\end{align}
again by considering the two cases: either customer $n+1$ arrives before customer $n$ departs ($\A_{n,0} < \S_{n,0} + \W_{n,0}$) and goes   into service the moment customer $n$ departs, or   server $0$  is empty waiting for customer $n+1$ for time $(\A_{n,0} - \S_{n,0} - \W_{n,0} )^+$ before service of customer $n+1$ begins.  
	Process $\{\A_{n,1}:n\in\Z\}$ is  
	independent of $\{\S_{n,k}:k\ge 1,n\in\Z\}$. 

Combining  equations \eqref{q:lin} and \eqref{def:A}  	 and iterating gives 
		\[
			\W_{1,0}+\S_{1,0}+\sum_{i=1}^n\A_{i,1}=\W_{n+1,0}+\S_{n+1,0}+\sum_{i=1}^n\A_{i,0} \qquad\text{for $n\ge 1$.} 
		\]
This and  Lemma \ref{t:lm-W_n} imply  $E[\A_{0,1}]=E[\A_{0,0}]$.
 	(In the queueing literature, this has been observed in \cite{Loy-62}.) 
	
These steps are repeated at each queue.  At queue $k$ we have the stationary, ergodic  arrival process  $\{\A_{n,k}\}_{n\in\Z}$  that is  independent of the service times  $\{\S_{n,j}: n\in\Z, j\ge k\}$.  Waiting times at queue $k$ are defined by 
\begin{align}
		\label{def:Wk}
   			\W_{n,k} = \Big( \sup_{j\le n-1} \sum_{i=j}^{n-1} ( \S_{i,k} - \A_{i,k} ) \Big)^+ . 
   		\end{align}	
Properties  $\W_{n,k}<\infty$, Lemma \ref{t:lm-W_n},   and $E[\A_{n,k}]=E[\A_{0,0}]$ are preserved along the way. 		
This procedure constructs the  process $\{ \A_{n,k}, \S_{n,k}, \W_{n,k}:  n\in\Z, k\in\Z_+\}$  that satisfies the   following system of equations:   
		\begin{align}
			\begin{split}\label{eq:ne-induction:q}
 				&\W_{n+1,k}+\S_{n+1,k}=\S_{n+1,k}+(\W_{n,k}+\S_{n,k}-\A_{n,k})^+,\\
				&\A_{n,k+1} = ( \A_{n,k} - \S_{n,k} - \W_{n,k} )^+ + \S_{n+1,k},\\
				&\S_{n+1,k}=(\S_{n+1,k}+\W_{n+1,k})\wedge \A_{n,k+1}.
			\end{split}
		\end{align} 
	The third equation follows directly from the first two.  
	A useful consequence of \eqref{eq:ne-induction:q} is the ``conservation law''
		\begin{align}
		\label{eq:closed-loop:q}
			\W_{n+1,k}+\S_{n+1,k}+A_{n,k}=\W_{n,k}+\S_{n,k}+\A_{n,k+1}.
		\end{align}

%

The next  four statements  
summarize the situation with fixed points, quoted  from articles \cite{Mai-Pra-03, Pra-03}. 
	Given a stationary ergodic probability measure $\mu$ on $\R^\Z$ consider random variables 
		\[
			\{\A_{n,0},\,\S_{n,0},\,\W_{n,0},\,\A_{n,1}:n\in\Z\}
		\] 
	where $\{\A_{n,0}:n\in\Z\}$ has distribution $\mu$, $\{\S_{n,0}:n\in\Z\}$ 
	are i.i.d.\ with distribution $\P$, 
	the two collections are independent of each  other, $\W_{n,0}$ are defined via \eqref{def:W}, 
	and $\A_{n,1}$ are defined via \eqref{def:A}.
	Let $\Phi(\mu)$ denote the distribution of $\{\A_{n,1}:n\in\Z\}$.  $\Phi$ is the {\sl queueing operator} whose fixed points are the focus now.  

	Let  $\M_e^\alpha(\R^\Z)$ be the space of translation-ergodic probability measures $\mu$ on $\R^\Z$ with marginal mean  $E^\mu[\A_{0,0}]=\alpha$.   We are mainly interested in ergodic fixed points, so we define
		\begin{align*}
			\cA=\{\alpha>\Ew:\exists\mu\in\M_e^\alpha(\R^\Z)\text{ such that } \Phi(\mu)=\mu\}.
		\end{align*}

         		\begin{theorem} \label{th:Pra-03}  \text{\cite[Thm.~1]{Pra-03}}
			Let $\alpha\in\cA$. Then there exists a unique $\mu^\alpha\in\M_e^\alpha(\R^\Z)$ with $\Phi(\mu^\alpha)=\mu^\alpha$.
			Furthermore, let $\A^0=\{\A_{n,0}:n\in\Z\}$ be ergodic with mean 
			$E[\A_{0,0}]=\alpha$ and $\{\S_{n,k}:n\in\Z,k\in\Z_+\}$  
			i.i.d.\ with distribution $\P$ and independent of   $\A^0$.
			Let $A^k=\{\A_{n,k}:n\in\Z\}$, $k\in\N$,  be defined via inductions 
			\eqref{def:W} and \eqref{eq:ne-induction:q}.
			Then the distributions of $A^k$ converge weakly to $\mu^\alpha$.  
		\end{theorem}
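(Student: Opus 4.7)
The plan is to establish both uniqueness and attractiveness via a single monotone coupling argument driven by a common service-time array, together with ergodic arguments built on the conservation law \eqref{eq:closed-loop:q}. Since $\alpha\in\cA$, at least one fixed point $\mu^\alpha\in\M_e^\alpha(\R^\Z)$ exists, and it suffices to show that for every ergodic input $A^0$ with mean $\alpha$, the iterates $A^k$ satisfy $A^k\Rightarrow\mu^\alpha$; uniqueness then follows by applying this convergence with $A^0$ drawn from a second fixed point.

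First I would realize all processes on a single probability space carrying the i.i.d.\ service array $\{\S_{n,k}\}$, with $A^0$ and an independent stationary copy $\tilde A^0\sim\mu^\alpha$ both present. Using the common services, define $A^k,\tilde A^k,\W_{n,k},\tilde\W_{n,k}$ via \eqref{def:W} and \eqref{eq:ne-induction:q}. Both coupled sequences are jointly stationary and ergodic under the $n$-shift, and by \eqref{eq:closed-loop:q} and the fact that $E[\A_{n,k}]=E[\tilde\A_{n,k}]=\alpha$ for all $k$, the partial sums $\sum_{i=1}^n(\A_{i,k}-\tilde \A_{i,k})$ grow sublinearly in $n$ for every $k$.

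Second I would exploit the monotonicity built into the Lindley recursion: the map $(A,S)\mapsto(W,A')$ given by \eqref{def:W}--\eqref{def:A} is coordinatewise monotone (decreasing in $A$ for $W$, and structured for $A'$). This allows one to dominate $|\A_{n,k}-\tilde \A_{n,k}|$ by auxiliary processes that, by the strict stability $E[\S_{0,0}]=\Ew<\alpha$, admit infinitely many ``renewal epochs'' $n$ at which $\W_{n,k}=\tilde\W_{n,k}=0$. At such epochs the two coupled systems partially synchronize. The positive density of these epochs in the stationary regime follows from Lemma \ref{t:lm-W_n} and a standard renewal argument for GI/G/1 queues in heavy traffic excluded.

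Third I would promote synchronization to weak convergence of the marginals. For a bounded Lipschitz test function $f$, writing $\Delta_k=\E[f(\A_{0,k})-f(\tilde\A_{0,k})]$ and using the stationarity of $\tilde A^k$ together with the ergodic theorem applied to the coupled shift, one shows $\Delta_k\to0$ as $k\to\infty$ by decomposing over renewal blocks and bounding contributions using the conservation of total inter-arrival mass. This gives $A^k\Rightarrow\mu^\alpha$. Uniqueness follows: if $\mu_1,\mu_2$ are both ergodic fixed points with mean $\alpha$, starting the iteration from $\mu_1$ yields $\mu_1$ at every step by the fixed-point property, but also converges weakly to $\mu_2$ by the attractiveness just proved, forcing $\mu_1=\mu_2$.

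The main obstacle is step three: rigorously turning the pointwise synchronization at renewal epochs into convergence of the full stationary marginal distribution. The queueing operator $\Phi$ is not a contraction in any obvious metric on $\M_e^\alpha(\R^\Z)$, so one cannot use a Banach-style argument. The delicate point is to combine the finite-window mean-preservation from \eqref{eq:closed-loop:q} with the almost-sure occurrence of joint idle times to propagate coupling from a single coordinate to the whole shift-invariant distribution, which is precisely where the ergodicity hypothesis on $A^0$ enters in an essential way. This is the technical core of \cite{Pra-03} and is the reason we have simply quoted the theorem rather than reproved it here.
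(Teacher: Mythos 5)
The paper does not prove this theorem; it quotes it verbatim from \cite[Theorem~1]{Pra-03}, so there is no paper-internal proof to compare your proposal against. You explicitly (and correctly) acknowledge that the technical core is exactly the content of \cite{Pra-03} and that the theorem is being quoted rather than reproved, which matches the paper's treatment.

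If the sketch were to be pushed to a full argument, the weak point is step two. The claim that there are infinitely many $n$ at which $\W_{n,k}=\tilde\W_{n,k}=0$ is a joint idle-time statement that does not follow from Lemma~\ref{t:lm-W_n} (which controls each marginal waiting time separately and says nothing about simultaneous idleness under the coupling). Even granting joint idle times, they do not force synchronization: at a common idle epoch the two departure processes still differ because $A^0$ and $\tilde A^0$ differ, and a common service array alone does not identify the two systems. The monotonicity you invoke is real --- pointwise $A\le\tilde A$ is propagated to $\W\ge\tilde\W$ and to departures, as used in Appendix~\ref{app:q} of the paper when coupling the constant-rate inputs --- but a generic ergodic $A^0$ of mean $\alpha$ is not pointwise comparable to a stationary draw from $\mu^\alpha$, so the ordered-coupling sandwich does not directly apply to the general statement. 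You correctly flag step three as the hard part; that is precisely what \cite{Pra-03} supplies, and citing the theorem, as the paper does and as you ultimately do, is the appropriate conclusion.
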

 
		\begin{theorem}   \text{\cite[Thm.~5.1 and 6.4 and Lm.\ 6.3(a)]{Mai-Pra-03}} 
			The set $\cA$ is closed and nonempty, $\inf\cA=\Ew$, and $\sup\cA=\infty$. If $\alpha<\beta$ are both in $\cA$ then $\mu^\alpha\le\mu^\beta$ in the usual sense of  
	  stochastic ordering.  
		\end{theorem}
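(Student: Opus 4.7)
The plan is to build the fixed points by iterating the queueing operator $\Phi$ starting from tractable input distributions and extracting limits, then to obtain monotonicity from a coupling. All arguments will exploit two general features of $\Phi$: (a) the ``conservation law'' \eqref{eq:closed-loop:q}, which guarantees that the mean inter-arrival time is preserved by $\Phi$, so that $\Phi$ maps $\M_e^\alpha(\R^\Z)$ into itself for each $\alpha>\Ew$; and (b) a monotonicity property, namely that if $\mu\le\mu'$ stochastically then $\Phi(\mu)\le\Phi(\mu')$ stochastically. Property (b) follows by the standard coupling: run both systems with a common service sequence $\{\S_{n,0}\}$ and with coupled inter-arrival sequences $\A_{n,0}\le\A'_{n,0}$; then inductively $\W_{n,0}\ge\W'_{n,0}$ via \eqref{def:W}, and from \eqref{def:A} one checks $\A_{n,1}\le\A'_{n,1}$.

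To produce fixed points, fix $\alpha>\Ew$ and let $\mu_\alpha$ denote the i.i.d.\ product law on $\R^\Z$ whose marginal is, say, exponential (or any chosen stable law) with mean $\alpha$. Set $\mu^{(n)}_\alpha=\Phi^n(\mu_\alpha)$. Using the conservation law and Lemma \ref{t:lm-W_n}, each $\mu^{(n)}_\alpha$ has marginal mean $\alpha$ and the family is tight on $\R^\Z$ in the product topology. Averaging the Cesaro means $\bar\mu^{(N)}_\alpha=\frac1N\sum_{n=0}^{N-1}\mu^{(n)}_\alpha$ and extracting a weakly convergent subsequence produces a translation-invariant probability measure $\nu$ with mean $\alpha$ that is $\Phi$-invariant. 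An ergodic decomposition and the fact that $\Phi$ commutes with the shift then yield an ergodic fixed point in $\M_e^{\alpha'}(\R^\Z)$ for some $\alpha'$ with mean equal to $\alpha$ (the mean is preserved by ergodic decomposition together with the conservation law). This puts $\alpha\in\cA$ for every $\alpha>\Ew$, hence $\cA\supseteq(\Ew,\infty)$; in particular $\cA$ is nonempty, $\sup\cA=\infty$, and $\inf\cA\le\Ew$. The reverse inequality $\inf\cA\ge\Ew$ is forced by the stability bound \eqref{EA>ES}, so $\inf\cA=\Ew$.

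For closedness, suppose $\alpha_n\in\cA$ with $\alpha_n\to\alpha\in(\Ew,\infty)$. The family $\{\mu^{\alpha_n}\}$ is tight because the marginals have bounded means and nonnegative support. Any weak limit $\nu$ has marginal mean $\alpha$. One checks that $\Phi$ is weakly continuous on the relevant tight subset (both \eqref{def:W} and \eqref{def:A} are continuous functionals of the input, and uniform integrability follows from the preserved mean plus the i.i.d.\ service input), so $\Phi(\nu)=\nu$. Ergodicity of the limit is obtained via the same ergodic decomposition argument as above, combined with Theorem \ref{th:Pra-03} applied to each extremal component to force it into $\M_e^\alpha(\R^\Z)$. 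Hence $\alpha\in\cA$.

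Finally, for stochastic monotonicity, take $\alpha<\beta$ in $\cA$. Couple $\mu^\alpha$ and $\mu^\beta$ initially via an arbitrary joint distribution with the correct marginals, and run two queueing systems with a common service sequence. After one application of $\Phi$, monotonicity property (b) is not directly available because the initial coupling need not be monotone. The trick is to use the convergence assertion in Theorem \ref{th:Pra-03}: start both systems instead from \emph{comparable} i.i.d.\ inputs with means $\alpha$ and $\beta$ (e.g.\ construct them by a common uniform via inverse CDFs so that $\A^{0,\alpha}_n\le\A^{0,\beta}_n$ pointwise), iterate $\Phi$ a common number of times with shared services, propagate the inequality through \eqref{def:W}--\eqref{def:A} using (b) at each step, and pass to the weak limit. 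Uniqueness in Theorem \ref{th:Pra-03} identifies these weak limits as $\mu^\alpha$ and $\mu^\beta$, while the inequality is preserved under weak limits of monotone couplings. The resulting coupling realizes $\A^\alpha_n\le\A^\beta_n$ almost surely, which is the desired stochastic domination.

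The main obstacle is the ergodicity step: showing that the weak/subsequential limits produced above actually lie in $\M_e^\alpha(\R^\Z)$ rather than merely being translation-invariant. This is where one must invoke the uniqueness half of Theorem \ref{th:Pra-03} (applied to extremal components of the ergodic decomposition) to collapse the decomposition to a single ergodic measure with the correct mean; managing this circularity carefully is the delicate point and is precisely the content of the Mairesse--Prabhakar arguments being quoted.
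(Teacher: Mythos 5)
The statement you are proving is a quoted result from Mairesse and Prabhakar; the paper gives no proof of its own, only the citation. So there is no in-paper argument to compare with. Judged on its own merits, your proposal contains a genuine and consequential gap.

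The central problem is your claim that iterating $\Phi$ from an i.i.d.\ input, taking Ces\`aro averages, and applying ergodic decomposition produces an \emph{ergodic} fixed point of mean exactly $\alpha$, for every $\alpha>\Ew$. This would give $\cA=(\Ew,\infty)$, which is strictly stronger than the quoted theorem and is not true in general; indeed the paper itself immediately goes on to construct $\mu^\alpha$ for $\alpha\in(\Ew,\infty)\cap\cA^c$ as a nontrivial convex combination $t\mu^{\amin}+(1-t)\mu^{\amax}$, and Lemma~\ref{q:lm5}\eqref{q:lm5:a} characterizes the gaps of $\cA$ as exactly the linear segments of $\f$. The step that fails is the parenthetical ``the mean is preserved by ergodic decomposition together with the conservation law.'' The Ces\`aro limit $\nu$ is translation-invariant and $\Phi$-invariant with \emph{average} mean $\alpha$, but its translation-ergodic components $\nu_\omega$ can have varying marginal means $\alpha_\omega$ whose $\rho$-average is $\alpha$; the conservation law constrains $\Phi$, not the decomposition. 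Moreover, even granting that $\Phi$ is affine and maps ergodic measures to ergodic measures (the service sequence is i.i.d.), the identity $\Phi(\nu)=\nu$ only forces $\Phi$ to permute the ergodic components in law; it does not force $\Phi(\nu_\omega)=\nu_\omega$ componentwise, so you cannot conclude that some $\nu_\omega$ of mean $\alpha$ is itself a fixed point. Your closedness argument inherits the same gap. This is precisely the delicacy you flag in your last paragraph, but flagging it does not discharge it, and the assertion $\cA\supseteq(\Ew,\infty)$ that your argument (if it worked) would produce is not the content of the theorem.

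By contrast, your argument for the monotone coupling $\mu^\alpha\le\mu^\beta$ is sound in outline and is essentially the mechanism the paper uses later, in the proof of Theorem~\ref{th:cocycles}: couple the initial $A$-processes pointwise via a common uniform, propagate $\A^\alpha_{n,k}\le\A^\beta_{n,k}$ and $\W^\alpha_{n,k}\ge\W^\beta_{n,k}$ through \eqref{def:Wk} and \eqref{eq:ne-induction:q} with shared services, then pass to the weak limit supplied by Theorem~\ref{th:Pra-03}; stochastic domination survives weak limits of monotone couplings. If you want to salvage the existence and closedness parts, you need the genuine Mairesse--Prabhakar arguments (their Theorems~5.1 and~6.4 and Lemma~6.3(a)), which do not go through naive ergodic decomposition; a correct replacement would establish $\inf\cA=\Ew$ and $\sup\cA=\infty$ by exhibiting suitable sequences of ergodic fixed points and using monotone subsequential limits, and closedness by a compactness argument that preserves ergodicity directly rather than trying to recover it after the fact from a mixture.
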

		
				\begin{lemma} \text{\cite[Lm.\ 6.3(b)]{Mai-Pra-03}}  
		\label{lm:EJ=f}
			Let $\alpha\in\cA$,  $A^0\sim\mu^\alpha$, and $\{S_{n,k}\}\sim\P$ independent of $A^0$.  
			Define $\W_{n,0}$ via \eqref{def:W}. 
			Then 
\be\label{q:500} E^{\mu^\alpha\otimes\P}[\W_{0,0}+\S_{0,0}]=\f(\alpha).\ee
		\end{lemma}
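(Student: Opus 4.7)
The plan is to propagate the fixed-point arrival $\mu^\alpha$ through $K$ tandem stations, use that customer $0$'s departure time $D_{0, K}$ admits both a queueing and a last-passage description, and extract $f(\alpha)$ from the LPP shape theorem. Because $\mu^\alpha$ is a fixed point of $\Phi$, each inter-arrival process $A^k = \{A_{n, k}\}_{n \in \Z}$ has law $\mu^\alpha$. Moreover, $A^k$ is a measurable function of $A^0$ and $\{S_{n, \ell}: \ell < k\}$ and is therefore independent of $\{S_{n, k}\}_n$. By the Lindley formula \eqref{def:Wk} the sojourn time $J_{n, k} := W_{n, k} + S_{n, k}$ at station $k$ is a fixed measurable functional of $(A^k, \{S_{n, k}\}_n)$, so $J_{0, k} \stackrel{d}{=} J_{0, 0}$ for every $k \ge 0$. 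Hence $c := E[J_{0, 0}]$ is station-independent, and its finiteness follows from the stability gap $\alpha > \Ew$ together with the standing moment hypothesis on $\w_0$.

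Set $T_0 = 0$ and $T_{-m} = -\sum_{i=-m}^{-1} A_{i, 0}$. Then $D_{0, K}$ admits the two expressions
\begin{align*}
D_{0, K} \;=\; \sum_{k=0}^{K} J_{0, k} \;=\; \sup_{m \ge 0}\Bigl[\,T_{-m} + \tilde G_{(-m, 0),\,(0, K)}\,\Bigr],
\end{align*}
obtained respectively by tracking customer $0$ through the stations and by iterating the max-plus recursion $D_{n, k} = \max(D_{n-1, k}, D_{n, k-1}) + S_{n, k}$ with boundary $D_{n, -1} = T_n$. Here $\tilde G_{(-m, 0),(0, K)}$ is the directed LPP on $[-m, 0] \times [0, K]$ with weights $\{S_{i, \ell}\}$ and has the same law as $G_{(0, 0),(m, K)}$. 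Taking expectations in the first expression gives $E[D_{0, K}] = (K+1)c$, so the claim reduces to identifying $\lim_K E[D_{0, K}]/K$ with $f(\alpha)$.

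The shape theorem \eqref{eq:g:p2p} and the ergodic theorem for $\mu^\alpha$ yield, almost surely for each fixed $u > 0$,
\begin{align*}
K^{-1}\,\tilde G_{(-\lfloor uK \rfloor, 0),\,(0, K)} \longrightarrow \gpp(u, 1) = \gppa(u), \qquad K^{-1}\sum_{i=-\lfloor uK \rfloor}^{-1}A_{i, 0} \longrightarrow u\alpha.
\end{align*}
Because $u \mapsto \gppa(u) - u\alpha$ is concave and, by Lemma \ref{lm:f-properties}, its supremum $f(\alpha)$ is attained at a finite $u^\ast$, the supremum over $m$ in the LPP representation concentrates near $m \sim u^\ast K$, giving $K^{-1} D_{0, K} \to f(\alpha)$ almost surely. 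Fatou's lemma applied to $E[D_{0, K}]/K = (K+1)c/K$ then yields the easier direction $c \ge f(\alpha)$.

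The reverse inequality is the main obstacle because it demands $L^1$ control of $K^{-1} D_{0, K}$. Fekete's lemma combined with superadditivity of $G$ gives $E[G_{(0, 0),(m, K)}] \le \gpp(m, K) = K\gppa(m/K)$, and hence $\sup_m E[X_m]/K \le f(\alpha)$ for $X_m := \tilde G_{(-m, 0),(0, K)} - \sum_{i=-m}^{-1} A_{i, 0}$; but what one actually needs is the stronger $E[\sup_m X_m]/K \le f(\alpha) + o(1)$. Splitting $m \in [0, UK]$ from $m > UK$ and using that for $u > U$ large the drift $\gppa(u) - u\alpha < 0$ pushes $X_m$ deterministically to $-\infty$ in the mean, combined with concentration estimates for LPP and for sums of $\mu^\alpha$-distributed variables coming from the $\pp$th-moment hypothesis ($\pp > 2$), controls $E[\sup_m X_m]$ to $Kf(\alpha) + o(K)$. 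This establishes $c \le f(\alpha)$ and completes the identification $E[J_{0, 0}] = f(\alpha)$.
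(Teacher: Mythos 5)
Note first that the paper does not prove Lemma \ref{lm:EJ=f}; it is quoted verbatim from \cite[Lm.~6.3(b)]{Mai-Pra-03}. So there is no in-paper proof to compare against, and your attempt is a genuine reconstruction. The overall strategy---express the departure time $D_{0,K}$ of customer $0$ from station $K$ both as $\sum_{k=0}^{K}J_{0,k}$ and as an LPP quantity with boundary given by the $\mu^\alpha$-arrivals, then feed the shape theorem into the LPP expression---is the right one and is essentially how this identity is obtained in the queueing literature.

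There is, however, a concrete gap in your paragraph handling the reverse inequality. You invoke ``concentration estimates for LPP \emph{and for sums of $\mu^\alpha$-distributed variables} coming from the $\pp$th-moment hypothesis ($\pp>2$).'' The moment hypothesis in \eqref{2d-ass} is on the service distribution $\w_0$ only. The fixed point $\mu^\alpha$ is an abstractly constructed ergodic measure whose sole known moment is the first, $E^{\mu^\alpha}[A_{0,0}]=\alpha$; nothing in the construction guarantees that $A_{0,0}$ has a finite second moment, so the concentration bounds you want for $\sum_{i=-m}^{-1}A_{i,0}$ are not available. Consequently the union-bound/second-moment control of $E[\sup_m X_m]$ you sketch does not go through as stated. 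The same issue is hiding in your opening remark that ``finiteness [of $c=E[J_{0,0}]$] follows from the stability gap $\alpha>\Ew$ together with the standing moment hypothesis''; for a GI/G/1 queue, $\alpha>\Ew$ alone gives a.s.\ finiteness of $W_{0,0}$ but does not give $E[W_{0,0}]<\infty$ without further moment information on the inter-arrival law.

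Fortunately the difficult $L^1$ step can be bypassed entirely. Since $\mu^\alpha$ is a fixed point of $\Phi$, the process $k\mapsto(A^k,S^k)$ is stationary, so $\{J_{0,k}\}_{k\ge0}$ is a stationary sequence. Birkhoff's theorem applied to the truncations $J^M_{0,k}=J_{0,k}\wedge M$ gives $K^{-1}\sum_{k\le K}J^M_{0,k}\to E[J^M_{0,0}\mid\mathcal I]$ a.s., and since this Ces\`aro average is dominated by $K^{-1}D_{0,K}\to f(\alpha)$ a.s.\ (your LPP argument, which needs only the shape theorem and the SLLN for the first moment of $\mu^\alpha$), we get $E[J^M_{0,0}]=E\bigl[E[J^M_{0,0}\mid\mathcal I]\bigr]\le f(\alpha)$ for every $M$, hence $E[J_{0,0}]\le f(\alpha)$ by monotone convergence. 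The matching lower bound $E[J_{0,0}]\ge f(\alpha)$ is exactly your Fatou argument, using $D_{0,K}\ge0$. This removes the need for any concentration input or any moment hypothesis on $\mu^\alpha$ beyond the first, and also closes the a priori finiteness issue.
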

          
Suppose $\alpha\in(\Ew,\infty)\cap\cA^c$.  Let 
		\[
			\amin=\sup\bigl( \cA\cap(\Ew,\alpha]\bigr) \in\cA\quad\text{and}\quad\amax=\inf\bigl(\cA\cap[\alpha,\infty)\bigr)\in\cA, 
		\]
$t=(\amax-\alpha)/(\amax-\amin)$ and   $\mu^\alpha=t\mu^{\amin}+(1-t)\mu^{\amax}$. 
	Now $\mu^{\alpha}$ is a mean $\alpha$ fixed point of $\Phi$.   This fixed point is again attractive, in the following sense. 
 
		\begin{theorem} \text{\cite[Prop.\ 6.5]{Mai-Pra-03}} 
		\label{th:attractive}
			Let $\alpha>\Ew$.
			Let $\{\A_{n,0}:n\in\Z\}$ be ergodic with mean 
			$E[\A_{0,0}]=\alpha$ and $\{\S_{n,k}:n\in\Z,k\in\Z_+\}$  
			i.i.d.\ with distribution $\P$ and independent of the $\A$-process.
			Let $\{\A_{n,k}:n\in\Z,k\in\N\}$ be defined via inductions 
			\eqref{def:W} and \eqref{eq:ne-induction:q}.
			The Ces\`aro mean of the distributions of $\{\A_{n,k}:n\in\Z\}$ converges weakly to $\mu^\alpha$.
		\end{theorem}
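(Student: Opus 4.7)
\bigskip

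\noindent\textbf{Proof plan for Theorem \ref{th:attractive}.}

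The plan is to treat the two cases $\alpha\in\cA$ and $\alpha\in(\Ew,\infty)\setminus\cA$ separately. The first case is essentially an immediate corollary of Theorem \ref{th:Pra-03}: that theorem gives weak convergence of the raw distributions of $\{\A_{n,k}:n\in\Z\}$ to $\mu^\alpha$, and Ces\`aro averaging of a weakly convergent sequence converges to the same limit. So the real work is in the second case, where $\alpha\in(\amin,\amax)\subset(\Ew,\infty)$ with $\amin,\amax\in\cA$ the closest fixed-point means on either side, and the target limit is $\mu^\alpha=t\mu^{\amin}+(1-t)\mu^{\amax}$ with $t=(\amax-\alpha)/(\amax-\amin)$.

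For that harder case, the first step is tightness of the orbit $\{\Phi^k(\mu)\}_{k\ge0}$ where $\mu$ is the law of $\{\A_{n,0}\}$. I would obtain this from conservation of the mean: iterating the argument already sketched after \eqref{def:A} (which uses  Lemma \ref{t:lm-W_n} and the conservation law \eqref{eq:closed-loop:q}) shows $E[\A_{0,k}]=\alpha$ for every $k$, so the one-dimensional marginals are tight on $[0,\infty)$; stationarity in $n$ then upgrades this to tightness on $\R^\Z$. The second step is to extract a subsequential weak limit of the Ces\`aro averages $N^{-1}\sum_{k=0}^{N-1}\Phi^k(\mu)$ and observe that any such limit $\nu$ must be $n$-stationary (inherited from the $\A^k$'s) and, by continuity of $\Phi$ on inputs with fixed mean together with the standard Krylov--Bogolyubov argument, a fixed point of $\Phi$ with $E^\nu[\A_{0,0}]=\alpha$.

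The main obstacle, and where I would spend the most care, is identifying every such $n$-stationary fixed point $\nu$ as the prescribed mixture $t\mu^{\amin}+(1-t)\mu^{\amax}$. Decompose $\nu$ into its ergodic components under the $n$-shift; each component is itself a fixed point of $\Phi$ because $\Phi$ commutes with the $n$-shift and preserves the ergodic decomposition, and each component is translation-ergodic. By the definition of $\cA$, the mean of each ergodic component lies in $\cA$, so by Theorem \ref{th:Pra-03} each component equals $\mu^\beta$ for some $\beta\in\cA$. Thus
\begin{equation*}
\nu=\int_{\cA}\mu^\beta\,\rho(d\beta)
\end{equation*}
for some probability measure $\rho$ on $\cA$ with $\int\beta\,\rho(d\beta)=\alpha$. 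To force $\rho$ to be supported on $\{\amin,\amax\}$ I would use the monotonicity of $\Phi$ together with the stochastic ordering $\mu^{\amin}\le\mu^{\amax}$: a coupling argument (run three copies starting from $\mu^{\amin}$, $\mu$, and $\mu^{\amax}$ with common service times) combined with the attractivity in Theorem \ref{th:Pra-03} for $\amin,\amax\in\cA$ sandwiches the Ces\`aro limits between $\mu^{\amin}$ and $\mu^{\amax}$, so $\rho$ charges only $\{\amin,\amax\}$. The mean constraint then fixes $\rho(\{\amin\})=t$, $\rho(\{\amax\})=1-t$. Uniqueness of the subsequential limit proves the claimed Ces\`aro convergence. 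The coupling/sandwich step is the place I expect the real technical difficulty, because the initial distribution $\mu$ need not be stochastically comparable to $\mu^{\amin}$ or $\mu^{\amax}$ and the sandwich must be established in an asymptotic (Ces\`aro) sense.
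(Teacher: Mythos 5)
The theorem you're proving is quoted in the paper verbatim from \cite[Prop.\ 6.5]{Mai-Pra-03}; the paper offers no proof, so there is no ``paper's own proof'' to compare against. Your proposal therefore has to stand on its own as a reconstruction, and while the outline is sensible, it contains two genuine gaps (one of which you yourself flag).

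First, the ergodic-decomposition step is subtler than stated. Writing a subsequential Ces\`aro limit $\nu$ as $\nu=\int_E \pi\,\lambda(d\pi)$ over ergodic $n$-stationary $\pi$, the fact that $\Phi$ sends ergodic measures to ergodic measures and commutes with mixtures gives only that the pushforward $\lambda\mapsto\Phi^*_*\lambda$ equals $\lambda$ (by essential uniqueness of the ergodic decomposition). Invariance of $\lambda$ under $\Phi^*$ does \emph{not} imply $\Phi(\pi)=\pi$ for $\lambda$-a.e.\ $\pi$; a priori $\Phi^*$ could permute components nontrivially. You can repair this for mean values $\beta\in\cA$ by disintegrating $\lambda$ over the ($\Phi^*$-invariant) mean and invoking the attractivity in Theorem \ref{th:Pra-03}: if $\lambda_\beta$ is a $\Phi^*$-invariant probability on ergodic measures with mean $\beta$, then for bounded weakly-continuous $f$ one has $\int f\,d\lambda_\beta=\int f\circ(\Phi^*)^k\,d\lambda_\beta\to f(\mu^\beta)$, forcing $\lambda_\beta=\delta_{\mu^\beta}$. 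But you still need to rule out that $\lambda$ charges ergodic components with mean $\beta\notin\cA$, and Theorem \ref{th:Pra-03} gives no attractivity there; nothing in your argument excludes, say, a $\Phi^*$-invariant measure supported on a nontrivial orbit of ergodic processes with mean in $(\Ew,\infty)\setminus\cA$.

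Second, as you note, the sandwich step---showing that the Ces\`aro limits are stochastically bounded between $\mu^{\amin}$ and $\mu^{\amax}$ even though $\mu$ need not be comparable to either---is where the content of \cite[Prop.\ 6.5]{Mai-Pra-03} actually lives. Their route does not pass through an ergodic decomposition at all; it works directly with the monotonicity of the Lindley/queueing map in the arrival process under pathwise coupling with common service times, together with the mean conservation $E[\A_{0,k}]=\alpha$ and attractivity at $\amin,\amax$. Your proposal identifies the right pressure point but does not supply the argument, so as written it does not constitute a proof.
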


\begin{lemma}\label{q:lm5} 
\begin{enumerate}[\ \ \rm(a)]
\item\label{q:lm5:a}  Let $\amin< \amax$ be points in $\cA$ such that  $(\amin, \amax)\subset\cA^c$.  Then $\f$ is linear on the interval  $[\amin, \amax]$.  
\item\label{q:lm5:b}  Let $\xi\in\Diff$, $s=\xi\cdot e_1/\xi\cdot e_2$ and $\alpha=\gppa'(s)$.  Then $\alpha\in\cA$.  
\end{enumerate}
\end{lemma}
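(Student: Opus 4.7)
The plan is to prove (a) by comparing two evaluations of the expected last-passage time in a long tandem of queues, and to derive (b) from (a) by Legendre duality between $\gppa$ and $\f$.

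For (a), fix $\alpha\in(\amin,\amax)$ with $t=(\amax-\alpha)/(\amax-\amin)$ and drive the tandem with any ergodic stationary interarrival process $\{\A_{n,0}\}$ of mean $\alpha$, independent of the i.i.d.\ services $\{\S_{n,k}\}$. Let $T_{N,K}=A_N+\sum_{k=0}^K(\W_{N,k}+\S_{N,k})$ be the departure time of customer $N$ from queue $K$, with $A_N=\sum_{i=0}^{N-1}\A_{i,0}$. The recursion $T_{n,k}=\max(T_{n-1,k},T_{n,k-1})+\S_{n,k}$ together with $T_{n,-1}=A_n$ identifies $T_{N,K}$ with a stationary last-passage time having bulk weights $\S_{n,k}$ and west-boundary values $A_n$. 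I plan to compute $\lim_N N^{-1}\E[T_{N,\fl{sN}}]$ in two ways for a fixed $s>0$ small enough that the Legendre maximum in the shape formula below is attained at an interior point. On one side, the ergodic LLN $A_N/N\to\alpha$ together with the bulk shape theorem gives the almost-sure limit $\max_{\tau\in[0,1]}\{\tau\alpha+\gpp(1-\tau,s)\}$, which a short computation using the duality in Section \ref{sec:duality} evaluates to $\alpha+s\f(\alpha)$; a uniform integrability argument based on $\E[|\w_0|^\pp]<\infty$ upgrades this to an $L^1$ limit. On the other side, stationarity in $n$ gives $\E[T_{N,K}]=N\alpha+\sum_{k=0}^K\E[\W_{0,k}+\S_{0,k}]$, and Theorem \ref{th:attractive} provides Ces\`aro weak convergence of the laws of $\{\A_{n,k}\}_n$ to $\mu^\alpha=t\mu^{\amin}+(1-t)\mu^{\amax}$. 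The same moment bound then yields Ces\`aro convergence of $\E[\W_{0,k}+\S_{0,k}]$ to $E^{\mu^\alpha\otimes\P}[\W_{0,0}+\S_{0,0}]=t\f(\amin)+(1-t)\f(\amax)$, by the mixture structure and Lemma \ref{lm:EJ=f} applied to each ergodic component. Equating the two limits forces $\f(\alpha)=t\f(\amin)+(1-t)\f(\amax)$, so $\f$ is linear on $[\amin,\amax]$.

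For (b), suppose for contradiction that $\alpha=\gppa'(s)\notin\cA$. Since $\cA$ is closed with $\inf\cA=\Ew<\alpha<\infty=\sup\cA$, there exist $\amin,\amax\in\cA$ with $\amin<\alpha<\amax$ and $(\amin,\amax)\cap\cA=\varnothing$, so by (a) $\f$ is linear on $[\amin,\amax]$ with some slope $-s_0$. The duality formula \eqref{eq:f=g-sa} translates this linearity into $[\amin,\amax]\subset[\gppa'(s_0+),\gppa'(s_0-)]$ and $\f(\beta)=\gppa(s_0)-s_0\beta$ for all $\beta\in[\amin,\amax]$. On the other hand, $\xi\in\Diff$ forces $\gppa'(s+)=\gppa'(s-)=\alpha$ and $\f(\alpha)=\gppa(s)-s\alpha$. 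If $s=s_0$, then $\gppa'(s_0-)=\alpha<\amax$ contradicts $\gppa'(s_0-)\ge\amax$. If $s\ne s_0$, equating the two formulas for $\f(\alpha)$ produces $\gppa(s_0)-\gppa(s)=(s_0-s)\alpha$, which combined with $\gppa'(s)=\alpha$ and concavity of $\gppa$ forces $\gppa$ to be linear with slope $\alpha$ on the closed interval between $s$ and $s_0$. Then $\gppa'(s_0-)=\alpha$ if $s<s_0$, contradicting $\gppa'(s_0-)\ge\amax>\alpha$; similarly $\gppa'(s_0+)=\alpha$ if $s>s_0$, contradicting $\gppa'(s_0+)\le\amin<\alpha$. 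Hence $\alpha\in\cA$.

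The main obstacle is the uniform integrability step in (a): one needs $L^1$-control on $T_{N,\fl{sN}}/N$ and on $\W_{0,k}+\S_{0,k}$ uniformly in $k$. These should follow from the moment assumption $\E[|\w_0|^\pp]<\infty$ combined with the elementary estimate $\W_{n,0}\le(\sup_{j\le n-1}\sum_{i=j}^{n-1}(\S_{i,0}-\A_{i,0}))^+$ from \eqref{def:W} and Burkholder-type bounds in the spirit of the proof of Theorem \ref{th:construction}, but carrying these out carefully is where the real technical work lies.
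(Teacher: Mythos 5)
Your part (b) argument is correct and is substantively the same as the paper's: both hinge on the Legendre correspondence $\f(\beta)=\gppa(s_0)-s_0\beta$ on $[\amin,\amax]$ with $[\amin,\amax]\subseteq[\gppa'(s_0+),\gppa'(s_0-)]$, and derive a contradiction with $\gppa'(s)=\alpha$. (You split into $s=s_0$ and $s\ne s_0$; the paper instead shows directly that the slope must be $s_0=s$ and then uses the duality bound $\gppa(t)\le\f(\alpha_1)+\alpha_1 t$ to contradict differentiability — same mechanism, slightly more economical.)

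For part (a) you take a genuinely different route — identifying the sojourn time $T_{N,K}$ with last-passage percolation with boundary $A_n$, computing $\lim N^{-1}\E[T_{N,\fl{sN}}]$ once by the boundary LPP shape formula ($=\alpha+s\f(\alpha)$, and your Legendre computation of this is correct when $s<1/s_\alpha$) and once by stationarity in $n$ plus Ces\`aro convergence to $\mu^\alpha=t\mu^{\amin}+(1-t)\mu^{\amax}$. This is an attractive idea, but the $L^1$/uniform-integrability step you flag at the end is not a routine detail: it is the entire difficulty, and the direction you actually need cannot be obtained cheaply. Note that the passage from the a.s.\ shape limit to the expectation limit via Fatou only gives $\liminf_N N^{-1}\E[T_{N,\fl{sN}}]\ge\alpha+s\f(\alpha)$, and the Ces\`aro-weak convergence of the laws together with lower semicontinuity of the waiting-time functional similarly only gives lower bounds. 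Combining these yields $t\f(\amin)+(1-t)\f(\amax)\ge\f(\alpha)$, which is precisely the convexity inequality for $\f$ and hence carries no new information. To conclude you would need $\limsup_N N^{-1}\E[T_{N,\fl{sN}}]\le\alpha+s\f(\alpha)$, i.e.\ genuine uniform integrability of $T_{N,\fl{sN}}/N$ (equivalently, of the waiting times $\W_{0,k}$ uniformly in $k$), and this is not available: only the means $\E[\A_{0,k}]=\alpha$ are conserved along $k$, not higher moments, so no Kingman-type uniform bound applies.

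The paper's proof is designed exactly to dodge this. It avoids ever asserting an equality of limits of expectations: instead it establishes the single \emph{reverse} inequality $t\f(\amin)+(1-t)\f(\amax)\le\f(\alpha)$ and then invokes convexity of $\f$ (Lemma \ref{lm:f-properties}) to force equality. The reverse inequality is obtained without UI, by truncating: $E(\wt\W_0\wedge c)=\lim n^{-1}\sum_k E(\W_{0,k}\wedge c)$ (weak convergence plus a bounded continuous observable, using the joint Ces\`aro limit $\wt\W=\Psi(\wh\A,\wh\S)$ of \cite[eqn.~(29)]{Mai-Pra-03}), then the concavity of $x\mapsto x\wedge c$ gives $n^{-1}\sum_k E(\W_{0,k}\wedge c)\le E[(n^{-1}\sum_k\W_{0,k})\wedge c]$, and the almost-sure LLN $n^{-1}\sum_k\W_{0,k}\to M(\alpha)$ of \cite[Thm.~4.1]{Mai-Pra-03} plus dominated convergence gives $\to M(\alpha)\wedge c$; letting $c\to\infty$ closes the inequality. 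If you want to pursue your LPP route, you should adopt the same structure: get an exact identity on the $\mu^\alpha$ side from the mixture decomposition and Lemma \ref{lm:EJ=f}, get a one-sided inequality on the dynamical side via truncation and Jensen, and let convexity of $\f$ do the rest — then the boundary LPP shape theorem is no longer needed at all, and you are back to the paper's argument.
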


\begin{proof}  Part \eqref{q:lm5:a}.   Let $0<t<1$ and  $\alpha= t\amin+(1-t)\amax$.  In the notation of \cite{Mai-Pra-03}, consider a sequence of tandem queues
$(\A^k, \S^k, \W^k, \A^{k+1})_{k\in\Z_+}$ where the initial arrival process  $\A^0=(A_{n,0})_{n\in\Z}$ is ergodic with mean $E(A_{n,0})=\alpha$,  the service times   $\{S^k\}_{k\in\Z_+}=\{\S_{n,k}\}_{n\in\Z, k\in\Z_+}$ are independent of $\A^0$ and i.i.d.\ $\P$-distributed, and the remaining variables are defined iteratively.    Let $(\wh\A, \wh\S, \wt\W, \wt\D)$ denote a weak limit point of the Ces\`aro averages of the distributions of $(\A^k, \S^k, \W^k, \A^{k+1})$.   Then, as shown in \cite[eqn.~(29)]{Mai-Pra-03} in the course of the proof of their Theorem 5.1,   $\wt\W=\Psi(\wh\A, \wh\S)$ where the mapping  $\Psi$ encodes definition \eqref{def:W}.   By Theorem \ref{th:attractive}   \cite[Prop.~6.5]{Mai-Pra-03}    the distribution of $\wh\A$ is  $t\mu^{\amin}+(1-t)\mu^{\amax}$.   By \cite[Theorem 4.1]{Mai-Pra-03}, 
\be\label{MP-lln5} n^{-1}\sum_{k=0}^{n-1} \W_{0,k} \to M(\alpha)\equiv  \f(\alpha)-\Ew
\qquad\text{almost surely.}\ee
  Combine these facts as follows.   First  
\begin{align*}
E(\wt\W_0)&=E[\Psi(\wh\A, \wh\S)_0]= t E^{\mu^{\amin}\otimes\P}[\Psi(\wh\A, \wh\S)_0] + (1-t) E^{\mu^{\amax}\otimes\P}[\Psi(\wh\A, \wh\S)_0]\\
  &=   tM(\amin)+(1-t)M(\amax)
\end{align*}
where the last equality comes from \cite[Lemma 6.3(b)]{Mai-Pra-03} restated as Lemma \ref{lm:EJ=f} above.    The weak limit, combined with the law of large numbers \eqref{MP-lln5}
and dominated convergence, gives, for any $c<\infty$ and along a subsequence, 
\begin{align*}
E(\wt\W_0\wedge c)&= \lim_{n\to\infty}  n^{-1}\sum_{k=0}^{n-1} E(\W_{0,k}\wedge c) \le  \lim_{n\to\infty}  E\Bigl[\,\Bigl(  n^{-1}\sum_{k=0}^{n-1} \W_{0,k}\Bigr) \wedge c\Bigr] = M(\alpha)\wedge c\\
&\le M(\alpha).  
\end{align*}  
Letting $c\nearrow\infty$ gives 
\[    tM(\amin)+(1-t)M(\amax) \le  M(\alpha).   \]
Since $M$ is convex and $\f$ differs from $M$ by a constant, this implies 	
$\f(\alpha)=  t\f(\amin)+(1-t)\f(\amax)$ and completes the proof of part \eqref{q:lm5:a}. 	

\smallskip

Part \eqref{q:lm5:b}.    To get a contradiction, suppose $\alpha\in\cA^c$.  Then there exist $\amin< \amax$   in $\cA$ such that  $\alpha\in (\amin, \amax)\subset\cA^c$.    By part \eqref{q:lm5:a}  $f$ is linear on  $[\amin, \amax]$.  Basic convex analysis   implies that $\gppa$ has multiple tangent slopes at $s$ and hence cannot be differentiable  at $s$. Here is the argument.  


By \eqref{eq:f=g-sa} the assumption $\gppa'(s)=\alpha$ implies that $\gppa(s)=\f(\alpha)+\alpha s$.   It follows that $s$ must be the slope of $\f$ on $(\amin, \amax)$.  For suppose this slope is $s_1$ and let $\alpha_1\in(\amin, \amax)$.  Then  by the duality  \eqref{eq:ga=f*} 
\begin{align*}
\gppa(s)&\le \f(\alpha_1)+\alpha_1s  = \f(\alpha)+s_1(\alpha_1-\alpha) +\alpha_1 s \\
&=\f(\alpha)+\alpha s + (\alpha_1-\alpha)(s_1-s) 
\end{align*}
which  contradicts $\gppa(s)=\f(\alpha)+\alpha s$  unless $s_1=s$ because we can make $\alpha_1-\alpha$ both positive and negative.  

Since $s$ is the slope of $\f$ on $(\amin, \amax)$,  we have 
$   \f(\alpha)+\alpha s = \f(\alpha_1)+\alpha_1 s$ 
\text{for all  $\alpha_1\in[\amin, \amax]$.}   
Hence  for any $t\ne s$ and any  $\alpha_1\in[\amin, \amax]$
\begin{align*}  
\gppa(t)-\gppa(s) &\le  \bigl(  \f(\alpha_1)+\alpha_1t\bigr)  -\bigl(  \f(\alpha_1)+\alpha_1s\bigr) = \alpha_1(t-s) 
\end{align*}
which contradicts $\gppa'(s)=\alpha$ because we can choose $\alpha_1$ smaller and larger than $\alpha$.  
\end{proof}

To prepare for the proof of Theorem \ref{th:cocycles}, 
	fix $\alpha>\Ew$. Let $\{\A_{n,0}:n\in\Z\}$ 
	have the $\Phi$-invariant distribution $\mu^\alpha$, let $\{\S_{n,k}:n\in\Z,k\in\Z_+\}$ 
	be i.i.d.\ with distribution $\P$, let the two collections be independent, 
	and define $\{\W_{n,k},\A_{n,k+1}:n\in\Z,k\in\Z_+\}$ using \eqref{def:W} 
	and \eqref{eq:ne-induction:q}.
	Because $\Phi(\mu^\alpha)=\mu^\alpha$, process 
	$\{\A_{n,k},\S_{n,k},\W_{n,k}:n\in\Z, k\in\Z_+\}$ is stationary, both in $n$ and in $k$.
	This  allows us to extend the process to entire lattice $\Z^2$ and thereby define the $\Z^2$-indexed stationary process $(\A, \S, \W)=\{\A_{n,k},\S_{n,k},\W_{n,k}:n,k\in\Z\}$.   
Define also another 	$\Z^2$-indexed stationary process $(\Ap, \Sp, \Wp)$ by  
\[ ( \Ap_{i,j},\Sp_{i,j}, \Wp_{i,j}) = 
(\W_{j-1,i+1}+\S_{j-1,i+1} , \, \S_{j,i}, \, \A_{j-1,i+1}-\S_{j,i} ).  \]

\begin{lemma}\label{q:erg-lm} 
Suppose  $\alpha\in\cA$.  Then     the process $(\A, \S, \W)$ is ergodic   under   translation $T_{e_1}$,  and also ergodic under  $T_{e_2}$.   Furthermore,  $\f(\alpha)\in\cA$.   $(\Ap, \Sp, \Wp)$ is a stationary queueing system where $\{\Ap_{n,0}:n\in\Z\}$ has distribution  $\mu^{\f(\alpha)}$, and is also ergodic under both $T_{e_1}$  and    $T_{e_2}$. 
\end{lemma}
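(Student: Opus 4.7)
The proof decomposes into three stages: first, establishing $T_{e_1}$- and $T_{e_2}$-ergodicity of the unprimed process; second, verifying that the primed variables form a stationary queueing system; and third, identifying the primed input distribution as $\mu^{f(\alpha)}$ via the uniqueness in Theorem \ref{th:Pra-03}.

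For $T_{e_2}$-ergodicity, I first restrict to the half-plane $k\ge 0$, where $(\A,\S,\W)$ is a deterministic function of the inputs $(A^0, (S^k)_{k\ge 0})$ via \eqref{def:Wk}--\eqref{eq:ne-induction:q}. Under $T_{e_2}$ these inputs form the product of the $T_{e_2}$-ergodic measure $\mu^\alpha$ with the i.i.d.\ array $\P^{\otimes\Z\times\Z_+}$, which is ergodic, and factor maps preserve ergodicity. To promote this to the full $\Z^2$-process, any $T_{e_2}$-invariant event is $L^1$-approximable by cylinders on finitely many coordinates, and by $T_{e_1}$-stationarity those cylinders may be translated into $k\ge 0$ without altering their probabilities, reducing to the half-plane case.

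For $T_{e_1}$-ergodicity---the main obstacle---I view $(\A^k)_{k\in\Z}$ as a stationary Markov chain on $\R^\Z$ with transition $\Phi$ and invariant distribution $\mu^\alpha$; any $T_{e_1}$-invariant event for the full process lies in the tail $\sigma$-algebra of this chain. Theorem \ref{th:Pra-03} furnishes weak convergence $\Phi^n\nu\Rightarrow\mu^\alpha$ from every $T_{e_2}$-ergodic initial distribution $\nu$ of mean $\alpha$, i.e., uniqueness of the ergodic $\Phi$-fixed point at that mean. My plan is to upgrade this to tail triviality by a coupling argument exploiting monotonicity of the queueing dynamics in the arrival sequence: two chains driven by shared service times but started from different arrival distributions can be made to coalesce, yielding the required zero-one law on $T_{e_1}$-invariant events.

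In the third stage, substituting the definitions into \eqref{eq:ne-induction:q} shows that the primed Lindley equation coincides with the unprimed arrival-update rule \eqref{def:A} evaluated at $(n,k)=(j-1,i+1)$, while the primed arrival update reduces to the unprimed Lindley equation, so $(\Ap,\Sp,\Wp)$ is a queueing system. The service times $\Sp_{i,j}=\S_{j,i}$ are i.i.d.\ $\P$-distributed; a dependency analysis using \eqref{def:Wk}--\eqref{eq:ne-induction:q} shows that $(\Ap_{n,0})_n$ depends only on unprimed data with first index $\le -1$, hence is independent of the primed service times at primed levels $\ge 0$, which use unprimed data with first index $\ge 0$. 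Lemma \ref{lm:EJ=f} combined with $T_{e_1}$-stationarity gives $E[\Ap_{n,0}]=f(\alpha)$. Because the primed axes correspond to the unprimed axes with their roles swapped, Stages 1 and 2 transfer to the primed system and deliver ergodicity under both translations. Therefore the law of $(\Ap_{n,0})_n$ is a $T_{e_2}$-ergodic $\Phi$-fixed point of mean $f(\alpha)$; the uniqueness in Theorem \ref{th:Pra-03} then forces $f(\alpha)\in\cA$ and identifies this distribution with $\mu^{f(\alpha)}$.
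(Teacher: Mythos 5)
Your $T_{e_1}$/$T_{e_2}$ labels are swapped relative to the paper's convention (the paper's $T_{e_1}$ is the $n$-shift, the one under which $\mu^\alpha$-ergodicity lives); I read your proposal with the labels corrected, but two genuine gaps remain, and a smaller one.

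The larger gap sits in your Stage~3. Checking that the one-step recursions of \eqref{eq:ne-induction:q} transpose correctly is not enough to conclude that $(\Ap,\Sp,\Wp)$ is a stationary queueing system: you must verify that $\Wp$ is given by the Lindley supremum \eqref{def:Wk}, and iterating $\Wp_{i,j}=(\Wp_{i-1,j}+\Sp_{i-1,j}-\Ap_{i-1,j})^+$ only reaches the full supremum if the boundary term vanishes, i.e.\ if $\Wp_{\ell,j}=0$ for some $\ell$. This is not automatic and requires an argument. The paper proves it by contradiction: assuming $\Wp_{\ell,j}>0$ for all $\ell\le k$ leads, via the identity \eqref{q:705}, to an absurdity once one invokes the deterministic pathwise Ces\`aro limit $\frac1{|n|}\sum_{i=n}^{k-1}\Ap_{ij}\to f(\alpha)$ (Theorem~4.1 of \cite{Mai-Pra-03} combined with Lemma~\ref{lm:EJ=f}) and $\f(\alpha)>\Ew$. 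Your write-up glosses over this step entirely, so the primed queueing structure is not actually established, and everything downstream (Lemma~\ref{lm:EJ=f} applied to the primed system, the identification of $\mu^{f(\alpha)}$, etc.) is floating.

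The second gap is that your $k$-shift ergodicity argument is a plan, not a proof. Weak convergence $\Phi^n\nu\Rightarrow\mu^\alpha$ from Theorem~\ref{th:Pra-03} does not by itself give tail triviality, and monotonicity of the queueing map in the arrival sequence yields a \emph{comparison} of coupled chains, not coalescence; there is no contraction to drive them together. (If pathwise coalescence under a common service environment were available off the shelf, the convergence theory of \cite{Mai-Pra-03,Pra-03} would be substantially simpler.) Also, your tail $\sigma$-algebra must be that of the joint chain $(\A^k,S^k)_k$, not of $(\A^k)_k$ alone, since $k$-shift-invariant events of the full process may depend on the service array. The paper avoids the coupling problem entirely by a different key lemma: once the primed system is known to be a queueing system with the deterministic LLN above, Prop.~4.4 of \cite{Mai-Pra-03} — a stationary $\Phi$-fixed point with a deterministic pathwise Ces\`aro limit is automatically ergodic — delivers ergodicity of $\{\Ap_{n,0}\}$, hence $f(\alpha)\in\cA$ and the law $\mu^{f(\alpha)}$ via the uniqueness in Theorem~\ref{th:Pra-03}; applying the easy $n$-shift ergodicity argument to the primed system then gives the $k$-shift ergodicity of the unprimed one. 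That proposition is the tool you are missing. Finally, a lesser point: in the easy case, your cylinder-translation step from the half-plane $\{k\ge 0\}$ to the full lattice does not quite close, because a translated cylinder approximates the translated event, not $B$ itself; the paper instead uses the conditional expectations $E[\one_B\mid\cG_\ell]$, which are $n$-shift-invariant and hence constant, and converge to $\one_B$ as $\ell\to-\infty$.
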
 

\begin{proof}
 Ergodicity under $T_{e_1}$ 
 follows from the construction. Process
$(\A_{n,0}, \S_{n,k})_{n\in\Z, k\in\Z_+}$  is ergodic under $T_{e_1}$, as a product of an ergodic process and an i.i.d.\ process.   
  The equations developed above define $(\A_{0,k},\S_{0,k},\W_{0,k})_{k\in\Z_+}$ as  a function   of the process 
$(A_{n,0}, S_{n,k})_{n\in\Z, k\in\Z_+}$, and for each $m\in\Z$,     $(\A_{m,k},\S_{m,k},\W_{m,k})_{k\in\Z_+}$  is obtained by applying the same  function to the  $T_{me_1}$-shift  of the process 
$(A_{n,0}, S_{n,k})_{n\in\Z, k\in\Z_+}$.   Thus $(\A_{n,k},\S_{n,k},\W_{n,k})_{n\in\Z, k\in\Z_+}$ is ergodic   under    $T_{e_1}$.  
   
   The same argument gives  $T_{e_1}$-ergodicity of $(\A_{n,k},\S_{n,k},\W_{n,k})_{n\in\Z, k\ge\ell}$ for any $\ell\in\Z$.   For the final step, let $B$ be a $T_{e_1}$-invariant event  of the full process $\{\A_{n,k},\S_{n,k},\W_{n,k}: n, k\in\Z\}$.    Write $\cG_\ell$ for the $\sigma$-algebra generated by $(\A_{n,k},\S_{n,k},\W_{n,k})_{n\in\Z, k\ge\ell}$.   The conditional expectations 
   $E(\one_B\vert\cG_\ell)$  are $T_{e_1}$-invariant, hence a.s.\ constant by the ergodicity proved thus far.   $E(\one_B\vert\cG_\ell)\to\one_B$  almost surely as $\ell\to-\infty$, and  consequently $\one_B$ is a.s.\ constant.   This completes the proof of ergodicity under $T_{e_1}$.   

To get ergodicity under $T_{e_2}$ we transpose, and that leads us to look at $( \Ap, \Sp,\Wp)$. 
To see that   $( \Ap, \Sp,\Wp)$ is another queueing system with the same i.i.d.\ service time distribution  $\Sp_{i,j}=\S_{j,i}$, we need to check three items.

(i)  Independence of $\{\Ap_{i,\ell}\}_{i\in\Z}$ and $\{\Sp_{i,j}\}_{i\in\Z, j\ge \ell}$, for each $\ell\in\Z$.  This follows from the structure of equations \eqref{eq:ne-induction:q} and the independence of the $\{S_{i,j}\}$.  

(ii)  $\Ap_{i, j+1}=(\Ap_{ij} -\Sp_{ij} -\Wp_{ij} )^+ +\Sp_{i+1, j} $.  This follows from the top equation of \eqref{eq:ne-induction:q}.  

(iii)  The third point needed is
 \be\label{q:700} \Wp_{k+1,j}=\Bigl( \;\sup_{n:\,n\le k}\, \sum_{i=n}^{k} (\Sp_{ij}-\Ap_{ij})  \Bigr)^+. \ee This needs a short argument.  Fix $k,j$.   The middle equation of \eqref{eq:ne-induction:q} gives 
\be\label{q:701}  \Wp_{ij}=(\Wp_{i-1,j} +\Sp_{i-1, j} -\Ap_{i-1, j} )^+  \ee
which  can be iterated to give  
\[  \Wp_{k+1,j}=\Bigl(  \Bigl\{\Wp_{\ell j} +  \sum_{i=\ell}^{k} (\Sp_{ij}-\Ap_{ij})  \Bigr\}  \;\vee\; 
\Bigl\{ \max_{n:\ell<n\le k}  \sum_{i=n}^{k} (\Sp_{ij}-\Ap_{ij})  \Bigr\} \,  \Bigr)^+  
\qquad \text{for $\ell\le k$.}  \]
Thus \eqref{q:700} follows if $\Wp_{\ell j}=0$ for some $\ell\le k$.  Suppose on the contrary that  $\Wp_{ij}>0$ for all $i\le k$.   Apply \eqref{q:701}  to all $\Wp_{ij}$ for $n<i\le k$ and divide by $\abs n$ to get   
\[   \frac{\Wp_{kj}}{\abs n}= \frac{\Wp_{nj}}{\abs n} +  \frac1{\abs n}\sum_{i=n}^{k-1} (\Sp_{ij}-\Ap_{ij})  \]
which is the same as 
 \be\label{q:705}  \frac{\A_{j-1,k+1}}{\abs n} -  \frac{\S_{jk}}{\abs n}  \; = \;  \frac{\A_{j-1,n+1}}{\abs n} -  \frac{\S_{jn}}{\abs n} +  \frac1{\abs n}\sum_{i=n}^{k-1} (\S_{ji}-\W_{j-1,i+1}-\S_{j-1,i+1}) .  \ee
Let $n\to-\infty$.   The i.i.d.\ property of the $\{\S_{ij}\}$ and  Theorem 4.1 of  \cite{Mai-Pra-03},  combined with \eqref{q:500} from above,  give  the a.s.\ limit 
 \be\label{q:707}   \lim_{n\to-\infty}  \frac1{\abs n}\sum_{i=n}^{k-1}  \Ap_{ij} =     \lim_{n\to-\infty}  \frac1{\abs n}\sum_{i=n}^{k-1} (\W_{j-1,i+1}+\S_{j-1,i+1}) =  f(\alpha). \ee
 The four leftmost  terms  of \eqref{q:705}  vanish as $n\to-\infty$ (by stationarity and finite expectations). 
Hence  letting $n\to-\infty$ in \eqref{q:705} leads to 
$0=\Ew-f(\alpha)<0$  (the last inequality from Lemma \ref{lm:f-properties}). 
This contradiction  verifies \eqref{q:700}. 

At this point we have shown  that  the stationary process $\{\Ap_{n,0}\}_{n\in\Z}$ is a fixed point for $\Phi$ with the deterministic  pathwise limit \eqref{q:707}.  By Prop.~4.4 of  \cite{Mai-Pra-03} the process $\{\Ap_{n,0}\}_{n\in\Z}$ must be ergodic. 
%
%
  We have shown that $f(\alpha)\in\cA$.   The   part of the lemma already  proved gives the ergodicity of
the process  
\[  \{ \Ap_{ij},\Sp_{ij}, \Wp_{ij}\} = 
\{ \W_{j-1,i+1}+\S_{j-1,i+1} , \, \S_{j,i}, \, \A_{j-1,i+1}-\S_{j,i} \}  \]
under translations of the index $i$.  Since ergodicity is preserved by mappings that respect translations,  a suitable mapping of the right-hand side  above gives the $T_{e_2}$-ergodicity of $\{ \A_{nk},\S_{nk}, \W_{nk}\}$. 
\end{proof} 

\begin{proof}[Proof of Theorem \ref{th:cocycles}]    
We begin by constructing a convenient 
\label{A_0} 
   countable subset $\cA_0$ of $\cA$.   Let $\Uset_0$ be a dense countable subset of $\ri\Uset$ such that $\Uset_0$ contains all (at most countably many)  points of nondifferentiability of $\gpp$  and  
    a dense countable subset of points of differentiability of $\gpp$.   Then put 
  $\cA_0=\{ \gppa'(s\pm):   (\tfrac{s}{1+s}, \tfrac{1}{1+s})\in\Uset_0\}$.    $\cA_0\subset\cA$ by virtue of Lemma \ref{q:lm5}\eqref{q:lm5:b} and the closedness of $\cA$.  
  

We construct a measure $\bar\mu$  on $\R^{\cA_0\times\Z}$ that couples together 
the distributions $\mu^\alpha$ for $\alpha\in\cA_0$ so  that the coordinates $\{\eta^{\alpha}_n\}_{ \alpha\in\cA_0, n\in\Z}$  satisfy 
$\{\eta^{\alpha}_n\}_{n\in\Z} \sim\mu^\alpha$  
 and 
$\eta^\alpha_{n}\le \eta^\beta_{n} $ $\bar\mu$-a.s.\ for $\alpha<\beta$  in $\cA_0$.   This measure $\bar\mu$ comes from a weak limit of a coupled system of queues.     For each $\alpha\in\cA_0$  let an  initial inter-arrival process be the deterministic constant process $\A^\alpha_{n,0}=\alpha$.  As before use the iterative equations to construct the  variables 
   $(\A^{\alpha, k}, \S^k, \W^{\alpha, k})= \{ \A^\alpha_{n,k}, \S_{n,k}, \W^\alpha_{n,k}:  n\in\Z\}$ for $k\in\Z_+$.  Each process uses the same version of the service times $\{S_{n,k}\}$. 
  According to Theorem \ref{th:Pra-03}  \cite[Thm.~1]{Pra-03},  each $\A^{\alpha, k}$ converges weakly to $\mu^\alpha$.  Let $\bar\mu$ be any weak limit point of the joint distributions of the systems  $\{\A^{\alpha, k}:  \alpha\in\cA_0\}$ as $k\to\infty$.      The inequalities  
 \be \label{q:ineq13}     \A^\alpha_{n,k}\le \A^\beta_{n,k}   \quad\text{and}\quad 
     \W^\alpha_{n,k}\ge \W^\beta_{n,k}  \quad\text{ for $\alpha<\beta$}   
 \ee 
 are true for the $A$-processes at $k=0$ by construction.  They are propagated for all $k$ by equations \eqref{def:Wk} and \eqref{eq:ne-induction:q}.    Consequently $\bar\mu$ has the desired properties.   

Next we  construct a joint  queueing process that couples together the stationary queuing processes for all $\alpha\in\cA_0$.  Let the inputs $(\{\A^{\alpha,0}:\alpha\in\cA_0\}, \{S^k:k\in\Z_+\})$ have distribution $\bar\mu\otimes\P$.  Construct again the variables   $\{\A^\alpha_{n,k},\S_{n,k},  \W^\alpha_{n,k}:  n\in\Z, k\in\Z_+,  \alpha\in\cA_0\}$ with the iterative equations.   Use the stationarity under translations of $k$ to extend the joint distribution to a process indexed by $\Z^2$, denoted by  $\{(\A^\alpha, \S,  \W^\alpha): \alpha\in\cA_0\}= 
\{ \A^\alpha_{n,k}, \S_{n,k}, \W^\alpha_{n,k}:  n,k\in\Z, \alpha\in\cA_0\}$.    
Then  for each $\alpha\in\cA_0$,  $(\A^\alpha, \S,  \W^\alpha)$ is as described in Lemma \ref{q:erg-lm}: stationary and ergodic under both translations, $\{ \A_{n,0}^\alpha: n\in\Z\}\sim\mu^\alpha$,  and  $\{\S_{0,k}+\W_{0,k}^\alpha: k\in\Z\}\sim\mu^{\f(\alpha)}$.     Furthermore, inequalities \eqref{q:ineq13}  continue to hold almost surely  in this coupling.  

Define the following mapping from the coordinates $\{(\A^\alpha, \S, \W^\alpha): \alpha\in\cA_0\}$ to the coordinates   $\{ (\w_x)_{x\in\Z^2},  (\w^{i,\alpha}_x)_{i\in\{1,2\}, \,\alpha\in\cA_0, \, x\in\Z^2}\}$ of the space 
 $\Ombig=\Omega\times \R^{\{1,2\}\times\cA_0\times\Z^2}$:   for $(n,k)\in\Z^2$ and  $\alpha\in\cA_0$, 
 \begin{align}\label{q:map} 
( \om_{n,k}, \, \w^{1,\alpha}_{n,k}, \, \w^{2,\alpha}_{n,k})
=  (\S_{-n,-k},\,\A^\alpha_{-n-1,-k+1},\,\W^\alpha_{-n,-k}+\S_{-n,-k})  . 
 \end{align}
Let $\Pbig$ be the distribution induced on $\Ombig$ by this mapping, from the joint distribution of the coupled stationary queueing processes.  

The probability space $\OBPbig$ of Theorem \ref{th:cocycles}  has now been constructed.  
For $\xi\in\Uset_0$ and $i=1,2$   define the functions   $B^{\xi}_{\pm}(x,x+e_i)$ as the following coordinate projections: 
\be\label{q:B3}     B^{\xi}_{\pm}(\what, x,x+e_i)= \w^{i,\gppa'(s\pm)}_x \qquad\text{for $s=\xi\cdot e_1/\xi\cdot e_2$.}   
\ee
The set $\cA_0$ was constructed to ensure  $\gppa'(s\pm)\in\cA_0$ for each $\xi\in\Uset_0$ so these functions  are well-defined.  

The remainder  of the proof consists of two steps: (a) verification that the processes 	$B^{\xi}_{\pm}(x,x+e_i)$ defined in \eqref{q:B3}  for $\xi\in\Uset_0$ satisfy all the properties required by Theorem \ref{th:cocycles} and  (b) definition of  $B^{\xi}_{\pm}(x,x+e_i)$ for {\sl all} $\xi\in\ri\Uset$  through monotone limits followed by  another verification of the required properties.    

\medskip

In part \eqref{th:cocycles:indep} of Theorem \ref{th:cocycles} the measurability claim comes from the construction.  The stationarity and ergodicity of each process 
$\varphi^{\xi,\pm}_x(\what)=(\w_x, B^{\xi}_{\pm}(x,x+e_1), B^{\xi}_{\pm}(x,x+e_2))$  under both translations $T_{e_1}$ and $T_{e_2}$  are a consequence  of Lemma \ref{q:erg-lm}. 
The independence claim follows from the fact that in the   queuing construction the triple  $(\A^\alpha_{-n-1,-k+1}, \S_{-n,-k}, \W^\alpha_{-n,-k})$ is  
a function of $\{ \A^\alpha_{i,m}, \S_{i,j}: i\le -n,  m\le j\le -k\}$ for any $m<-k$. 

Part \eqref{th:cocycles:exist} of Theorem \ref{th:cocycles} requires the cocycle properties. 
  The conservation law \eqref{eq:closed-loop:q} of the queueing construction implies that, almost surely,   for all $\alpha\in\cA_0$
\[			\W^\alpha_{-n,-k}+\S_{-n,-k}+A^\alpha_{-n-1,-k}=\A^\alpha_{-n-1,-k+1}+\W^\alpha_{-n-1,-k}+\S_{-n-1,-k}.  \]
Via \eqref{q:map} and \eqref{q:B3} this translates into the  $\Pbig$-almost sure property 
\[	B^{\xi}_{\pm}(x,x+e_2) + B^{\xi}_{\pm}(x+e_2,x+e_1+e_2) =  B^{\xi}_{\pm}(x,x+e_1) + B^{\xi}_{\pm}(x+e_1,x+e_1+e_2)	  \]	
for $x=(n,k)$ and all $\xi\in\Uset_0$.  Thus each process $B^{\xi}_{\pm}(x,x+e_i)$  extends to a  cocycle   $\{B^{\xi}_{\pm}(x,y): x,y\in\Z^2\}$.   Stationarity came in the previous paragraph and integrability comes form the next calculation. 

 The tilt vectors satisfy 
\begin{align*}
h_\pm(\xi) &=  -\, \bigl( \,\Ebig[B^{\xi}_{\pm}(0,e_1)]\,, \, \Ebig[B^{\xi}_{\pm}(0, e_2)]\, \bigr) 
=  - \bigl( E[A^{\gppa'(s\pm)}_{0,0}],  E[W^{\gppa'(s\pm)}_{0,0}+S_{0,0}]\,\bigr)\\
&= -\bigl(   \gppa'(s\pm) , \f(\gppa'(s\pm))\bigr)  = -\nabla \gpp(\xi\pm).  
\end{align*} 
The fact that one-sided gradients satisfy the duality \eqref{eq:duality} is basic convex analysis. 

 Via \eqref{q:map} and \eqref{q:B3} the bottom equation of \eqref{eq:ne-induction:q}  translates into the potential-recovery property 
\[\w_x= B^{\xi}_{\pm}(x,x+e_1) \wedge  B^{\xi}_{\pm}(x,x+e_2) \qquad \text{$\Pbig$-a.s.}   \]
Part \eqref{th:cocycles:exist} of Theorem \ref{th:cocycles} has been verified for $B^{\xi}_{\pm}(\what, x,x+e_i)$ for $\xi\in\Uset_0$. 

Part \eqref{th:cocycles:flat} of Theorem \ref{th:cocycles} is the equality of cocycles that share the tilt vector.  This is clear from definition \eqref{q:B3}  because $h_\pm(\xi)$ determines $\gppa'(s\pm)$.  

For the inequalities of part \eqref{th:cocycles:cont},  let  $s=\xi\cdot e_1/\xi\cdot e_2$  and $t=\zeta\cdot e_1/\zeta\cdot e_2$ for $\xi, \zeta\in\Uset_0$.   Then 
 $\xi\cdot e_1<\zeta\cdot e_1$ implies $s<t$. By concavity $\gppa'(s-)\ge \gppa'(s+)\ge \gppa'(t-)$    and the first inequality of \eqref{q:ineq13} gives 
 $A^{\gppa'(s-)}_{n,k}\ge A^{\gppa'(s+)}_{n,k}\ge A^{\gppa'(t-)}_{n,k}$ which translates into the first inequality of \eqref{eq:monotone}.   Assuming   $\xi_n\cdot e_1\searrow\zeta\cdot e_1$, monotonicity gives a.s.\ existence of the limit and 
\be\label{q:Blim}  \lim_{n\to\infty}B^{\xi_n}_{\pm}(x,x+e_1) \le  B^{\zeta}_{+}(x,x+e_1) \qquad\text{$\Pbig$-a.s.}   
\ee
Monotonicity of the family of cocycles gives a bound that justifies dominated convergence, and hence 
\[ \Ebig\bigl[ \, \lim_{n\to\infty}  B^{\xi_n}_{\pm}(x,x+e_1)\bigr] =    \lim_{n\to\infty}  \gppa'(s_n\pm) =  \gppa'(t+)  =   \Ebig\bigl[  B^{\zeta}_{+}(x,x+e_1)\bigr] .
\]
Equality of expectations forces a.s.\ equality in \eqref{q:Blim}.    To complete part \eqref{th:cocycles:cont} of Theorem \ref{th:cocycles}  replace $e_1$ with $e_2$,  take limits from below, and adapt these arguments.  
 
 \smallskip
 
 Theorem \ref{th:cocycles}  has now been verified for  $B^{\xi}_{\pm}(x,x+e_i)$ defined in \eqref{q:B3}  for $\xi\in\Uset_0$.   The next step is to define $B^{\zeta}(x,x+e_i)=B^{\zeta}_{\pm}(x,x+e_i)$ for   $\zeta\in(\ri\Uset)\smallsetminus\Uset_0$. Since all points of nondifferentiability of $\gpp$ were included in $\Uset_0$,  $\zeta$ must be a point of differentiability in which case we define $B^{\zeta}_{\pm}(x,x+e_i)$ as equal and denote the process by $B^{\zeta}(x,x+e_i)$.  
 
 In order to secure a single null set for all $\xi, \zeta\in\ri\Uset$  for the monotonicity in \eqref{eq:monotone},   we  define the remaining cocycles as  one-sided limits.   Hence define 
\be\label{q:Blim3} \begin{aligned}  B^{\zeta}(\what, x,x+e_1) =B^{\zeta}_{\pm}(\what, x,x+e_1) =  \inf_{\xi\in\Uset_0 \,: \,  \xi\cdot e_1<\zeta\cdot e_1}B^{\xi}_{\pm}(\what, x,x+e_1 ) \\   
 B^{\zeta}(\what, x,x+e_2) =B^{\zeta}_{\pm}(\what, x,x+e_2) =  \sup_{\xi\in\Uset_0\,: \,  \xi\cdot e_1<\zeta\cdot e_1}B^{\xi}_{\pm}(\what, x,x+e_2 ).  
\end{aligned} \ee 

Fix an event $\Ombig_0$ of full $\Pbig$-probability on which cocycles   are finite and  inequalities  \eqref{eq:monotone} hold for all $\xi,\zeta\in\Uset_0$.  
Definition \eqref{q:Blim3} extends \eqref{eq:monotone}   to all $\xi, \zeta$.  

  Pick sequences  $\xi'_n$ and $\xi''_n$ 
   in $\Uset_0$  such that $\xi'_n\cdot e_1\nearrow\zeta\cdot e_1$ and $\xi''_n\cdot e_1\searrow\zeta\cdot e_1$.   Let $s'_n= \xi'_n\cdot e_1/  \xi'_n\cdot e_2$ and similarly $s''_n$.   Definition \eqref{q:Blim3} implies that on the event $\Ombig_0$ 
\be\label{q:Blim4} B^{\zeta}(\what, x,x+e_i)   =  \lim_{n\to\infty}B^{\xi'_n}_{\pm}(\what, x,x+e_i)   
\ee 
 and   by  monotonicity and  integrable bounds  the limit also holds in $L^1(\Pbig)$.  

Next we argue that for the price of a $\Pbig$-null set that is specific to $\zeta$, we can also take the limit in \eqref{q:Blim4} from the right,  as $\xi''_n\to\zeta$.   Consider the edge $(x,x+e_1)$ first.   Monotonicity gives  
\[   B^{\zeta}(x,x+e_1) \ge \lim_{n\to\infty}  B^{\xi''_n}_{\pm}(\what, x,x+e_1) \qquad\text{on the event $\Ombig_0$.}  
 \]
Again   monotonicity and integrability of the cocycles  give both almost sure and   $L^1(\Pbig)$ convergence.  From    differentiability of $\gppa$ at $t=\zeta\cdot e_1/\zeta\cdot e_2$ follows 
\begin{align*}
\E[B^{\zeta}(x,x+e_1)]&=\lim_{n\to\infty} \E[B^{\xi'_n}_{\pm}(x,x+e_1)]=\lim_{n\to\infty} \gppa'(s'_n\pm)=\gppa'(t) = \lim_{n\to\infty} \gppa'(s''_n\pm)\\
&=\lim_{n\to\infty} \E[B^{\xi''_n}_{\pm}(x,x+e_1)]=\E[\, \lim_{n\to\infty} B^{\xi''_n}_{\pm}(x,x+e_1)].
\end{align*}
Consequently $B^{\zeta}(x,x+e_1)=\lim_{n\to\infty} B^{\xi''_n}_{\pm}(x,x+e_1)$   $\Pbig$-a.s.   The same argument with reversed inequalities works for $e_2$.  
Now we have  the limit 
\be\label{q:Blim5} B^{\zeta}(x,x+e_i) = \lim_{n\to\infty}B^{\xi_n}_{\pm}(x,x+e_i)   
\qquad\text{for $\Uset_0 \ni\xi_n\to\zeta\in(\ri\Uset)\smallsetminus\Uset_0$}   
\ee
both $\Pbig$-a.s.\ and  $L^1(\Pbig)$, but with a $\Pbig$-null set that can depend on $\zeta$.  

We turn to verifying the remaining claims of  Theorem \ref{th:cocycles} for the newly defined processes $B^{\zeta}(x,x+e_i)$.  

Part \eqref{th:cocycles:indep}.  The measurability claim again comes from the construction.
Stationarity and the independence claim are  preserved by limits but ergodicity is not.   To verify the ergodicity of  $\varphi^{\zeta}_x(\what)=(\w_x, B^{\zeta}(x,x+e_1), B^{\zeta}(x,x+e_2))$  under both translations $T_{e_1}$ and $T_{e_2}$ we return to the queuing picture.  The limit \eqref{q:Blim3} can also be taken in the queueing processes.   First $\cA_0\ni\alpha_n=\gppa'(s_n^-)\nearrow \gppa'(t)=\beta$.    Since $\cA$ is closed,  $\beta\in\cA$.  Hence there is a stationary queueing process $(\A^\beta, \S, \W^\beta)$ that satisfies Lemma \ref{q:erg-lm} and that we can include in the coupling with  the queueing processes indexed by  $\cA_0$.   The coordinatewise  monotone a.s.\ limit $\lim_{n\to\infty}(\A^{\alpha_n}, \S, \W^{\alpha_n})$ must coincide with $(\A^\beta, \S, \W^\beta)$ by the same reasoning used above:  there are inequalities, namely  $\lim_{n\to\infty}\A^{\alpha_n}_{m,k}\le \A^{\beta}_{m,k}$ and $\lim_{n\to\infty}\W^{\alpha_n}_{m,k}\ge \W^{\beta}_{m,k}$,  
 but the expectations agree and hence force agreement.   The continuous mapping \eqref{q:map}  transports the distribution of 
$\{  (\S_{-n,-k},\,\A^\beta_{-n-1,-k+1},\,\W^\beta_{-n,-k}+\S_{-n,-k}) : n,k\in\Z\}$  to the process 
$\{ (\w_x, B^{\zeta}(x,x+e_1),B^{\zeta}(x,x+e_2)): x\in\Z^2\}$,  which   thereby inherits from Lemma \ref{q:erg-lm}  the   ergodicity claimed in part \eqref{th:cocycles:indep} of  Theorem \ref{th:cocycles}.  

The cocycle properties of part \eqref{th:cocycles:exist} are preserved by pointwise limits.   The identities of part \eqref{th:cocycles:flat} continue to hold without null sets if we refine the limit definition \eqref{q:Blim5}  by defining  $B^\zeta(x,x+e_i)=B^\xi(x,x+e_i)$  whenever $\zeta\in\ri\Uset\smallsetminus\Uset_0$, $\xi\in\Uset_0\cap\Diff$, and $\nabla\gpp(\zeta)=\nabla\gpp(\xi)$.    The   inequalities and limits of part \eqref{th:cocycles:cont} were discussed above.  
 \end{proof}

\section{Coalescence of cocycle geodesics}
\label{app:geod}

%

In this section we prove  that two cocycle geodesics defined by the same cocycle and tie-breaking rule coalesce almost surely. 
We consider the following general setting. Probability space $(\cS,\cB, P)$ is equipped with  an additive group of measurable bijections  $\{T_x\}_{x\in\Z^2}$ from $\cS$ onto itself.  In other words,   $T_0$ is the identity map and 
$T_x T_y=T_{x+y}$ for all $x,y\in\Z^2$.    $P$   is invariant under $\{T_x\}_{x\in\Z^2}$. 

There are real-valued random variables  $\{Y_x, B(x,y)\}_{x,y\in\Z^2}$  on $(\cS,\cB, P)$ that satisfy 
	\begin{align}\label{B-prop}
	&\B(\wg,x+z,y+z)=\B(T_z\wg,x,y),\quad \B(\wg,x,y)+\B(\wg,y,z)=\B(\wg,x,z),\\
	&\text{and}\quad \Y_x(\wg)=\B(\wg,x,x+e_1)\wedge \B(\wg,x,x+e_2) 
	\label{recov} 
	\end{align}
	for all $x,y,z\in\Z^2$ and $P$-almost every $\wg\in\cS$. 
In other words, $B$ is a stationary cocycle that recovers the potential $Y_0$. 	
We assume that 
\be\begin{aligned}\label{Y-erg}
&\text{the process $\{Y_x\}_{x\in\Z^2}$ is 
ergodic under the group $\{T_x\}_{x\in\Z^2}$.}  
\end{aligned}\ee
As usual, this means that if a Borel set $H\subset\R^{\Z^2}$ is invariant under all translations by elements of $\Z^2$, then $P\{ (Y_x)_{x \in\Z^2}\in H\}=0$ or $1$.  

We require a downward finite energy condition: for any $K\in\R$
	\begin{align}\label{downward}
	\begin{split}
	&P(\Y_0\le K)>0\\
	&\qquad\Rightarrow\ P\big(\Y_0\le K\,\big|\,\{Y_x\}_{x\ne0},\,\{\B(y,y+e_i)\}_{y\not\le0,\,i\in\{1,2\}} \big)>0\quad\text{almost surely.}
	\end{split}
	\end{align}

We are given a random variable $\t(\wg,0)\in\{e_1,e_2\}$ for breaking ties. Let $\t(x)=\t(\wg,x)=\t(T_x\wg,0)$ for $x\in\Z^2$. 
For $u\in\Z^2$ let $x^u_{0,\infty}$ be the up-right path in $\Z^2$ such that $x_0=u$, $\B(x_k,x_{k+1})=\B(x_k,x_k+e_1)\wedge\B(x_k,x_k+e_2)$ for all $k\ge0$, and
$x_{k+1}=x_k+\t({x_k})$ when $\B(x_k,x_k+e_1)=\B(x_k,x_k+e_2)$.

Finally, to rule out certain trivialities, we   assume that 
\be\label{Y-ass-5}
\text{the variable $Y_0$ is not  almost surely constant } 
\ee
and 
\be\label{B-ass-5}
\text{$P$-a.s.\ each path $x_{0,\infty}^u$ takes infinitely many $e_1$ steps and infinitely many $e_2$ steps.}
\ee
	
The setting in Theorem \ref{th:geod:coal:t} is a special case of the above. Namely, $\cS=\Ombig$, $\cB=\kS$, $P=\Pbig$, $\wg=\what$, $\Y_x(\wg)=\w_x$, and $\B(\eta,x,y)=B^\xi_-(\what,x,y)$ (or $B^\xi_+(\what,x,y)$).
The downward finite energy condition is satisfied by Theorem \ref{th:cocycles}\eqref{th:cocycles:indep} and  \eqref{B-ass-5} holds due to Theorem \ref{th:B-geod:direction}.    

\begin{theorem}\label{app-coal-thm}
$P$-almost surely for all $u,v\in\Z^2$ there exist $n,m\ge0$ such that
$x^u_{n,\infty}=x^v_{m,\infty}$.
\end{theorem}

The proof  follows   closely   the ideas in \cite{Lic-New-96} for first-passage percolation.
A key portion of the proof  is a modification argument.  We begin with that.  

 Given $\cV\subset\Z^2$ let  
 \[\cV^*=\bigcap_{x\in\cV}\{y\in\Z^2: y\not\le x\} \]  and define  the mapping   $\phi_\cV:\cS\to \R^{\cV^c\times\cV^*\times\{1,2\}}$ by 
	\begin{align*}
	&\phi_\cV(\wg)=\big\{\Y_x(\wg),\B(\wg,y,y+e_i):x\not\in\cV, y\in\cV^*, i\in\{1,2\}\big\}.  
	\end{align*}
For a fixed $K\in\R$ and each finite subset $\cV \subset \Z^2$ 
%
define the event 
\[ R_\cV=\{\wg\in\cS:  \Y_x(\wg)\le K \;\forall x\in\cV\}. \] 
For each $\wg\in\cS$ let   $\cW(\wg)$ be  a finite subset of $\Z^2$ that depends on $\wg$ in a $\cB$-measurable manner.  
The goal is now to take a positive probability event $A$ and replace sample points $\wg\in A$ with new points $\wgtil$ so that the desirable event  $R_{\cV}$  occurs on $\cV=\cW(\wg)$ but without changing the values $\phi_\cV$.


%
%
	
Let $P\{\,\cdot\,\vert\,\phi_\cV=\phi_\cV(\wgtil)\}$ denote a   conditional probability measure of $P$, given $\phi_\cV=\phi_\cV(\wgtil)$.  	 For $P$-almost every $\wgtil$  this conditional measure is supported on the event 
$\{\wg: \phi_\cV(\wg)=\phi_\cV(\wgtil)\}$.  
For an event  $A\in\cB$ define
	\be \Psi(A) = \bigcup_{\cV}\Bigl[  R_\cV\cap    \big\{\wgtil\in\cS:P\bigl(A\cap\{\cW=\cV\}\,\big\vert\,\phi_\cV=\phi_\cV(\wgtil)\bigr)>0\big\} \Bigr] \;\in\;\cB.   \label{eq:3:6} \ee	

The next lemma says that for almost every  $\wgtil\in\Psi(A)$ there is some  $\wg\in A$ with the same  values of  $\{\Y_x:  x\not\in \cW(\wg)\}$ and  
$\{\B(y,y+e_i): y\in{\cW(\wg)}^*,\, i\in\{1,2\}\}$,   but such that  values  $\{\Y_x(\wg): x\in \cW(\wg)\}$ were replaced by    $\{\Y_x(\wgtil): x\in \cW(\wg)\}$ that satisfy  $\Y_x(\wgtil)\le K$ for all $x\in \cW(\wg)$.
The association of $\wg$ to  $\wgtil$  might not be measurable but that is not a problem. 


\begin{lemma}\label{lm:chuck}\text{\cite[Lemma 3.1]{Lic-New-96}}   Assume $P(\Y_0\le K)>0$ and  $P(A)> 0$.  Then  $P(\Psi(A))>0$.	
For $P$-almost every  $\wgtil\in\Psi(A)$ there exist  $\wg\in A$ and a finite $\cV\in\Z^2$ such that $\wgtil\in R_\cV$, $\cW(\wg)=\cV$, and $\phi_{\cV}(\wg)=\phi_{\cV}(\wgtil)$. 
\end{lemma}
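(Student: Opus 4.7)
My plan naturally splits according to the two assertions. I begin with the second assertion because it follows almost directly from regular-conditional-probability bookkeeping. Fix a version of the regular conditional probability $\Pbig(\,\cdot\,|\,\phi_\cV=\tilde\w)$ that, for each finite $\cV$, is supported on the fiber $\{\what:\phi_\cV(\what)=\tilde\w\}$ for $\Pbig$-a.e.\ $\tilde\w$. If $\tilde\w\in\Psi(A)$, definition \eqref{eq:3:6} exhibits some finite $\cV$ with $\tilde\w\in R_\cV$ and $\Pbig(A\cap\{\cW=\cV\}\mid\phi_\cV=\phi_\cV(\tilde\w))>0$. Since the conditional measure sits on the fiber, positive conditional mass forces $A\cap\{\cW=\cV\}$ to intersect $\{\what:\phi_\cV(\what)=\phi_\cV(\tilde\w)\}$. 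Any $\what$ in that intersection satisfies the three required properties, and exhausting the countable collection of possible $\cV$'s controls the exceptional $\Pbig$-null set.

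For the first assertion, the task is to produce $\Pbig(\Psi(A))>0$ from $\Pbig(A)>0$. The sets $\{\cW=\cV\}$ over finite $\cV\subset\Z^2$ form a countable partition, so some finite $\cV_0$ satisfies $\Pbig(B)>0$ where $B:=A\cap\{\cW=\cV_0\}$. Standard properties of conditional expectation then give $\Pbig(N)>0$ for the $\phi_{\cV_0}$-measurable event $N:=\{\tilde\w:\Pbig(B\mid\phi_{\cV_0}=\tilde\w)>0\}$. Since $R_{\cV_0}\cap N\subseteq\Psi(A)$ by construction, it suffices to show $\Pbig(R_{\cV_0}\cap N)>0$.

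For this, I would argue that $R_{\cV_0}$ is independent of $\phi_{\cV_0}$, so that $\Pbig(R_{\cV_0}\cap N)=\Pbig(R_{\cV_0})\,\Pbig(N)>0$. By hypothesis $R_{\cV_0}\in\sigma(\w_x:x\in\cV_0)$, and the independence has two parts. First, because the $\w$-marginal under $\Pbig$ is i.i.d.\ (Theorem \ref{th:cocycles}(\ref{th:cocycles:indep})), the coordinates $\{\w_x:x\in\cV_0\}$ are independent of $\{\w_x:x\notin\cV_0\}$. Second, picking $z\in\Z^2$ with $\cV_0\subseteq\{x:x\le z\}$, Theorem \ref{th:cocycles}(\ref{th:cocycles:indep}) yields that the collection $\{\w^{i,\alpha}_y:y\not\le z\}$ is independent of $\{\w_x:x\le z\}\supseteq\{\w_x:x\in\cV_0\}$. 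Jointly this gives the required independence, provided that every $\w^{i,\alpha}_y$ appearing in $\phi_{\cV_0}$ (i.e.\ with $y\in\cV_0^*$) is captured by the far collection $\{\w^{i,\alpha}_y:y\not\le z\}$.

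The main technical obstacle lies in this last point: for a general finite $\cV_0$ and any $z$ dominating $\cV_0$, the set $\cV_0^*$ may intersect the rectangle $\{y\le z\}$ (for example $\cV_0=\{e_1,e_2\}$ contains $e_1+e_2$ in $\cV_0^*\cap\{y\le e_1+e_2\}$), so some cocycle variables in $\phi_{\cV_0}$ lie outside the far collection to which Theorem \ref{th:cocycles}(\ref{th:cocycles:indep}) directly applies. I would handle this by appealing to the explicit queueing construction of Appendix \ref{app:q}: for $y\in\cV_0^*$ the cocycle coordinate $\w^{i,\alpha}_y$ is built from service and arrival variables at positions that are not $\le$ any point of $\cV_0$, and a variant of the independence proof for Theorem \ref{th:cocycles}(\ref{th:cocycles:indep}) shows that these coordinates are also independent of $\{\w_x:x\in\cV_0\}$. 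With that in hand, the independence of $R_{\cV_0}$ and $\phi_{\cV_0}$ is complete, finishing the first assertion.
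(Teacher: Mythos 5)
Your proposal matches the paper's proof in both parts: the second assertion via the regular conditional probability, and the first via the lower bound $\Pbig(\Psi(A))\ge\Pbig(R_{\cV_0})\,\Pbig(N)$ after an independence claim. The paper justifies the independence of $R_{\cV_0}$ and $\phi_{\cV_0}$ by a bare citation of Theorem~\ref{th:cocycles}\eqref{th:cocycles:indep}, and you correctly flag that the stated theorem (independence of $\{\w_x, \w^{i,\alpha}_x:x\not\le z\}$ from $\{\w_x:x\le z\}$ for a fixed $z$) does not by itself cover all of $\phi_{\cV_0}$: as your example $\cV_0=\{e_1,e_2\}$ shows, $\cV_0^*$ can contain points $y$ with $y\le z$ for every $z$ dominating $\cV_0$. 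Your resolution of returning to the queueing construction in Appendix~\ref{app:q} is the right one; the precise mechanism is that for $y\in\cV_0^*$ the coordinate $\w^{i,\alpha}_y$ is a function of $\{\w_x:x\ge y\}$ (and $x\ge y$ with $y\in\cV_0^*$ forces $x\in\cV_0^*$, hence $x\notin\cV_0$) together with an arrival-process slice $\{\A^\alpha_{i,m}\}$ at a level $m$ that can be pushed arbitrarily far back; once $-m>\max_{v\in\cV_0}v\cdot e_2$, that slice is independent of all service times at levels $\ge m$, which include $\{\w_v:v\in\cV_0\}$, and one gets the joint independence of $\sigma(\phi_{\cV_0})$ from $\sigma(\w_v:v\in\cV_0)$ by letting $m\to-\infty$. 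So your argument is correct and makes explicit a step the paper treats very tersely.
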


\begin{proof} 
Fix $\cV$ so that $P(A\cap\{\cW=\cV\})>0$. 
By  \eqref{downward}  $P(R_\cV\,|\,\phi_\cV)>0$ almost surely, and so   
 \[P(\Psi(A))\ge  E\Big[P(R_\cV\,|\,\phi_\cV)\,\one\bigl\{\, P\bigl(A\cap\{\cW=\cV\}\,\big\vert\,\phi_\cV\bigr)>0\,\bigr\}\Big]  >0.    \]

Let  $\wgtil\in\Psi(A)$ be such that   $P\{\,\cdot\,\vert\,\phi_\cV=\phi_\cV(\wgtil)\}$  is supported on the event $\{\wg: \phi_\cV(\wg)=\phi_\cV(\wgtil)\}$ for all finite $\cV$.  
Then pick a  finite $\cV\subset\Z^2$ such that 
$\wgtil\in R_\cV$  and  $P\bigl(A\cap\{\cW=\cV\}\,\big\vert\,\phi_\cV=\phi_\cV(\wgtil)\bigr)>0$.   A set of positive measure cannot be empty so  there exists $\wg\,\in\,A\cap\{\cW=\cV\}\cap  \{  \phi_\cV=\phi_\cV(\wgtil)\}$. 
 \end{proof}

We turn to the proof of coalescence.    Beginning with two geodesics that never intersect,   stationarity and  the modification argument show that with positive probability  the following happens for some fixed  rectangle:  from the north boundary of the rectangle  emanates   a geodesic  that intersects no  geodesic that starts to the west or south of the rectangle.   By stationarity   this gives  at least $cL^2$ disjoint  geodesics that start  inside  an $L\times L$ square.  For large $L$ this is a contradiction because there are only $2L$ north and east boundary points through which these geodesics can exit.  

Consider paths $x_{0,\infty}^u$ as in the statement of Theorem \ref{app-coal-thm}.
By Lemma \ref{g:GB} these  are semi-infinite geodesics for last-passage times   
	\[\Gpp_{x,y}(\wg)=\max_{x_{0,n}}\sum_{k=0}^{n-1}\Y_{x_k}(\wg), \quad x\le y, \]
where the maximum is over up-right paths with $x_0=x$, $x_n=y$, and $n=\abs{y-x}_1$.
Because these geodesics  follow the same rule $\t$ and cocycle $\B$, any  two that   intersect  stay together forever. Therefore, we  need
to prove only  that geodesics eventually intersect.
The proof is done by way of contradiction.

Before we start, let us record a technical observation that relies on assumption \eqref{Y-erg}. 

\begin{lemma}\label{coal-lm4} 
Suppose $K\in\R$ is such that $P(Y_0>K)>0$.   Then for any $u\in\Z^2$,  
$P$-almost surely there are arbitrarily large $m\in\N$ such that  
$Y_z>K$ for infinitely many  $z$ above $x^u_{0,\infty}$ on the vertical line at $m$, that is, 
$z\cdot e_1=m$ and $z\cdot e_2> x^u_n\cdot e_2$
for all $n$ such that $x^u_n\cdot e_1=m$.
\end{lemma}

\begin{proof}   Let $A_N=\{\exists x: \abs{x}_1\le N \text{ and } Y_x>K\}$.  By ergodicity $P(\cup_{N\ge 1} A_N)=1$.  The remainder of the lemma requires only invariance.   Fix $A=A_N$ temporarily.  We argue that $P$-almost every $\wg\in A$ lies in $T_{-ke_1}A$ for infinitely many  $k\in\N$.  Let 
\[  D=A\smallsetminus\bigl(\,\underset{i\ge 1}\cup T_{-ie_1}A\bigr)
=\{\wg\in A:   T_{ie_1}\wg\notin A \,\forall i\in\N\}.  \]
The sets $\{T_{-je_1}D\}_{j\in\N}$ are disjoint, hence by invariance  $P(D)=0$. Now suppose $T_{ke_1}\wg\in A$  but $T_{\ell e_1}\wg\notin A$ $\forall \ell>k$.  Then 
$\wg\in T_{-ke_1}D$.  Consequently, the set of $\wg\in A$ for which $T_{ke_1}\wg\in A$ for only finitely many $k$ has probability zero.   (This is a basic recurrence argument from ergodic theory, see for example Theorem 3.1 in \cite{Kre-85}.) 

Repeat this argument for each  $T_{-ke_1}A_N$ to conclude that for $P$-almost every $\wg\in A_N$ there are  infinitely many  $k\in\N$ such that    $T_{ke_1+\ell e_2}\wg\in A_N$ for infinitely many  $\ell\in\N$.

Now for almost every  $\wg$, we can pick $A_N\ni\wg$ and then any $k$ such that 
$T_{ke_1+\ell_j e_2}\wg\in A_N$ for a subsequence  $\ell_j\nearrow\infty$.  This means that for each $j$,  $Y_{ke_1+\ell_j e_2+x_j}>K$ for some $\abs{x_j}_1\le N$.
Consequently for some $m\in[k-N, k+N] $ there are infinitely many $r\in\N$ such that $Y_{me_1+re_2}>K$.  
\end{proof} 

	The initial  course   of the proof   depends on whether  or not the essential infimum of $\Y_0$ is taken with positive probability.\medskip

	{\bf Case 1. }  Suppose  $K=P\text{-}\mathrm{ess\,inf}\,\Y_0>-\infty$  and  $P\{\Y_0=K\}>0$.  

	To get a contradiction, start by assuming that  $P\{x_{0,\infty}^a\cap x_{0,\infty}^b=\varnothing\}>0$ for some $a,b\in\Z^2$.  
	By assumption  \eqref{B-ass-5}  these geodesics   cross every  vertical line to the right of $a$ and $b$.   
Restart  the geodesics  from the points where they exit some  vertical line that contains a point $z$ with $Y_z>K$ above the geodesics. (Here we invoke Lemma  \ref{coal-lm4}.)    Then  by  stationarity   we can assume $a=0$,  $x_1^0=e_1$, $b=me_2$ for some $m\in\N$,  and  $x^{me_2}_1=me_2+e_1$. 
  Thus we  take the following   assumption as the basis from which a contradiction will come.    
		\be\label{g:ass99} P\big\{x_{0,\infty}^0\cap x_{0,\infty}^{m e_2}=\varnothing, \, x^0_1=e_1,x^{me_2}_1=me_2+e_1, \, \exists r>m: Y_{re_2}>K\big\}>0.\ee
	
	By  the recurrence idea used in the proof of Lemma  \ref{coal-lm4}, for almost every $\wg$ in the event above, the same    event   happens again  for infinitely many $T_{ie_2}\wg$.  
	Consequently, there exists $i>m$ such that 
		\begin{align*}  P\big\{x_{0,\infty}^0\cap x_{0,\infty}^{m e_2}=\varnothing, \, x_{0,\infty}^{ie_2}\cap x_{0,\infty}^{(i+m)e_2}=\varnothing, \,x^0_1=e_1, \, &x^{(i+m)e_2}_1=(i+m)e_2+e_1, \\
		&\exists r>i+m: Y_{re_2}>K \big\}>0.\end{align*} 
	Let  $\ell=i+m$. If $x_{0,\infty}^{m e_2}\cap x_{0,\infty}^{\ell e_2}\not=\varnothing$ then by planarity    $x_{0,\infty}^{i e_2}$  intersects    $x_{0,\infty}^{\ell e_2}$.
	So we have $0<m<\ell$ such that 
			\[P\big\{x_{0,\infty}^0\cap x_{0,\infty}^{m e_2}=\varnothing,\, x_{0,\infty}^{m e_2}\cap x_{0,\infty}^{\ell e_2}=\varnothing,\,x_1^0=e_1,x_1^{\ell e_2}=\ell e_2+e_1, \, \exists r>\ell: Y_{re_2}>K\big\}>0.\]

	By following the geodesic $x_{0,\infty}^0$   fix  large enough deterministic  $M_1>0$ and $M_2>\ell$  
	such that 
	\be \label{g:A1} 	\begin{aligned}
		P\Big\{& \wg:x_{0,\infty}^0\cap x_{0,\infty}^{m e_2}= \varnothing,\,x_{0,\infty}^{m e_2}\cap x_{0,\infty}^{\ell e_2}=\varnothing,\, x^0_1=e_1,\,x^{\ell e_2}=\ell e_2+e_1, \\
		&x^0_{M_1+M_2-1}=(M_1,M_2-1),\,x^0_{M_1+M_2}=(M_1,M_2),\sum_{j=\ell}^{M_2-1}\Y_{je_2}>K(M_2-\ell)\Big\}>0.
		\end{aligned} \ee
Denote the  event above by $A$. 		
	Let $u_1, u_2$ and  $u_3$ be the points  where geodesics 
$x_{0,\infty}^{\ell e_2}$, $x_{0,\infty}^{me_2}$ and $x_{0,\infty}^0$  (respectively) first intersect  the line $M_2 e_2+\R e_1$.  By definition  $u_3=(M_1, M_2)$.  (See Figure \ref{fig:coal}.) 

The geodesic $x_{0,\infty}^{u_2}$ will be the one that does not intersect any   geodesic that starts west or south of 
the rectangle $[0,M_1]\times[0,M_2]$.  To make this happen with positive probability, we apply the modification argument to the event $A$ defined above.

	
Let $\cR$ be the lattice region 
	strictly between $x_{0,\infty}^0(\wg)$ and $x_{0,\infty}^{\ell e_2}(\wg)$, strictly east of $\R e_2$, and strictly south of $M_2 e_2+\R e_1$ (shaded region in Figure \ref{fig:coal}). 
	Define $\cW(\wg) = \{x \in \cR : \Y_x > K\}$. For a finite set $\cV\subset\Z^2$ recall $R_{\cV}=\{\wg:\Y_x\le K\;\forall x\in\cV\}$. Note that $P(R_\cV)>0$.
	Event $\Psi(A)$ is   given in \eqref{eq:3:6} and 
by	Lemma \ref{lm:chuck}   $P(A)>0$ implies  $P(\Psi(A))>0$.    The claim to be proved now is this: 

\begin{lemma} \label{g:lm:77}  For $P$-almost every   $\wgtil\in\Psi(A)$,   geodesic $x_{0,\infty}^{u_2}(\wgtil)$   does not intersect any  geodesic that starts at a point $(a,b)$ 
  outside the rectangle $[0,M_1]\times[0,M_2]$  with either $a\le 0$  or  $b\le 0$. 
\end{lemma}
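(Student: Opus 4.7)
The plan is a modification argument in three stages: (a) invoke Lemma~\ref{lm:chuck} to realize a generic $\tilde\w \in \Psi(A)$ as a modification of some $\what \in A$ in which all bulk weights in $\cR$ equal the essential infimum $K$; (b) verify that the target geodesic $x_{0,\infty}^{u_2}$ is unaffected by the modification; (c) derive a contradiction from any hypothetical intersection via planarity and the coalescence property of cocycle geodesics.

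For step (a), Lemma~\ref{lm:chuck} produces, for $\Pbig$-a.e.\ $\tilde\w \in \Psi(A)$, an $\what \in A$ and a finite set $\cV = \cW(\what) \subset \cR$ with $\tilde\w \in R_\cV$ and $\phi_\cV(\what) = \phi_\cV(\tilde\w)$. Since $R_\cV$ enforces $\w_x \le K$ on $\cV$ while $\P\{\w_0 \ge K\}=1$, one has $\tilde\w_x = K$ on $\cV$. For $x \in \cR \setminus \cV$ the weight is unmodified and, by the definition $\cW(\what) = \{x \in \cR : \w_x > K\}$, already equals $K$. Hence $\tilde\w_x = K$ for every $x \in \cR$. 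For step (b), Theorem~\ref{th:cocycles}\eqref{th:cocycles:indep} writes each cocycle increment $B^{\xi-}(y,y+e_i)$ as a function only of $(\w_y^{i,\alpha})_{\alpha \in \cA_0}$, and these coordinates lie in $\phi_\cV$ precisely when $y \in \cV^*$. Since $\cV \subset \cR$ is strictly south of the horizontal line $\{y \cdot e_2 = M_2\}$, every $y$ with $y \cdot e_2 \ge M_2$ satisfies $y \in \cV^*$. The geodesic $x_{0,\infty}^{u_2}$ starts at $u_2$ (at height $M_2$) and is up-right, so each of its increments uses unchanged coordinates, giving $x_{0,\infty}^{u_2}(\tilde\w) = x_{0,\infty}^{u_2}(\what)$; in particular it stays disjoint from $u_1$ and $u_3$ by the assumption on $A$.

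For step (c), suppose for contradiction that a cocycle geodesic $y_\centerdot = x_{0,\infty}^{(a,b)}(\tilde\w)$ with $(a,b) \notin [0,M_1]\times[0,M_2]$ and $a \le 0$ or $b \le 0$ meets $x_{0,\infty}^{u_2}(\tilde\w)$ at some lattice site $z$. The paths $x_{0,\infty}^{0}(\what)$ and $x_{0,\infty}^{\ell e_2}(\what)$ form barriers enclosing $\cR$, with $u_2$ lying strictly between $u_1$ and $u_3$ on the line $\{y \cdot e_2 = M_2\}$; by planarity in $\Z^2$, $y_\centerdot$ must share a site $w$ with one of them, say $x_{0,\infty}^0(\what)$, before reaching $z$. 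Granting that $x_{0,\infty}^0(\what)$ is still a cocycle geodesic of $\tilde\w$, the minimum-gradient rule with common tie-breaker $\t$ forces $y_\centerdot$ and $x_{0,\infty}^0(\what)$ to coalesce from $w$ onward; but $x_{0,\infty}^0(\what)$ passes through $u_3 \ne u_2$ and by step~(b) does not meet $x_{0,\infty}^{u_2}(\tilde\w)$, contradicting the intersection at $z$.

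The hard part will be the intermediate claim just invoked: that $x_{0,\infty}^0(\what)$ and $x_{0,\infty}^{\ell e_2}(\what)$ remain cocycle geodesics of $\tilde\w$. Their cocycle increments depend on coordinates $\w_y^{i,\alpha}$ at boundary sites $y$ that need not lie in $\cV^*$ — a point on the south boundary $x^0$ can be dominated by a point of $\cV$ lying above it inside $\cR$ — so the cocycle itself changes along these paths under the modification. To establish the claim one must combine the unchanged vertex weights $\tilde\w_{x_k^0} = \w_{x_k^0}(\what)$ with the recovery identity $\w_y = B^{\xi-}(y,y+e_1) \wedge B^{\xi-}(y,y+e_2)$, the deterministic first step $x_1^0 = e_1$ encoded in $A$, and the boundary-versus-bulk geometry of $\cR$. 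Adapting Licea--Newman's original planarity modification to the cocycle setting at this point is the central technical effort; once it is in place, the planarity--coalescence argument above closes the lemma.
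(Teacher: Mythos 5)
Your steps~(a) and~(b) match the paper's Lemma~\ref{lm:chuck} setup and the observation (from Theorem~\ref{th:cocycles}\eqref{th:cocycles:indep}) that the cocycle coordinates at sites of height $\ge M_2$ lie in $\cV^*$, so that $x^{u_2}_{0,\infty}(\tilde\w)=x^{u_2}_{0,\infty}(\what)$. The problem is step~(c), which has two genuine gaps.

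First, the claim you flag as ``the hard part'' --- that $x^0_{0,\infty}(\what)$ and $x^{\ell e_2}_{0,\infty}(\what)$ remain cocycle geodesics of $\tilde\w$ --- is not merely unproved; it is the wrong target and is in general false. Sites $y$ on $x^0_{0,\infty}(\what)$ inside the rectangle can fail to lie in $\cV^*$ (one only needs some $x\in\cV$ with $y\le x$, which is easy to arrange for $x\in\cR$ northeast of $y$), so the cocycle increments $B^{\xi-}(\tilde\w, y, y+e_i)$ at those sites can differ from their $\what$-values, and the $\what$-geodesic from $y$ need not follow $\tilde\w$-minimal gradients. What the paper proves instead is a strictly weaker, one-sided statement (Lemma~\ref{lm:geo below}): for $v\in x^0_{0,\infty}(\what)$, the $\tilde\w$-cocycle geodesic $x^v_{0,\infty}(\tilde\w)$ stays \emph{weakly below} $x^v_{0,\infty}(\what)$, with the symmetric ``weakly above'' bound along $x^{\ell e_2}_{0,\infty}(\what)$. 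This asymmetric comparison is all one can get, and all that is needed for the shielding step: once $y_\centerdot$ hits a site $w$ on $x^0_{0,\infty}(\what)$, its continuation equals $x^w_{0,\infty}(\tilde\w)$, which by Lemma~\ref{lm:geo below} stays weakly below $x^w_{0,\infty}(\what)\subset x^0_{0,\infty}(\what)$, and hence (since $x^{u_2}_{0,\infty}$ lies strictly northwest of $x^{u_3}_{0,\infty}$ by the non-intersection built into $A$) can never reach $x^{u_2}_{0,\infty}(\tilde\w)$. Your coalescence argument via the common tie-breaker $\t$ would require both competitors to be geodesics of the \emph{same} cocycle, which is exactly what fails; so this route does not close.

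Second, your planarity claim ``$y_\centerdot$ must share a site $w$ with one of the barriers'' is false and misses the essential remaining case: $y_\centerdot$ may enter $\cR$ through the open west segment $]0,\ell e_2[$ on the $e_2$-axis and exit through the open north segment $]u_1,u_3[$ without ever touching $x^0_{0,\infty}(\what)$ or $x^{\ell e_2}_{0,\infty}(\what)$. Ruling this out is precisely why the constraint $\sum_{j=\ell}^{M_2-1}\w_{0,j}>K(M_2-\ell)$ (resp.\ the boundary conditions in Case~2) is built into the event $A$ in \eqref{g:A1}: after modification the $\tilde\w$-weight of any up-right path crossing $\cR$ from $]0,\ell e_2[$ to $]u_1,u_3[$ is $K$ per interior step, whereas the path that runs up the west boundary of the rectangle and then east along the north boundary collects weights $\ge K$ with at least one strictly $>K$. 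The boundary path is therefore strictly better, so no $\tilde\w$-geodesic (which is a last-passage maximizer between any two of its points by Lemma~\ref{lm:grad flow}\eqref{lm:grad flow:a}) can traverse $\cR$ in this way. Your proposal never invokes this comparison and does not use the weight constraint in $A$ at all, which is a structural signal that the argument is incomplete.

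So the outline is recognizable but the substance of step~(c) needs to be replaced: state and prove the one-sided geodesic comparison (as in Lemma~\ref{lm:geo below}, which itself requires the careful case analysis with the recovery identity, the deterministic first step $x^0_1=e_1$, and the two exit scenarios of Figure~\ref{fig:coal2}), and separately dispose of the $\cR$-traversal case by the last-passage weight comparison that exploits the constraint in \eqref{g:A1}.
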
  
		 
\begin{proof} 	From Lemma \ref{lm:chuck} we   read  that almost  every $\wgtil\in\Psi(A)$  is a modification of some $\wg\in A$  
	so that the following items  hold.   
	
	(i)  For all $x\in\cR$  the modified weights satisfy  $\Y_x(\wgtil)\le K$. 
	
	(ii) Weights $\{\Y_x: x\not\in\cR\}$ as well as the values  $\{\B(y,y+e_i) : y\ge u_1, i=1,2\}$ remain  the same under both $\wg$ and $\wgtil$.   In particular,   	
	geodesics $x_{0,\infty}^{u_1}(\wgtil)$, $x_{0,\infty}^{u_2}(\wgtil)$, and $x_{0,\infty}^{u_3}(\wgtil)$ are  the same as the ones under $\wg$. 

Part of the reason  that  $x_{0,\infty}^{u_2}(\wgtil)$  does not intersect any geodesic that starts from west or south of the rectangle is that it is  ``shielded'' by geodesics $x_{0,\infty}^0(\wgtil)$ and $x_{0,\infty}^{\ell e_2}(\wgtil)$.  This is the point of the next lemma.   

	\begin{lemma}\label{lm:geo below}   Let  $\wg\in A$ be    associated 
	to  $\wgtil\in\Psi(A)$ by Lemma \ref{lm:chuck}.  Then  for any $v\in x^0_{0,\infty}(\wg)$ and  $n\ge0$, $x_n^v(\wgtil)\cdot e_2\le x_n^v(\wg)\cdot e_2$.
	Similarly, for any $v\in x^{\ell e_2}_{0,\infty}(\wg)$ and  $n\ge0$, $x_n^v(\wgtil)\cdot e_2\ge x_n^v(\wg)\cdot e_2$.
	\end{lemma}
	
	We defer the proof of this  lemma to the end of the section. See Figure \ref{fig:coal} for a summary of the construction thus far.

	\begin{figure}
		\begin{center}
		
			\begin{tikzpicture}[>=latex, scale =0.5]
				
				\draw(14,0)node[below]{$M_1$};
				\draw(-0.75,8)node{$M_2$};

				\draw(0,0)rectangle(14,8);
				
				\draw[color=nicosred, line width= 2pt] (0,0)--(0,8)--(14,8);

				\draw[darkgreen, line width=1.2pt,->]plot[smooth] coordinates{(1,0)(5,1)(10,2)(14,8)(15,8.7)};
				\draw[blue, line width=1.2pt,->]plot[smooth] coordinates{(1,0)(5,1)(10,1.5)(14,5.8)(15.2,6.5)};
				\draw[blue, line width=1.2pt] plot coordinates{(0,0)(1,0)};
				\draw[darkgreen, line width=1.2pt,->]plot[smooth] coordinates{(0,4)(1,4.3)(2,5)(3,6.2)(4,8)(6,9)(10,9.7)(13,11)};
				\draw[blue, line width=1.2pt,->]plot[smooth] coordinates{(0,4)(1,4.3)(2,5)(2.3,5.5)(3,8)(4,9.7)(6,11)};
				\draw[nicosred, line width=1.2pt,->]plot[smooth] coordinates{(7,8)(11,8.5)(13.5,9.6)(15,10)};
				\draw[line width=1.2pt]plot[smooth] coordinates{(0,1)(1,2)(3,2.5)(5,4)(7,4.5)(11,5)(12.4,5.4)};
				\draw[line width=1.2pt]plot[smooth] coordinates{(0,3)(1,3.2)(2.6,4.6)(3.1,6.3)};
				\draw[line width=1.2pt,->]plot[smooth] coordinates{(3.1,6.3)(3.3,7)(3.5,7.8)(3.7,8.2)(4,8.8)(4.5,9.2)(5.4,9.5)(6.5,10)(7,11)};
				\draw[line width=1.2pt,->]plot[smooth] coordinates{(12.4,5.4)(14,6.5)(15,7.3)};
				
				\shade[ball color=nicosred](12.4,5.4)circle(2mm); \draw(12.9,5.2)node{$v$};
				\shade[ball color=nicosred](3.1,6.3)circle(2mm); \draw(3.6,6.3)node{$v$};
				\shade[ball color=nicosred](0,0)circle(2mm); \draw(-0.5,0)node{$0$};
				\shade[ball color=nicosred](0,4)circle(2mm); \draw(-0.7,4)node{$\ell e_2$};
				\shade[ball color=nicosred](4,8)circle(2mm); \draw(4.3,7.5)node{$u_1$};
				\shade[ball color=nicosred](7,8)circle(2mm); \draw(7,8.6)node{$u_2$};
				\shade[ball color=nicosred](14,8)circle(2mm); \draw(13.8,8.6)node{$u_3$};

				\fill[color=blue, nearly transparent]plot[smooth] coordinates{(0,0)(1,0)(5,1)(10,2)(14,8)}--(4,8)--(3,6.2)--(2,5)--(1,4.3)--(0,4);
				
				\draw(3.2,10.5)node{$x^{\ell e_2}_{0, \infty}(\wgtil)$};
				\draw(9,10.2)node{$x^{\ell e_2}_{0, \infty}(\wg)$};
				\draw(9,2.7)node{$x^{0}_{0, \infty}(\wg)$};
				\draw(11,1)node{$x^{0}_{0, \infty}(\wgtil)$};
				\draw(10.8,9.1)node{$x^{u_2}_{0,\infty}$};

\end{tikzpicture}

		\end{center}
	\caption{	The shaded region is $\cR$ where the weights are modified to be small.
			Weights on the thick west and north boundaries are large.
			The curved lines represent the various geodesics. The middle geodesic starting at $u_2$ is shielded by all the other ones around it: after the modification, the   geodesic starting at $\ell e_2$ and going through $u_1$ becomes the top geodesic in the picture and the  geodesic starting at $0$ and going through $u_3$ becomes the bottom  
			one in the picture. Geodesics entering from $[0,\ell e_2]$ cannot exit the top between $u_1$ and $u_3$ and hence cannot touch the middle geodesic starting at $u_2$.
		     }	
	\label{fig:coal}
\end{figure}
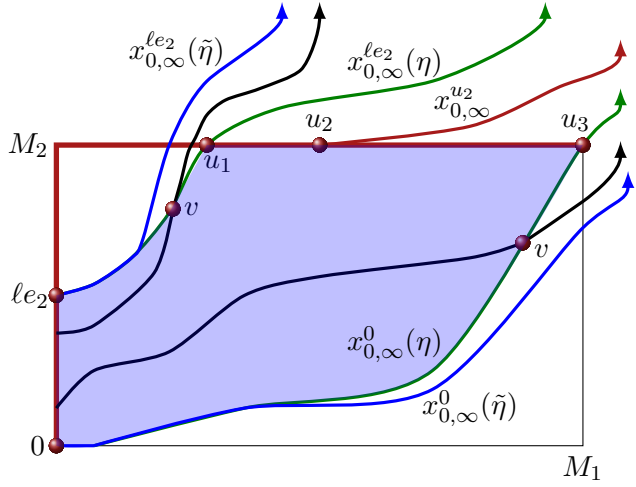

By Lemma \ref{lm:geo below}, if some  geodesic $y_{0,\infty}(\wgtil)$     intersects geodesic $x^{u_2}_{0,\infty}(\wgtil)$ in violation of Lemma \ref{g:lm:77}, then $y_{0,\infty}(\wgtil)$    must  (i)  enter $\cR$ through the vertical line segment $]0,\ell e_2[$ and   (ii) exit $\cR$  through the line segment $]u_1, u_3[$.   The reason is that if $y_{0,\infty}(\wgtil)$ exits $\cR$ through  $x^0_{0,\infty}(\wg)$ or  $x^{\ell e_2}_{0,\infty}(\wg)$,    Lemma \ref{lm:geo below} prevents it from ever touching $x^{u_2}_{0,\infty}(\wgtil)$.  

To rule out this last possibility,  we simply observe that in environment $\wgtil$ any path from  $]0,\ell e_2[$ to  $]u_1, u_3[$  through $\cR$ is inferior to following the west and north boundaries of the rectangle.  This is because for $x\in\cR$ each weight 
$\Y_x(\wgtil)=K$, while along the west and north boundaries each weight $\Y_x(\wgtil)=\Y_x(\wg)\ge K$ and  by \eqref{g:A1} some weight on the line segment $[\ell e_2, M_2e_2]$ is  $>K$.     Thus no geodesic $y_{0,\infty}(\wgtil)$  from  outside the rectangle    can  follow this strategy to intersect   $x^{u_2}_{0,\infty}(\wgtil)$. 
Lemma \ref{g:lm:77} has been proved.  
\end{proof} 

%
%
%
	The Burton-Keane lack of space argument \cite{Bur-Kea-89}   now leads to a contradiction that proves \eqref{g:ass99} false. 
	By $P(\Psi(A))>0$ and the ergodic theorem there exists an event $U$ of positive probability such that on $U$ 
	for all  large enough $L$  and a small enough fixed  $\delta>0$,  event 
$\Psi(A)\circ T_z$ occurs for  at least $\delta L^2$ points 	$z\in[0,L]^2$ such that the rectangles $z+[0,M_1]\times[0,M_2]$ are pairwise disjoint and lie inside $[0,L]^2$.    Then with positive probability we have 
$\delta L^2$ pairwise disjoint geodesics that start inside $[0,L]^2$.  Each of these geodesics must exit through a boundary point of  $[0,L]^2$,  but for large enough $L$ the number of   boundary points is $< \delta L^2$.    Theorem \ref{th:geod:coal:t} has been  proved in Case 1.
	
	
	\medskip
	
	{\bf Case 2.} Assume $P\text{-}\mathrm{ess\,inf}\,\Y_0$  cannot be taken with positive $P$-probability.  
	
The proof begins as for Case 1 by constructing three disjoint geodesics, but this time 	the condition on $Y_z$ in the event in \eqref{g:ass99} is not needed.  After fixing   $M_1>0$ and $M_2>\ell$ such that $x_{0,\infty}^0$ takes an $e_2$-step to $(M_1,M_2)$,  pick   $K$ close enough to but strictly above  $P\text{-}\mathrm{ess\,inf}\,\Y_0$ so that 
		\begin{align*}
		P\Big\{\wg:x_{0,\infty}^0\cap x_{0,\infty}^{m e_2}=\varnothing,&\ x_{0,\infty}^{m e_2}\cap x_{0,\infty}^{\ell e_2}=\varnothing,\, x^0_1=e_1,\,x^{\ell e_2}=\ell e_2+e_1,\\
		&x^0_{M_1+M_2-1}=(M_1,M_2-1),\,x^0_{M_1+M_2}=(M_1,M_2),\\
		&\ \Y_{ie_1+M_2e_2}\ge K\; \forall i\in[0,M_1],\, \Y_{je_2}>K \;\forall j\in[0,M_2]\Big\}>0
		\end{align*}
	and $P\{\Y_0\le K\}>0$. 
	Then continue  as in Case 1, with the same $R_\cV$ and $\cW(\wg)$.  Again, after the modification,  under $\wgtil$  any path from the west to the north boundary  through $\cR$ is inferior to following the west and north boundaries.   We consider  the proof of   Theorem \ref{app-coal-thm} complete.  
	
	
\medskip 	
	
It remains to give the proof of Lemma  \ref{lm:geo below}.  	
	
	\begin{proof}[Proof of Lemma \ref{lm:geo below}]   We do the case $v\in x^0_{0,\infty}(\wg)$.    Let $z$ be the first point after which  $x_n^v(\wgtil)\cdot e_2\le x_n^v(\wg)\cdot e_2$ is violated.   Then the two geodesics split at $z$ so that     $x_1^z(\wgtil)=z+e_2$ and 
	$x_1^z(\wg)=z+e_1$. 
	Point  $z$ lies inside the $[0,M_1]\times [0,M_2]$ rectangle because north and east of this rectangle $\wgtil$ geodesics agree with those of $\wg$.        Either  $x^z_{0,\infty}(\wgtil)$ hits the north boundary of the $[0,M_1]\times [0,M_2]$ rectangle, or it hits the path  $x^0_{0,\infty}(\wg)$ inside the rectangle.  
We treat  the   two cases separately. See Figure \ref{fig:coal2}.\medskip
	
		\begin{figure}
		\begin{center}
		
			\begin{tikzpicture}[>=latex, scale =0.5]
				
				\draw(14,0)node[below]{$M_1$};
				\draw(-0.75,8)node{$M_2$};

				\draw(0,0)rectangle(14,8);
				
				\draw[color=nicosred, line width= 2pt] (0,0)--(0,8)--(14,8);

				\draw[darkgreen, line width=1.2pt,->]plot[smooth] coordinates{(1,0)(5,1)(10,2)(14,8)(15,8.7)};
				\draw[darkgreen, line width=1.2pt] plot coordinates{(0,0)(1,0)};
				\draw[darkgreen, line width=1.2pt,->]plot[smooth] coordinates{(0,4)(1,4.3)(2,5)(3,6.2)(4,8)(6,9)(10,9.7)(13,11)};
				\draw[blue, line width=1.2pt]plot[smooth] coordinates{(2.2,0.3)(3.6,2.5)(5.5,4)(7,4.5)(11,5)(12.4,5.4)};
				\draw[blue, line width=1.2pt,->]plot[smooth] coordinates{(2.2,0.3)(2.5,4.6)(2.8,6.3)(3.3,8)(4,8.8)(4.5,9.2)(5.4,9.5)(6.5,10)(7,11)};
				\draw[blue, line width=1.2pt,->]plot[smooth] coordinates{(12.4,5.4)(13.5,6)};
				
				\shade[ball color=nicosred](12.4,5.4)circle(2mm); \draw(13,5.2)node{$y_2$};
				\shade[ball color=nicosred](3.3,8)circle(2mm); \draw(3.1,8.6)node{$y_1$};
				\shade[ball color=nicosred](2.2,0.3)circle(2mm); \draw(2.7,0.25)node{$z$};
				\shade[ball color=nicosred](0,0)circle(2mm); \draw(-0.5,0)node{$0$};
				\shade[ball color=nicosred](0,4)circle(2mm); \draw(-0.7,4)node{$\ell e_2$};
				\shade[ball color=nicosred](4,8)circle(2mm); \draw(4.3,7.5)node{$u_1$};
				\shade[ball color=nicosred](14,8)circle(2mm); \draw(13.9,8.6)node{$u_3$};

				\fill[color=blue, nearly transparent]plot[smooth] coordinates{(0,0)(1,0)(5,1)(10,2)(14,8)}--(4,8)--(3,6.2)--(2,5)--(1,4.3)--(0,4);
				
				\draw(9,10.2)node{$x^{\ell e_2}_{0, \infty}(\wg)$};
				\draw(9,2.7)node{$x^{0}_{0, \infty}(\wg)$};
				\draw(8,5.5)node{$x^z_{0,\infty}(\wgtil)$\text{ in Case (b)}};
				\draw(3.3,10.5)node{$x^z_{0,\infty}(\wgtil)$\text{ in Case (a)}};

\end{tikzpicture}

		\end{center}
	\caption{	Illustration of cases (a) and (b) of the proof of Lemma \ref{lm:geo below}.  
		     }	
	\label{fig:coal2}
\end{figure}
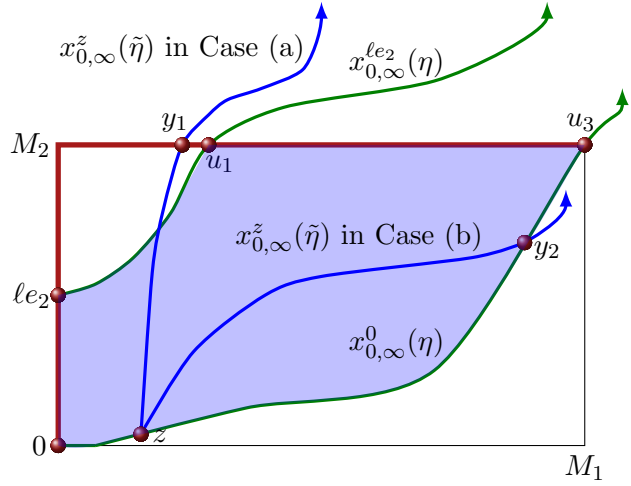

	{\bf Case (a).} $x^z_{0,\infty}(\wgtil)$ intersects with $[M_2 e_2,u_3]$ at some point $y_1$.
	Since weights were not   modified on $x^0_{0,\infty}(\wg)$ and  were not increased anywhere,  the last passage time  $\Gpp_{z,u_3}(\wgtil)$   under $\wgtil$ equals  $\Gpp_{z,u_3}(\wg)$,   the time  under the old environment $\wg$. Combine this with Lemma \ref{lm:grad flow}\eqref{lm:grad flow:a} for $\wg$  and weight recovery \eqref{recov}  for $\wgtil$ (which is valid almost surely under $P$) to get  
	\[\B(\wg,z,u_3)=\Gpp_{z,u_3}(\wg)=\Gpp_{z,u_3}(\wgtil)\le \B(\wgtil,z,u_3).\]
Since $\B(\wg)$-increments and $\B(\wgtil)$-increments agree on the north boundary of the rectangle, we have   
$\B(\wg,y_1,u_3)=\B(\wgtil,y_1,u_3)$.  
	The additivity of cocycles then implies that $\B(\wg,z,y_1)\le \B(\wgtil,z,y_1)$.
	
	 On the other hand, we have
	\[\B(\wgtil,z,y_1)= \Gpp_{z,y_1}(\wgtil)\le \Gpp_{z,y_1}(\wg)\le \B(\wg,z,y_1).\]
	The first equality follows again from Lemma \ref{lm:grad flow}\eqref{lm:grad flow:a} for the cocycle geodesic $x^z_{0,\infty}(\wgtil)$ in environment $\wgtil$.  The first inequality comes from the fact that the modified weights are no larger than the original ones. 
	The second inequality is again due to 
	potential recovery. 
Combine all the inequalities above to conclude that
\be \label{g:86}  \B(\wg,z,y_1) = \B(\wgtil,z,y_1)     \ee
and  $\B(\wgtil,z,u_3)=\Gpp_{z,u_3}(\wg)$.
	Rewrite the last equality as 
		\[\sum_{i=0}^{\abs{u_3-z}_1-1}\B(\wgtil,x_i^z(\wg),x_{i+1}^z(\wg))
		= \B(\wgtil,z,u_3) = \Gpp_{z,u_3}(\wg)=\sum_{i=0}^{\abs{u_3-z}_1-1}\Y_{x_i^z(\wg)}(\wg).\]
Potential recovery under $\wgtil$ and last passage weights of $\wgtil$ being the same as the $\wg$ weights  on the path $x^0_{0,\infty}(\wg)$  
now  imply  that the sums agree term by term. In particular, \be \label{g:88} \B(\wgtil,z,x_1^z(\wg))=\Y_z(\wg).\ee

In the same manner we deduce   the statement 
\be \label{g:89} 	\B(\wg,z,x_1^z(\wgtil))=\Y_z(\wg). \ee
To see this last identity, consider this:  
	\begin{align*} 
	&\sum_{i=0}^{\abs{y_1-z}_1-1}\Y_{x_i^z(\wgtil)}(\wgtil)\le \sum_{i=0}^{\abs{y_1-z}_1-1}\Y_{x_i^z(\wgtil)}(\wg) \le \sum_{i=0}^{\abs{y_1-z}_1-1}\B(\wg,x_i^z(\wgtil),x_{i+1}^z(\wgtil))  \\
		&\qquad\qquad = \B(\wg,z,y_1) = \B(\wgtil,z,y_1)  = \Gpp_{z,y_1}(\wgtil)=
		\sum_{i=0}^{\abs{y_1-z}_1-1}\Y_{x_i^z(\wgtil)}(\wgtil).
		\end{align*} 
The first two inequalities are valid term by term,  by  the modification and potential recovery.  The third step is cocycle additivity, the fourth is \eqref{g:86} from above, and the last two are  due to $\{x_i^z(\wgtil)\}$ being a cocycle geodesic.   The upshot is that the second and third sums must agree term by term.  The equality of the first terms is \eqref{g:89}. 

Equations  \eqref{g:88}--\eqref{g:89} are incompatible with $x_1^z(\wgtil)\ne x_1^z(\wg)$ since both geodesics follow the same  tie-breaking rule $\t$.   Thus Case (a) led to a contradiction.  

	\medskip

	{\bf Case (b).} 	$x^z_{0,\infty}(\wgtil)$ intersects with $x^0_{0,\infty}(\wg)$ at some point $y_2>z$. 
 	Start by observing that $\Gpp_{z,y_2}(\wgtil)=\Gpp_{z,y_2}(\wg)$.  
	  Hence,
	$\B(\wg,z,y_2)=\B(\wgtil,z,y_2)=\Gpp_{z,y_2}(\wg)$. 
	An argument similar to  Case (a)  shows  that   Case (b)  cannot happen either. 
	
	We have proved the part of Lemma \ref{lm:geo below} that claims  $x_n^v(\wgtil)\cdot e_2\le x_n^v(\wg)\cdot e_2$ 
	 for any $v\in x^0_{0,\infty}(\wg)$.  
	The claim for  geodesics starting from $v\in x^{\ell e_2}_{0,\infty}(\wg)$ is proved similarly.
	\end{proof}

\section{Ergodic theorem for cocycles}  
\label{app:aux}

Cocycles satisfy a uniform ergodic theorem. The following is a special case of Theorem 9.3 of  \cite{Geo-etal-15-}. Note that a one-sided bound suffices for a hypothesis.   Recall   Definition \ref{def:cK} for the space $\Cor$ of centered cocycles.   

\begin{theorem}\label{th:Atilla}
Assume $\P$ is ergodic under the transformations  $\{T_{e_i}:i\in\{1,2\}\}$. 
Let $F\in\Cor$.   Assume there exists  a function $V$ such that  for $\P$-a.e.\ $\w$  
\be \label{cL-cond}
\varlimsup_{\e\searrow0}\;\varlimsup_{n\to\infty} \;\max_{x: \abs{x}_1\le n}\;\frac1n \sum_{0\le k\le\e n} 
\abs{V(T_{x+ke_i}\w)}=0\qquad\text{for $i\in\{1,2\}$ }\ee  
and 
$\max_{i\in\{1,2\}} F(\w,0,e_i)\le V(\w)$.  
Then  
\[\lim_{n\to\infty}\;\max_{\substack{x=z_1+\dotsm+z_n\\z_{1,n}\in\{e_1, e_2\}^n}} \;\frac{\abs{F(\w,0,x)}}n=0 \qquad\text{for   $\P$-a.e.\ $\w$.}\]
\end{theorem}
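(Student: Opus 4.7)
The plan is to combine Birkhoff's ergodic theorem along the coordinate axes with an oscillation bracketing argument based on the one-sided bound $F\le V$ and hypothesis \eqref{cL-cond}. Since $F$ is a centered $L^1$ cocycle, $\E[F(0,e_i)]=0$ for $i=1,2$. Applying Birkhoff to $f_i(\w)=F(\w,0,e_i)$ under the ergodic transformation $T_{e_i}$ gives $n^{-1}F(0,ne_i)\to 0$ a.s.\ for $i=1,2$. Combining the two axis limits via the cocycle decomposition
\[ F(0,\fl{na}e_1+\fl{nb}e_2)=F(0,\fl{na}e_1)+F(\fl{na}e_1,\fl{na}e_1+\fl{nb}e_2), \]
where the second summand is a Birkhoff sum for $f_2$ along $T_{e_2}$ started at the shifted configuration $T_{\fl{na}e_1}\w$, and invoking stationarity under $T_{e_1}$, yields $n^{-1}F(0,\fl{na}e_1+\fl{nb}e_2)\to 0$ a.s.\ for every fixed $(a,b)\in\R_+^2$.

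The heart of the proof is an oscillation bracketing step that handles uniformity. Fix $\e>0$ and consider the grid $\cG_\e=\{(j\fl{\e n},k\fl{\e n})\in\Z_+^2:j+k\le\e^{-1}+2\}$ of size $O(\e^{-2})$. For any $x\in\Z_+^2$ with $|x|_1=n$ there exist grid points $y_1,y_2\in\cG_\e$ with $y_1\le x\le y_2$ coordinatewise and $|y_i-x|_1\le 2\e n$. The hypothesis $F(\w,u,u+e_i)\le V(T_u\w)$, iterated along the up-right path $\pi_1$ from $y_1$ to $x$, gives $F(y_1,x)\le\sum_{u\in\pi_1}V(T_u\w)$, and similarly along the up-right path $\pi_2$ from $x$ to $y_2$ gives $F(x,y_2)\le\sum_{u\in\pi_2}V(T_u\w)$. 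Using $F(0,x)=F(0,y_1)+F(y_1,x)=F(0,y_2)-F(x,y_2)$,
\[ F(0,y_2)-\sum_{u\in\pi_2}V(T_u\w)\;\le\;F(0,x)\;\le\;F(0,y_1)+\sum_{u\in\pi_1}V(T_u\w). \]
Each $\pi_i$ has length $\le 2\e n$ and decomposes into an $e_1$-segment and an $e_2$-segment, each of length $\le 2\e n$ and based at a point of $\ell^1$-norm $\le n+2\e n$; hypothesis \eqref{cL-cond} applied with $i=1$ and $i=2$ gives
\[ \varlimsup_{\e\searrow 0}\;\varlimsup_{n\to\infty}\;n^{-1}\max_{|z|_1\le 2n}\;\sup_{\pi\ni z,\,|\pi|\le 2\e n}\;\sum_{u\in\pi}|V(T_u\w)|=0. \]
Combined with the convergence $n^{-1}F(0,y)\to 0$ holding simultaneously at all $y\in\cG_\e$ (a finite intersection of full-measure events from the first step), this yields $\varlimsup_{n\to\infty}n^{-1}\max_{|x|_1=n}|F(0,x)|\le\kappa(\e)$ with $\kappa(\e)\to 0$, and sending $\e\to 0$ completes the proof.

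The main obstacle is the lower bound on $F(0,x)$: the hypothesis $F\le V$ only directly controls $F$ from above along up-right paths, and there is no pointwise lower bound on $F$. The bracketing device circumvents this by choosing a grid point $y_2$ \emph{above} $x$, writing $F(0,x)=F(0,y_2)-F(x,y_2)$, and applying the upper bound to $F(x,y_2)$ on the up-right path \emph{from} $x$ \emph{to} $y_2$ to produce the desired lower bound on $F(0,x)$. A minor subtlety is that $T_{e_1}$ and $T_{e_2}$ are assumed ergodic only individually, so joint ergodicity of the lattice action is not available; this is handled by decomposing each cocycle increment into two axis-aligned pieces, each controlled by one-dimensional Birkhoff applied to the appropriate shifted starting configuration.
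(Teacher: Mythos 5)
The paper does not prove this statement; it quotes it as a special case of Theorem~9.3 in \cite{Geo-etal-13-}, so I compare your argument against what such a proof must contain. Your bracketing device in the third paragraph is the right way to exploit the one-sided bound $F(\w,0,e_i)\le V(\w)$: a lower bound on $F(0,x)$ comes from a grid point $y_2\ge x$ via $F(0,x)=F(0,y_2)-F(x,y_2)$, and \eqref{cL-cond} correctly controls the short $e_1$- and $e_2$-runs of $\pi_1,\pi_2$. The gap is in the second paragraph, where you claim that $n^{-1}F(0,\fl{na}e_1+\fl{nb}e_2)\to 0$ a.s.\ for fixed $(a,b)$ follows from Birkhoff plus stationarity. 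In the decomposition $F(0,\fl{na}e_1+\fl{nb}e_2)=F(0,\fl{na}e_1)+\sum_{j=0}^{\fl{nb}-1}f_2(T_{\fl{na}e_1+je_2}\w)$, the second term is a Birkhoff-type sum along $T_{e_2}$ but \emph{started at the moving configuration} $T_{\fl{na}e_1}\w$. Birkhoff gives an a.s.\ limit at a \emph{fixed} starting configuration; stationarity under $T_{e_1}$ only says these normalized strip sums have, for each $n$ separately, the same distribution as the fixed-start ones. That yields convergence in probability, not almost sure convergence. (In the i.i.d.\ case the strips for different $n$ use essentially disjoint columns, so the normalized sums behave like nearly independent random variables; moreover $f_2$ is only bounded \emph{above} by $V$, so the moment control built into \eqref{cL-cond} does not dominate $|f_2|$.) The grid-direction a.s.\ limit is thus left unproven, and that is precisely the nontrivial content of the theorem.

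A correct way to supply this step is the ergodicity-of-the-limit argument. For an integer direction $\xi\in\Z^2$ with $\xi\cdot e_1,\xi\cdot e_2>0$, $F(\w,0,n\xi)$ is additive along $T_\xi$, so Birkhoff along $T_\xi$ gives $n^{-1}F(\w,0,n\xi)\to\chi_\xi(\w)$ a.s.\ and in $L^1$, with $\E[\chi_\xi]=\E[F(0,\xi)]=0$ because $F$ is centered. The stationarity and additivity of the cocycle yield
\[
F(T_{e_1}\w,0,n\xi)=F(\w,0,n\xi)+f_1(T_{n\xi}\w)-f_1(\w),
\]
and $n^{-1}f_1(T_{n\xi}\w)\to 0$ a.s.\ because $\{f_1\circ T_{n\xi}\}$ is a stationary $L^1$ sequence; hence $\chi_\xi\circ T_{e_1}=\chi_\xi$ a.s., and likewise for $T_{e_2}$. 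Ergodicity then forces $\chi_\xi$ to be a.s.\ constant, equal to its mean $0$. With this step (plus a routine interpolation to handle the floor functions for nonlattice $(a,b)$), your bracketing argument does complete the proof.
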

If the process  $\{V(T_x\w):x\in\Z^2\}$ is  i.i.d.,  then a sufficient condition for \eqref{cL-cond} is  $\E(\abs{V}^p)<\infty$ for some $p>2$  \cite[Lemma A.4]{Ras-Sep-Yil-13}

\section{Percolation cone}\label{sec:cone-pf}

In this appendix we prove Theorem \ref{th:flat-edge}.   The proof is divided between two sections.  Section \ref{sec:flat-edge-pd} proves that on $\Uset$, $\gpp=1$  exactly on the percolation cone.  Section \ref{sec:flat-diff-pf} proves the differentiability of $\gpp$ at   $\etamax=(\beta_{p_1}, 1-\beta_{p_1})$.  Differentiability at  $\etamin=(1-\beta_{p_1}, \beta_{p_1})$ comes by symmetry.  

 The standing assumptions are  $\{\w_x\}_{x\in\Z^2}$  i.i.d., $\E\abs{\w_0}^p<\infty$ for some $p>2$,  $ \w_x\le 1$, and   $p_1=\P\{\w_0=1\}>\opc$ where $\opc$ is the critical probability of oriented site percolation.  The $\w$-weights are   assumed nondegenerate and so   $\P\{\w_0<1\}>0$.  
 
   The oriented percolation weights are defined by $\sigma_x=\one\{\w_x=1\}$.  
The oriented percolation event $u\to v$ means that there exists an up-right path $u=x_0, x_1,\dotsc, x_m=v$ with $x_{i}-x_{i-1}\in\{e_1,e_2\}$, $m=\abs{v-u}_1$, and such that $\sigma_{x_i}=1$ for $i=1,\dotsc,m$. (The initial point $u$ may be open or closed.)   The percolation event $u\to\infty$ means that there is an infinite up-right path starting at $u$ along which all weights $\sigma_{x}=1$ except perhaps $\sigma_u$.
 

\subsection{Flat edge} \label{sec:flat-edge-pd}




%

This section proves that on $\Uset$ the limiting time constant $\gpp$ is equal to one only on the percolation cone.  

\begin{theorem}\label{perc-flat-thm}
Let  $\xi\in\Uset$. Then
$\gpp(\xi)=1$  if and only if  $1-\beta_{p_1}\le\xi\cdot e_1\le\beta_{p_1}$.
\end{theorem}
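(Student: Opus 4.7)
The plan is to prove Theorem \ref{perc-flat-thm} in three steps: a trivial upper bound, the lower bound inside the cone via the oriented percolation cluster, and a truncation combined with the Durrett--Liggett flat-edge argument outside the cone.

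The upper bound $\gpp\le 1$ is immediate from $\w_x\le 1$ and $|\fl{n\xi}|_1\le n$. For the flat part, the symmetry $\gpp(s,t)=\gpp(t,s)$ gives $\gpp(\etamin)=\gpp(\etamax)$, and concavity of $\gpp$ together with the global upper bound reduces the problem to showing $\gpp(\etamax)\ge 1$. On the positive-probability event $\{0\to\infty\}$ the right-edge satisfies $a_n/n\to\beta_{p_1}$, and the up-right open path from $0$ to $v_n:=(a_n,n-a_n)$ has $\w_{x_k}=1$ for every $1\le k\le n$, hence $\Gpp_{0,v_n}\ge\w_0+(n-1)$. Applying the uniform shape theorem of Appendix \ref{app:shape} along $v_n/|v_n|_1\to\etamax$ yields $\gpp(\etamax)\ge 1$ on this event, and since $\gpp$ is deterministic we conclude $\gpp(\etamax)=1$.

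For the strict inequality $\gpp(\xi)<1$ when $\xi\cdot e_1>\beta_{p_1}$ (the case $\xi\cdot e_1<1-\beta_{p_1}$ being symmetric), my plan is to truncate to a $\{0,1\}$-valued model. For $\epsilon>0$ set $W^\epsilon_x=\one\{\w_x>1-\epsilon\}$; since $\w_x\le(1-\epsilon)+\epsilon W^\epsilon_x$ pointwise, one gets
\[\gpp(\xi)\le(1-\epsilon)+\epsilon\,\wt g^\epsilon(\xi),\]
where $\wt g^\epsilon$ is the LPP constant of the i.i.d.\ Bernoulli$(p^\epsilon)$ weights $W^\epsilon_x$ with $p^\epsilon=\P\{\w_0>1-\epsilon\}$. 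Monotone convergence gives $p^\epsilon\searrow p_1$ as $\epsilon\searrow 0$; combining with monotonicity and continuity of $p\mapsto\beta_p$ at $p_1$ (a standard oriented-percolation fact), for small $\epsilon$ one retains $\xi\cdot e_1>\beta_{p^\epsilon}$. It thus suffices to prove $\wt g^\epsilon(\xi)<1$ for the Bernoulli model, which is the Durrett--Liggett flat-edge theorem in its site-percolation form.

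This final step, which I expect to be the main obstacle, reduces to showing that $n-\wt G^\epsilon_{0,\fl{n\xi}}$, the minimum number of closed sites on an up-right path from $0$ to $\fl{n\xi}$, grows linearly in $n$ whenever $\xi$ lies outside the percolation cone. Arguing by contradiction, if it were $o(n)$ then any near-optimal path decomposes into $o(n)$ maximal open sub-segments of total length $n-o(n)$. The exponential right-edge estimate for oriented site percolation, applied uniformly over starting points in $[0,n]^2$ via a union bound adapted from \cite{Dur-84}, bounds the $e_1$-displacement of any long open sub-segment of length $\ell$ by $(\beta_{p^\epsilon}+o(1))\ell$; summing these and using the trivial displacement bound on closed sites yields total $e_1$-displacement at most $\beta_{p^\epsilon}n+o(n)$, which contradicts $(\xi\cdot e_1)n>\beta_{p^\epsilon}n$. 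Securing this uniform large-deviation control on open-cluster displacements is exactly where the first-passage arguments of \cite{Dur-Lig-81,Mar-02,Auf-Dam-13} need to be adapted to the directed site setting.
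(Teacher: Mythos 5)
Your first two steps (the trivial upper bound $\gpp\le1$ and the identity $\gpp=1$ on $[\etamin,\etamax]$ via the oriented cluster and the $L^1$ shape theorem of Appendix~\ref{app:shape}) are exactly the paper's argument and are correct. The truncation step is also sound: the pointwise bound $\w_x\le(1-\epsilon)+\epsilon\one\{\w_x>1-\epsilon\}$ does give $\gpp(\xi)\le(1-\epsilon)+\epsilon\,\wt g^\epsilon(\xi)$, and the continuity of $p\mapsto\beta_p$ lets you keep $\xi$ outside the $p^\epsilon$-cone for small $\epsilon$. This is a different organization than the paper's, which never passes to a clean Bernoulli model but instead thresholds at the block level (via $p_0=\P\{\w_0\ge1-\delta_0\}$ inside Proposition~\ref{prop:marchand}).

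The gap is in the last step. You argue by contradiction that if the minimal number of closed sites $m$ along a near-optimal path is $o(n)$, then summing right-edge displacement bounds over the maximal open sub-segments produces total $e_1$-displacement at most $\beta_{p^\epsilon}n+o(n)$. This fails in the intermediate regime $n/\log n\lesssim m=o(n)$: there may be up to $m+1$ open sub-segments, and for those of length $\ell\lesssim\log n$ the exponential right-edge estimate is useless after a union bound over the $\sim n^2$ possible starting points, so you must fall back on the trivial bound (displacement $\le$ length). The total length sitting in such short sub-segments can be a constant fraction of $n$ (e.g.\ $m\sim n/\log n$ isolated closed sites separating open runs of length $\sim\log n$), and then your sum gives displacement $\approx n$, not $\approx\beta n$, so there is no contradiction. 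The paper's Proposition~\ref{prop:marchand} avoids exactly this by working at a single fixed scale $N$: blocks are colored black or white according to whether the LPP time across the block in the cone direction is below $N-\delta_0$, white blocks are exponentially rare, and the $L$-dependent Peierls estimate of Lemma~\ref{lm:perc-aux} converts ``few black blocks on a coarse path'' into an exponentially small event. That renormalization, not a union bound over sub-segments of all lengths, is the missing ingredient in your sketch, and it is also what the Durrett--Liggett/Marchand/Auffinger--Damron arguments actually do.
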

 
The rest of the section gives the proof.   
  
\begin{lemma}\label{lm:perc-aux}
Fix $\rho\in(0,1)$ and $L>0$.  Then there exists $q_0=q_0(\rho, L)\in(0,1)$ 
such that the following holds.   If $q\ge q_0$ and $\{\tau_z: z\in\Z^2\}$  are stationary 
$\{0,1\}$-valued random variables with $\P\{\tau_0=1\}=q$  
and $\tau_0$ independent of $\{\tau_z: \abs{z}_1>L\}$, 
then there are  positive constants $a$ and $b$ such that,  for all $m\ge1$, 
\[P\Big\{\exists \text{ up-right path } z_{0,m}: z_0=0, \; \sum_{i=0}^m \tau_{z_i} \le \rho(m+1)\Big\}\le a e^{-bm}.\]
\end{lemma}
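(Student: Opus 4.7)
The plan is to run a standard Peierls-type union bound over up-right paths, combined with a Chernoff estimate that exploits the finite-range dependence. Let $\xi_z = 1 - \tau_z$, so that $\P\{\xi_0 = 1\} = 1-q$ can be made as small as we please by taking $q_0$ close to $1$. The event in the lemma is the existence of an up-right path with $\sum_{i=0}^{m} \xi_{z_i} \ge (1-\rho)(m+1)$.

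First I would establish full joint independence of $\{\tau_{y_1}, \ldots, \tau_{y_k}\}$ whenever the $y_j$ are pairwise at $\ell^1$-distance strictly greater than $L$. This follows by iterated conditioning from the hypothesis (combined with stationarity): $\tau_{y_1}$ is independent of $\sigma(\tau_z : |z-y_1|_1 > L)$, which contains $\sigma(\tau_{y_2},\ldots,\tau_{y_k})$; conditioning on $\tau_{y_2},\dots,\tau_{y_k}$ and iterating yields the product law. Next, given any up-right path $z_{0,m}$, partition the indices $\{0,\ldots,m\}$ into $L+1$ residue classes modulo $L+1$; along an up-right path any two points with indices in the same class satisfy $|z_i - z_j|_1 = |i-j| \ge L+1 > L$, so the variables $\{\xi_{z_i}\}$ restricted to one residue class are i.i.d.\ Bernoulli$(1-q)$.

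If $\sum_{i=0}^m \xi_{z_i} \ge (1-\rho)(m+1)$ then at least one residue class $G_j$ satisfies $\sum_{i \in G_j} \xi_{z_i} \ge (1-\rho)(m+1)/(L+1)$, with $|G_j| \le (m+1)/(L+1)+1$. Provided $1-q < 1-\rho$, this is a large-deviation event, and the standard Chernoff bound for sums of i.i.d.\ Bernoullis gives
\[
\P\Bigl\{\sum_{i \in G_j} \xi_{z_i} \ge \tfrac{(1-\rho)(m+1)}{L+1}\Bigr\} \le e^{-I(q)\, m},
\]
where the rate $I(q)$ satisfies $I(q) \to \infty$ as $q \to 1$. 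A union bound over the $L+1$ residue classes and over the $2^m$ up-right paths starting at $0$ yields a total bound of $(L+1)\,2^m e^{-I(q)m}$. Choosing $q_0 = q_0(\rho,L)$ so large that $I(q_0) > \log 2$ produces the desired exponential decay $a\,e^{-bm}$ with $b = I(q_0) - \log 2 > 0$.

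The one subtle point — and the only step that is not routine — is the derivation of joint independence from the stated pairwise hypothesis "$\tau_0$ independent of $\{\tau_z : |z|_1 > L\}$"; but as sketched above, the hypothesis is in fact a $\sigma$-algebra independence statement, so the iterated-conditioning argument closes it without needing any stronger mixing assumption. Everything else is bookkeeping with Chernoff's inequality, so I expect no additional obstacles.
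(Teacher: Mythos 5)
Your argument is correct, and it rests on the same two pillars as the paper's proof: a Peierls union bound over the $2^m$ up-right paths, and the observation that $L$-range dependence together with stationarity forces independence of $\tau$-values at pairwise $\ell^1$-distant sites. The decomposition you use to cash in that independence is a clean variant of the paper's. The paper conditions on the number $k$ of bad (i.e.\ $\tau=0$) sites along the path, extracts from them $\lceil k/(L+1)\rceil$ indices at mutual distance $>L$, and bounds the probability by $\binom{m+1}{\lceil k/(L+1)\rceil}(1-q)^{k/(L+1)}$ before summing over $k$ and over paths; the exponential decay is then extracted by hand. You instead split $\{0,\dots,m\}$ deterministically into the $L+1$ residue classes mod $L+1$, which along an up-right path are automatically pairwise at distance $\ge L+1$, apply pigeonhole to locate a heavy class, and invoke the standard Chernoff bound on that class. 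Your route is slightly more modular and makes the appeal to large deviations transparent; the paper's is more explicit about the constants. You are also more careful than the paper on the independence step: the paper silently uses $\P(\tau_{y_1}=0,\dots,\tau_{y_n}=0)\le(1-q)^n$ for pairwise distant $y_i$, whereas you spell out the iterated-conditioning argument that upgrades the hypothesis to full joint independence, which is the only point that is not purely bookkeeping. The two proofs close identically, by choosing $q_0$ close enough to $1$ that the gain per sparse site beats the $\log 2$ entropy per step of the path.
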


\begin{proof} First, fix $m\ge 1$ and an up-right path $z_{0,m}$ such that $z_0=0$.
For $k\le m+1$, on the event $\sum_{j=0}^m (1-\tau_{z_j})=k$,  
we can find, among the $k$ indices with $\tau_{z_j}=0$,  indices 
$j_1,\dotsc,j_{\ce{k/(L+1)}}$ such that $\abs{z_{j_i}-z_{j_r}}_1>L$ for all $r,i\le\ce{k/(L+1)}$. Given these indices, the probability that $\tau_{z_{j_r}}=0$ for all $r\le\ce{k/(L+1)}$
is bounded above by $(1-q)^{k/(L+1)}$. There are at most $m+1\choose \ce{k/(L+1)}$
many choices of these indices.  Consequently,
\[P\Big\{\sum_{j=0}^m (1-\tau_{z_j})=k\Big\}\le{m+1\choose\ce{k/(L+1)}}(1-q)^{k/(L+1)}.\]
This implies
\[P\Big\{\sum_{i=0}^m \tau_{z_j} \le \rho(m+1)\Big\}\le \sum_{(1-\rho)(m+1)\le k\le m+1}{m+1\choose\ce{k/(L+1)}}(1-q)^{k/(L+1)}.\]
Since there are $2^m$ paths $z_{0,m}$, 
the probability in the claim of the lemma is then bounded above by
\begin{align*}
&2^m\sum_{(1-\rho)(m+1)\le k\le m+1}{m+1\choose\ce{k/(L+1)}}(1-q)^{k/(L+1)}\\
&\qquad\le 2^m(1-q)^{(1-\rho)(m+1)/(L+1)} \times (L+1)\sum_{i=0}^{m+1}{m+1\choose i}\\
&\qquad\le 2(L+1)(1-q)^{(1-\rho)/(L+1)}\exp\Big\{\Big[\log 4+\frac{1-\rho}{L+1}\log(1-q)\Big]m\Big\}.
\end{align*}
This decays exponentially fast as soon as $q>1-4^{-(L+1)/(1-\rho)}$.
\end{proof}

The main work is in the next proposition.   

\begin{proposition}\label{prop:marchand}
Assume $p_1=\P\{\w_0=1\}>\pc$. 
Then for each $\e\in(0,1-\beta_{p_1})$ there exist finite positive constants $A$, $B$, and $\delta$ such that for all $\ell,k\in\N$ with $\ell/k\le (1-\beta_{p_1}-\e)/(\beta_{p_1}+\e)$ we have
\begin{align}\label{marchand:decay}
\P\{\Gpp_{0,(k,\ell)}\ge(1-\delta)(k+\ell)\} \le A e^{-Bk}.
\end{align}
\end{proposition}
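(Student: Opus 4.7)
The plan is to convert the last-passage estimate into an oriented-percolation estimate, and then use a block/renormalization argument to exploit that the direction $(k,\ell)$ lies strictly outside the percolation cone.

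\textbf{Step 1 (Introducing a gap).} Since $\w_0$ is nondegenerate with $\w_0\le 1$, choose $\eta\in(0,1)$ so that $q_\eta:=\P\{\w_0\le 1-\eta\}>0$ and $p_1^\eta:=\P\{\w_0>1-\eta\}=1-q_\eta$ still satisfies $p_1^\eta>\opc$. Define $\sigma_x^\eta=\one\{\w_x>1-\eta\}$; these are i.i.d.\ Bernoulli$(p_1^\eta)$. The corresponding oriented-percolation right-edge constant $\beta_{p_1^\eta}$ is continuous and monotone in $p_1^\eta$ on $(\opc,1)$ (standard oriented-percolation fact, cf.\ \cite{Dur-84}). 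Shrinking $\eta$ if necessary, we can arrange $\beta_{p_1^\eta}\le\beta_{p_1}+\e/2$, so the direction $(k,\ell)$ with $\ell/k\le(1-\beta_{p_1}-\e)/(\beta_{p_1}+\e)$ is strictly below the $\sigma^\eta$-percolation cone as well.

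\textbf{Step 2 (Reduction to a path-counting event).} Set $n=k+\ell$. If $\Gpp_{0,(k,\ell)}\ge(1-\delta)n$, pick a maximizing up-right path $x_{0,n}$. Using $\w_{x_i}\le 1$ and $(1-\w_{x_i})\ge\eta\,\one\{\sigma_{x_i}^\eta=0\}$,
\[
\delta n\;\ge\;\sum_{i=0}^{n-1}(1-\w_{x_i})\;\ge\;\eta\cdot\#\{i:\sigma_{x_i}^\eta=0\}.
\]
Thus the event in \eqref{marchand:decay} is contained in
\[
\mathcal{E}_\rho=\bigl\{\exists\text{ up-right path }x_{0,n}\text{ from }0\text{ to }(k,\ell):\ \#\{i:\sigma_{x_i}^\eta=0\}\le\rho n\bigr\}
\]
with $\rho=\delta/\eta$. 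It therefore suffices to show $\P(\mathcal{E}_\rho)\le Ae^{-Bk}$ for some sufficiently small $\rho>0$ depending only on $\e,p_1$.

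\textbf{Step 3 (Renormalization, the main estimate).} This is where the direction assumption is used. Fix an integer $L$ (to be chosen large) and partition $\Z^2$ into $L\times L$ blocks $B_u=Lu+[0,L)^2$, $u\in\Z^2$. Call a block $B_u$ \emph{good} if every up-right path crossing $B_u$ in a direction asymptotically $\le(1-\beta_{p_1^\eta}-\e/4)/(\beta_{p_1^\eta}+\e/4)$ contains at least $cL$ many $\sigma^\eta$-closed sites, where $c=c(\eta,p_1,\e)>0$ is a suitable constant. Because $(k,\ell)/n$ lies strictly below the $\sigma^\eta$-percolation cone, the exponential estimate on the right edge $a_n^\eta$ of $\sigma^\eta$-oriented percolation (see \cite{Dur-84}) guarantees that, for $L$ large, $\P(B_u\text{ is good})\ge 1-\e'(L)$ with $\e'(L)\to 0$. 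Goodness of $B_u$ depends only on $\{\sigma_x^\eta:x\in B_u\}$ and hence forms an i.i.d.\ field across blocks.

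Any up-right path from $0$ to $(k,\ell)$ crosses $\Theta(k/L)$ blocks. Along its passage through a good block it accumulates at least $cL$ closed sites, while the total number of closed sites it is allowed to accumulate on $\mathcal{E}_\rho$ is $\rho n$. Hence on $\mathcal{E}_\rho$ the path must traverse at least $\Theta(k/L)$ \emph{bad} blocks (up to constants depending on $c,\rho,\ell/k$). A standard Peierls/contour count for site percolation on this renormalized block lattice, together with the arbitrarily small density of bad blocks, produces a bound $\le A\exp(-BL^{-1}k)$. Choosing $L$ fixed and $\rho$ (hence $\delta$) sufficiently small in terms of $\e,\eta,c$ completes the proof.

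\textbf{Main obstacle.} Step 3 is the real work. The delicate point is establishing the ``goodness'' estimate $\P(B_u\text{ good})\to 1$ as $L\to\infty$: one has to rule out that some up-right path crossing a large block manages to be almost entirely $\sigma^\eta$-open while still having a slope below the cone. This is essentially a large-deviation statement for oriented percolation in a strictly sub-characteristic direction, and is where Marchand's \cite{Mar-02} argument for first-passage percolation is adapted to the directed setting. Once this is in hand, the block/renormalization step is routine.
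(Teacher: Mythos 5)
Your approach tracks the paper's proof quite closely: block renormalization, appeal to the right-edge estimate of \cite{Dur-84}, a flat-vs-steep dichotomy for block crossings, and a Peierls-type count (which is exactly Lemma \ref{lm:perc-aux} of the paper, proved for a finite-range-dependent stationary field). Steps 1 and 2 are essentially correct. The gap is in Step 3, and it is not just the ``delicate point'' you flag but a wrong choice of goodness condition.

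You declare $B_u$ good if every flat-crossing up-right path picks up $\ge cL$ $\sigma^\eta$-closed sites, with $c$ fixed, and claim $\P(\text{good})\to 1$ follows from the right-edge estimate. It does not: the right-edge estimate rules out paths in the sub-characteristic direction with \emph{zero} closed sites, not paths with fewer than $cL$ closed sites. A statement of the form ``every flat-crossing path of length $L$ has a positive density of closed sites, except with exponentially small probability'' is a genuine large-deviation estimate for oriented percolation, and in fact is morally equivalent to Proposition \ref{prop:marchand} itself applied at the block scale — so as written there is a real risk of circularity, and no derivation is given. The paper avoids this entirely by coloring a block \emph{black} when $\Gpp_{u,v}\le|v-u|_1-\delta_0$ for all $N$-step flat increments $(u,v)$ in the block. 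Whiteness then immediately produces a fully $\sigma$-open percolation path: if $\Gpp_{0,v}>|v|_1-\delta_0$ and $\w_x\le 1$ then \emph{every} site on the maximizing path has $\w_{x_i}>1-\delta_0$ (not merely most), and one further conditions on $\w_v=1$ so that $0\to v$ in the $(1-\delta_0)$-thresholded percolation. This reduces whiteness to the right-edge event with no secondary renormalization. The price is that a black block gives only a deficit of $\delta_0$ in passage time (rather than $cL$ closed sites), so the final tuning parameter $\delta$ must satisfy the $N$-dependent constraint \eqref{eq:delta}; your $\ge cL$-closed-sites formulation was presumably chosen to avoid this $L$-dependence of $\delta$, but the estimate it requires is precisely the one you do not prove.

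A smaller issue: at the end of Step 3 you write that ``the path must traverse at least $\Theta(k/L)$ bad blocks,'' but a path accumulates no closed sites in a \emph{good} block it crosses steeply, so you cannot immediately conclude the unexplained blocks are bad. The paper handles this by separating the black $z_j$'s into ``good $j$'' (black, flat crossing — bounded via the LPP deficit, eqn.\ \eqref{good}) and ``bad $j$'' (black, steep crossing — bounded via the slope constraint, eqn.\ \eqref{bad}), and then Lemma \ref{lm:perc-aux} supplies the Peierls count for the white $z_j$'s. Your phrase ``up to constants depending on $c,\rho,\ell/k$'' hints at this but the bookkeeping needs to be done explicitly.
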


\begin{proof}
For $N\in\N$ and $z\in\Z^2$ define 
	\[C_N(0)=\{x\ge0:\abs{x}_1<N\},\ C_N(z)=z+C_N(0),\text{ and }B_N(z)=C_{2N}(z)\setminus C_N(z).\]
	
Fix $\e_1\in(0,\e)$. Fix $\delta_0\in(0,1)$ such that $p_0=\P\{\w_0\ge1-\delta_0\}>\pc$ and $\beta_{p_1}\le\beta_{p_0}<\beta_{p_1}+\e$.  Here we used the continuity of $\beta_{p}$  as a function of $p\in[\pc,1)$ \cite[(3) on p.\ 1031]{Dur-84}.  

Abbreviate 
	\[\lambda_{p_1,\e_1}=\frac{1-\beta_{p_1}-\e_1}{\beta_{p_1}+\e_1}\,.\]	
Given $N\in\N$ and $\w\in\Omega$, color $z\in\Z^2$ {\sl black} if 
	\begin{align}\label{eq:black}
	\Gpp_{u,v}\le\abs{v-u}_1-\delta_0
	\end{align}
for every $Nz\le u\le v$ with $\abs{u-Nz}_1=N$, $\abs{v-u}_1=N$, and
	\[\frac{(v-u)\cdot e_2}{(v-u)\cdot e_1}\le\lambda_{p_1,\e_1}.\]
See the left panel of Figure \ref{fig:g-b}. Color $z$ {\sl white} if it is not black. 
Then
	\begin{align*}
	&\P\{0\text{ is white}\}\le(N+1)\P\big\{\exists v\ge0:\abs{v}_1=N,v\cdot e_2/v\cdot e_1\le\lambda_{p_1,\e_1},\Gpp_{0,v}>\abs{v}_1-\delta_0\big\}\\
		&\qquad=p^{-1}(N+1)\P\big\{\exists v\ge0:\abs{v}_1=N,v\cdot e_2/v\cdot e_1\le\lambda_{p_1,\e_1},\Gpp_{0,v}>\abs{v}_1-\delta_0,\w_v=1\big\}.
	\end{align*}
(For the equality we used the fact that $\Gpp_{0,v}$ is independent of $\w_v$.)
Define the oriented site  percolation weights $\sigma_x=\one\{\w_x\ge1-\delta_0\}$. 
Since $\w_x\le1$ for all $x$, $\w_v=1$ and $\Gpp_{0,v}>\abs{v}_1-\delta_0$ imply the existence of an up-right path from $0$ to $v$
with $\w_x\ge1-\delta_0$ along the path. In other words $0\to v$ in the oriented percolation process.
Thus, 
	\[\P\{0\text{ is white}\}\le p_1^{-1}(N+1)\P\big\{N^{-1}a_N\ge 1/(\lambda_{p_1,\e_1}+1)=\beta_{p_1}+\e_1\big\}.\]
Since $\beta_{p_1}+\e_1>\beta_{p_0}$, the probability on the right-hand side decays exponentially fast. (See the first remark on p.\ 1018 of \cite{Dur-84}.)  
Consequently, the probability the origin is white vanishes as $N\to\infty$.

Pick $\rho\in(0,1)$ such that
	\[\frac{1-\beta_{p_1}-\e}{1-\beta_{p_1}-\e_1} <\rho<1.\]
Pick $N$ large enough so that $\P\{0\text{ is black}\}\ge q_0$, where $q_0$ is from Lemma \ref{lm:perc-aux}.
Pick $\delta>0$ small so that 
\begin{align}\label{eq:delta}
\frac{1-\beta_{p_1}-\e}{1-\beta_{p_1}-\e_1} + \frac{N\delta}{\delta_0}<\rho.
\end{align}

Given an up-right path $x_{0,k+\ell}$ from $0$ to $(k,\ell)$
let $m=\fl{(k+\ell)/N}$ and define the up-right path $z_{0,m}$ by
	\[x_{(j+1)N-1}\in C_{N}(Nz_j),\quad 0\le j\le m.\]
Vertices $Nz_j$ are  the south-west corners of the squares $\{y:Nz\le y\le Nz+(N-1,N-1)\}$, $z\in\Z_+^2$, that  path $x_{0,k+\ell}$ enters in succession. See Figure \ref{fig:triangle}.

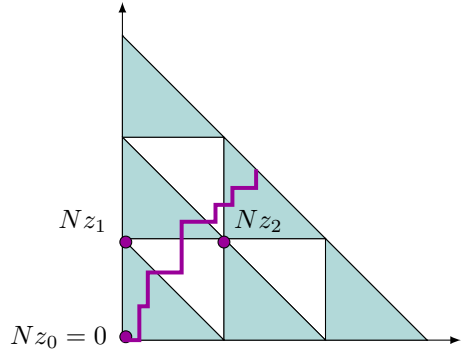
\begin{figure}[h!]
\begin{center}

\begin{tikzpicture}[>=latex,scale=0.45]

\definecolor{sussexg}{rgb}{0,0.5,0.5}
\definecolor{sussexp}{rgb}{0.6,0,0.6}

\draw[<->] (10,0)--(0,0)--(0,10);

\foreach \x in {3,6,9}
	{
		\draw (\x, 0) -- (0,\x);
		\draw(0, \x)--(9-\x, \x);
		\draw( \x,0)--(\x, 9- \x); 
	}
	
	\draw[fill=sussexg!30](0, 0) -- (0, 3) -- (3, 0)--(0,0);
	\draw[fill=sussexg!30](0, 3) -- (0, 6) -- (3, 3)--(0,3);
	\draw[fill=sussexg!30](0,6) -- (0, 9) -- (3,6)--(0,6);
	\draw[fill=sussexg!30](3, 0) -- (3, 3) -- (6, 0)--(3,0);
	\draw[fill=sussexg!30](3, 3) -- (3, 6) -- (6, 3)--(3,3);
	\draw[fill=sussexg!30](6,0) -- (6, 3) -- (9, 0)--(6,0);
	
	\draw[fill = sussexp] (0.1,0.1)circle(1.75mm)node[left]{\small$Nz_0 = 0\,\,$};
	\draw[fill = sussexp] (0.1,2.9)circle(1.75mm)node[above left]{\small$Nz_1 \,\,$};
	\draw[fill = sussexp] (3,2.9)circle(1.75mm)node[above right]{\small$Nz_2 \,\,$};;

	\draw[line width=1.5pt,scale=.5,color=sussexp](0,0)--(1,0)--(1,2)--(1.5,2)--(1.5,3.5)--(1.5, 4)--(3.5,4)--(3.5,7)--(5.5,7)--(5.5,8)--(6.5,8)--(6.5, 9)--(7.9,9)--(7.9,10.1);  
\end{tikzpicture}

\end{center}
\caption{Up-right path $z_{\centerdot}$ constructed from the up-right path $x_{\centerdot}$.}
\label{fig:triangle}
\end{figure}

We prove next that for $k$ and $\ell$ as in the claim of the proposition
	\begin{align}\label{eq:marchand}
	\begin{split}
	&\{\Gpp_{0,(k,\ell)}\ge(1-\delta)(k+\ell)\}\\
	&\qquad\subset\Big\{\exists \textrm{ an up-right path }z_{0,m}:z_0=0,\sum_{j=0}^m\one\{z_j\text{ is black}\}\le\rho(m+1)\Big\}.
	\end{split}
	\end{align}
The proposition then follows from Lemma \ref{lm:perc-aux} and the fact that $k+\ell\le N(m+1)$.

Fix an up-right path $x_{0,k+\ell}$ such that $x_0=0$, $x_{k+\ell}=(k,\ell)$, and
	\begin{align}\label{eq:Gpp-large}
	\sum_{i=0}^{k+\ell-1}\w_{x_i}\ge(1-\delta)(k+\ell).
	\end{align}

Consider $j\le m$. If $z_j$ is black, then we  label $j$ as {\sl good} if
		\[\frac{(x_{(j+2)N}-x_{(j+1)N})\cdot e_2}{(x_{(j+2)N}-x_{(j+1)N})\cdot e_1}\le \lambda_{p_1,\e_1}.\]
If $z_j$ is black and $j$ is not good, then say $j$ is {\sl bad}. See Figure \ref{fig:g-b}.

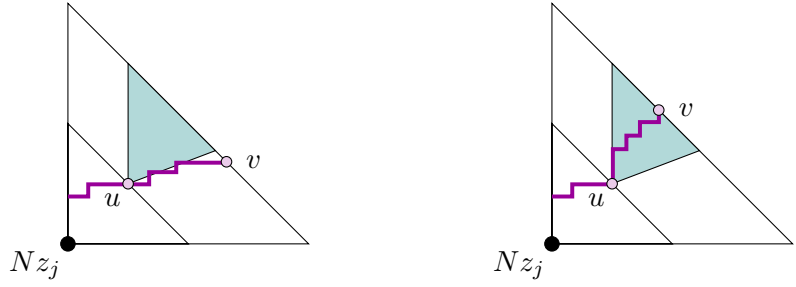
\begin{figure}[h!]

\begin{tikzpicture}[>=latex, scale=0.4]

\definecolor{sussexg}{rgb}{0,0.5,0.5}
\definecolor{sussexp}{rgb}{0.6,0,0.6}

\draw (0,0)--(4,0)--(0,4)--(0,0)--(0,8)--(8,0)--(0,0);

\draw[fill = sussexg!30](2,2)--(2,6)--(4.9,3.1)--(2,2); 

\draw[line width=1.5pt,scale=.45,color=sussexp](0,3.5)--(1.5,3.5)--(1.5, 4.4)--(4.5,4.4)--(6,4.4)--(6,5.3)--(8,5.3)--(8,6)--(11.7, 6);


\draw[fill=sussexp!20](2,2)circle(1.75mm);
\draw[fill=sussexp!20](5.27,2.73)circle(1.75mm)node[right]{\,\,$v$};

\draw(2.1,2)node[below left]{\,$u$};

\draw[fill = black] (0,0)circle(2.4mm);
\draw(0.1,0.1)node[below left]{\,$Nz_j$};


\end{tikzpicture}
\hspace{2cm} 
\begin{tikzpicture}[>=latex, scale=0.4]

\definecolor{sussexg}{rgb}{0,0.5,0.5}
\definecolor{sussexp}{rgb}{0.6,0,0.6}

\draw (0,0)--(4,0)--(0,4)--(0,0)--(0,8)--(8,0)--(0,0);

\draw[fill = sussexg!30](2,2)--(2,6)--(4.9,3.1)--(2,2); 

\draw[line width=1.5pt,scale=.45,color=sussexp](0,3.5)--(1.5,3.5)--(1.5, 4.4)--(4.5,4.4)--(4.5,7)--(5.5,7)--(5.5,8)--(6.5,8)--(6.5, 9)--(7.9,9)--(7.9,10.1);  

\draw[fill=sussexp!20](2,2)circle(1.75mm);
\draw[fill=sussexp!20](3.55,4.45)circle(1.75mm)node[right]{\,\,$v$};

\draw(2.1,2)node[below left]{\,$u$};

\draw[fill = black] (0,0)circle(2.4mm);
\draw(0.1,0.1)node[below left]{\,$Nz_j$};


\end{tikzpicture}

\caption{A good $j$ (left) and a bad $j$ (right). Here, $u=x_{(j+1)N}$ and $v=x_{(j+2)N}$. 
The shaded region contains the points $x\ge u$ such that $(x-u)\cdot e_2/(x-u)\cdot e_1>\lambda_{p_1,\e_1}$.}
\label{fig:g-b}
\end{figure}

If $j$ is good then $x_{(j+1)N,(j+2)N}$, the portion of the path crossing $B_N(z_j)$, has a passage time no larger than $N-\delta_0$ (see \eqref{eq:black}). 
Since $\w_x\le1$ for all $x$,
	\[\sum_{i=0}^{k+\ell-1}\w_{x_i}\le k+\ell-\delta_0\abs{\{j\le m:j\text{ is good}\}}.\]
Together with \eqref{eq:Gpp-large} this implies
	\begin{align}\label{good}
	\abs{\{j\le m:j\text{ is good}\}}\le \frac\delta{\delta_0}(k+\ell)\le\frac{\delta N}{\delta_0} (m+1).
	\end{align}

If $j$ is bad, then 
\begin{align*}
(x_{(j+2)N}-x_{(j+1)N})\cdot e_2
&>\lambda_{p_1,\e_1}(x_{(j+2)N}-x_{(j+1)N})\cdot e_1\\
&=\lambda_{p_1,\e_1}[N-(x_{(j+2)N}-x_{(j+1)N})\cdot e_2].
\end{align*}
This implies
\begin{align}\label{eq:bad-bound}
(x_{(j+2)N}-x_{(j+1)N})\cdot e_2>\frac{N\lambda_{p_1,\e_1}}{1+\lambda_{p_1,\e_1}}\,.
\end{align}
Similarly, $\ell \le \lambda_{p_1,\e} k$ implies
\[x_{k+\ell}\cdot e_2=\ell \le\frac{(k+\ell)\lambda_{p_1,\e}}{1+\lambda_{p_1,\e}}\,.\]
Adding \eqref{eq:bad-bound} over the bad $j$ now leads to
\[\frac{N\lambda_{p_1,\e_1}}{1+\lambda_{p_1,\e_1}}\abs{\{j\le m:j\text{ is bad}\}}\le \frac{(k+\ell)\lambda_{p_1,\e}}{1+\lambda_{p_1,\e}}.\]
Consequently, 
	\begin{align}\label{bad}
	\abs{\{j\le m:j\text{ is bad}\}}\le \frac{1-\beta_{p_1}-\e}{1-\beta_{p_1}-\e_1}\cdot\frac{k+\ell}N\le\frac{1-\beta_{p_1}-\e}{1-\beta_{p_1}-\e_1} (m+1).
	\end{align}

By the choice of $\delta$ in \eqref{eq:delta}, adding \eqref{good} and \eqref{bad} we see that for a path satisfying \eqref{eq:Gpp-large} 
the proportion of $z_j$ colored black is no more than $\rho(m+1)$. Inclusion 
\eqref{eq:marchand} has been verified and Proposition \ref{prop:marchand} proved.
\end{proof}

\begin{proof}[Proof of Theorem \ref{perc-flat-thm}] 
On the positive probability event $\{0\to\infty\}$ in the oriented percolation with weights $\sigma_x=\one\{\w_x=1\}$ we have $0\to (a_n,n-a_n)$ and
$G_{0,(a_n,n-a_n)}=n$ for all $n$. Since $a_n/n\to\beta_{p_1}$ as $n\to\infty$, the shape theorem \eqref{lln5} implies $\gpp(\beta_{p_1},1-\beta_{p_1})=1$.
By symmetry, we have $\gpp(1-\beta_{p_1},\beta_{p_1})=1$ as well. Concavity of $\gpp$ and the fact that $\gpp(\xi)\le1$ for all $\xi\in\Uset$ imply 
that $\gpp(\xi)=1$ when $1-\beta_{p_1}\le\xi\cdot e_1\le\beta_{p_1}$.

For the other direction assume $\xi\cdot e_1>\beta_{p_1}$ and apply \eqref{marchand:decay} to $(k,\ell)=\fl{n\xi}$ in conjunction with \eqref{eq:g:p2p} to deduce that $\gpp(\xi)\le1-\delta$ for some $\delta>0$.  The result for $\xi\cdot e_1<1-\beta_{p_1}$ comes by symmetry.
\end{proof}

\subsection{Differentiability at the endpoints}  \label{sec:flat-diff-pf} 

 In this section we prove that $\gpp$ is differentiable at $\etamax=(\beta_{p_1}, 1-\beta_{p_1})$.  It is convenient here to alter the definition \eqref{Gxy1} of the last-passage time $\Gpp_{x,y}$ so that $\w_x$ is excluded and $\w_y$ is included.  This of course makes no difference to the limit $\gpp$.     

We   define the oriented percolation process more generally.  
The successive levels  on which the process lives  are denoted by $D_n=\{(i,j)\in\Z^2: i+j=n\}$.    Let $S\subset D_m$ be a given initial occupied set.  Then at time $n\in\Z_+$ the occupied set is $\clb_n(S)=\{ v\in D_{m+n}:  \exists \,u\in S: u\to v\}$.  
If  $S$   is bounded below ($S$ has only finitely many points below the $x$-axis),    the lowest point  $r_n(S)$ of $\clb_n(S)$ is well-defined and satisfies $r_n(S)=(a_n(S), b_n(S))$ where 
\[  a_n(S) = \max_{u \in \clb_n(S)}\{ u \cdot e_1 \}
\qquad\text{and}\qquad 
b_n(S) = \min_{u \in \clb_n(S)}\{ u \cdot e_2 \}  . \]
A particular case of such an initial set is $\wt\Z_-=\{(-k,k): k\in\Z_+\}$, the antidiagonal copy of $\Z_-$.   Occasionally we also use the notation $b(\clb_n(S))=b_n(S)$.   Let $F_m=\{ (k,-k):  k=1,\dotsc, m\}$.   

\begin{lemma}\label{op:lm4}   For infinite sets $A\subseteq B\subseteq \wt\Z_-$,
\[  \E[ a_n(A \cup F_m)-a_n(A)]  \ge  \E[ a_n(B \cup F_m)-a_n(B)]  \ge   m    \]
with equality in the last inequality if $B=\wt\Z_-$.  
\end{lemma}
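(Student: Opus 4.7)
The plan rests on one identity and one translation-invariance calculation. The identity is
\[a_n(X \cup F_m) = a_n(X) \vee a_n(F_m)\quad\text{for all }X\subseteq\wt\Z_-,\]
valid pointwise in $\w$. This holds because any open up-right path from $X\cup F_m$ has its starting point in either $X$ or $F_m$, so $\clb_n(X\cup F_m) = \clb_n(X)\cup\clb_n(F_m)$, and the rightmost coordinate of a union of sets is the maximum of the rightmost coordinates. Subtracting $a_n(X)$ from both sides yields the key formula
\[a_n(X \cup F_m) - a_n(X) = \bigl(a_n(F_m) - a_n(X)\bigr)^+ ,\]
which I intend to use for both inequalities.

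The first inequality is then almost immediate. Since $A \subseteq B$, every open up-right path from $A$ is also an open up-right path from $B$, so pathwise $a_n(A) \le a_n(B)$. This gives pointwise
\[\bigl(a_n(F_m) - a_n(A)\bigr)^+ \ge \bigl(a_n(F_m) - a_n(B)\bigr)^+,\]
and taking expectations produces $\E[a_n(A \cup F_m) - a_n(A)] \ge \E[a_n(B \cup F_m) - a_n(B)]$.

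For the second inequality and the equality claim, the plan is to observe the lattice identity
\[\wt\Z_- \cup F_m \;=\; (m,-m) + \wt\Z_-,\]
which is a direct enumeration: $(m,-m) + \{(-k,k):k\ge0\} = \{(m-k,-m+k):k\ge0\}$ sweeps through $F_m$ as $k=0,\dots,m-1$, hits $(0,0)$ at $k=m$, and thereafter fills out $\{(-j,j):j\ge1\}$. Because $\P$ is invariant under the shift $T_{(m,-m)}$, the stationarity property of the percolation process gives the distributional identity $a_n(\wt\Z_- \cup F_m) \stackrel{d}{=} m + a_n(\wt\Z_-)$, and so $\E[a_n(\wt\Z_-\cup F_m)-a_n(\wt\Z_-)] = m$. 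This settles the equality case. For general $B \subseteq \wt\Z_-$ I combine this with the pathwise inequality $a_n(B)\le a_n(\wt\Z_-)$ used as in the previous paragraph (with $\wt\Z_-$ in the role of $B$ there) to conclude
\[\E[a_n(B\cup F_m)-a_n(B)] \;\ge\; \E[a_n(\wt\Z_-\cup F_m)-a_n(\wt\Z_-)] \;=\; m.\]

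There is essentially no hard step in this argument; the whole proof reduces to the union decomposition of $\clb_n$, pathwise monotonicity in the initial set, and translation invariance of $\P$. The only mild point worth flagging is the need to know $a_n(\wt\Z_-)$ is integrable so the subtraction $\E[a_n(\wt\Z_-\cup F_m)]-\E[a_n(\wt\Z_-)]$ is unambiguous. This is standard: $a_n(\wt\Z_-)$ is sandwiched between $-\infty$ (irrelevant since $\clb_n(\wt\Z_-)\ne\varnothing$ deterministically, as $\wt\Z_-$ percolates) and $n$, and in fact $|a_n(\wt\Z_-)|\le n$, so it is bounded.
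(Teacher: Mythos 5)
Your proof is correct and takes the same route as the paper: express $a_n(X\cup F_m)-a_n(X)$ via the union decomposition $\clb_n(X\cup F_m)=\clb_n(X)\cup\clb_n(F_m)$, use pathwise monotonicity in the initial set for the first inequality, and use the lattice identity $\wt\Z_-\cup F_m=(m,-m)+\wt\Z_-$ together with translation invariance of $\P$ for the equality, with the general lower bound $\ge m$ following by plugging $B=\wt\Z_-$ into the first inequality. One small nit in the integrability aside: $\clb_n(\wt\Z_-)\neq\varnothing$ is only almost sure (not deterministic), and $a_n(\wt\Z_-)$ is bounded above by $n$ but not below by $-n$; integrability still holds because the smallest index $k$ with $(-k,k)\to D_n$ has exponential tails, but the stated bound $|a_n(\wt\Z_-)|\le n$ is not correct.
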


\begin{proof} Since $\clb_n(A \cup F_m) = \clb_n(A) \cup \clb_n(F_m)$,  
\begin{align*}
a_n(A \cup F_m)-a_n(A)&= a_n(A) \vee a_n(F_m)-a_n(A)
=0\vee ( a_n(F_m)-a_n(A)) \\
&\ge 0\vee ( a_n(F_m)-a_n(B)) =  a_n(B\cup F_m)-a_n(B).
\end{align*}  
By a shift of the underlying weights $\w$,   
\[ r_n(\wt\Z_-\cup F_m)(\w)= r_n(\wt\Z_-)(T_{m,-m}\w) + (m,-m) . 
\]
By the shift-invariance of $\P$  
\[ \E[a_n(\wt\Z_-\cup F_m)]- \E[ a_n(\wt\Z_-)]=    m. 
\qedhere \]
\end{proof}

Let $\aaa$ be an integer $\ge 2$.  Fix a constant $c_0<1$ such that  $\P(c_0\le \w_0<1)>0$.  
To have an  {\sl $\aaa$-triangle} (configuration) at $(a,b)\in\Z^2$ means that 
$\w_{a,b+i}<1$ for $i=1,\dotsc, \aaa$,  $c_0\le \w_{a+1,b}<1$, and  
except for $(a+1,b)$ all sites in the triangle 
$\{(i,j):  i\ge a+1, \,j\ge b, \, i-(a+1)+j-b\le \aaa-1\}$ have weight
$\w_{i,j}=1$.   See Figure \ref{fig:ELL}.  

\begin{figure}[h]
	\begin{center}
		\begin{tikzpicture}[>=latex, scale=0.6]
			\definecolor{sussexg}{rgb}{0,0.4,0.4}
				\definecolor{sussexp}{rgb}{0.6,0,0.6} 
				
				\draw[fill=sussexp!30,rounded corners](0.7,1.7)rectangle(1.3,6.3);
				\draw[fill=sussexg!30, rounded corners](1.7,0.7)rectangle(2.3,1.3);
				\draw[densely dotted, color=sussexp, line width=1.5pt, smooth,rounded corners](2.3, 5.3)..controls (2,5.5)..(1.7, 5.3)--(1.7,1.7)--(2.7, 1.7)--(2.7, 0.7)--(6.3,0.7)..controls(6.5,1).. (6.3,1.3)--(2.3, 5.3);
				
				\foreach \x in {1,...,6}{
						\draw[fill = sussexg!30 ] (1,\x)circle(1.7mm);
					}
					
				\foreach \x in {1,...,5}{
						\draw[fill = gray!30] (2,\x)circle(1.7mm);
					}
				\foreach \x in {1,...,4}{
						\draw[fill = gray!30 ] (3, \x)circle(1.7mm);
					}
				\foreach \x in {1,...,3}{
						\draw[fill = gray!30 ] (4, \x)circle(1.7mm);
					}
				\foreach \x in {1,...,2}{
						\draw[fill = gray!30 ] (5,\x)circle(1.7mm);
					}
				\foreach \x in {1,...,1}{
						\draw[fill = gray!30 ] (6,\x)circle(1.7mm);
					}	
				
				\draw[fill=white](1,1)circle(1.2mm);
					
				\draw(1,0.1)node{$a$};
				\draw(0.1,1)node{$b$};	
				\draw(6,0.1)node{$a+\ell $};
				\draw(0.5,6)node[left]{$b+\ell $};	
				
				\draw[fill=sussexp!30](2,1)circle(1.6mm);
				\draw[line width=0.2pt,<->](0.5,6.5)--(0.5, 0.5)--(7,0.5);				
		\end{tikzpicture}
	\end{center}
	\caption{\small An $\ell$-triangle at $(a,b)$ with $\ell = 5$.  The weight $\w_{a,b}$ is unrestricted.  The column above $(a,b)$ has weights $\omega_{a, b+i} < 1$.
			Point $(a+1,b)$ has weight $c_0 \le \omega_{a+1,b} < 1$. In the  region inside the dotted boundary  all weights are equal to $1$.
			}
	\label{fig:ELL}
\end{figure}
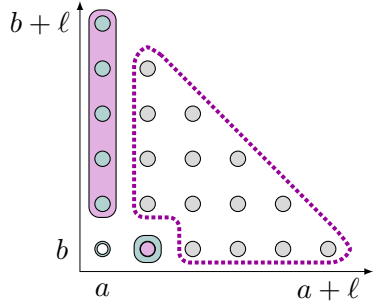

Let $\Saset_\aaa=\{ (\aaa-i,i): 0\le i\le \aaa-1\}$.   Suppose  there is an $\aaa$-triangle
at $(a,b)$.  Then two things happen that are relevant for the sequel. 
\be\label{op:45} \begin{aligned} &\text{There is an up-right path of $\w$-weight $ \aaa+\w_{a+1,b}-1\ge \aaa+c_0-1$ from
$(a,b)$}\\
 &\text{to each of the  $\aaa$ sites of  $(a,b)+\Saset_\aaa$, without counting the
weight at $(a,b)$. } 
\end{aligned}  
\ee  
Furthermore, no open oriented percolation path can go through any of the sites $\{ (a+1,b),  (a,b+i):  i=1,\dotsc, \aaa\}$.   

Now let $S$ be an  infinite  initial set that is bounded below.  Set $\clb_n^0=\clb_n(S)$,  $r^0_n=r_n(S)=(a^0_n, b^0_n)$. 
Define the stopping time 
\[ \tau_1=\inf\{ n>0:  \text{$\exists$ $\aaa$-triangle at $r^0_{n\aaa-\aaa}$}  \}.  \] 
The natural filtration is $\cH_{m}=\sigma\{\w_{i,j}:  i+j \le m\}$.

Since  $r^0_{\tau_1\aaa-\aaa}$ is the lowest point of the occupied set $\clb^0_{\tau_1\aaa-\aaa}$, no open oriented percolation  path can reach 
 $r^0_{\tau_1\aaa-\aaa}+\Saset_\aaa$:  
\[  (r^0_{\tau_1\aaa-\aaa}+\Saset_\aaa)\cap \clb^0_{\tau_1 \aaa}=\varnothing.  \]
To each point of the set $\Saset^0_{\tau_1 \aaa}=r^0_{\tau_1\aaa-\aaa}+\Saset_\aaa\subset D_{\tau_1 \aaa}$ there is an up-right  path
  of $\w$-weight 
\[  \tau_1 \aaa-1+\w_{ r^0_{\tau_1\aaa-\aaa} +e_1} \ge  \tau_1 \aaa-1 +c_0 \]
from some point on $S$.  

Start a new process $\clb^1_n$ at level  $\tau_1 \aaa$ by joining  $\Saset^0_{\tau_1 \aaa}$ to the occupied set: 
\[\clb^1_n=\begin{cases}  \clb^0_n,  &n\le \tau_1 \aaa-1\\
  \clb_{n-\tau_1 \aaa}(\clb^0_{\tau_1 \aaa} \cup \Saset^0_{\tau_1 \aaa}) , &n\ge \tau_1 \aaa.  
\end{cases} 
\]
Let   $r^1_n=(a^1_n, b^1_n)$ be the lowest  point of $\clb^1_n$.  
  
Continue in this manner, with 
\[ \tau_{k+1}=\inf\{ n>\tau_k:  \text{$\exists$ $\aaa$-triangle at $r^k_{n\aaa-\aaa}$}  \},  \   \Saset^k_{\tau_{k+1} \aaa}=r^k_{\tau_{k+1}\aaa-\aaa}+\Saset_\aaa, \] 
\[\clb^{k+1}_n=\begin{cases}  \clb^k_n,  &n\le \tau_{k+1} \aaa-1\\
  \clb_{n-\tau_{k+1} \aaa}(\clb^k_{\tau_{k+1} \aaa} \cup \Saset^k_{\tau_{k+1} \aaa}) , &n\ge \tau_{k+1} \aaa, 
\end{cases} \]
with lowest point $r^{k+1}_n=(a^{k+1}_n, b^{k+1}_n)$.

Let $\rho_\aaa>0$ be the probability of an $\aaa$-triangle and $  K_n=\max\{k: \tau_k\le n\} \sim$ Binom($n, \rho_\aaa$). 

\begin{lemma}\label{dif:lm5}  For any infinite initial occupied set that is bounded below, 
\[  \E[ a^{K_n}_{n\aaa}-a^0_{n\aaa}]\ge  n\aaa \rho_\aaa
\qquad\text{and}\qquad 
 \E[ b^{K_n}_{n\aaa}-b^0_{n\aaa}]\le  -n\aaa \rho_\aaa. 
 \]
\end{lemma}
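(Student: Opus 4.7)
The plan is to telescope
\[
a^{K_n}_{n\aaa} - a^0_{n\aaa} \;=\; \sum_{k\ge 0}\Delta^k,
\qquad
\Delta^k := (a^{k+1}_{n\aaa} - a^k_{n\aaa})\,\one\{\tau_{k+1}\le n\},
\]
establish the per-step bound $\E[\Delta^k\mid\cH_{\tau_{k+1}\aaa}]\ge \aaa$ on $\{\tau_{k+1}\le n\}$, and sum using $\E[K_n]=n\rho_\aaa$. Since $r^k_j\in D_j$ gives $a^k_j + b^k_j = j$, the two inequalities in the lemma are equivalent, so I only handle the bound for $a$. On $\{\tau_{k+1}\le n\}$, writing $m = \tau_{k+1}\aaa$ and $n' = n\aaa - m$, the construction gives $\Delta^k = a_{n'}(\clb^k_m\cup\Saset^k_m) - a_{n'}(\clb^k_m)$, where the percolation from level $m$ onward uses fresh i.i.d.\ weights independent of $\cH_m$.

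For the per-step bound I adapt the shift-invariance argument behind Lemma~\ref{op:lm4}. Writing $r^k_{m-\aaa} = (a',b')$, let $\wt A$ be the infinite antidiagonal on $D_m$ with lowest point $(a', b'+\aaa)$,
\[
\wt A \;=\; \{(a'-j,\,b'+\aaa+j) \;:\; j\in\Z_+\}.
\]
Using the explicit form $\Saset^k_m = \{(a'+\aaa-i,\,b'+i):0\le i\le\aaa-1\}$, a direct computation gives $\wt A\cup\Saset^k_m = \wt A + (\aaa,-\aaa)$. Since the weights above level $m$ are i.i.d.\ and independent of $\cH_m$, translation invariance then yields
\[
\E\bigl[\,a_{n'}(\wt A\cup\Saset^k_m) - a_{n'}(\wt A)\,\big|\,\cH_m\,\bigr] \;=\; \aaa.
\]
The basic monotonicity behind Lemma~\ref{op:lm4} -- namely that $a_{n'}(A\cup\Saset) - a_{n'}(A) = [a_{n'}(\Saset) - a_{n'}(A)]^+$ is nonincreasing in $A$ -- then shows that once the inclusion $\clb^k_m \subseteq \wt A$ is in place, the inequality $\E[\Delta^k\mid\cH_m]\ge\aaa$ follows. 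Summing over $k$ and taking expectations yields $\E[a^{K_n}_{n\aaa}-a^0_{n\aaa}] \ge \aaa\,\E[K_n] = n\aaa\rho_\aaa$, and the $b$-bound follows from $a^{K_n}_{n\aaa} + b^{K_n}_{n\aaa} = n\aaa$.

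The main obstacle is the verification of $\clb^k_m\subseteq\wt A$, equivalently $a^k_m\le a'$, which is where the triangle is used. The $\aaa$-triangle at $(a',b')$ forces the sites $(a',b'+1),\ldots,(a',b'+\aaa)$ and $(a'+1,b')$ to be closed; in particular $r^k_{m-\aaa}$ has no descendants, since both its up-right neighbors are closed. Suppose some descendant of $\clb^k_{m-\aaa}$ at level $m$ had $e_1\ge a'+1$. Then the associated up-right path would cross column $a'$ at some open site $(a', b'+k)$, and the triangle forces $k\ge\aaa+1$. But any such path starts from some $(a'-q, b'+q)\in\clb^k_{m-\aaa}$ with $q\ge 1$ and needs exactly $k$ up-right steps to reach $(a', b'+k)$, arriving at level $m-\aaa+k\ge m+1$, one past level $m$. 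Hence no descendant at level $m$ has $e_1\ge a'+1$, giving $a^k_m\le a'-1\le a'$ and closing the argument.
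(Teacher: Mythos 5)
Your proof is correct and takes essentially the same route as the paper's: telescope the increments $a^{k+1}_{n\aaa}-a^k_{n\aaa}$, bound each by $\aaa$ in conditional expectation via the strong Markov property together with the monotonicity and translation-invariance arguments underlying Lemma~\ref{op:lm4}, and sum using $\E[K_n]=n\rho_\aaa$. The only real difference is that you spell out the geometric containment $\clb^k_{\tau_{k+1}\aaa}\subseteq\wt A$ (forced by the $\aaa$-triangle) and the identity $\wt A\cup\Saset^k_{\tau_{k+1}\aaa}=\wt A+(\aaa,-\aaa)$, details the paper leaves implicit when it invokes Lemma~\ref{op:lm4}.
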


\begin{proof}   The two statements are equivalent since $a^{K_n}_{n\aaa}+b^{K_n}_{n\aaa}=n\aaa= a^{0}_{n\aaa}+b^{0}_{n\aaa}$. 
  Since $a^k_{n\aaa}=a^{K_n}_{n\aaa}$   for $k>K_n$,   and by the strong Markov property and  Lemma \ref{op:lm4}, 
\begin{align*}
\E[ a^{K_n}_{n\aaa}-a^0_{n\aaa}]  &= \sum_{k=1}^n \E[ a^{k}_{n\aaa}-a^{k-1}_{n\aaa}]  
= \sum_{k=1}^n \E[ a^{k}_{n\aaa}-a^{k-1}_{n\aaa},\, \tau_k\le n] \\
&= \sum_{k=1}^n \E\bigl[ a_{n\aaa-\tau_k \aaa}(\clb^{k-1}_{\tau_{k} \aaa} \cup \Saset^{k-1}_{\tau_{k} \aaa})
 - a_{n\aaa-\tau_k \aaa}(\clb^{k-1}_{\tau_{k} \aaa}),\, \tau_k\le n\,\bigr]\\
 &\ge \aaa  \sum_{k=1}^n  \P(\tau_k\le n)  =  \aaa n\rho_\aaa.  
\qedhere 
\end{align*}
\end{proof}

For the remainder of the proof  the initial set for oriented percolation is $S=\wt\Z_-$. 

\begin{lemma}   With  initial set   $S=\wt\Z_-$, 
\be\label{op:85}  \varliminf_{n\to\infty}   (n\aaa)^{-1}   \E [  \, a^{K_n}_{n\aaa} \,\vert  \, 0\to D_{n\aaa} ] 
 \ \ge \    \beta_{p_1}+  \rho_\aaa . \ee
\end{lemma}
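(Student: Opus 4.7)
My plan has three main steps. First, I would establish the unconditional law of large numbers $a^0_n/n \to \beta_{p_1}$ almost surely, where $a^0_n$ is the right edge of the oriented percolation cluster started from $\wt\Z_-$. For the lower bound, ergodicity guarantees that some $(-k,k)\in\wt\Z_-$ almost surely percolates to infinity; applying the shape theorem \cite[eqn.~(7) on p.~1005]{Dur-84} to percolation from that starting point yields $a^0_n \ge a_n^{(-k,k)} - k \sim \beta_{p_1} n - k$. The matching upper bound $\varlimsup a^0_n/n \le \beta_{p_1}$ follows from a union bound over $k\ge 0$ combined with exponential decay of the right-edge tail $\P\{a_n^{(-k,k)}>(\beta_{p_1}+\e)n+k\}$ and Borel--Cantelli.

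Second, combining this LLN with Lemma \ref{dif:lm5} gives the unconditional estimate
\[
\varliminf_{n\to\infty} (n\aaa)^{-1}\E[a^{K_n}_{n\aaa}] \ge \beta_{p_1} + \rho_\aaa.
\]

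Third, I would transfer this to the conditional expectation on $E_n = \{0\to D_{n\aaa}\}$ by strengthening the unconditional statement to $L^1$-convergence of $a^{K_n}_{n\aaa}/(n\aaa)$ to $\beta_{p_1}+\rho_\aaa$. Uniform integrability holds because $a^{K_n}_{n\aaa}\in[-K^*,n\aaa]$ with $K^* = \min\{k\ge 0: (-k,k)\to\infty\}$ integrable. For almost sure convergence one combines the first step with the renewal LLN $K_n/n\to\rho_\aaa$ (a consequence of the strong Markov property: whether an $\aaa$-triangle occurs at the lowest point $r^{k-1}_{n\aaa-\aaa}$ is determined by weights at levels $n\aaa-\aaa+1,\dots,n\aaa$ in a deterministic pattern, which are independent of $\cH_{n\aaa-\aaa}$, so the inter-arrival times $\tau_k-\tau_{k-1}$ are i.i.d.~Geometric$(\rho_\aaa)$) together with an ergodic argument showing the normalized boost $\sum_{k\le K_n} X_k/(\aaa K_n)$ tends to $1$ almost surely. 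Since $E_n\downarrow\{0\to\infty\}$ and $\P(E_n)\to\P(0\to\infty)>0$, bounded convergence then gives $(n\aaa)^{-1}\E[a^{K_n}_{n\aaa}\one_{E_n}]\to(\beta_{p_1}+\rho_\aaa)\P(0\to\infty)$, and dividing by $\P(E_n)$ yields \eqref{op:85}.

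The main obstacle is the almost sure control of the boosts in the third step, since Lemma \ref{dif:lm5} only provides the expectation lower bound $\E[X_k\one_{\tau_k\le n}]\ge\aaa\P(\tau_k\le n)$. Establishing a matching pathwise limit requires additional structure, for instance a shift-invariance argument in the spirit of the equality case $B=\wt\Z_-$ of Lemma \ref{op:lm4} applied to the augmented process $r^{K_n}$ itself, or a subadditive ergodic theorem exploiting stationarity of the weights under the diagonal shift that sends $\wt\Z_-$ to itself.
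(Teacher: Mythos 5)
Your proposal has a genuine gap, and you have correctly located it: Step 3 requires an almost-sure pathwise lower bound of the form $\varliminf_n a^{K_n}_{n\aaa}/(n\aaa) \ge \beta_{p_1}+\rho_\aaa$, but Lemma \ref{dif:lm5} only controls the \emph{expectation} of the cumulative boost. This does not self-improve to an a.s.\ statement: Lemma \ref{op:lm4} shows $\E[a_n(B\cup F_m)-a_n(B)]\ge m$, but the random increment $a_n(B\cup F_m)-a_n(B)$ is $\ge 0$ and can vanish on a large event (the added seed can be absorbed by the existing cluster). The boosts $X_k=a^k_{n\aaa}-a^{k-1}_{n\aaa}$ are neither i.i.d.\ nor stationary in any visible way --- each depends on the entire occupied set at level $\tau_k\aaa$ --- and the set from which the process is restarted after a boost is a random subset of a diagonal, not a translate of $\wt\Z_-$, so neither a superadditive ergodic theorem nor a diagonal-shift argument applies directly. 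Without an ergodic or concentration input, an expectation bound $\E[\cdot]\ge c$ is perfectly consistent with the $\varliminf$ being below $c$ on an event of positive probability.

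The paper sidesteps the pathwise question entirely and works with expectations throughout, by a different device. It reformulates the claim in terms of the \emph{lowest} coordinate $b^{K_n}_{n\aaa}=n\aaa-a^{K_n}_{n\aaa}$, which is nonnegative. Nonnegativity of $b$ together with the monotonicity $\{0\to D_{n\aaa}\}\subset\{0\to D_{m\aaa}\}$ allows the replacement
\[
\E\bigl[b^{K_n}_{n\aaa}\,\one\{0\to D_{n\aaa}\}\bigr]\le\E\bigl[b^{K_n}_{n\aaa}\,\one\{0\to D_{m\aaa}\}\bigr],
\]
which trades the awkward conditioning event for the $\cH_{m\aaa}$-measurable one. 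One then conditions on $\cH_{m\aaa}$, restarts the process at $\clb^{K_m}_{m\aaa}$ via the Markov property, applies Lemma \ref{dif:lm5} to this restarted process to peel off a factor $-(n-m)\aaa\rho_\aaa$, and uses the monotonicity $\clb^{K_m}_{m\aaa}\supset\clb^0_{m\aaa}$ to reduce to the unboosted process $\clb^0$. Sending $n\to\infty$ and then $m\to\infty$, the $L^1$ shape theorem for oriented percolation from $\wt\Z_-$ and $\P\{0\to D_n\}\to\P\{0\to\infty\}>0$ give the claim. Your Steps 1 and 2 are sound, but the route from there through almost-sure convergence is substantially harder than the purely expectational argument the lemma actually admits, and the machinery you would need to complete Step 3 is not supplied by the lemmas at hand.
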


\begin{proof}  We prove the equivalent statement 
\be\label{op:85b}  \varlimsup_{n\to\infty}   (n\aaa)^{-1}   \E [  \, b^{K_n}_{n\aaa} \,\vert  \, 0\to D_{n\aaa} ] 
 \ \le \    1-\beta_{p_1}-  \rho_\aaa . \ee
   Let $m<n$.  Since paths never go  in the $-e_2$ direction, $b^{K_n}_{n\aaa}\ge 0$, and  so  
\be\label{op:88} \begin{aligned}  
 & \E [  \, b^{K_n}_{n\aaa}\, \one\{0\to D_{n\aaa}\}\, ] \ \le \    \E [  \, b^{K_n}_{n\aaa} \, \one\{ 0\to D_{m\aaa}\} \, ]  \\
 &\qquad = \; 
 \E \bigl[  \,  \E\bigl\{ b(\clb^{K_n}_{n\aaa}(\wt\Z_-)) \,\big\vert\,\cH_{m\aaa}\bigr\}  \, \one\{ 0\to D_{m\aaa}\} \, \bigr].  \\
  \end{aligned}\ee
  
To bound  the conditional expectation use the Markov property to restart the evolution at $\clb^{K_m}_{m\aaa}(\wt\Z_-)$  and   apply   Lemma \ref{dif:lm5}.    Note  that 
$\clb^{K_m}_{m\aaa}(\wt\Z_-)\supset \clb^{0}_{m\aaa}(\wt\Z_-)$.  Hence if we replace  $\clb^{K_m}_{m\aaa}(\wt\Z_-)$ with $  \clb^{0}_{m\aaa}(\wt\Z_-)$ as the initial set of an oriented percolation process,  the later occupied set shrinks, 
which implies that the lowest $e_2$-coordinate  increases. 
\begin{align*}
\E\bigl[ b(\clb^{K_n}_{n\aaa}(\wt\Z_-)) \,\big\vert\,\cH_{m\aaa}\bigr] 
&= \E\bigl[ b(\clb^{K_{n-m}}_{(n-m)\aaa}(\clb^{K_m}_{m\aaa}(\wt\Z_-))) \,\big\vert\,\cH_{m\aaa} \,\bigr]\\
&\le  \E\bigl[ b(\clb^{0}_{(n-m)\aaa}(\clb^{K_m}_{m\aaa}(\wt\Z_-)))\,\big\vert\,\cH_{m\aaa} \,\bigr]  -  (n-m)\aaa \rho_\aaa\\
&\le  \E\bigl[ b(\clb^{0}_{(n-m)\aaa}(\clb^{0}_{m\aaa}(\wt\Z_-))) \,\big\vert\,\cH_{m\aaa}\,\bigr]  - (n-m)\aaa \rho_\aaa. 
\end{align*} 
Substitute this back up in \eqref{op:88}  to get the bound 
\be\label{op:89}  \begin{aligned}  
 & \E [  \, b^{K_n}_{n\aaa}  \, \one\{0\to D_{n\aaa}\}\, ]\\ 
&\qquad \le 
 \E \bigl[  \,  b(\clb^{0}_{n\aaa}(\wt\Z_-))  \, \one\{ 0\to D_{m\aaa}\} \, \bigr]   -{(n-m)}\aaa \rho_\aaa \P\{0\to D_{m\aaa}\}.  \\
  \end{aligned}\ee
  For  oriented percolation with $p_1>\opc$  we have the limits 
\[  n^{-1}  r(\clb^{0}_{n}(\wt\Z_-)) \to  (\beta_{p_1}, 1-\beta_{p_1}) \quad \text{in $L^1$ and }
\qquad   \P\{0\to D_{n}\} \to  \P\{0\to\infty\}>0.  \]
The $L^1$ convergence of the lowest point is a consequence of the subadditive ergodic theorem and   estimate (3) for oriented percolation on p.~1028 pf \cite{Dur-84}. 
  To get the conclusion \eqref{op:85b} divide through  \eqref{op:89} by  $n\aaa  \P\{0\to D_{n\aaa}\}$ and  let  first $n\to\infty$ and then $m\to\infty$.  
\end{proof}


The final piece of preparation derives a bound on last-passage times.  

\begin{lemma}\label{op:lmG6}   Let the initial set for the construction of $r^k_n$ be  $S=\wt\Z_-$. 
Then   the oriented percolation event  $0\to D_{n}$ implies that   $ \Gpp_{0, \, r^{k}_{n}}  \ge  n + k(c_0-1)$ for all $n, k\ge 0$.   
\end{lemma}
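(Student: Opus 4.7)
The plan is to prove this by induction on $k$, with $n$ arbitrary and the event $\{0\to D_n\}$ fixed throughout. The intuition is transparent: each $\ell$-triangle used along a constructed path contributes a segment of weight $\geq\ell+c_0-1$ via \eqref{op:45}, which is a deficit of $1-c_0$ compared to the free $\ell$ supplied by an open percolation path through weight-$1$ sites. After incorporating $k$ triangles we accumulate a deficit of $k(1-c_0)$, leaving weight at least $n+k(c_0-1)$.

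For the base case $k=0$, I first check that on $\{0\to D_n\}$ the lowest point $r^0_n$ has nonnegative coordinates: $b^0_n\geq j$ for any starting point $(-j,j)\in\widetilde{\Z}_-$ whose open cluster realizes the minimum, while $b^0_n\leq b(v)$ for any $v\in D_n$ with $0\to v$, forcing both $a^0_n,b^0_n\geq 0$. Then fix open oriented percolation paths $\gamma_1$ from $0$ to such a $v$ and $\gamma_2$ from $(-j,j)$ to $r^0_n$. At $e_2$-level $j$ the path $\gamma_2$ sits at $e_1$-coordinate $-j\leq 0$ while $\gamma_1$ is nonnegative, whereas at $e_2$-level $b^0_n$ the inequality $a^0_n\geq a(v)$ places $\gamma_2$ strictly to the right of $\gamma_1$. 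A level-by-level comparison of the horizontal segments composing the two paths then forces a common lattice vertex $u$, and concatenating $\gamma_1$ up to $u$ with $\gamma_2$ from $u$ onward yields an up-right path of $n$ steps through sites of weight $1$, so $G_{0,r^0_n}\geq n$.

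For the inductive step, suppose the bound holds at level $k-1$ for every $n$. If $\tau_k\ell>n$, then $\mathcal{O}^k_n=\mathcal{O}^{k-1}_n$ and the bound for $k-1$ already implies the one for $k$. Otherwise $\mathcal{O}^k_n$ is the open percolation closure of $\mathcal{O}^{k-1}_{\tau_k\ell}\cup V^{k-1}_{\tau_k\ell}$, and I split cases by which component reaches $r^k_n$. If $r^k_n\in\mathcal{O}^{k-1}_n$, then $r^k_n$ being the lowest point of the enlarged set $\mathcal{O}^k_n$ forces $r^k_n=r^{k-1}_n$, and the inductive hypothesis gives the stronger bound $n+(k-1)(c_0-1)$. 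Otherwise $r^k_n$ is reachable by open percolation from some $w\in V^{k-1}_{\tau_k\ell}=(a,b)+V_\ell$, where $(a,b)=r^{k-1}_{\tau_k\ell-\ell}$. I splice together three up-right segments: the inductive hypothesis applied at time $\tau_k\ell-\ell$, valid because $\{0\to D_n\}\subseteq\{0\to D_{\tau_k\ell-\ell}\}$, supplies a path $0\to(a,b)$ of weight $\geq(\tau_k-1)\ell+(k-1)(c_0-1)$; the $\ell$-triangle at $(a,b)$ supplies a path $(a,b)\to w$ of weight $\geq\ell+c_0-1$ via \eqref{op:45}; and the open percolation path $w\to r^k_n$ contributes weight $n-\tau_k\ell$. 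The three weights sum to exactly $n+k(c_0-1)$.

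The main subtlety is in the base case, specifically the planar crossing argument producing a common vertex of $\gamma_1$ and $\gamma_2$; the inductive step itself is essentially bookkeeping once one identifies the right dichotomy on the origin of $r^k_n$ within the construction of $\mathcal{O}^k_n$, and the weight arithmetic collapses cleanly because each triangle costs precisely $1-c_0$.
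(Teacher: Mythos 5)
Your proof is correct and follows the same inductive strategy as the paper. The one place you differ is the base case: the paper simply asserts that $0\to D_n$ implies $0\to r^0_n$ as ``clear,'' whereas you supply the planar crossing argument (two monotone lattice paths with opposite horizontal ordering at the two endpoints must share a vertex, and splicing gives $0\to r^0_n$); this is exactly the missing detail and it is sound. Your inductive step — dichotomizing on whether $r^k_n$ is already in $\mathcal{O}^{k-1}_n$, and otherwise splicing the three segments (inductive hypothesis at time $\tau_k\ell-\ell$, the triangle path via \eqref{op:45}, and the open percolation path from $V^{k-1}_{\tau_k\ell}$) — is the same decomposition as the paper's, just phrased with the index shifted by one, and your weight bookkeeping respects the modified convention that $\Gpp_{x,y}$ excludes $\w_x$ and includes $\w_y$, so nothing is double-counted.
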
 

\begin{proof}  Induction on $k$.  The case $k=0$ is clear because $0\to D_{n}$ implies that there is an oriented percolation  path from $0$ to $r^0_n$, which is also an up-right path with $\w$-weight $n$.  

Assume the claim is true for $k$.  For $n\le \tau_{k+1} \aaa-1$ we have $r^{k+1}_{n}=r^{k}_{n}$ and the claim follows for $k+1$ because $c_0-1<0$.  

Suppose  $n\ge \tau_{k+1} \aaa$.  If  $r^{k+1}_{n}$ and $r^{k}_{n}$ do not coincide,   $r^{k+1}_{n}$ must lie below  $r^{k}_{n}$ on level $D_n$.  It follows that a path that links level  $D_{\tau_{k+1} \aaa}$  to $r^{k+1}_{n}$   must   originate from $V^k_{\tau_{k+1}\aaa}$. (If not, such a path originates from $\clb^k_{\tau_{k+1} \aaa} $ which forces $r^{k+1}_{n}=r^{k}_{n}$.)   Now construct a path from $0$ to  $r^{k+1}_{n}$ as follows.  
  The induction assumption gives a  path from $0$ to $r^k_{\tau_{k+1}\aaa-\aaa}$ with $\w$-weight $\ge$ $\tau_{k+1}\aaa-\aaa + k(c_0-1)$.     The oriented percolation path from  $V^k_{\tau_{k+1}\aaa}$ to  $r^{k+1}_{n}$ gives $\w$-weight $n-\tau_{k+1}\aaa$.  Connect the two paths by taking   \eqref{op:45} from $r^k_{\tau_{k+1}\aaa-\aaa}$ to a point on $V^k_{\tau_{k+1}\aaa}$  with $\w$-weight $\ge $ 
 $\aaa+c_0-1$.    Adding up these pieces  verifies that  $ \Gpp_{0, \, r^{k+1}_{n}}  \ge  n + (k+1)(c_0-1)$.  
\end{proof}

Differentiability of $\gpp$ at $\etamax$ is equivalent to the differentiability of $\bar g(s)=\gpp(s,1-s)$ at $s=\beta_{p_1}$.   The left derivative $\bar g'(\beta_{p_1}-)=0$ because  
$\bar g(s)=1$ for $1-\beta_{p_1}\le s\le \beta_{p_1}$.  We show that the right derivative equals zero also.    Since $\bar g$ is concave and attains its maximum on $[1-\beta_{p_1},   \beta_{p_1}]$, it must be strictly decreasing on $[\beta_{p_1}, 1]$.   
\begin{align*}
\bar g(\beta_{p_1}+\rho_\aaa) \; &\ge\; \bar g\Bigl( \, \varliminf_{n\to\infty}   (n\aaa)^{-1}   \E [  \, a^{K_n}_{n\aaa} \,\vert  \, 0\to D_{n\aaa} ]  \Bigr) \\
&=\;  \varliminf_{n\to\infty}   \bar g\Bigl( \,    \E \bigl[  \, (n\aaa)^{-1} a^{K_n}_{n\aaa} \,\big\vert  \, 0\to D_{n\aaa} \bigr]  \Bigr) \\
&\ge\;  \varliminf_{n\to\infty}    \E \bigl[  \,  \bar g\bigl(  (n\aaa)^{-1} a^{K_n}_{n\aaa}\bigr) \,\big\vert  \, 0\to D_{n\aaa} \bigr]   \\
&=\;  \varliminf_{n\to\infty}   \frac1{\P\{0\to D_{n\aaa} \}} \E \bigl[  \,  \bar g\bigl(  (n\aaa)^{-1} a^{K_n}_{n\aaa}\bigr) \,\one\{0\to D_{n\aaa} \}\,\bigr]   \\
&=\;  \varliminf_{n\to\infty}   \frac1{\P\{0\to D_{n\aaa} \}} \E \bigl[  \, (n\aaa)^{-1} \gpp\bigl(   r^{K_n}_{n\aaa}\bigr) \,\one\{0\to D_{n\aaa} \}\,\bigr]   \\
&=\;  \varliminf_{n\to\infty}   \frac1{\P\{0\to D_{n\aaa} \}} \E \bigl[  \, (n\aaa)^{-1}  \Gpp_{0,\,r^{K_n}_{n\aaa}} \,\one\{0\to D_{n\aaa} \}\,\bigr]   \\
&\ge\;  \varliminf_{n\to\infty}   \frac1{\P\{0\to D_{n\aaa} \}} \E \bigl[  \, \bigl(1+ (c_0-1)  (n\aaa)^{-1}K_n \bigr)  \,\one\{0\to D_{n\aaa} \}\,\bigr]   \\
&=\;  1+ (c_0-1)  \aaa^{-1}\rho_\aaa.  
\end{align*} 
In the calculation above first use \eqref{op:85} and  the monotonicity and continuity of $\bar g$, then concavity to put $\bar g$ inside the conditional expectation. Homogeneity of $\gpp$ is used.  The second-last equality uses the $L^1$ shape theorem \eqref{lln5} from Appendix \ref{app:shape}.  The last inequality uses Lemma \ref{op:lmG6}.    The last equality is from the $L^1$ limit $K_n/n\to \rho_\aaa$.  

  From this and $\bar g(\beta_{p_1})=1$  we write 
\[ 0\ge \frac{\bar g(\beta_{p_1}+\rho_\aaa) - \bar g(\beta_{p_1})}{\rho_\aaa} \ge \frac{c_0-1}\aaa. 
\]  
Letting $\aaa\nearrow\infty$ takes $\rho_\aaa\searrow 0$ and yields $\bar g'(\beta_{p_1}+)=0$.  Differentiability of $\gpp$ at $\etamax$ has been established.   This concludes the proof of  Theorem \ref{th:flat-edge}.

\section{Shape theorem}  
\label{app:shape}

\begin{theorem}
Assume $\w_x$ are i.i.d.\ such that
	\[\int_{-\infty}^0 \P\{\w_0\le r\}^{1/2}\,dr<\infty\quad\text{and}\quad\int_0^\infty\P\{\w_0>r\}^{1/2}\,dr<\infty.\]
Then  
\begin{align}\label{lln5}
\lim_{n\to\infty} n^{-1}\max_{x\in\Z_2^+:\abs{x}_1=n}\abs{\Gpp_{0,x}-\gpp(x)}=0\quad\text{$\P$-almost surely and in $L^1$}.
\end{align}
\end{theorem}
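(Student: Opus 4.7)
The almost sure convergence in \eqref{lln5} is the uniform shape theorem of Martin \cite{Mar-04}; the task is to upgrade it to $L^1$ convergence. The plan is to establish uniform integrability of the nonnegative sequence
\begin{equation*}
X_n := n^{-1}\max_{x\in\Z_+^2:\,\abs{x}_1=n}\abs{\Gpp_{0,x}-\gpp(x)},
\end{equation*}
which, together with the a.s.\ limit $X_n\to 0$, will yield $\E X_n\to 0$ via Vitali's convergence theorem.

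Since $\gpp$ is continuous and $1$-homogeneous on $\Uset$, one has $n^{-1}\abs{\gpp(x)}\le \max_{\xi\in\Uset}\abs{\gpp(\xi)}<\infty$ for every $x$ with $\abs{x}_1=n$, so the problem reduces to dominating $n^{-1}\max_{\abs{x}_1=n}\abs{\Gpp_{0,x}}$ by a uniformly integrable sequence. The key device is a two-sided sandwich by auxiliary LPPs with nonnegative weights: let $\Gpp^{(+)}_{0,y}$ denote the last-passage time built from the weights $\w_z\vee 0$ and $\Gpp^{(-)}_{0,y}$ the one built from $(-\w_z)\vee 0$. The trivial inequality $\w_z\le\w_z\vee 0$ gives $\Gpp_{0,x}\le \Gpp^{(+)}_{0,x}$, while $\sum_k \w_{\gamma_k}\ge -\sum_k ((-\w_{\gamma_k})\vee 0)$ along any admissible path $\gamma$ gives $\Gpp_{0,x}\ge -\Gpp^{(-)}_{0,x}$. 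Because their defining weights are nonnegative, both $\Gpp^{(+)}$ and $\Gpp^{(-)}$ are coordinatewise nondecreasing in the endpoint (extending any admissible path adds only nonnegative contributions), and any $x\in\Z_+^2$ with $\abs{x}_1=n$ satisfies $x\le (n,n)$. Hence
\begin{equation*}
\max_{\abs{x}_1=n}\abs{\Gpp_{0,x}}\;\le\;\Gpp^{(+)}_{0,(n,n)}+\Gpp^{(-)}_{0,(n,n)}.
\end{equation*}

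The integrability hypotheses imply $\E[\w_0\vee 0]<\infty$ and $\E[(-\w_0)\vee 0]<\infty$, using $\P\{\cdot\}\le\P\{\cdot\}^{1/2}$ on $[0,1]$. Each auxiliary LPP is superadditive along the diagonal: $\Gpp^{(\pm)}_{0,(n+m,n+m)}\ge \Gpp^{(\pm)}_{0,(n,n)}+\Gpp^{(\pm)}_{(n,n),(n+m,n+m)}$, the second term being an independent copy of $\Gpp^{(\pm)}_{0,(m,m)}$. Liggett's superadditive form of Kingman's ergodic theorem then yields, for each sign, $n^{-1}\Gpp^{(\pm)}_{0,(n,n)}\to c_\pm$ both almost surely and in $L^1$, with $c_\pm\in[0,\infty)$. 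Since $L^1$ convergence is equivalent to uniform integrability, the dominating sequence $n^{-1}(\Gpp^{(+)}_{0,(n,n)}+\Gpp^{(-)}_{0,(n,n)})$ is uniformly integrable, and hence so is $X_n$. Combining with $X_n\to 0$ a.s.\ from \cite{Mar-04} and Vitali's theorem gives $\E X_n\to 0$, which is the $L^1$ statement of \eqref{lln5}.

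The main technical point addressed by this plan is the absence of any useful monotonicity for the signed $\Gpp_{0,x}$ in $x$, which would otherwise permit a direct reduction of the uniform-in-$x$ maximal bound to the single site $(n,n)$. The sandwich by the two monotone nonnegative-weight LPPs $\Gpp^{(\pm)}$ circumvents this and turns the uniform maximal control into a pointwise $L^1$ statement at $(n,n)$, where classical subadditive theory applies under the sharp integrability assumption of the theorem.
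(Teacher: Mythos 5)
Your route is genuinely different from the paper's and, modulo one citation that needs to be made explicit, it works. The paper proceeds by discretizing the direction set: it fixes finitely many directions $\xi_\ell$, sandwiches $\Gpp_{0,x}$ between $\Gpp_{0,\fl{m_n^\pm\xi_{\ell^\pm(x)}}}$ plus small horizontal/vertical ``detour'' sums, invokes Martin's Proposition~2.1(i) for $L^1$ convergence at the finitely many anchor directions, and controls the detour sums with a Chebyshev/union-bound maximal estimate. You instead observe that the nonnegative-weight LPPs $\Gpp^{(\pm)}$ are coordinatewise monotone in the endpoint, which collapses the uniform-in-direction maximum to a single diagonal site $(n,n)$, and then you appeal to the $L^1$ statement in the superadditive ergodic theorem along the diagonal. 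Your route avoids the direction-discretization and the detour estimates entirely, and is cleaner where it applies; the paper's route has the advantage that it makes no use of monotonicity and would go through in settings where the sandwich trick is unavailable.

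There is, however, one real gap. When you write that Kingman--Liggett gives $n^{-1}\Gpp^{(\pm)}_{0,(n,n)}\to c_\pm$ a.s.\ and in $L^1$ with $c_\pm\in[0,\infty)$, the finiteness of $c_\pm$ and the $L^1$ convergence are \emph{not} automatic from superadditivity and $\E[\w_0^\pm]<\infty$. For a superadditive process the theorem delivers $n^{-1}X_n\to\gamma=\sup_n n^{-1}\E X_n$, and it gives $L^1$ convergence precisely when $\gamma<\infty$; when $\gamma=\infty$ there is no $L^1$ statement. Verifying $\sup_n n^{-1}\E[\Gpp^{(\pm)}_{0,(n,n)}]<\infty$ is exactly the place the square-root moment hypothesis enters, and for a \emph{maximum}-type functional this is not trivial (unlike first-passage percolation, you cannot bound a last-passage time by the sum along one fixed path). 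The bound is Proposition~2.1 of \cite{Mar-04}, applied to the weights $\w\vee 0$ and $(-\w)\vee 0$, each of which inherits the relevant one-sided square-root integrability from the hypothesis. Your phrase ``classical subadditive theory applies under the sharp integrability assumption'' gestures at this but does not pin it down; the proof needs to state that the linear moment bound is being imported from Martin. With that citation made explicit, the argument is complete.
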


\begin{proof}
The almost sure limit is in  Theorem 5.1(i) of \cite{Mar-04}. We  prove the $L^1$ limit.

Fix an integer $k\ge2$. 
Let $\xi_\ell=(\ell/k,1-\ell/k)$, $\ell\in\{0,\dotsc,k\}$. Given an integer $n\ge2$, let $m_n^-=\fl{n/(1+1/k)}\in[1,n)$ and $m_n^+=\ce{n/(1-1/k)}\ge n$.

Given $x\ge0$ with $\abs{x}_1=n$ there exists $\ell^-(x)\in\{0,\dotsc,k\}$ such that $m_n^-\xi_{\ell^-(x)}\le x$. (This is because $n-m_n^-\ge m_n^-/k$.)
Also, there exists $\ell^+(x)\in\{0,\dotsc,k\}$ such that $x\le m_n^+\xi_{\ell^+(x)}$. (This is because $m_n^+-n\ge m_n^+/k$.)

A path  from $0$ to $x$    can     first go to $\fl{m_n^-\xi_{\ell^-(x)}}$ and then take at most $n-m_n^-+1\le n/(k+1)+2$ $e_1$-steps   followed by at most 
$n/(k+1)+2$ $e_2$-steps. Hence,
\begin{align*}
\Gpp_{0,x}&\ge\Gpp_{0,\fl{m_n^-\xi_{\ell^-(x)}}}-\sum_{i=0}^{n/(k+1)+1} \abs{\w_{\fl{m_n^-\xi_{\ell^-(x)}}+ie_1}}\\
&\qquad-\max_{0\le i\le n/(k+1)+1}\sum_{j=0}^{n/(k+1)+1}\abs{\w_{\fl{m_n^-\xi_{\ell^-(x)}}+ie_1+je_2}}.
\end{align*}
This gives
\begin{align*}
\Gpp_{0,x}-\gpp(x)&\ge\gpp(\fl{m_n^-\xi_{\ell^-(x)}})-\gpp(x)+\Gpp_{0,\fl{m_n^-\xi_{\ell^-(x)}}}-\gpp(\fl{m_n^-\xi_{\ell^-(x)}})\\
&\qquad -\sum_{i=0}^{n/(k+1)+1} \abs{\w_{\fl{m_n^-\xi_{\ell^-(x)}}+ie_1}}\\
&\qquad-\max_{0\le i\le n/(k+1)+1}\sum_{j=0}^{n/(k+1)+1}\abs{\w_{\fl{m_n^-\xi_{\ell^-(x)}}+ie_1+je_2}}.
\end{align*}
Similarly,
\begin{align*}
\Gpp_{0,x}-\gpp(x)&\le\gpp(\fl{m_n^+\xi_{\ell^+(x)}})-\gpp(x)+\Gpp_{0,\fl{m_n^+\xi_{\ell^+(x)}}}-\gpp(\fl{m_n^+\xi_{\ell^+(x)}})\\
&\qquad +\sum_{i=0}^{n/(k-1)+1} \abs{\w_{\fl{m_n^+\xi_{\ell^+(x)}}-ie_1}} \\
&\qquad+\max_{0\le i\le n/(k-1)+1}\sum_{j=0}^{n/(k-1)+1}\abs{\w_{\fl{m_n^+\xi_{\ell^+(x)}}-ie_1-je_2}}.
\end{align*}

Note that $\abs{\fl{m_n^\pm\xi_{\ell^\pm(x)}}-x}_1\le 1+2\abs{n-m_n^\pm}\le 1+2n/(1\mp k)$. By continuity of $\gpp$ 
	\begin{align*}
	&\lim_{k\to\infty}\lim_{n\to\infty}n^{-1}\max_{x\in n\Uset}\abs{\gpp(\fl{m_n^\pm\xi_{\ell^+(x)}})-\gpp(x)}\\
	&=\lim_{k\to\infty}\lim_{n\to\infty}\max_{x\in n\Uset}\abs{\gpp(n^{-1}\fl{m_n^\pm\xi_{\ell^+(x)}})-\gpp(n^{-1}x)}=0.
	\end{align*}
	
By Proposition 2.1(i) of \cite{Mar-04}  
	\begin{align*}
	&\lim_{n\to\infty}n^{-1}\E\Big[\max_{0\le\ell\le k}\abs{\Gpp_{0,\fl{m_n^\pm\xi_\ell}}-\gpp(\fl{m_n^\pm\xi_\ell})}\Big]
	=0.
	\end{align*}
	
Next we put some distance  between the sums to make them independent: 
	\begin{align*}
	&n^{-1}\E\Big[\max_{0\le\ell\le k}\max_{0\le i\le n/(k-1)+1}\sum_{j=0}^{n/(k-1)+1}\abs{\w_{\fl{m_n^-\xi_{\ell}}+ie_1+je_2}}\Big]\\
	&\le n^{-1}\E\Big[\max_{0\le\ell\le k/2}\max_{0\le i\le n/(k-1)+1}\sum_{j=0}^{n/(k-1)+1}\abs{\w_{\fl{m_n^-\xi_{2\ell}}+ie_1+je_2}}\Big]\\
	&\qquad+ n^{-1}\E\Big[\max_{0\le\ell<k/2}\max_{0\le i\le n/(k-1)+1}\sum_{j=0}^{n/(k-1)+1}\abs{\w_{\fl{m_n^-\xi_{2\ell+1}}+ie_1+je_2}}\Big].
	\end{align*}
The proof of the theorem is complete if we prove that the  right-hand side vanishes as first $n\to\infty$ and then $k\to\infty$. 
We show the first limit, the second being similar. Centering the $\abs{\w_x}$ terms   does not change the limit. Hence, we will  show that 
	\[\lim_{k\to\infty}\lim_{n\to\infty} n^{-1}\E\Big[\max_{0\le\ell\le k/2}\max_{0\le i\le n/(k-1)+1}\sum_{j=0}^{n/(k-1)+1}(\abs{\w_{\fl{m_n^-\xi_{2\ell}}+ie_1+je_2}}-\E\abs{\w_0})\Big]=0.\]
Abbreviate $\sigma^2=\E[\abs{\w_0}^2]-(\E\abs{\w_0})^2$ and let \[S^{i,\ell}_{n,k}=\sum_{j=0}^{n/(k-1)+1}(\abs{\w_{\fl{m_n^-\xi_{2\ell}}+ie_1+je_2}}-\E\abs{\w_0}).\] 
Note that $\{S^{i,\ell}_{n,k}:0\le i\le n/(k-1)+1,0\le \ell\le k/2\}$ are i.i.d.
By taking $n$ and $k$ large enough  and restricting to $t\ge k^{-1/4}$ Chebyshev's inequality gives 
$P\{    S^{0,0}_{n,k}  \ge nt  \}\le 2\sigma^2/(nkt^2)<1/2$.   Using  $(\frac{k}2+1)(\frac{n}{k-1}+2)\le 2n$ and  $(1-\delta)^n\ge 1-n\delta$ we bound the  expectation: 
\begin{align*}
&E\Big[\max_{0\le\ell\le k/2}\;\max_{0\le i\le n/(k-1)+1} n^{-1}  S^{i,\ell}_{n,k}\Bigr] 
=    \int_0^\infty   \P\Big\{   \max_{0\le\ell\le k/2}\max_{0\le i\le n/(k-1)+1}   S^{i,\ell}_{n,k}  \ge  n t  \Big\}\,  dt    \\
&\qquad\qquad \le  k^{-1/4}  + \int_{k^{-1/4}}^\infty  \big[1 -  \bigl(  1 -  \tfrac{2\sigma^2}{nkt^2}     \bigr)^{2n}\,\big]  \,    dt \\
&\qquad\qquad \le  k^{-1/4} + \int_{k^{-1/4}}^\infty  \  \tfrac{4\sigma^2}{kt^2}      \,    dt
=  k^{-1/4} + 4\sigma^2 k^{-3/4}   \longrightarrow 0 
\end{align*}
  as $n\to\infty$ and then $k\to\infty$.
The argument   can be repeated for the other sums.
%
\end{proof}


\bibliographystyle{abbrv}

\bibliography{firasbib2010}

\end{document}